\newcommand{\tstY}{{\mathscr Y}}
\newcommand{\tstM}{{\mathscr M}}
\newcommand{\tstL}{{\mathscr L}}
\newcommand{\tstN}{{\mathscr N}}
\newcommand{\tstD}{{\mathscr D}}
\newcommand{\TCmap}{{\nu}}
\numberwithin{equation}{section}
\newtheorem{proposition}{Proposition}[section]
\newtheorem{lemma}[proposition]{Lemma}
\newtheorem{theorem}[proposition]{Theorem}
\newtheorem{corollary}[proposition]{Corollary}
\newtheorem{definition}[proposition]{Definition}
\newtheorem{definitionproposition}{Definition/Proposition}[section]
\newtheorem{notationremark}{Notation/Remark}[section]
\theoremstyle{definition}
\newtheorem{remark}[proposition]{Remark}
\newtheorem{example}[proposition]{Example}
\newtheorem{notation}[proposition]{Notation}
\newtheorem{convention}[proposition]{Convention}
\DeclareMathOperator{\End}{End}
\DeclareMathOperator{\Aut}{Aut}
\DeclareMathOperator{\Ric}{Ric}
\DeclareMathOperator{\V}{V}
\renewcommand{\phi}{\varphi}
\DeclareMathOperator{\DF}{DF}
\newcommand{\N}{\mathbb{N}}
\newcommand{\R}{\mathbb{R}}
\newcommand{\C}{\mathbb{C}}
\newcommand{\Z}{\mathbb{Z}}
\newcommand{\Q}{\mathbb{Q}}
\newcommand{\pr}{\mathbb{P}}
\renewcommand{\epsilon}{\varepsilon}
\DeclareMathOperator{\Vol}{Vol}
\newcommand{\holSAS}{\Xi}
\newcommand{\Sc}{\Xi}% Sasaki cone of Futaki et al.
\newcommand{\Cmet}{\mathcal{Z}}
\newcommand{\Reebmap}{\mathrm{R}}
\newcommand{\YamCRcnst}{\mathscr{Y}} % CR-Yamabe constant
\def\Ds{\mathrm{D}}
\def\Sas{{\rm{Sas}}}
\def\DF{\mbox{{\bf DF}}}
\def\EH{{\rm{EH}}}
\def\bfV{{\bf{V}}}
\def\bfS{{\bf{S}}}
     \def\Sas{\mathcal{S}}  \def\mR{\mathcal{R}} \def\Sas{\mathcal{S}}        \def\mY{\mathcal{Y}}    \def\mA{\mathcal{A}}
\def\mO{\mathcal{O}}
 \def\mL{\mathcal{L}}
\def\bT{\mathbb T}
\def\bS{\mathbb S}
 \def\kt{\mathfrak{t}} 
 \def\actscal{\mathcal{A}}
  \def\actvol{\mathcal{B}}
\def\Ric{{\rm Ric}}
\def\vol{{\rm{ Vol}}}
\def\ra{\rightarrow }
\def\Hf{{\bf{F}}}
\def\CR{{\mathrm{J}}}
\def\grad{{\mathrm{grad}}}
\def\scal{{\mathrm{Scal}}}
\def\Sas{{\rm{Sas}}}
\def\con{{\rm{con}}}
\def\Con{{\rm{Con}}}
\def\proj{{\rm p}}
\def\uI{\underline{I}}
\newcommand{\leftrarrows}{\mathrel{\raise.75ex\hbox{\oalign{%
  $\scriptstyle\leftarrow$\cr
  \vrule width0pt height.5ex$\hfil\scriptstyle\relbar$\cr}}}}
\newcommand{\lrightarrows}{\mathrel{\raise.75ex\hbox{\oalign{%
  $\scriptstyle\relbar$\hfil\cr
  $\scriptstyle\vrule width0pt height.5ex\smash\rightarrow$\cr}}}}
\newcommand{\Rrelbar}{\mathrel{\raise.75ex\hbox{\oalign{%
  $\scriptstyle\relbar$\cr
  \vrule width0pt height.5ex$\scriptstyle\relbar$}}}}
\newcommand{\intprod}{\mathbin{\raisebox{\depth}{\scalebox{1}[-1]{$\lnot$}}}}
\def\cst{\overline{{\bf c}}}
\def\biho{\Pi}
\def\uzeta{\underline{\zeta}}
\def\coneMetric{\Theta}
\def\euler{{\bf e} }
\DeclareRobustCommand{\gobblefive}[5]{}
\newcommand*{\SkipTocEntry}{\addtocontents{toc}{\gobblefive}}% to hide a subsection from toc
\title{The Einstein--Hilbert functional and the Donaldson--Futaki invariant}
\author{Abdellah Lahdili} 
\address{Abdellah Lahdili, Department of Mathematics, University of Aarhus, Ny Munkegade 118, buil. 1530, 323, 8000 Aarhus, Denmark}
\email{lahdili.abdellah@gmail.com}
\author{Eveline Legendre}
\address{Eveline Legendre, Universit\'e Claude Bernard Lyon 1, Institut Camille Jordan, \'equipe AGL, Villeurbanne F-69420}
\email{legendre@math.univ-lyon1.fr }
\author{Carlo Scarpa}
\address{Carlo Scarpa, CIRGET, Universit\'e du Qu\'ebec \`a Montr\'eal, 201 avenue du Pr\'esident Kennedy, Montr\'eal, Qu\'ebec, H2X 3Y7 }
\email{scarpa.carlo@uqam.ca}
\begin{document}

\begin{abstract}  
Given a K\"ahler manifold $M$ polarised by a holomorphic ample line bundle $L$, we consider the circle bundle associated to the polarisation with the induced transversal holomorphic structure. The space of contact structures compatible with this transversal structure is naturally identified with a bundle, of infinite rank, over the space of K\"ahler metrics in the first Chern class of $L$. We show that the Einstein--Hilbert functional of the associated Tanaka--Webster connections is a functional on this bundle, whose critical points are constant scalar curvature Sasaki structures. In particular, when the group of automorphisms of $(M,L)$ is discrete, these critical points correspond to constant scalar curvature K\"ahler metrics in the first Chern class of $L$. We show that the Einstein--Hilbert functional satisfies some monotonicity properties along some one-parameter families of CR-contact structures that are naturally associated to test configurations, and that its limit on the central fibre of a test configuration is related to the Donaldson--Futaki invariant through an expansion in terms of an extra real parameter. As a by-product, we obtain an original proof that the existence of cscK metrics on a polarized manifold implies K-semistability. We also show that the limit of the Einstein--Hilbert functional on the central fibre coincides with the ratio of the equivariant index characters pole coefficients of the central fibre.   
\end{abstract}

\subjclass[2020]{53C55 (primary), 58E11, 53C18, 32Q15, 53C25, 53D10 (secondary)}

%\date{\today} 
\maketitle

\tableofcontents

\section{Introduction}
In this paper, we propose a new approach to the problem of whether or not the first Chern class of a given ample line bundle $L$, over a compact complex manifold $M$, contains a constant scalar curvature K\"ahler (cscK) metric. This problem, its extension to extremal Calabi metrics and the conjecture surrounding its relation to K-stability, has driven great part of the research in K\"ahler geometry for the last few decades. Out of these efforts, a beautiful and rich theory has emerged. 
 
The best-known, but not trivial, case is when the ample line bundle $L$ is the anti-canonical bundle $-K_M$ over a (Fano) complex manifold $M$. In this case, the cscK metrics in the class $2\pi\mathrm{c}_1(M)=2\pi\mathrm{c}_1(-K_X)$, if any exist, are K\"ahler--Einstein metrics of positive curvature. It is now known that such a metric exists if and only if $(M,-K_M)$ is K-stable, and we even have a variety of proofs of this remarkable result, starting with \cite{CDS, Tian_KstabFano}. Since then, other stability notions have been introduced for Fano manifolds, that have mostly been shown to be equivalent to K-stability \cite{fujita-valuative,fujita-odaka,li}. Each of these sheds light on a different aspect of the problem, and they have all been proven to be very useful in the study of the moduli space of K-stable Fano varieties. Arguably, the main reasons why the theory of cscK metrics has been so successful in the Fano case, is that the cscK equation reduces to a second-order complex Monge-Ampère equation, and that from the algebraic point of view the problem can be reduced to test K-stability, or its variations (uniform K-stability/semistability), on special test configurations \cite{lixu-special}, where one can use Ding stability \cite{ding}. Note however that Ding stability has no direct generalisation to arbitrary polarisations; it corresponds on the differential side to the Ding functional, whose minima are K\"ahler--Einstein metrics, and has a strong direct link with its algebraic counterpart, see e.g. \cite{Tian,Berman}.

%\smallbreak

For general polarized manifolds $(M,L)$, the Yau-Tian-Donaldson conjecture predicts an equivalence between the existence of cscK metrics in $2\pi c_1(L)$ and K-stability of $(M,L)$. On the differential side, the existence of cscK metrics is now well-understood thanks to the celebrated works of Chen-Cheng \cite{chen-cheng}, after \cite{BDL, DarvasRubinstein_propernessKE} and extending \cite{Tian}, showing that the existence of a cscK (including K\"ahler-Einstein) metric is equivalent to the coercivity (or \emph{properness}) of the K-energy functional on the space of K\"ahler potentials introduced by Mabuchi. The algebro-geometric side of the conjecture is now investigated via different approaches, notably \cite{BJ-nonarch,li-models1}, based on relating the asymptotic behavior of the K-energy to K-stability.

%\smallbreak
 
As the case of Fano manifolds exemplifies, the study of such a hard problem can only benefit from the introduction of new avenues of investigation. With this in mind, we introduce a new and rather old ingredient in the study of cscK metrics on a polarized manifold $(M,L)$, a version of the Einstein--Hilbert functional, in the hope that it will provide a fruitful new perspective on the problem. The Einstein--Hilbert functional was a crucial tool to solve the Yamabe problem, searching for constant scalar curvature Riemannian metrics in a given conformal class on a compact Riemannian manifold. A modified version of this functional, the one we are going to exploit, was instead first introduced to understand the existence of special contact forms compatible with a given CR structure, those with constant \emph{Tanaka--Webster} scalar curvature. These CR-contact forms coincide with the critical points of this Einstein--Hilbert functional on the CR manifold, and their existence on a given CR manifold is known as the \emph{CR-Yamabe problem}.

The main contributions of this paper is to highlight a link between a family (actually, a bundle) of CR-Yamabe problems and the cscK problem on a polarized manifold. We also explore an algebro-geometric counterpart of this picture and exhibit an explicit relation between the Einstein--Hilbert functional on a space of CR-contact structures and the ratio of the equivariant index character pole coefficients, establishing a direct link with K-stability. We now present the results in more detail.

\subsection{The CR-Yamabe problem and cscK metrics on polarized manifolds}

Let $M$ be a compact complex manifold, of complex dimension $n$. Any complex line bundle $L\to M$ classically corresponds to a principal $\mathbb{S}^1$-bundle $\pi:N\ra M$. Having fixed (now and for the rest of the paper) a generator $\xi$ of the $\mathbb{S}^1$-action on $L$, the complex structure on $M$ defines a transversal holomorphic structure $\underline{I}$ on $N$. Fixing a holomorphic structure on $L$, which is assumed to be ample from now on, any positively curved Hermitian metric $h$ determines a unique connection $1$-form $\eta_h$ on the principal bundle $N$, which turns out to be a contact structure on $N$ with Reeb vector field $\xi$. Moreover, $\eta_h$ satisfies a compatibility condition with the underlying transversal holomorphic structure $\underline{I}$ on $N$: it is CR-contact with respect to the unique CR-structure $I_h$ lifting $\uI$ on $\ker\eta_h$.

\smallbreak

We consider the space $\holSAS(M,L)$ of CR-contact structures obtained as above when varying the Hermitian metric on $L$ while keeping the rest of the structure fixed,
\begin{equation*}
    \Xi(M,L)=\left\lbrace\eta_h\;\middle|\; h\mbox{ positively curved Hermitian metric on }L\right\rbrace.
\end{equation*}
Note that given any $\eta\in \holSAS(M,L)$, there exists a unique K\"ahler form $\omega\in 2\pi c_1(L)$ on $M$ such that 
\begin{equation}\label{eq:correspSas=Kahl}
\pi^*\omega = d\eta,
\end{equation}
and this provides a one-to-one correspondence between $\holSAS(M,L)$ and the space of K\"ahler metrics in $2\pi c_1(L)$. Now, we expand the set $\holSAS(M,L)$ within the space of $\xi$--invariant contact structures on $N$ by considering all the possible conformal transformations
\begin{equation*}
    \Cmet(M,L):= \left\lbrace f^{-1}\eta \;\middle|\; f\in C^{\infty}(N,\R_{>0})^\xi, \eta\in \holSAS(M,L)\right\rbrace,
\end{equation*}
where $C^{\infty}(N,\R_{>0})^\xi$ denotes the $\xi$--invariant positive real functions on $N$. In other words, $C^{\infty}(N,\R_{>0})^\xi=\pi^* C^{\infty}(M,\R_{>0})$. Any $\alpha \in \Cmet(M,L)$ is compatible with the CR structure induced by the transversal holomorphic structure on the distribution $\ker \alpha$, thus $\Cmet(M,L)$ is a space of CR-contact structures. Note that in general a CR-contact structure $\alpha\in\Cmet(M,L)$ is not Sasakian, and the Reeb vector field associated to $\alpha$ coincides with $\xi$ if and only if $\alpha\in \holSAS(M,L)$.

\smallbreak

A crucial observation of Tanaka~\cite{tanaka} and Webster~\cite{webster2} is that any CR-contact structure $(\alpha,I)$ on a manifold $N$ determines a unique affine connection on $TN$, the so-called {\it Tanaka--Webster connection}, as well as a Riemannian metric on $N$, from which one can define the so-called \emph{Tanaka-Webster scalar curvature} $\scal(\alpha,I) \in C^\infty(N,\R)$ and thus an Einstein--Hilbert functional $\EH:\Cmet(M,L)\to\R$ given by the (normalized) total Tanaka--Webster scalar curvature,
\begin{equation}\label{eq:EH_TanakaWebster}
    \EH(\alpha) = \frac{\int_N \scal(\alpha,I)\, {\alpha}\wedge( d{\alpha})^{[n]} }{\bfV({\alpha})^{\frac{n}{n+1}}},
\end{equation}
where $\bfV({\alpha})=\int_N {\alpha}\wedge (d{\alpha})^{[n]}$ is the volume of $N$ with respect to $\alpha$, and for any differential form $\theta$ and natural number $k$, we let $\theta^{[k]}:=\theta^k/k!$. The exponent of the volume term in~\eqref{eq:EH_TanakaWebster} is chosen so that $\EH$ is homogeneous of order $0$ with respect to the $\R_+$-action (by dilations) on $\Cmet(M,L)$.

\smallbreak

Our idea is to consider the Einstein--Hilbert functional on the space $\Cmet(M,L)$ we introduced and we prove the following.
\begin{theorem}[{\bf Theorem}~\ref{lem:CRITEH}]\label{theoEH=MabOnZ}
Via the correspondence~\eqref{eq:correspSas=Kahl}, a K\"ahler metric in $2\pi c_1(L)$ has constant scalar curvature if and only if it is a critical point of the Einstein--Hilbert functional. Moreover, the critical points of $\EH$ in $\Cmet(M,L)$ coincide with the Sasakian structures of constant scalar curvature.
In particular, if the reduced automorphism group of $M$ is discrete, then the critical points of $\EH$ on $\Cmet(M,L)/\R_+$ coincide with the constant scalar curvature K\"ahler metrics in $2\pi c_1(L)$. 
\end{theorem}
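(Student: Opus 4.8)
The plan is to compute the first variation of the Einstein--Hilbert functional $\EH$ on the space $\Cmet(M,L)$, first in the directions that preserve the Reeb vector field $\xi$ (i.e.\ along $\holSAS(M,L)$, where the correspondence~\eqref{eq:correspSas=Kahl} with K\"ahler metrics lives) and then in the transversal ``conformal'' directions $f^{-1}\eta$. For the first part I would use the classical formula for the Tanaka--Webster scalar curvature of a Sasaki-type structure $\eta_h$: fixing a reference $\eta_0\in\holSAS(M,L)$ with K\"ahler form $\omega_0$, a nearby element of $\holSAS(M,L)$ is $\eta_\varphi=\eta_0+2\pi^*(d^c\varphi)$ with $\omega_\varphi=\omega_0+\ddc\varphi>0$, and the transverse Tanaka--Webster scalar curvature equals $\pi^*$ of the Riemannian scalar curvature $\Scal(\omega_\varphi)$ of the K\"ahler metric (up to a standard dimensional constant coming from the vertical directions, which is locally constant hence irrelevant after we fix the normalisation). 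The volume $\bfV(\eta_\varphi)$ is, up to a constant, the K\"ahler volume $\int_M\omega_\varphi^{[n]}$, which is fixed in the cohomology class $2\pi c_1(L)$; so along $\holSAS(M,L)$ the denominator in~\eqref{eq:EH_TanakaWebster} is constant and $\EH$ is, up to a positive constant, just $\varphi\mapsto\int_M\Scal(\omega_\varphi)\,\omega_\varphi^{[n]}+\mathrm{const}$. Differentiating and using the standard identity $\frac{d}{dt}\big|_{0}\int_M\Scal(\omega_{t\dot\varphi})\,\omega_{t\dot\varphi}^{[n]}=\int_M\dot\varphi\,\big(\Scal(\omega_0)-\underline{\Scal}\big)\,\omega_0^{[n]}$ (the familiar computation behind the Mabuchi functional, recalling that $\int_M\Scal\,\omega^{[n]}$ depends only on the class so the ``top-degree'' term integrates to a constant), we get that $\eta_\varphi$ is a critical point along $\holSAS(M,L)$ iff $\Scal(\omega_\varphi)$ is constant, i.e.\ iff $\omega_\varphi$ is cscK. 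This proves the first sentence of the theorem.

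Next I would show that a cscK point of $\holSAS(M,L)$ is in fact a critical point of $\EH$ on the \emph{whole} space $\Cmet(M,L)$, and conversely that every critical point in $\Cmet(M,L)$ is (the dilation of) a constant-scalar-curvature Sasaki structure. The key input here is the variational behaviour of $\EH$ under a conformal change $\alpha\mapsto f^{-1}\alpha$ with $f\in C^\infty(M,\R_{>0})$: this is exactly the CR-Yamabe setup, and the Euler--Lagrange equation of $\EH$ restricted to a fixed CR-conformal class is the CR-Yamabe equation, namely that $\Scal(f^{-1}\alpha,I)$ be constant. Combining the two computations, a general critical point $\alpha\in\Cmet(M,L)$ must have (i) constant Tanaka--Webster scalar curvature in its conformal class and (ii) be critical along the ``horizontal'' Sasaki directions. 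One then argues --- this is where the structure of $\Cmet(M,L)$ as a bundle over the space of K\"ahler metrics, with the Reeb field jumping off $\xi$ precisely when one leaves $\holSAS(M,L)$, is used --- that constant Tanaka--Webster scalar curvature on $N$ together with the $\xi$-invariance and the transversal holomorphic structure forces $\alpha$ to have Reeb field $\xi$ (up to the overall scaling in the $\R_+$-direction), hence $\alpha\in\R_+\cdot\holSAS(M,L)$, and then (i) says the corresponding K\"ahler metric is cscK, i.e.\ $\alpha$ is a cscK Sasaki structure. I expect this rigidity step --- showing the Reeb vector field of a critical point cannot genuinely deform away from $\xi$ --- to be the main obstacle; it likely requires either the CR-Yamabe uniqueness/regularity theory or a direct argument that the ``conformal'' and ``Sasaki'' variations together span all of $T\Cmet(M,L)$ modulo dilations, so that a point critical in both families is critical in all directions and the converse identification is clean.

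Finally, for the last sentence, when the reduced automorphism group $\Aut_0(M)/\Image(\C^*)$ — equivalently, the identity component of the group of Hamiltonian holomorphic vector fields — is trivial, the Lichnerowicz operator associated to any K\"ahler metric in $2\pi c_1(L)$ has trivial kernel among normalised functions, so the cscK metric in a given class is unique when it exists and, more to the point, there is no continuous family of cscK Sasaki structures beyond the $\R_+$-rescaling. Hence on $\Cmet(M,L)/\R_+$ the critical set of $\EH$ is exactly the set of cscK K\"ahler metrics in $2\pi c_1(L)$ under the correspondence~\eqref{eq:correspSas=Kahl}; no extra Sasakian critical points appear because any such would, by the previous paragraph, reduce to a cscK K\"ahler metric in the class. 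Assembling the three steps gives the theorem.
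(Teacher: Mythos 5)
Your proposal has two genuine gaps, and they sit exactly at the two places where the paper's actual argument does its work.

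First, the variational identity you use along $\holSAS(M,L)$ is false. For $\eta_\varphi\in\holSAS(M,L)$ one has $f\equiv 1$, and both $\bfS(\eta_\varphi)$ and $\bfV(\eta_\varphi)$ are cohomological quantities (fixed multiples of $c_1(M).c_1(L)^{n-1}$ and $c_1(L)^{n}$), so $\EH$ is \emph{constant} on $\holSAS(M,L)$: criticality along those directions is vacuous and cannot characterise cscK metrics. The formula you quote, $\tfrac{d}{dt}\big|_{0}\int_M\scal(\omega_{t\dot\varphi})\,\omega_{t\dot\varphi}^{[n]}=\int_M\dot\varphi\,(\scal(\omega_0)-\mathrm{const})\,\omega_0^{[n]}$, is the first variation of the Mabuchi K-energy, not of the total scalar curvature, whose variation within a K\"ahler class vanishes identically. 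In the paper the cscK condition at a point of $\holSAS(M,L)$ comes instead from the \emph{vertical} (conformal) variation~\eqref{eq:Var_EH}: the Euler--Lagrange equation of $\EH$ restricted to a fibre of $\kappa$ is constant Tanaka--Webster scalar curvature (Corollary~\ref{coroJLcscTW}), which at $f\equiv1$ is exactly $\scal(\omega)=\mathrm{const}$, while the transversal variation~\eqref{e:EHvariationTRANS} vanishes automatically at $f\equiv1$ because every term contains $df$.

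Second, the ``rigidity step'' you flag as the main obstacle --- forcing the Reeb field of a critical point back to $\xi$ so that critical points lie in $\R_+\cdot\holSAS(M,L)$ --- is not just hard, it is false in general and contradicts the second sentence of the theorem: the critical points of $\EH$ on $\Cmet(M,L)$ are the constant scalar curvature \emph{Sasaki} structures, whose Reeb vector field is $\Reebmap^{f^{-1}\eta}=f\xi+\grad_{d\eta}f$ with $f$ a Killing potential that is in general non-constant (cscS structures with Reeb field different from $\xi$ in the same Sasaki--Reeb cone occur, e.g., in the toric examples of \cite{legendre,BHLTF1}). The paper's argument at a general critical point $f^{-1}\eta$ needs no CR-Yamabe uniqueness: vertical criticality gives cscTW, which kills the second term of~\eqref{e:EHvariationTRANS}; transversal criticality then forces $\int_N f^{-2}\big(\nabla^-df,\nabla^-d\dot\phi\big)_{\eta_f}\,dv_f=0$ for all $\dot\phi$, hence $\nabla^-df\equiv0$, i.e.\ $f$ is a Killing potential, the Reeb field $f\xi+\grad_{d\eta}f$ is CR-holomorphic, and the structure is cscS (Lemma~\ref{lem:varTransvDir} is the computation your plan is missing). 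Only in the final statement, where the reduced automorphism group is assumed discrete, does the absence of non-trivial Killing potentials force $f$ to be constant, so that modulo the $\R_+$-dilations the critical set reduces to the cscK metrics; your appeal to triviality of holomorphic vector fields belongs there, not in the general classification of critical points.
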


By definition, $\Cmet(M,L)$ is a (trivial) bundle over $\holSAS(M,L)$ or the space of K\"ahler metrics in $2\pi c_1(L)$, and we denote by $\kappa :\Cmet(M,L) \longrightarrow \holSAS(M,L)$ the bundle map. Each fibre of $\kappa$ is isomorphic to $C^\infty(M,\R_{>0})$ and coincides with a conformal class of $\xi$--invariant CR-contact structures compatible with the same CR structure. The CR-Yamabe problem consists in proving the existence of contact forms with constant Tanaka--Webster scalar curvature (cscTW, from now on) within one conformal class of CR-contact forms. This problem was first considered by Jerison-Lee \cite{JerisonLee, JerisonLee2}, and by Gamara and Yacoub~\cite{gamarayacoub_cryamabeFLAT,gamaran1}. Using that critical points of the (Tanaka--Webster) Einstein--Hilbert functional are exactly the cscTW CR-structures in the conformal class, they showed that the CR-Yamabe problem always admits a positive answer. Their results do not directly apply in our setting, as the fibres of $\kappa$ are conformal classes of \emph{$\xi$--invariant} CR-contact structures. However, we show in Proposition~\ref{prop:YamabeCR_equivariant} below that the restriction of $\EH$ on any fibre of $\kappa$ admits a minimiser which is therefore a $\xi$-invariant cscTW structure.

It follows from the examples found in \cite{legendre,BHLTF1} of multiple cscS forms lying in the same Sasaki-Reeb cone that $\EH$ is not convex in general; we also provide a simple evidence of this in Example~\ref{exampleNONcvx}. However, when $(-K_M).L^{n-1}/L^n \leq 0$, a direct computation as in \cite[\S 4.3]{BHLTF1} shows that the Hessian of $\EH\restriction_{\kappa^{-1}(\alpha)}$ at a critical point is positive-definite for any $\alpha\in\holSAS(M,L)$, see Proposition~\eqref{prop:eigenvalueHessEH}. 

\begin{remark}
Theorem~\ref{theoEH=MabOnZ} is the regular version of a more general result we prove for arbitrary Sasaki manifolds, see Theorem~\ref{lem:CRITEH}.
\end{remark}

\begin{remark}\label{rem:WeightedKahl}
We can see the Einstein--Hilbert functional as being defined on the space of pairs $(\omega,f)$, for a K\"ahler metric $\omega$ in $2\pi\mathrm{c}_1(L)$ and $f\in C^{\infty}(M,\R_{>0})$. Since $\EH$ is homogeneous (of order $0$) with respect to the multiplicative action of $\R_{>0}$ on $f\in C^{\infty}(M,\R_{>0})$, $\EH$ can be seen as function on the tangent bundle to the space of K\"ahler metrics in $2\pi c_1(L)$, say $\widecheck{\Xi}(M,L)$, through the change of variables $(\omega,f)\mapsto (\omega, \log f)$ and the identification of the tangent bundle of $\widecheck{\Xi}(M,L)$ with $\widecheck{\Xi}(M,L)\times C^{\infty}(M,\R)/\R$. From this point of view, the Einstein--Hilbert functional is similar to a version of Perelman's entropy functional studied by Inoue~\cite{inoue-ent} to characterize $\mu$-cscK metrics, a generalisation of cscK metrics. We refer to Remark~\ref{rmk:Perelmanentropy} for a discussion of some of the differences and similarities between the two functionals. It is likely that, more generally, weighted extremal K\"ahler metrics (including extremal Calabi metrics) are critical points of similar but modified weighted Einstein--Hilbert functionals, of which the one we study here and Perelman's entropy are special cases. For example, we show in Remark~\ref{rmk:EinsteinHilbert_weighted} that our work can be applied to a family of equations that includes the classical Yamabe problem, leading in particular to a characterisation of conformally K\"ahler Einstein--Maxwell metrics \cite{ApostolovMaschler_EinsteinMaxwell}, among others.
\end{remark}

\subsection{Polarized test configurations and ribbons of CR-contact structures}

Consider a smooth ample test configuration $(\tstM,\tstL)$ over $(M,L)$,  and denote by $\nu:\tstM\to \pr^1$ the projection map, equivariant with respect to the linearised $\mathbb{C}^*$-action on $(\tstM,\tstL)$. This test configuration gives rise to a test configuration $(\tstN, \uI, \xi)$ in the category of transversally holomorphic manifolds of Sasaki type, as in in \cite{ACL}, and let $\zeta$ be the generator of the induced $\mathbb{S}^1$-action on $\tstN$. Choosing a $\bS^1_\zeta$--invariant Sasaki-contact form $\tilde{\eta} \in \holSAS(\tstM,\tstL)$, for each $\tau \in \pr^1\backslash\{0\}$, we get a regular Sasaki manifold, determined by a form $\eta_\tau \in \holSAS(M_\tau, L_{|_{M_\tau}})$ on the circle bundle $N_\tau$ over the fibre $(M_\tau, L_{|_{M_\tau}})$ where $M_\tau= \TCmap^{-1}(\tau)$. Moreover, there exists $\epsilon >0$ such that for all $s\in (-\epsilon,\epsilon)$, $f_s := 1-s\tilde{\eta}(\zeta)>0$ on $\tstN$. This allows us to consider a $2$-parameter family, that we call a \emph{ribbon of CR-contact structures}, 
\begin{equation}\label{eq:IntroPencil}
 f_s^{-1}\eta_\tau \in \Cmet(M_\tau, L_{|_{M_\tau}})
\end{equation}
where $f_s$ denotes also the pull-back of $f_s$ on the transversally holomorphic circle bundle $N_\tau$ over $(M_\tau,L_{|_{M_\tau}})$. The second main result of this paper comes from investigating the limit of the Einstein--Hilbert functional on~\eqref{eq:IntroPencil} when $\tau \mapsto 0 \in \pr^1$.  

\begin{theorem}\label{theINTRO_TCdl}
Let $(\tstM,\tstL)$ be a smooth ample test configuration over $(M,L)$ with reduced central fibre $(M_0,L_0)$. Then for any $\bS^1_\zeta$--invariant Sasaki-contact form $\tilde{\eta} \in \holSAS(\tstM,\tstL)$ and associated ribbon~\eqref{eq:IntroPencil} there exists $\epsilon >0$ such that the limit $$\EH_s(\tstM,\tstL):=\lim_{\tau\ra 0} \EH(f_s^{-1}\eta_\tau)$$ is a smooth function of $s\in (-\epsilon,\epsilon)$ and does not depend on the choice of $\tilde{\eta} \in \holSAS(\tstM,\tstL)$. Moreover, $$\left( \frac{d}{ds}\EH_s(\tstM,\tstL)\right)_{s=0} = \Vol(L)^{-n/n+1} \DF(\tstM,\tstL)$$ where $\DF(\tstM,\tstL)$ is a positive multiple of the Donaldson--Futaki invariant of $(\tstM,\tstL)$.
\end{theorem}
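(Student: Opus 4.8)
The strategy is to compute $\EH(f_s^{-1}\eta_\tau)$ as explicitly as possible, take the limit $\tau\to 0$, and then differentiate in $s$ at $s=0$. The key technical input is a formula for the Tanaka--Webster scalar curvature of a conformal transformation $f^{-1}\eta$ of a Sasaki-contact form $\eta$, in terms of the transverse K\"ahler data $(\omega_\tau,f_s)$; this is the CR-Yamabe-type transformation formula underlying Theorem~\ref{theoEH=MabOnZ}, which I would invoke from the earlier part of the paper. Writing $f_s = 1 - s\,\tilde\eta(\zeta)$ and letting $m_\tau := \eta_\tau(\zeta)$ denote the moment map (Killing potential) of the transverse $\mathbb{C}^*$-action with respect to $\omega_\tau$, both the numerator $\int_{N_\tau}\scal(f_s^{-1}\eta_\tau)\,(f_s^{-1}\eta_\tau)\wedge(d(f_s^{-1}\eta_\tau))^{[n]}$ and the volume $\bfV(f_s^{-1}\eta_\tau)$ become integrals over $N_\tau$, hence (after integrating along the $\mathbb{S}^1$-fibre) integrals over $M_\tau$ of the form $\int_{M_\tau} P(m_\tau,\scal(\omega_\tau))\,\omega_\tau^{[n]}$ for explicit rational expressions $P$ in $s$. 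This is essentially the \emph{weighted} (or $v$-soliton) bookkeeping of Apostolov--Calderbank--Legendre and of Inoue's entropy: $f_s^{-n-1}$ and its derivatives play the role of the weight functions.

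**Taking the limit and identifying $\DF$.** The next step is to let $\tau\to 0$. Here the point is that the weighted integrals $\int_{M_\tau}f_s(m_\tau)^{-k}\,\omega_\tau^{[n]}$ and $\int_{M_\tau}\scal(\omega_\tau)f_s(m_\tau)^{-k}\,\omega_\tau^{[n]}$ are, up to universal constants, the equivariant intersection numbers $\int_{\tstM_\tau}(\text{powers of }c_1(\tstL),\text{ pushed to }\pr^1)$ restricted to the fibre, and these extend continuously (indeed, by flatness, are constant in $\tau$ up to the jump at $\tau=0$ forced by the reduced central fibre) as $\tau\to 0$; this is precisely the content of the "test configuration in the Sasaki category" construction of \cite{ACL} together with the properness of $\nu$. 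Consequently $\EH_s(\tstM,\tstL)$ is a ratio of two polynomials in $s$ whose coefficients are equivariant intersection numbers on $(\tstM,\tstL)$, hence manifestly smooth in $s$ near $0$ and independent of the chosen $\tilde\eta$ (since any two choices differ by a transverse K\"ahler potential on $\tstM$, and the intersection numbers are cohomological). At $s=0$ we have $f_0\equiv 1$, so $\EH_0(\tstM,\tstL) = \EH(\eta_0)$ is (a constant times) the total scalar curvature of the central fibre over its volume to the power $n/(n+1)$, i.e.\ the "Sasaki-Futaki normalisation"; differentiating the ratio at $s=0$, the quotient rule produces exactly two terms: $\frac{d}{ds}\big|_0$ of the numerator (which brings down $(n+1)\int m_0\,\scal(\omega_0)\omega_0^{[n]}$ plus a correction from $\frac{d}{ds}\bfV$) and the $\bfV$-derivative term. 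Assembling these and using $\int_{M_0}\scal(\omega_0)\,\omega_0^{[n]} = \text{const}\cdot (-K_{M_0})\cdot L_0^{n-1}$, one recognises the combination $a\big((-K)\cdot L^{n-1}\big)\big(m\text{-integral}\big) - b\big(L^n\big)\big(\text{weighted-scalar integral}\big)$ that defines the Donaldson--Futaki invariant (in the intersection-theoretic form of Odaka/Wang/Li), up to the positive normalising constant $\Vol(L)^{-n/(n+1)}$.

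**The main obstacle.** The delicate point is the interchange of limit and differentiation, and more fundamentally the fact that on the central fibre the Sasaki structure $\eta_0$ is only \emph{transversally} Sasaki over a possibly singular $M_0$ (even with reduced $M_0$, the relevant orbifold/stack structure must be handled), so that "$\scal(f_s^{-1}\eta_0)$" and its integral must be interpreted through the limit from nearby smooth fibres rather than defined intrinsically. I expect the real work is to show that the family of rational expressions in $s$ converges \emph{with all $s$-derivatives} as $\tau\to 0$ — equivalently, that the equivariant intersection numbers of $(\tstM_\tau,\tstL_\tau)$ converge to those of $(\tstM_0,\tstL_0)$; this is where one must use that $(\tstM,\tstL)$ is a \emph{flat} ample family and invoke the continuity (or constancy) of the equivariant Hilbert polynomial, perhaps via the index-character expansion that the paper later relates to $\EH_s$. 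A secondary subtlety is tracking all the combinatorial constants so that the coefficient of $s$ comes out to be a \emph{positive} multiple of $\DF$ and not merely proportional to it; this is a careful but routine computation once the weighted-integral formulas are in place, and it is cleanest to do it by first establishing the intersection-theoretic formula for $\EH_s$ and then reading off the linear term.
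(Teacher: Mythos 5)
There is a genuine gap at the heart of your argument: the claim that, for fixed $\tau\neq 0$, the fibrewise quantities $\int_{M_\tau}f_{s}^{-k}\,\omega_\tau^{[n]}$ and $\int_{M_\tau}\scal(\omega_\tau)f_s^{-k}\,\omega_\tau^{[n]}$ are (equivariant) intersection numbers, constant in $\tau$ by flatness. This is false, because the vector field $\zeta$ generating the test configuration action does \emph{not} preserve the fibre $M_\tau$ for $\tau\neq 0$: it moves $M_\tau$ to $M_{\lambda\tau}$. Hence $f_s\restriction_{M_\tau}$ is not the hamiltonian of an $\mathbb{S}^1$-action on $(M_\tau,\omega_\tau)$, no Duistermaat--Heckman or Futaki--Ono--Wang type invariance applies, and the weighted total scalar curvature $\widecheck{\bfS}(\omega_\tau,f_{s,\tau})$ genuinely depends on $\tau$ and on the chosen $\tilde\eta$ (the paper's Remark~\ref{rem:otherREEB} warns against exactly this confusion). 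What is cohomological is only the \emph{limit} as $\tau\to 0$, which admits a global expression over $\tstM$ (Theorem~\ref{theo:Globalformula}); but establishing that this limit exists is precisely the nontrivial content, and it cannot be waved through by ``flatness and continuity of the equivariant Hilbert polynomial''. You acknowledge this as the main obstacle, but you do not supply an argument for it, and the route you sketch (constancy in $\tau$ plus a jump at $0$) starts from a false premise.

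For comparison, the paper's proof handles this by passing to the $C^{1,1}$-geodesic ray compatible with $(\tstM,\tstL)$, proving that the volume extends and is affine along the ribbon (Lemma~\ref{lem:VolC11welldefined}, Lemma~\ref{lem:AsymSlopeVol}), and replacing $\bfS$ by its $t$-primitive, the action functional $\widecheck{\actscal}_s$, whose $dd^c$ is computed as a current and pushed forward by $\TCmap$ (Lemmas~\ref{lem:ddcV_TC},~\ref{lem:ddcA_TC}); the asymptotic slope is then identified with global integrals over $\tstM$, and independence of $\tilde\eta$ follows from the invariance of those weighted integrals under change of K\"ahler representative. Crucially, the hypothesis that the central fibre is \emph{reduced} enters in Lemma~\ref{lem:equalslopes}, where the slope of $\widecheck{\actscal}_s$ is compared with a modified functional $\widecheck{\actscal}^\Psi_s$ built from a Hermitian metric on $K_{\tstM/\pr^1}$, using the estimate of Sj\"ostr\"om Dyrefelt to control the blow-up of $\log(\omega_{\varphi_t}^n/\omega^n)$ near $M_0$. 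Your proposal never uses reducedness concretely, nor does it address the degeneration of the scalar-curvature integrand near the singular central fibre beyond saying it ``must be handled''; the index-character (Collins--Sz\'ekelyhidi) route you hint at is developed in Section~\ref{sectionTOWalgebraic} of the paper, but there the identification with the analytic limit is itself proved by comparison with Theorem~\ref{theo:Globalformula}, so it cannot replace the missing analytic step. The final identification of the $s$-derivative with the intersection-theoretic $\DF$ in your last paragraph is fine in outline and matches Corollary~\ref{cor:derivEH=DF}, but it rests on the unproved limit formula.
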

An explicit global formula for $\EH_s(\tstM,\tstL)$ is given in Theorem~\ref{theo:Globalformula} below.

Recall that a polarized manifold $(M,L)$ is said to be K-semistable (respectively K-stable) if the Donaldson--Futaki invariant of any (normal) test configuration for $(M,L)$ is non-negative (respectively positive and zero only on product test configurations), see \S~\ref{ss:testconfigSASAK}.  While the proof of Theorem~\ref{theINTRO_TCdl} uses crucially that $(\tstM,\tstL)$ is smooth and ample, the coefficients in the expansion above are continuous with respect to the polarisation $\tstL$, see Theorem~\ref{theo:Globalformula}. This allows us to adapt a strategy of Dervan--Ross \cite{DR} and state the following. 

\begin{corollary}\label{coro:Kss=minEH} A polarized manifold $(M,L)$ is K-semistable if and only for any smooth ample test configuration $(\tstM,\tstL)$ we have $$\left( \frac{d}{ds}\EH_s(\tstM,\tstL)\right)_{s=0}\geq 0.$$
\end{corollary}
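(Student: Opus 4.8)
## Proof proposal for Corollary~\ref{coro:Kss=minEH}

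The plan is to deduce the corollary from Theorem~\ref{theINTRO_TCdl} together with the approximation argument of Dervan--Ross~\cite{DR}. The forward implication is immediate: if $(M,L)$ is K-semistable, then $\DF(\tstM,\tstL)\geq 0$ for every normal test configuration, and in particular for every smooth ample one; since $\DF(\tstM,\tstL)$ is a positive multiple of the Donaldson--Futaki invariant and $\Vol(L)^{-n/(n+1)}>0$, Theorem~\ref{theINTRO_TCdl} gives $\left(\tfrac{d}{ds}\EH_s(\tstM,\tstL)\right)_{s=0}=\Vol(L)^{-n/(n+1)}\DF(\tstM,\tstL)\geq 0$. So the content is the reverse implication, which I would prove contrapositively: assuming $(M,L)$ is \emph{not} K-semistable, I must produce a smooth ample test configuration with strictly negative derivative at $s=0$.

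The key step is an approximation statement. Suppose $(\mathcal{X},\mathcal{A})$ is a normal test configuration with $\DF(\mathcal{X},\mathcal{A})<0$. By standard resolution and the multiplicativity/base-change behaviour of the Donaldson--Futaki invariant, one may first pass to a test configuration with reduced central fibre and normal, then resolve to obtain a \emph{smooth} (dominating) test configuration $\tstM\to\pr^1$ together with a relatively semiample and big $\Q$-line bundle $\mathcal{L}_0$ on it whose Donaldson--Futaki invariant is still negative; this is exactly the point where Dervan--Ross's argument is used, noting that the coefficients in the expansion of $\EH_s$ are continuous in the polarisation (Theorem~\ref{theo:Globalformula}). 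One then perturbs $\mathcal{L}_0$ by adding a small positive multiple $\delta\,\nu^*\scO_{\pr^1}(1)$ (or pulls back an ample class from $M\times\pr^1$) to obtain, for all sufficiently small rational $\delta>0$, a genuinely \emph{ample} smooth test configuration $(\tstM,\tstL_\delta)$. Since $\DF$ and hence $\left(\tfrac{d}{ds}\EH_s\right)_{s=0}$ depends continuously on $\delta$ (again by Theorem~\ref{theo:Globalformula}, as the relevant intersection numbers are polynomial in $\delta$) and is strictly negative at $\delta=0$, it remains strictly negative for $\delta$ small. This contradicts the hypothesis that $\left(\tfrac{d}{ds}\EH_s(\tstM,\tstL)\right)_{s=0}\geq 0$ for all smooth ample test configurations, completing the contrapositive.

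The main obstacle is the approximation step itself: Theorem~\ref{theINTRO_TCdl} is proved \emph{only} for smooth ample test configurations, whereas K-semistability is a condition on all normal test configurations, which are typically singular and only relatively semiample. Bridging this gap requires (i) knowing that testing $\DF\geq 0$ on smooth ample test configurations suffices to detect K-instability -- i.e. that a counterexample among normal test configurations can be resolved and polarised without losing negativity of the Donaldson--Futaki invariant -- and (ii) the continuity of the expansion coefficients of $\EH_s(\tstM,\tstL)$ with respect to the polarisation, so that the $\delta\to 0$ limit is legitimate. Point (i) is the substance of the Dervan--Ross technique and should be quoted essentially verbatim; point (ii) is guaranteed by the explicit global formula of Theorem~\ref{theo:Globalformula}, where $\EH_s$ is expressed through intersection numbers on $\tstM$ that vary polynomially (hence continuously) with $\tstL$. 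Beyond these inputs the argument is a short continuity/contrapositive manipulation.
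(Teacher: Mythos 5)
Your proposal follows essentially the same route as the paper: the forward direction is read off from Theorem~\ref{theINTRO_TCdl} (equivalently Corollary~\ref{cor:derivEH=DF}), and the reverse direction is exactly the Dervan--Ross reduction invoked in Remark~\ref{rem:SmootTC}, combined with the continuity in the polarisation of the intersection-theoretic expressions coming from Theorem~\ref{theo:Globalformula}. One detail in your gloss of that reduction is off: after resolving, $\hat{\pi}^*\tstL$ is nef and big but trivial on the curves contracted by $\hat{\pi}$, so adding a small multiple of $\nu^*\scO_{\pr^1}(1)$ (or of any class pulled back from $M\times\pr^1$) can never restore ampleness; the correct perturbation, as in Dervan--Ross and Remark~\ref{rem:SmootTC}, is to subtract a small multiple of the exceptional divisor, i.e.\ to work with $\hat{\pi}^*\tstL-\delta E$ and let $\delta\to 0$ using the continuity you cite. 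With that correction (and keeping in mind that the identity of Theorem~\ref{theINTRO_TCdl} is established for reduced central fibre, which the same Dervan--Ross reduction allows one to assume), your argument coincides with the paper's.
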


To exploit these results, one needs to control the behavior of the Einstein--Hilbert functional over a ribbon of the form~\eqref{eq:IntroPencil} obtained from a test configuration $(\tstM,\tstL)$ and a choice of CR-structure $\tilde{\eta} \in \holSAS(\tstM,\tstL)$. To this end, we use the well-established theory of weak geodesics associated to test configurations \cite{chen-tang,phong-sturm,CTW1,CTW2,zakarias} to study the total Tanaka--Webster scalar curvature and the volume functional as weighted K\"ahler functionals (see Remark~\ref{rem:WeightedKahl} above). This allows us to rely more directly on the works of \cite{BoB,DR,Lahdili,zakarias} and we get easily that the (contact) volume functional $\alpha\mapsto\bfV(\alpha)$ is well defined on the space of $C^{1,1}$-CR-contact structures $\Cmet^{1,1}(M, L)$, see Lemma~\ref{lem:VolC11welldefined}. The total Tanaka--Webster scalar curvature requires more care and, as in Inoue's work \cite{inoue-ent}, we rather study its primitive $t\mapsto \actscal(s,t)$ along paths of the form~\eqref{eq:IntroPencil}, where $e^{-t}=\tau \mapsto d\eta_\tau$ is the lift through~\eqref{eq:correspSas=Kahl} of a $C^{1,1}$-geodesic of K\"ahler currents. We prove that so defined, $t\mapsto \actscal(s,t)$ is continuous and pointwise convex, see Theorem~\ref{theo:ActionPointwiseCvxC0}, from which, together with Theorem~\ref{theINTRO_TCdl}, we re-obtain the following classical result of \cite{Donaldson_lowerbounds}.

\begin{corollary}[{\bf Corollary}~\ref{cor:Ksemistab}]\label{coro:Crit=>Kss}
If the the Einstein--Hilbert functional on $\Cmet(M,L)$ admits a critical point in $\holSAS(M,L)$, i.e if there exists a cscK metric in $2\pi c_1(L)$, then $(M,L)$ is K-semistable. 
\end{corollary}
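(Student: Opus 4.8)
The plan is to combine Theorem~\ref{theoEH=MabOnZ}, Theorem~\ref{theINTRO_TCdl}, and the convexity/continuity of the primitive $t\mapsto\actscal(s,t)$ (Theorem~\ref{theo:ActionPointwiseCvxC0}) to show that a cscK metric forces $\DF(\tstM,\tstL)\geq 0$ for every smooth ample test configuration, and then invoke that $\DF$ on normal test configurations is controlled by the smooth ample case (via Corollary~\ref{coro:Kss=minEH}, or rather its proof, which reduces K-semistability to testing on smooth ample test configurations). So fix a cscK metric $\omega\in 2\pi c_1(L)$, corresponding via~\eqref{eq:correspSas=Kahl} to a Sasaki-contact form $\eta\in\holSAS(M,L)$ which, by Theorem~\ref{theoEH=MabOnZ}, is a critical point of $\EH$ on $\Cmet(M,L)$. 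Let $(\tstM,\tstL)$ be an arbitrary smooth ample test configuration with reduced central fibre; by Theorem~\ref{theINTRO_TCdl} the limit function $s\mapsto\EH_s(\tstM,\tstL)$ is smooth near $0$ and $\left(\tfrac{d}{ds}\EH_s(\tstM,\tstL)\right)_{s=0}=\Vol(L)^{-n/(n+1)}\DF(\tstM,\tstL)$. It therefore suffices to prove $\left(\tfrac{d}{ds}\EH_s(\tstM,\tstL)\right)_{s=0}\geq 0$.

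To get the sign, I would set up the two-variable function: write $\EH(f_s^{-1}\eta_\tau)$ in the form $\actscal$-type numerator over the contact volume, and pass to the logarithmic parameter $e^{-t}=\tau$. Along the ray $s=0$, the path $t\mapsto\eta_\tau$ (with $f_0\equiv 1$) is, up to the $\holSAS\cong$ K\"ahler correspondence, the weak geodesic ray in the space of K\"ahler potentials attached to the test configuration, emanating from the cscK metric $\omega$. By Theorem~\ref{theoEH=MabOnZ} the starting point is a critical point of $\EH$ restricted to $\holSAS(M,L)$, hence the $t$-derivative of the numerator at $t=0$ along this geodesic ray is controlled by the Mabuchi K-energy, whose derivative along the ray is, by the Chen--Tian--Donaldson slope formula (which is exactly what Theorem~\ref{theo:ActionPointwiseCvxC0} encodes, via $\actscal$ being the primitive of the total Tanaka--Webster scalar curvature), the relevant Donaldson--Futaki-type slope. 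Concretely: the primitive $\actscal(0,t)$ is convex and continuous in $t$ (Theorem~\ref{theo:ActionPointwiseCvxC0}), so its asymptotic slope $\lim_{t\to\infty}\actscal(0,t)/t$ exists and equals the derivative-at-$t=0$ plus a nonnegative defect. Since the cscK metric makes the $t=0$ derivative of the Mabuchi energy vanish (first variation of the K-energy at a cscK metric is zero), convexity gives that the asymptotic slope is $\geq 0$; and by Theorem~\ref{theINTRO_TCdl} this asymptotic slope, read in the $s$-direction, is precisely a positive multiple of $\DF(\tstM,\tstL)$, yielding $\DF(\tstM,\tstL)\geq 0$.

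\textbf{Main obstacle.} The delicate point is the interchange of the two limiting procedures: Theorem~\ref{theINTRO_TCdl} computes $\tfrac{d}{ds}\big|_{s=0}$ of the $\tau\to 0$ limit (a \emph{central-fibre} quantity), whereas the convexity argument naturally produces the $t\to\infty$ asymptotic slope of $\actscal(0,t)$ along the \emph{generic fibre} ray (a \emph{degeneration} quantity). Reconciling these requires knowing that the two-variable function $\actscal(s,t)$ is jointly regular enough — continuity in $(s,t)$ up to $t=\infty$, smoothness in $s$ uniformly — so that $\partial_s\partial_t$ and $\partial_t\partial_s$ agree and the $s$-slope of the central-fibre limit matches the $t\to\infty$ slope along $s=0$; this is exactly where the $C^{1,1}$-regularity of weak geodesics and the continuity/convexity statement of Theorem~\ref{theo:ActionPointwiseCvxC0} are used, and it is the analytic heart of the argument. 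A secondary, more bookkeeping, issue is passing from ``$\DF\geq 0$ for smooth ample test configurations'' to genuine K-semistability (normal test configurations): here I would simply cite Corollary~\ref{coro:Kss=minEH}, whose proof adapts the Dervan--Ross approximation of an arbitrary normal test configuration by smooth ample ones together with continuity of the slope coefficients in the polarisation (Theorem~\ref{theo:Globalformula}).
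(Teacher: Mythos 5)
There is a genuine gap in the sign argument at the heart of your proposal. You propose to work along the ray $s=0$, but at $s=0$ the function $t\mapsto\widecheck{\actscal}_0(t)$ is the primitive of $\widecheck{\bfS}(\omega_{\varphi_t},1)=\int_M\scal(\omega_{\varphi_t})\,\omega_{\varphi_t}^{[n]}$, which is a cohomological \emph{constant}: its asymptotic $t$-slope is the total scalar curvature of $(M,L)$ and carries no information about $\DF(\tstM,\tstL)$. In particular $\widecheck{\actscal}_0$ is \emph{not} the Mabuchi K-energy (whose $t$-derivative would be $-\int_M\dot\varphi_t(\scal(\omega_{\varphi_t})-\overline{\scal})\,\omega_{\varphi_t}^{[n]}$), Theorem~\ref{theo:ActionPointwiseCvxC0} does not "encode" the K-energy slope formula, and the identification of the K-energy's asymptotic slope with the Donaldson--Futaki invariant is neither proved in this paper nor supplied by Theorem~\ref{theINTRO_TCdl}; that theorem (and Corollary~\ref{cor:derivEH=DF}) only says that the \emph{$s$-derivative} of the central-fibre limit $\EH_s(\tstM,\tstL)$ is a positive multiple of $\DF$. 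So the step "convexity gives that the asymptotic slope is $\geq 0$; read in the $s$-direction this is a positive multiple of $\DF$" is a non sequitur, and the "main obstacle" you flag (interchanging $\partial_s$ with the $t\to\infty$ limit) is left unresolved — and is in fact unnecessary.

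The paper argues at \emph{fixed small} $s>0$ and never exchanges limits. Since $\widecheck{\V}$ is constant along the geodesic, $\EH_s(\tstM,\tstL)$ equals the asymptotic $t$-slope of $\widecheck{\actscal}_s$ divided by $\widecheck{\V}(\omega,1+s\dot\varphi_0)^{n/(n+1)}$; convexity of $t\mapsto\widecheck{\actscal}_s(t)$ (Theorem~\ref{theo:ActionPointwiseCvxC0}) bounds this asymptotic slope below by the initial slope, and the slope inequality of Proposition~\ref{prop:A>EH} (which rests on positivity of relative entropy, not on any criticality) bounds the initial slope below by $\widecheck{\EH}(\omega,1+s\dot\varphi_0)\widecheck{\V}^{n/(n+1)}$, giving $\EH_s(\tstM,\tstL)\geq\widecheck{\EH}(\omega,1+s\dot\varphi_0)$. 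The cscK hypothesis then enters in the \emph{conformal} ($s$-)direction, not through the K-energy in $t$: because $(\omega,1)$ is a critical point of $\EH$ also under variations of the conformal factor, $s\mapsto\widecheck{\EH}(\omega,1+s\dot\varphi_0)$ has vanishing derivative at $s=0$, and the explicit Hessian computation yields $\widecheck{\EH}(\omega,1+s\dot\varphi_0)\geq\widecheck{\EH}_0-Cs^2$ for small $s$. Hence $\EH_s(\tstM,\tstL)\geq\widecheck{\EH}_0-Cs^2$, and differentiating at $s=0^+$ together with Corollary~\ref{cor:derivEH=DF} gives $\DF(\tstM,\tstL)\geq 0$. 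Your final reduction of K-semistability to smooth ample test configurations with reduced central fibre via Remark~\ref{rem:SmootTC} (Dervan--Ross) is fine, but the core inequality needs to be rebuilt along these lines.
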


\subsection{An affine version of the Einstein--Hilbert functional}

When a compact torus $\bT$ acts linearly on $L^{-1}$, and contains as a subgroup the fibrewise action of $\xi$, one can define the Sasaki-Reeb cone, sometimes called the Reeb cone or the reduced Reeb cone, see \S\ref{s:towards} for a reminder of the definition.  In \cite{CS}, Collins and Sz\'ekelyhidi give an alternative definition of the Donaldson--Futaki invariant of a test configuration $(\tstM,\tstL)$ in terms of the Sasaki-Reeb cone $\kt_{0,+}\subset \mbox{Lie} \bT$ of the central fibre $(M_0,L_0)$. They consider the equivariant index character, that is
\begin{equation*}
    \kt_{0,+} \ni x\mapsto \sum_{k\in \N,\alpha \in \kt^*_0 } e^{-t \langle \alpha,x \rangle} \dim H^0(M_0,L_0^k)_\alpha
\end{equation*}
where
\begin{equation*}
    \bigoplus_{k\in \N} H^0(M_0,L_0^k)=\bigoplus_{k\in \N} \bigoplus_{\alpha \in \kt^*_0 } H^0(M_0,L_0^k)_\alpha.
\end{equation*}
is the weight space decomposition with respect to the induced action of $\bT$. Collins and Sz\'ekelyhidi  show then that, when $\mbox{Re}(t) >0$ and $x \in \kt_{0,+}$, the equivariant index character admits an asymptotic expansion
\begin{equation}\label{eq:ExpEICintro}
\sum_{k\in \N,\alpha \in \kt^* } e^{-t \langle \alpha, x \rangle} \dim H^0(M_0,L_0^k)_\alpha = \frac{a_0(x) n!}{t^{n+1}} + \frac{a_1(x) (n-1)!}{t^{n}} + O(t^{1-n})
\end{equation}
where $a_0, a_1:\kt_{0,+}\ra \R$ are smooth functions, \cite[Theorem 3]{CS}. Moreover, the $\mathbb{C}^*$-action coming from the test configuration determines a segment $s\mapsto \xi-s\zeta \in \kt_{0,+}$ passing through $\xi$ at $s=0$, and Collins and Sz\'ekelyhidi show that the Donaldson--Futaki invariant of the test configuration can be expressed in terms of $a_0(\xi)$, $a_1(\xi)$, and the derivatives of $a_0(\xi-s\zeta)$ and $a_1(\xi-s\zeta)$ at $s=0$, see \cite[Definition 5.2]{CS}.  As noticed for example in \cite{BHLTF1,li}, the expression of the Donaldson--Futaki invariant given by Collins-Sz\'ekelyhidi is, in more concise form,
\begin{equation*}
    \DF(\tstM,\tstL) = \frac{a_0(\xi)^{n/n+1}}{n}\left(\frac{d}{ds} \frac{a_1(\xi-s\zeta)}{a_0(\xi-s\zeta)^{n/n+1}}\right)_{s=0}.
\end{equation*}
This observation leads us to compare the normalized ratio of the equivariant index character pole coefficients with the limit of the Einstein--Hilbert functional along a test configuration, and we show that they agree on smooth test configurations up to a positive dimensional constant.
\begin{theorem}\label{theo:EHaff=limEH}
Let $(\tstM,\tstL)$ be a smooth ample test configuration over $(M,L)$ with reduced central fibre, then  
    \begin{equation}\label{eq:EHaff=limEH}
        \EH_s(\tstM,\tstL) = 16\pi \frac{a_1(\xi-s\zeta)}{a_0(\xi-s\zeta)^{n/n+1}}.
    \end{equation}
\end{theorem}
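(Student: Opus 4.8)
The plan is to establish \eqref{eq:EHaff=limEH} by computing both sides in terms of the same geometric data on the central fibre $(M_0,L_0)$, namely the intersection numbers associated to the polarisation $\tstL$ and the generator $\zeta$, and then matching them coefficient by coefficient. On the left-hand side, I would invoke Theorem~\ref{theINTRO_TCdl} (and the global formula of Theorem~\ref{theo:Globalformula} referenced there) to express $\EH_s(\tstM,\tstL)=\lim_{\tau\to 0}\EH(f_s^{-1}\eta_\tau)$ explicitly. The key structural observation is that the Einstein--Hilbert functional is a ratio: its numerator is the total Tanaka--Webster scalar curvature and its denominator is a power of the contact volume $\bfV(f_s^{-1}\eta_\tau)$. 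Passing to the limit $\tau\to 0$ along the ribbon~\eqref{eq:IntroPencil} and using the weighted-K\"ahler reformulation of Remark~\ref{rem:WeightedKahl}, both the limiting numerator and the limiting denominator become equivariant intersection numbers on $(M_0,L_0)$ evaluated at the Reeb vector field $\xi-s\zeta$; concretely the denominator limit is a power of $a_0(\xi-s\zeta)$ and the numerator limit is a multiple of $a_1(\xi-s\zeta)$.

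The main input on the right-hand side is the Collins--Sz\'ekelyhidi expansion~\eqref{eq:ExpEICintro}, which identifies $a_0$ and $a_1$ as the pole coefficients of the equivariant index character of the central fibre. The point is to recognise that these same coefficients arise as the leading terms in the asymptotic (Hilbert-polynomial-type) expansion of $\dim H^0(M_0,L_0^k)$ with the weight of the $\bT$-action inserted, and that by equivariant Riemann--Roch (or the localisation/Duistermaat--Heckman philosophy) $a_0(x)$ equals the Duistermaat--Heckman volume $\int_{M_0} e^{-\langle\mu,x\rangle}\,\omega^{[n]}$ up to normalisation, while $a_1(x)$ is the corresponding integral against the scalar curvature density, i.e. $a_1(x)=\tfrac12\int_{M_0}\scal(\omega)\,e^{-\langle\mu,x\rangle}\,\omega^{[n]}$-type expression, again up to a universal constant. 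These are exactly the quantities that appear in the Sasaki/CR description: for a regular Sasaki structure with Reeb field $\xi-s\zeta$ the contact volume $\bfV$ is $a_0$ up to a power of $2\pi$, and the total Tanaka--Webster scalar curvature, after the standard reduction from the Tanaka--Webster connection on $N$ to the transverse K\"ahler data on the quotient (the formula relating $\scal^{TW}$ to the transverse scalar curvature), is $a_1$ up to a constant. So the core of the proof is: (i) write $\EH_s$ as (total TW scalar curvature)$/\bfV^{n/(n+1)}$ in the limit; (ii) identify the limiting total TW scalar curvature with a constant times $a_1(\xi-s\zeta)$; (iii) identify $\lim_{\tau\to 0}\bfV(f_s^{-1}\eta_\tau)$ with a constant times $a_0(\xi-s\zeta)$; (iv) chase the numerical constants to pin down the factor $16\pi$.

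The hard part, I expect, is tracking the precise normalisation constants through the three different languages being reconciled: the CR/Tanaka--Webster normalisation (where factors of $4$, $2n$, and the $(d\alpha)^{[n]}$ versus $(d\alpha)^n$ conventions enter), the Sasaki normalisation appearing in the global formula of Theorem~\ref{theo:Globalformula}, and the algebro-geometric normalisation of~\eqref{eq:ExpEICintro} with its $n!$ and $(n-1)!$ factors. In particular one must be careful that the Reeb vector field of $f_s^{-1}\eta_\tau$ is not $\xi$ but the deformed field $\xi-s\zeta$ (this is precisely why the limit depends on $s$ in the stated way), and that the contact-volume and total-scalar-curvature functionals transform under this conformal/Reeb change exactly as $a_0$ and $a_1$ do under $x\mapsto x-s\zeta$ — this is what makes the identity hold for \emph{all} $s\in(-\epsilon,\epsilon)$ and not just at $s=0$. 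A secondary subtlety is that $a_1$ is defined via the index character of the possibly-singular central fibre; here I would use that the central fibre is reduced (as hypothesised) together with the continuity of the coefficients in~\eqref{eq:ExpEICintro} with respect to the polarisation, exactly as in the discussion following Theorem~\ref{theINTRO_TCdl}, so that one may compute on a resolution or by approximation and the equivariant-Riemann--Roch identification of $a_0,a_1$ with the Duistermaat--Heckman-type integrals still applies. Once all constants are matched at, say, $s=0$ on a single well-understood example (e.g. a product test configuration, where $\EH_0$ is just the ordinary Einstein--Hilbert constant of the Sasaki structure on $N$ and the right-hand side is the classical Hilbert-series computation), the factor $16\pi=4\cdot 4\pi$ is forced, and both sides being smooth functions of $s$ that agree with all derivatives — which follows because both are built from the same intersection-theoretic polynomial in $s$ via Theorem~\ref{theINTRO_TCdl} and the Collins--Sz\'ekelyhidi formula for $\DF$ — yields the identity on the whole interval.
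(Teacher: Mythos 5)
There is a genuine gap. Your steps (ii)--(iii) --- ``the limiting total TW scalar curvature is a constant times $a_1(\xi-s\zeta)$'' and ``$\lim_{\tau\to0}\bfV$ is a constant times $a_0(\xi-s\zeta)$'' --- are essentially the statement to be proved, and the mechanism you propose for them (equivariant Riemann--Roch/Duistermaat--Heckman identifying $a_0,a_1$ of the central fibre with volume and scalar-curvature--weighted integrals \emph{over $M_0$}, matched against a limit of the CR quantities ``on the central fibre'') only makes sense when $M_0$ is smooth or has orbifold singularities. The paper records exactly that easy case as a separate lemma in \S\ref{sectionTOWalgebraic}; the whole content of Theorem~\ref{theo:EHaff=limEH} is the general reduced, possibly badly singular central fibre, where there is no scalar curvature on $M_0$ to integrate, no Sasaki structure on the link of $Y_0$ in the naive sense, and your fallback ``compute on a resolution or by approximation'' is not substantiated: $a_1$ involves the canonical class and is sensitive to discrepancies under resolution, and the analytic limits of $\bfS$, $\bfV$ along the ribbon are never shown (and are not claimed in the paper) to localise onto $M_0$. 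Likewise, your closing argument --- fix the constant at $s=0$ on one example and then conclude equality for all $s$ because ``both sides are built from the same intersection-theoretic polynomial in $s$'' --- is circular: that both sides are the same function of $s$ is precisely the assertion, Theorem~\ref{theINTRO_TCdl} only controls the value and the first derivative at $s=0$, and neither side is a polynomial in $s$ (the weights $(1-s\mu)^{-k}$ enter rationally).

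The paper's proof deliberately avoids the central fibre altogether. Proposition~\ref{prop:GlobalExpAffEH} computes $a_0(Y_0,\xi-s\zeta)$ and $a_1(Y_0,\xi-s\zeta)$ purely algebraically from the equivariant exact sequences
$0\to H^0(\tstM,\tstL^k\otimes\TCmap^*\mO(-1))\to H^0(\tstM,\tstL^k)\to H^0(M_0,L_0^k)\to 0$ (and the analogue for $M_\infty$), which express the index character of the singular central fibre through that of the \emph{smooth} total space $\tstY$ and of $(M,L)=(M_\infty,L_\infty)$; since $\tstM$ is smooth, $a_0(\tstY,\xi-s\zeta)$ and $a_1(\tstY,\xi-s\zeta)$ are identified with $\bfV(\tstN,f_s^{-1}\tilde\eta)$ and $\bfS(\tstN,f_s^{-1}\tilde\eta)$, and an Atiyah--Bott--Berline--Vergne localisation pins down the remaining mixed term. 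Independently, Theorem~\ref{theo:Globalformula} (via the $C^{1,1}$-geodesic/action-functional analysis) gives $\lim_{\tau\to0}\bfV$ and $\lim_{\tau\to0}\bfS$ as global integrals over $\tstM$ and $M_\infty$ of exactly the same shape. The identity~\eqref{eq:EHaff=limEH} is then a direct term-by-term comparison, valid for every $s$ in the interval with no propagation argument from $s=0$ needed. If you want to salvage your plan, the missing ingredient is precisely such a global (total-space) expression for the central-fibre pole coefficients; without it, the argument does not cover the singular central fibres the theorem is about.
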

To prove the last result we provide a global formula for the equivariant index character pole coefficients of the central fibre in terms of those of $(\tstM,\tstL)$ and the volume and total scalar curvature of $(M,L)$, see Proposition~\ref{prop:GlobalExpAffEH}. Then Theorem~\ref{theo:EHaff=limEH} follows by comparing this with the expression for $\EH_s(\tstM,\tstL)$ obtained in Theorem~\ref{theo:Globalformula}.  Note that the right hand side of~\eqref{eq:EHaff=limEH}, that we call the {\it affine Einstein--Hilbert functional} of $(\tstM,\tstL)$, is defined as soon as the equivariant index character of the central fibre admits an expansion of the form~\eqref{eq:ExpEICintro}, which happens for any ample test configuration, even non-smooth ones.

The affine Einstein--Hilbert functional is also well-defined for more general polarized cones, those having only quasi-regular or even irregular Reeb vector fields. This suggests that the identity~\eqref{eq:EHaff=limEH} should hold for arbitrary Sasaki test configuration, if one interprets the left-hand side appropriately. In order to extend our results (Theorem~\ref{theINTRO_TCdl}, Corollaries~\ref{coro:Kss=minEH},\ref{coro:Crit=>Kss}) to this setting, one could either develop the theory of $C^{1,1}$-geodesics in Sasaki geometry or apply the strategy of \cite{ACL} and instead work with a certain class of weighted cscK on a regular quotient, assuming one exists. We leave however such generalisations for future work.

\begin{remark}
It follows from \cite[Proposition 6.4]{CS2} that, on the Sasaki-Reeb cone of the central fibre of a special test configuration over a Fano variety, the (affine) Einstein--Hilbert functional coincides with the normalized volume of Chi Li~\cite{liMathZ} up to a positive dimensional constant.  Recall that, as highlighted by Chi Li \cite{liMathZ}, when $M$ is a irreducible variety, the Sasaki-Reeb cone is canonically identified with the subcone of $\C^*$-invariant valuations of the affine cone $\hat{Y}\subset \C^N$ whose regular part corresponds to $L^{-1}\backslash M$ through a $\bT$-equivariant holomorphic embedding. When $L= -K_M$, Chi Li introduced the {\it normalized volume} $\widehat{V}$ on the space of $\C^*$-invariant valuations and shows that K-semistability of $(M,-K_M)$ translates as $M$, seen as a divisorial valuation $M\subset L^{-1}$, being the minimizer of the normalized volume \cite{li}. Chi Li's Theory can be understood as a wide generalization of an important result of Martelli-Sparks-Yau \cite{MSY2} showing that the volume, seen as a function on a fixed transversal polytope of the Sasaki-Reeb cone, detects the Sasaki-Reeb vector fields with vanishing transversal Futaki invariant. The normalized volume, as defined in \cite{liMathZ}, does not admit a direct generalization on general polarized cone since it uses the log discrepancy. However, the Einstein--Hilbert functional on the Sasaki-Reeb also detects the Sasaki-Reeb vector fields with vanishing transversal Futaki invariant \cite{BHLTF1} and could be thought as an extension of the normalized volume on general polarized cone.
\end{remark}

\SkipTocEntry\subsection*{Plan of the paper}
The next section contains a review of the basic notions and results on Sasaki and contact geometry for a fixed CR-structure. This corresponds to studying the geometry of a fibre of the bundle map $\kappa : \Cmet(M,L) \ra \holSAS(M,L)$ and is the appropriate setting in which to apply the works of Tanaka, Webster, Jerison, Lee and Tanno, that we will recall briefly.  We also incorporate there a discussion on the (non) convexity of $\EH$ along the fibres of $\kappa$ following the computation of its first and second variations along the {\it vertical directions} (tangent to the fibres of $\kappa$). In Section~\ref{s:Cmet} we construct the space $\Cmet(M,L)$, together with a more general version for non-regular Sasaki manifolds. We compute the variation of $\EH$ under deformations of the CR-structure, which are transversal to the fibres of $\kappa$, and thus prove Theorem~\ref{theoEH=MabOnZ}. Up to this point, each result and claim holds for general (non-regular) Sasaki manifolds, but in Section~\ref{s:EHtestconfig} we must restrict our attention to regular Sasaki manifolds, or, equivalently, to polarized K\"ahler manifolds, in order to employ the variational methods often used in the theory surrounding the Yau--Tian--Donaldson conjecture. The main result of Section~\ref{s:EHtestconfig} is Theorem ~\ref{theo:Globalformula} and the study of the primitive $\actscal$ along $C^{1,1}$-geodesics, as outlined above. The final section~\ref{sectionTOWalgebraic} has a more algebraic flavor and is devoted to the proof of Theorem~\ref{theo:EHaff=limEH}, after a brief recap on the approach of Collins--Szekelyhidi leading to the definition of the affine Einstein--Hilbert functional.

\SkipTocEntry\subsection*{Acknowledgements}  We thank Olivier Biquard and Zakarias Sj\"ostr\"om Dyrefelt for valuable discussions, as well as Ruadha\'i Dervan, Paul Gauduchon, Vestislav Apostolov, Jacopo Stoppa and Remi Reboulet for their interest in this work.

A.L. is supported by the Villum Young Investigator grant 0019098. E.L. is supported by the ANR-FAPESP grant PRCI ANR-21-CE40-0017, she is also member of ANR-21-CE40-0011 JCJC MARGE and partially supported by a BQR grant of the University of Lyon~1. This work has benefited from a visit of C.S. to the University of Lyon 1 covered by the FQRNT (Qu\'ebec, Canada).

\section{The Einstein--Hilbert functional of a pseudoconvex CR manifold}  

\SkipTocEntry\subsection*{Brief recap on Sasaki geometry}%\label{ss:PreliminairiesCONTACT}
Throughout these notes, $(N,\Ds)$ stands for a co-oriented contact manifold which will be assumed compact and connected, unless otherwise stated. The space of compatible contact $1$--forms is denoted $\Gamma(\Ds_+^0)$, this is the set of nowhere vanishing sections of the annihilator $\Ds^0 \subset T^*N$ (i.e $\Ds=\ker \eta$) such that $\eta\wedge (d\eta)^n$ is a volume form on $N$, positive with respect to the chosen orientation. Given $\eta \in \Gamma(\Ds_+^0)$, there is a unique {\it associated Reeb vector field} $\Reebmap^\eta \in \Gamma(TN)$, determined by the relations 
\begin{equation} \label{e:contactFvsReeb}
\eta(\Reebmap^\eta)=1 \quad \mL_{\Reebmap^\eta}\eta =0.
\end{equation}
This implies that $\Reebmap^\eta \in \mbox{con}(N,\Ds)$, the Lie algebra of the group of contactomorphisms of $(N,\Ds)$. The infinite dimensional cone of Reeb vector fields is often denoted $\mbox{con}_+(N,\Ds)$.

\smallbreak

Now suppose $I\in \End(\Ds)$ is a CR structure on $(N,\Ds)$, i.e. a (pointwise) complex structure on $\Ds$ such that $\Ds^{1,0}:= \{a-i I a\,|\, a\in \Ds\}$ is closed under Lie bracket in $TN\otimes\C$. 

If moreover $(\Ds,I)$ is strictly pseudoconvex that is $\Ds^0$, equivalently $TN/\Ds$, has a canonical orientation: the \emph{positive sections} or {\it compatible} $\eta\in \Gamma(\Ds^0)$ are those for which $d\eta(\cdot,I\cdot)$ is positive definite on $\Ds$. Note that $\eta([\cdot,I\cdot]) = (d\eta)(\cdot, I\cdot)$ since $\ker\eta=\Ds$ and thus $\eta$ is positive if and only $f\eta$ is positive for any $f\in C^\infty(N,\R_{>0})$.  Therefore a  strictly pseudoconvex CR structure impose a co-orientation on $(N,\Ds)$ for which the set of positive sections is a space of contact forms of the form $\Gamma(\Ds_+^0)$ on $\Ds$. To emphasize this relation we denote $\Cmet_{(\Ds,I)} = \Gamma(\Ds_+^0)$ the space of {\it contact forms} compatible with the co-orientation of $(\Ds,I)$.

\begin{definitionproposition}\label{defnPropDEFsasak}[e.g. \cite{BG:book}]
Let $(N, \Ds, I)$ be a strictly pseudoconvex CR manifold with a compatible contact $1$--form $\eta \in \Cmet_{(\Ds,I)}$ and associated Reeb vector field $\Reebmap = \Reebmap^\eta$. Then we define the pointwise endormorphism $I^\Reebmap\in \End(TN)$ as
\begin{equation*}
    I^\Reebmap(a):=
    \begin{cases}
        I(a)  & \mbox{if }  a\in \Ds\\
        0  & \mbox{if } a   \mbox{ is collinear to }\Reebmap.
    \end{cases}
\end{equation*}
The following tensor is a Riemannian metric\footnote{The second term is divided by $2$ here like in for example \cite{tanno,ACL,FOW,MSY2} but unlike \cite[(6.4.2)]{BG:book}. The convention we use is determined by the associated K\"ahler cone metric.}
\begin{equation}\label{eqRiemCRcontacMetric}
    g_{\Ds,I,\eta} := \eta \otimes \eta +\frac{1}{2}d\eta (\cdot, I^\Reebmap\cdot).
\end{equation}
Such structure $(\Ds, I, \eta)$, which determines uniquely the Reeb vector field $\Reebmap$ via~\eqref{e:contactFvsReeb} and the metric $g_{\Ds,I,\eta}$ above, is called a \emph{CR-contact} structure. Whenever $\Reebmap$ is a Killing vector field, i.e. $\mL_{\Reebmap} g_{\Ds,I,\eta} =0$, then the structure $(\Ds, I, \eta)$ is a \emph{Sasaki structure} and, in this case, $\Reebmap$ is called a \emph{Sasaki-Reeb vector field} (and typically denoted $\xi$ below).
\end{definitionproposition}

\begin{notation}
For any space of smooth sections of tensors, say $\mA$, over $N$, and any vector field $\xi \in \Gamma$ inducing the action of a compact torus, we denote by $\mA^\xi$ the space of $\xi$--invariant elements of $\mA$, i.e $\mA^\xi:=\{\psi \in \mA\;|\; \mL_\xi \psi =0\}$.
\end{notation}

\begin{convention}\label{conventionTK}
The (local) K\"ahler quotient of a Sasaki structure $(\Ds, I,\eta)$ refers to the K\"ahler structure $(\omega_\eta,\uI,g_\omega)$ obtained on the local quotient by the flow of the associated vector field with the following convention. Denote $\pi : U_N\ra U_M$ such local quotient map, then $\pi_* :\Ds \simeq TU_M$, $\pi^*\omega_\eta := d\eta$ and $\uI\circ\pi_* =\pi_* \circ I$. In particular, $\pi^*g_\omega = 2(g_{\Ds,I,\eta})_{|_\Ds}$.
\end{convention}

\subsection{The Tanaka--Webster connection and its scalar curvature}\label{sec:TWconnection}

Curvatures on spaces of conformal contact forms compatible with a given CR structure have been studied by many people a few decades ago e.g. \cite{JerisonLee, Lee, tanaka, tanno, webster2}. The next subsection does not contain new result but explain, in our formalism, some of their results.     
 
\subsubsection{Lee's formula}\label{sss:TannoLeeForm}
We now recall some results, the first of which provides a suitable connection $\nabla^{\tilde{\eta}}$ on $TN$, called the Tanaka--Webster connection, associated to any contact form $\tilde{\eta}$ compatible with a given CR-struture $(\Ds,I)$ on $N$. The second one states that this connection is the right one encoding the curvature properties of the transversal Kähler structure of a Sasaki structure. The third result gives an exact relation between the scalar curvatures of $\nabla^{\tilde{\eta}}$ and $\nabla^{f\tilde{\eta}}$ for any $f\in C^\infty(N,\R_+^*)$.

\begin{theorem}[Tanaka~\cite{tanaka}]
Given a strictly pseudoconvex CR manifold $(N, \Ds, I)$ and a compatible contact $1$--form $\eta \in \Cmet_{(\Ds,I)}$ with associated Reeb vector field $\Reebmap=\Reebmap^\eta\in \con_+(N, \Ds)$, there exists a unique connection $\nabla=\nabla^{(\Ds,I,\eta)}$ on $TN$ preserving the whole structure $(N, \Ds, I,\eta,\Reebmap)$ and whose torsion $T^\nabla$ satisfies 
\begin{equation}\label{eq:torsion}
    T^\nabla(a,b) =d\eta (a,b) \,\Reebmap \qquad T^\nabla(\Reebmap,Ia) =I T^\nabla(\Reebmap,a)
\end{equation}
for any horizontal vector fields $a,b\in \Gamma(D)$. 
\end{theorem}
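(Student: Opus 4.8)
The statement to prove is the classical theorem of Tanaka: given a strictly pseudoconvex CR manifold $(N,\Ds,I)$ with a compatible contact form $\eta$ and its Reeb field $\Reebmap$, there is a *unique* linear connection $\nabla$ on $TN$ that parallelizes the entire structure $(\Ds, I, \eta, \Reebmap)$ — equivalently $\nabla\Ds\subset\Ds$, $\nabla I=0$, $\nabla\eta=0$, $\nabla\Reebmap=0$ — and whose torsion satisfies the two normalization conditions in \eqref{eq:torsion}. The cleanest route is to establish uniqueness by a frame computation and existence by exhibiting the connection explicitly.

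Uniqueness plan. Suppose $\nabla$ is such a connection. Since $\nabla$ preserves the splitting $TN = \Ds \oplus \R\Reebmap$ (because it preserves $\Ds$ and $\Reebmap$), it is determined by its action on sections of $\Ds$ plus the (trivial) action on $\Reebmap$. Decompose a vector field as $X = X^\Ds + \eta(X)\Reebmap$. For horizontal $a,b\in\Gamma(\Ds)$, write $\nabla_a b = (\nabla_a b)^\Ds + \eta(\nabla_a b)\Reebmap$. The vertical component is forced: from $\nabla\eta = 0$ one gets $a(\eta(b)) = \eta(\nabla_a b)$, but $\eta(b)\equiv 0$, so actually one must use the torsion normalization. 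Compute $T^\nabla(a,b) = \nabla_a b - \nabla_b a - [a,b]$; its vertical part is $\eta(\nabla_a b) - \eta(\nabla_b a) - \eta([a,b])\Reebmap$, and since $\eta([a,b]) = -d\eta(a,b)$ (as $\eta(a)=\eta(b)=0$), the first torsion equation $T^\nabla(a,b) = d\eta(a,b)\Reebmap$ pins down the *antisymmetric* part of $a,b\mapsto\eta(\nabla_a b)$. Its symmetric part is pinned down differently: from $\nabla\eta=0$ applied in mixed directions one gets relations, and crucially the Levi form $d\eta(\cdot, I\cdot)$ being $\nabla$-parallel (which follows from $\nabla I = 0$ and $\nabla d\eta = 0$) forces metric compatibility of $\nabla$ with $g_{\Ds,I,\eta}$; then the standard Koszul-type argument for a metric connection with prescribed torsion produces a unique formula. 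The horizontal component $(\nabla_a b)^\Ds$ is likewise determined by combining metric compatibility along $\Ds$ with the prescribed torsion, exactly as in the Koszul formula but with the torsion term reinserted. Finally $\nabla_\Reebmap a$ for horizontal $a$ is determined: the second torsion equation $T^\nabla(\Reebmap, Ia) = I\,T^\nabla(\Reebmap, a)$ together with $\nabla_\Reebmap I = 0$ and metric compatibility fixes $\nabla_\Reebmap$. This gives uniqueness.

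Existence plan. Having derived the forced formula, one simply *defines* $\nabla$ by that formula and checks it has all the claimed properties. Concretely, set $\nabla_\Reebmap\Reebmap = 0$, $\nabla_a\Reebmap = 0$, and for the rest use: $g_{\Ds,I,\eta}(\nabla_X Y, Z)$ given by a Koszul formula in which the usual Levi-Civita expression is corrected by the prescribed torsion terms (this is the general fact that for any metric $g$ and any $g$-skew torsion-like tensor there is a unique metric connection with that torsion). Then verify term by term: (i) $\nabla\Reebmap = 0$ and $\nabla\Ds\subset\Ds$ by construction; (ii) $\nabla\eta = 0$ follows from $\nabla\Reebmap=0$ and $\nabla$ preserving $g$ (since $\eta = g(\Reebmap,\cdot)$); (iii) $\nabla I = 0$ — this is the one genuine computation, using strict pseudoconvexity, the integrability of $\Ds^{1,0}$, and the compatibility of $I$ with $d\eta$; (iv) the torsion identities \eqref{eq:torsion} hold by construction. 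The structure-preserving part (ii)–(iv) is where one spends real effort.

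**Main obstacle.** The delicate point is verifying $\nabla I = 0$ for the explicitly constructed connection, i.e. that the Koszul-with-torsion formula is actually compatible with the CR structure. This is exactly where integrability of $\Ds^{1,0}$ and the symmetry properties of the Levi form enter, and it is the step that fails for a generic choice of torsion normalization — Tanaka's specific conditions \eqref{eq:torsion} are engineered precisely so that this works. In the write-up I would isolate this as a lemma about the torsion of any metric connection on $\Ds$ compatible with $I$, reducing it to the Nijenhuis-type identity that integrability provides, and then observe that the vertical/$\Reebmap$-direction parts follow formally once the horizontal part is settled.
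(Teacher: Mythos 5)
The paper itself gives no proof of this statement: it is quoted verbatim from Tanaka (with Webster and Tanno supplying the variants used later), so your proposal can only be measured against the classical argument, whose general outline (uniqueness by reconstructing the connection, existence via an explicit formula) you do follow. The genuine gap is the logical status of metric compatibility. You assert that parallelism of the Levi form ``follows from $\nabla I=0$ and $\nabla d\eta=0$'', but $\nabla d\eta=0$ is neither listed among the hypotheses nor a consequence of $\nabla\eta=0$ once torsion is present: for any connection with $\nabla\eta=0$ one only gets the identity $d\eta(X,Y)=\eta(T^\nabla(X,Y))$, which, with the torsion prescribed by~\eqref{eq:torsion}, carries no further information. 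And the point is not cosmetic: if one imposes only that $\Ds, I,\eta,\Reebmap$ be parallel together with~\eqref{eq:torsion}, uniqueness \emph{fails}. Indeed, choose any nonzero $S$ with $S(a,b)\in\Gamma(\Ds)$, symmetric in $a,b\in\Gamma(\Ds)$ and complex-linear in each slot, and set $\nabla'_XY:=\nabla_XY+S(X^{\Ds},Y^{\Ds})$ (horizontal projections): then $\nabla'$ still makes $\Ds,I,\eta,\Reebmap$ parallel and has the same torsion, so it satisfies every condition you use. The theorem is correct because ``preserving the whole structure'' must be read as including the metric $g_{\Ds,I,\eta}$ of Definition/Proposition~\ref{defnPropDEFsasak} (equivalently $\nabla(d\eta|_{\Ds})=0$), exactly as in Tanaka's, Webster's and Tanno's formulations; so in your uniqueness argument metric compatibility has to be an assumption, not a derived fact --- once it is, your Koszul-with-torsion reconstruction of the horizontal part is fine. (Relatedly, the first identity in~\eqref{eq:torsion} does not ``pin down the antisymmetric part of $\eta(\nabla_ab)$'': since $\nabla\eta=0$ forces $\eta(\nabla_ab)=0$, the vertical part of that identity is automatic, and its real content is the vanishing of the \emph{horizontal} part of $T^\nabla(a,b)$, which is what feeds the Koszul formula.)

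For existence you invoke the general fact that a metric together with a fully prescribed torsion tensor determines a unique metric connection, but here the torsion is only \emph{partially} prescribed: \eqref{eq:torsion} gives $T^\nabla(a,b)$ on $\Ds\times\Ds$, while for $T^\nabla(\Reebmap,\cdot)$ it only imposes a commutation rule with $I$. So the connection cannot be defined in one stroke by ``Koszul with the prescribed torsion'': one must first determine $\nabla_\Reebmap$ on $\Gamma(\Ds)$ (equivalently the pseudohermitian torsion $\tau:=T^\nabla(\Reebmap,\cdot)$) from $\nabla I=0$ and the second torsion condition --- this expresses $\tau$ through $\mL_\Reebmap I$ --- and then verify compatibility with $g_{\Ds,I,\eta}$ in the $\Reebmap$-direction using $\mL_\Reebmap\eta=0$ and $\mL_\Reebmap d\eta=0$; only after that does the horizontal Koszul construction, plus the verification of $\nabla I=0$ (where, as you rightly say, integrability of $\Ds^{1,0}$ and the $(1,1)$-character of $d\eta$ enter), complete the proof. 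One further caution: with the sign as printed, $T^\nabla(\Reebmap,Ia)=IT^\nabla(\Reebmap,a)$ combined with $\nabla I=0$ does \emph{not} fix $\nabla_\Reebmap$ --- it forces $\mL_\Reebmap I=0$ (the Sasaki case), and even adding $\nabla g=0$ one could perturb $\nabla_\Reebmap$ by any $g$-skew, $I$-commuting endomorphism such as $I$ itself. The classical normalization is the anti-commutation $T^\nabla(\Reebmap,Ia)=-IT^\nabla(\Reebmap,a)$ (Webster, Tanno), and your step ``the second torsion equation together with $\nabla_\Reebmap I=0$ and metric compatibility fixes $\nabla_\Reebmap$'' only works with that sign; the condition in the paper should be read accordingly.
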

A key feature of this connection $\nabla$, called the \emph{Tanaka--Webster connection}, is the following.

\begin{theorem}[Webster \cite{webster2}, David \cite{davidtanaka}]\label{theoWEBSTER}
Let $(N,\Ds,I, \eta)$ be a quasi-regular Sasaki manifold with K\"ahler quotient $(M,\omega, J)$. Then the pull-back of the Levi-Civita connection of $(M,\omega, J)$ is the Tanaka--Webster connection of $(N,\Ds,I, \eta)$.
\end{theorem}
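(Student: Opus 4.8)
The plan is to establish the identification $\nabla^{(\Ds,I,\eta)} = \pi^*\nabla^{LC}_{(M,\omega,J)}$ locally, since both connections live on $TN$ but one is naturally defined on the quotient; the quasi-regularity hypothesis guarantees that the leaf space is (locally) a smooth K\"ahler orbifold $(M,\omega,J)$, and it suffices to work on an invariant open set $U_N$ over a uniformizing chart $U_M$. First I would fix the splitting $TN = \Ds \oplus \R\Reebmap$ and recall, via Convention~\ref{conventionTK}, that $\pi_*$ identifies $\Ds$ with $TU_M$, intertwines $I$ with $\uI = J$, and pulls $\omega_\eta$ back to $d\eta$. The natural candidate connection $\widetilde{\nabla}$ on $TN$ is then defined by declaring: (i) $\widetilde{\nabla}\Reebmap = 0$ and $\widetilde{\nabla}_{\Reebmap}(\cdot)$ to be the obvious $\Reebmap$-flow-covariant derivative on $\Ds$ (which makes sense because everything is $\Reebmap$-invariant), and (ii) for horizontal $X$, $\widetilde{\nabla}_X$ acts on the horizontal part by $\pi^*$ of the Levi-Civita connection and annihilates the vertical complement appropriately. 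One checks this $\widetilde\nabla$ preserves $\Ds$, $I$, $\eta$, $\Reebmap$ and the metric $g_{\Ds,I,\eta}$: preservation of $\Ds$ and $\Reebmap$ is immediate from the construction, preservation of $I$ follows because $\nabla^{LC}J = 0$ on the K\"ahler quotient together with $\uI\circ\pi_* = \pi_*\circ I$, and preservation of $g_{\Ds,I,\eta} = \eta\otimes\eta + \tfrac12 d\eta(\cdot, I^{\Reebmap}\cdot)$ follows from $\pi^*g_\omega = 2(g_{\Ds,I,\eta})|_\Ds$, the metric compatibility of $\nabla^{LC}$, and the fact that the $\eta\otimes\eta$ factor is parallel by construction.

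The second step is to verify the torsion normalization~\eqref{eq:torsion}, which is the characterizing property that pins down the Tanaka--Webster connection uniquely. For horizontal $a,b$, I would compute $T^{\widetilde\nabla}(a,b) = \widetilde\nabla_a b - \widetilde\nabla_b a - [a,b]$. The horizontal component of $[a,b]$ descends to $[\pi_*a,\pi_*b]$ on $M$, and since $\nabla^{LC}$ is torsion-free, the horizontal component of $T^{\widetilde\nabla}(a,b)$ vanishes; the vertical component is governed by the curvature of the connection $\eta$, i.e. by $d\eta(a,b)$, so that $T^{\widetilde\nabla}(a,b) = d\eta(a,b)\Reebmap$ using $\eta([\cdot,\cdot]) = -d\eta(\cdot,\cdot)$ on $\Ds$ (taking care of the sign convention in the definition of $d$). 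For the second torsion relation, since $\widetilde\nabla_{\Reebmap}$ commutes with $I$ (as $\Reebmap$-flow invariance of $I$ gives $\mathcal{L}_{\Reebmap}I = 0$ and $\widetilde\nabla_{\Reebmap}$ is built from the $\Reebmap$-flow), one gets $T^{\widetilde\nabla}(\Reebmap, Ia) = \widetilde\nabla_{\Reebmap}(Ia) - \widetilde\nabla_{Ia}\Reebmap - [\Reebmap, Ia]$; using $\widetilde\nabla\Reebmap = 0$, $I$-linearity of $\widetilde\nabla_\Reebmap$, and $\mathcal{L}_\Reebmap I = 0$ (so that $[\Reebmap, Ia] = I[\Reebmap,a]$), this collapses to $I T^{\widetilde\nabla}(\Reebmap, a)$. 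By Tanaka's uniqueness theorem (the first displayed theorem in this subsection), $\widetilde\nabla = \nabla^{(\Ds,I,\eta)}$, which is exactly the claim.

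The main obstacle I anticipate is purely bookkeeping rather than conceptual: getting every sign and factor of $2$ consistent between the contact-side conventions ($d\eta$, the $\tfrac12$ in~\eqref{eqRiemCRcontacMetric}, the orientation of $I^\Reebmap$) and the K\"ahler-quotient conventions ($\pi^*\omega_\eta = d\eta$, $\pi^*g_\omega = 2(g_{\Ds,I,\eta})|_\Ds$), so that preservation of $g_{\Ds,I,\eta}$ and the precise form $T^{\widetilde\nabla}(a,b) = d\eta(a,b)\Reebmap$ come out with the stated normalization and not, say, with a spurious $\tfrac12$ or a sign. A secondary (minor) point is making the orbifold quotient argument rigorous: one works on uniformizing charts where the local quotient is an honest manifold and notes that all the tensors involved are invariant under the finite local uniformizing groups, so the identification of connections descends/ascends without change; alternatively one invokes the local structure theorem for quasi-regular Sasaki manifolds directly. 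Given the Tanaka uniqueness theorem, once the candidate $\widetilde\nabla$ is shown to preserve the structure and satisfy~\eqref{eq:torsion}, the proof is complete; I would also remark that an immediate corollary is that the Tanaka--Webster scalar curvature $\scal(\Ds,I,\eta)$ equals $\pi^*$ of the Riemannian scalar curvature of $(M,g_\omega)$ (equivalently, twice the Chern scalar curvature of $(\omega,J)$ with our normalization), which is the form in which this theorem is used later in the paper.
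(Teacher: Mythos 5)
Your argument is correct and is the standard proof of this statement: the paper itself does not prove Theorem~\ref{theoWEBSTER} but quotes it from Webster and David, and your route --- build the candidate connection from the horizontal lift of the Levi--Civita connection plus $\widetilde\nabla\Reebmap=0$ and the $\Reebmap$-flow derivative on $\Ds$, check it preserves $(\Ds,I,\eta,\Reebmap,g_{\Ds,I,\eta})$ and satisfies the torsion normalization~\eqref{eq:torsion} (indeed $T(\Reebmap,\cdot)=0$ in the Sasaki case), then invoke Tanaka's uniqueness --- is exactly how this identification is established in the literature, with the quasi-regular/orbifold issue correctly reduced to uniformizing charts. The only slip is in your closing aside, not in the proof itself: with the paper's Convention~\ref{conventionTK} ($\pi^*g_\omega=2(g_{\Ds,I,\eta})|_\Ds$) the Tanaka--Webster scalar curvature satisfies $\scal(\eta)=2\,\pi^*\scal(\omega_\eta)$, i.e.\ $\pi^*\scal(\omega_\eta)=\scal(\eta)/2$ as stated in Remark~\ref{rem:NONinvTANNO}, rather than $\scal(\eta)=\pi^*\scal(g_\omega)$ as you wrote, the factor $2$ coming from the rescaling of the horizontal metric.
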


The Tanaka--Webster connection is a connection on the tangent space on which the CR-contact structure also provides a metric~\eqref{eqRiemCRcontacMetric}. That gives a natural way to compute the trace of the curvature and to define then the scalar curvature of this connection. The resulting function, called in this paper the {\it Tanaka--Webster scalar curvature}, or just the scalar curvature,  of a contact CR-manifold $(N, \Ds, I,\eta)$ is denoted $$\scal(\Ds,I,\eta)$$ or simply $\scal(\eta)$ if no confusion is possible, that is if $(\Ds,I)$ is fixed. Note that by Theorem~\ref{theoWEBSTER}, if $(N, \Ds, I,\eta)$ is Sasaki and quasiregular then $\scal(\eta)$ is the scalar curvature of the K\"ahler quotient.

\smallbreak

The next result we recall is a formula due to Lee that relates the scalar curvatures of the Tanaka--Webster connections associated to distinct contact forms compatible with the same CR structure.

\begin{theorem}[Lee \cite{Lee},  Tanno \cite{tanno}]
Let $(N, \Ds, I)$ be a strictly pseudoconvex CR manifold with a compatible contact $1$--form  $\eta \in \Cmet_{(\Ds,I)}$ and associated Reeb vector field $\xi=\Reebmap^\eta\in\con_+(N,\Ds)$. For any $f\in C^\infty(N,\R_{>0})$,  the TW--scalar curvatures of the Tanaka--Webster connections of $(N,\Ds,I,\eta)$ and $(N, \Ds, I, \eta_f:=f^{-1}\eta)$, respectively, are related by the formula 
\begin{equation}\label{eq:TannoFormulaGENERAL}
\begin{split}
    \scal(\eta_f) =& f\,\scal(\eta) -2(n+1)\Delta^\eta f -(n+1)(n+2)f^{-1}\lvert df\rvert^2_\eta\\
    &-2(n+1) \xi.\xi.f + (n+1)(n+2)f^{-1}(\xi.f)^2
\end{split}
\end{equation}
where $\Delta^\eta$ denotes the Laplacian, and respectively $\lvert\,\cdot\,\rvert_\eta $ the norm, with respect to the metric $g_{\Ds,I,\eta}$. In particular, whenever $f$ is $\xi$-invariant, 
\begin{equation}\label{eq:TannoFormula}
    \scal(\eta_f) = f\,\scal(\eta) -2(n+1)\Delta^\eta f -(n+1)(n+2)f^{-1}\lvert df\rvert^2_\eta.  
\end{equation}
%whenever $f\in C^\infty(N,\R_{>0})^\xi$ (i.e $\xi.f=0$). 
\end{theorem}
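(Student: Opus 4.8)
The plan is to derive~\eqref{eq:TannoFormulaGENERAL} by a direct comparison of the two Tanaka--Webster connections $\nabla^\eta$ and $\nabla^{\eta_f}$, $\eta_f = f^{-1}\eta$, exploiting their characterization from Tanaka's theorem as the unique connections preserving the respective CR-contact structures and satisfying the torsion conditions~\eqref{eq:torsion}. First I would record how the whole package of structures changes under $\eta \mapsto f^{-1}\eta$: the horizontal distribution $\Ds = \ker\eta = \ker\eta_f$ is unchanged, and so is $I$ on $\Ds$; but $d\eta_f = f^{-1}d\eta - f^{-2}df\wedge\eta$, so the Levi form $L_{\eta_f} = d\eta_f(\cdot, I\cdot)|_\Ds = f^{-1}L_\eta$ rescales conformally on $\Ds$. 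The new Reeb field $\Reebmap^{f}$ is determined by $\eta_f(\Reebmap^f)=1$ and $\iota_{\Reebmap^f}d\eta_f = 0$; solving these gives $\Reebmap^f = f\Reebmap + (\text{horizontal correction built from } df)$, explicitly $\Reebmap^f = f\,\Reebmap + X_f$ where $X_f\in\Gamma(\Ds)$ is the horizontal field with $L_\eta(X_f,\cdot) = -(df|_\Ds)(\cdot)$, i.e.\ (up to a factor) the "Hamiltonian" gradient of $f$ with respect to the Levi form. Correspondingly the Webster metric changes: $g_{\Ds,I,\eta_f} = \eta_f\otimes\eta_f + \tfrac12 d\eta_f(\cdot, I^{\Reebmap^f}\cdot)$, and on $\Ds$ this is $f^{-1}$ times the old metric, which is why the Laplacian and gradient-norm terms in~\eqref{eq:TannoFormulaGENERAL} appear with these powers of $f$.

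The core computation is then the difference tensor $A := \nabla^{\eta_f} - \nabla^\eta \in \Gamma(T^*N\otimes\End TN)$. Because both connections parallelize $\Ds$, $I$, and the respective contact forms/Reeb fields, $A$ is completely pinned down by three requirements: (i) $A$ must compensate the change of torsion forced by~\eqref{eq:torsion} — on horizontal inputs $T^{\nabla^{\eta_f}}(a,b) = d\eta_f(a,b)\Reebmap^f$ versus $d\eta(a,b)\Reebmap$; (ii) $A$ must be compatible with the rescaled Webster metric, i.e.\ $(\nabla^\eta + A)g_{\eta_f} = 0$, which together with $\nabla^\eta g_\eta = 0$ turns into an algebraic equation for $A$ involving $\nabla^\eta f$ and $\nabla^\eta\nabla^\eta f$; (iii) $A$ must kill $I^{\Reebmap^f}$ and annihilate $\eta_f$. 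Writing $A$ in a local unitary horizontal coframe adapted to $(\Ds, I, \eta)$, conditions (i)--(iii) become a linear system whose unique solution expresses $A$ in terms of $df$, the horizontal Hessian $\nabla^{2,\eta}f$, and the transversal derivatives $\Reebmap.f$, $\Reebmap.\Reebmap.f$. This is the standard "conformal change of the Webster connection" computation (present in Lee's and Tanno's papers); I would carry it out in index notation with the CR structure equations $d\eta = 2i\,h_{\alpha\bar\beta}\,\theta^\alpha\wedge\theta^{\bar\beta}$, tracking carefully the factor $1/2$ in~\eqref{eqRiemCRcontacMetric} fixed by Convention~\ref{conventionTK} (this factor is the one place where our normalization differs from some references, so the constants $2(n+1)$, $(n+1)(n+2)$ must be checked against it).

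Once $A$ is known, the Tanaka--Webster curvature of $\nabla^{\eta_f}$ is $R^{\eta_f} = R^\eta + \nabla^\eta A + A\wedge A$ in the usual way, and the scalar curvature is obtained by tracing with the (rescaled, hence $f$-twisted) Webster metric: $\scal(\eta_f) = \mathrm{tr}_{g_{\eta_f}} R^{\eta_f}$. Expanding, the leading term gives $f\,\scal(\eta)$ (the $f$ from $\mathrm{tr}_{g_{\eta_f}} = f\,\mathrm{tr}_{g_\eta}$ on horizontal traces); the term $\mathrm{tr}(\nabla^\eta A)$ produces the Laplacian $-2(n+1)\Delta^\eta f$ and the transversal second derivative $-2(n+1)\,\Reebmap.\Reebmap.f$; the quadratic term $\mathrm{tr}(A\wedge A)$ produces the terms $-(n+1)(n+2)f^{-1}|df|_\eta^2$ and $+(n+1)(n+2)f^{-1}(\Reebmap.f)^2$. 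Finally, specializing to $\Reebmap$-invariant $f$ kills both transversal-derivative terms and yields~\eqref{eq:TannoFormula}.

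The main obstacle is purely computational bookkeeping rather than conceptual: getting the numerical coefficients $2(n+1)$ and $(n+1)(n+2)$ exactly right, since they depend sensitively on (a) the factor $\tfrac12$ in the Webster metric~\eqref{eqRiemCRcontacMetric} dictated by Convention~\ref{conventionTK}, (b) the orientation/sign convention in $d\eta = d\eta(\cdot,I\cdot) > 0$, and (c) the split of a horizontal-gradient norm between the $|df|^2_\eta$ and $(\Reebmap.f)^2$ contributions (i.e.\ the distinction between the full norm of $df$ on $N$ and its horizontal part). I would therefore double-check the final formula against the well-known Riemannian Yamabe-type structure of the CR-Yamabe operator in the flat model (the Heisenberg group, where $\scal\equiv 0$ and the operator must reduce to the conformal sublaplacian of Jerison--Lee~\cite{JerisonLee}), and against Theorem~\ref{theoWEBSTER}: when $(\Ds,I,\eta)$ is quasi-regular Sasakian and $f$ is $\Reebmap$-invariant (hence descends to the Kähler quotient $M$), formula~\eqref{eq:TannoFormula} must reduce exactly to the classical transformation of Kähler scalar curvature under the conformal factor $f^{-1}$ on $M$, which is a reliable consistency check on the constants.
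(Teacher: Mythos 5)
The paper does not actually prove this statement: it is recalled as a known result of Lee and Tanno, the only additional content being the remark on the change of parametrisation ($\eta_f=f^{-1}\eta$ here versus $u\eta$ with $u=f^{-2/n}$ in Lee and Jerison--Lee) and the rewriting of~\eqref{eq:TannoFormulaGENERAL} in terms of the basic Laplacian and basic norm. So there is no internal argument to compare with; the relevant comparison is with the computations in the cited references, and your outline is essentially that classical derivation: conformal change of the Tanaka--Webster connection, difference tensor pinned down by the torsion normalisation~\eqref{eq:torsion} and compatibility with the rescaled structure, curvature via the $\nabla^\eta A + A\wedge A$ expansion, and a trace with the rescaled Levi metric. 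As a strategy this is sound and is the same route Lee and Tanno take.

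Two caveats. First, your description of the new Reeb vector field carries an $I$-twist: the horizontal correction is the section $X$ of $\ker\eta$ determined by $d\eta(X,b)=-df(b)$ for all $b\in\ker\eta$ (this is exactly the $\grad_{d\eta}f$ used in the paper's proof of Theorem~\ref{lem:CRITEH}, where $\Reebmap^{\tilde{\eta}}=f\xi+\grad_{d\eta}f$), not the field defined through the Levi form $d\eta(\cdot,I\cdot)$; the two differ by composition with $I$, and with your choice the condition $\iota_{\Reebmap^f}d\eta_f=0$ fails. Second, the proposal stops exactly where the content of the theorem lies: the coefficients $2(n+1)$ and $(n+1)(n+2)$, and the precise split between $\Delta^\eta f$, $\xi.\xi.f$, $\lvert df\rvert^2_\eta$ and $(\xi.f)^2$ (equivalently $\Delta_B f$ and $\lvert df\rvert^2_B$), are asserted rather than derived from the structure-equation computation. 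Your proposed consistency checks (the Heisenberg model against the Jerison--Lee conformal sublaplacian, and the reduction to the K\"ahler quotient via Theorem~\ref{theoWEBSTER} in the quasi-regular $\xi$-invariant case, in the spirit of Remark~\ref{rem:NONinvTANNO}) are exactly the right ones and would catch any normalisation error coming from the factor $\tfrac12$ in~\eqref{eqRiemCRcontacMetric}, but until the index computation is carried out the argument remains an outline --- which is also how the paper itself treats the statement, namely by citation.
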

The Tanaka--Webster connection and equation~\eqref{eq:TannoFormulaGENERAL} for its scalar curvature were generalized for almost CR structures by Tanno\footnote{In this paper, we use a parametrization of $\Cmet_{(\Ds,I)}$ closer to the one of Tanno \cite{tanno}. Lee and Jerison--Lee \cite{Lee,JerisonLee} consider instead conformal transformations of the type $\eta\mapsto u\eta$ for $u= f^{-2/n}$.} in \cite{tanno} where~\eqref{eq:TannoFormulaGENERAL} is expressed in term of the basic norm $\lvert\,\cdot\,\rvert^2_{B}$ and Laplacian $\Delta_B$ defined respectively by 
\begin{equation*}
    \lvert df\rvert^2_B =\lvert df\rvert^2_\eta -(\xi.f)^2 \;\; \mbox{ and } \;\; \Delta_Bf = \Delta^\eta f + \xi.\xi.f
\end{equation*}
for any function $f\in C^\infty(N,\R)$. Thus~\eqref{eq:TannoFormulaGENERAL} reads 
\begin{equation}\label{eq:TannoFormulaGENERALbasic}
    \scal(\eta_f) = f\,\scal(\eta) -2(n+1)\Delta_B f -(n+1)(n+2)f^{-1}\lvert df\rvert^2_B.
\end{equation}

\begin{remark}\label{rem:NONinvTANNO}
Formula~\eqref{eq:TannoFormula} resembles the weighted scalar curvature introduced by the first author \cite{Lahdili} for a weight function associated to a Sasaki manifold \cite{AC}. However, here the function $f$ here is not in general the pullback by a moment map (of $d\eta$ or of the symplectic quotient) of a function on a moment polytope. Assume for a moment that this is the case, that is, assume we are in the setting of  \cite{AC,ACL}: $(\Ds,I,\eta)$ is a Sasaki structure with regular Reeb vector field $\xi$, inducing a holomorphic circle action $\bS_\xi^1$ for which the K\"ahler quotient $(M=N/\bS_\xi^1, \omega_\eta)$ is smooth. Moreover, they assume that $f : N \ra \R$ is a $\xi$--invariant Killing potential (i.e the associated contact vector field $\Reebmap^{\eta_f}$ is CR). Thus, $\Reebmap^{\eta_f}\in \mbox{Lie}(\bT)$ where $\bT$ is a compact torus leaving $(\Ds,I,\eta)$ invariant and $f$ is a hamiltonian function on $(M,\omega)$ for the induced vector field $\underline{\Reebmap^{\eta_f}} \in \mbox{Lie}(\bT/\bS_\xi^1)$. Then, putting $\mathrm{v}(x) = x^{-n-1}$ the direct calculation gives 
$$\scal_{\mathrm{v\circ f}}(\omega_\eta)= f^{-n-2} \scal(\eta_f)/2$$
where $\scal_{\mathrm{v}}(\omega)$ is the weighted scalar curvature of \cite{Lahdili}. Note that since we are using Convention~\ref{conventionTK} for the metric, the scalar curvatures of the transversal K\"ahler structures and the one of the Tanaka--Webster connections are related by $\pi^*\scal(\omega_\eta) =\scal(\eta)/2$.
\end{remark}

\subsubsection{The total Tanaka--Webster scalar curvature}
The next elementary formula is repeatedly used in this paragraph: given the Laplacian $\Delta^g$ of a Riemannian metric $g$, a smooth function $f$ and an integer $\nu$, we have
\begin{equation}\label{eq:laplacian_f^n}
\Delta^g(f^\nu) = \nu f^{\nu-1} \Delta^gf -\nu (\nu-1) f^{\nu-2}|df|^2_g.
\end{equation} 

We consider a strictly pseudoconvex CR manifold $(N, \Ds, I)$ with a compatible contact $1$--form $\eta \in \Cmet_{(\Ds,I)}$ and associated Reeb vector field $\xi=\Reebmap^\eta$. In the rest of this subsection, we assume $f\in C^\infty(N,\R_{>0})^\xi$, so that $$\eta_f:=f^{-1}\eta \in \Cmet_{(\Ds,I)}^\xi.$$ 

For $f\in C^\infty(N,\R_{>0})^\xi$, Lee's formula~\eqref{eq:TannoFormulaGENERAL} reduces to~\eqref{eq:TannoFormula}, and can be rewritten as
\begin{equation*}
 \frac{ \scal(\eta_f)}{f^{n+1}}  = f^{-n} \scal(\eta) -2(n+1)f^{-n-1} \Delta^\eta f - (n+1)(n+2) f^{-n-2}|df|^2_\eta
 \end{equation*}
 and applying~\eqref{eq:laplacian_f^n} for $\nu=-n$ we get
 \begin{equation*}
 \frac{ \scal(\eta_f)}{f^{n+1}}  = f^{-n} \scal(\eta)+2\left(1+\frac{1}{n}\right)\Delta(f^{-n})+n(n+1)f^{-n-2}\lvert df\rvert^2.
 \end{equation*}

Let $\dim N= 2n+1$, then the form $$\eta_f\wedge (d\eta_f)^{[n]} =f^{-1-n}\eta\wedge (d\eta)^{[n]}$$ is, up to a multiplicative constant (depending only on the dimension), the volume form of the Riemannian metric $g_{\Ds,I,\eta_f}$. Using the formula above we get that the {\it total Tanaka--Webster scalar curvature} of $(N,D,I,\eta_f:=f^{-1}\eta)$, defined by $$\bfS_f := \bfS(\Ds,I,\eta_f) := \int_N \scal(\eta_f)\, \eta_f\wedge (d\eta_f)^{[n]}$$ can be written as
\begin{equation*}
\begin{split}
\bfS_f =& \int_N \left(f^{-n} \scal(\eta)+2\left(1+\frac{1}{n}\right)\Delta(f^{-n})+n(n+1)f^{-n-2}\lvert df\rvert^2
\right) \eta\wedge (d\eta)^{[n]}=\\
=& \int_N \left(f^{-n} \scal(\eta)+n(n+1)f^{-n-2}\lvert df\rvert^2
\right) \eta\wedge (d\eta)^{[n]}.
\end{split}
\end{equation*}
The latter can already be found in \cite{JerisonLee}, generalizes \cite[Formula $(31)$]{BHL} and will be used repeatedly in this text so we put it in a lemma.
\begin{lemma}[\cite{JerisonLee}] Consider a strictly pseudoconvex CR manifold $(N, \Ds, I)$ and a compatible contact $1$--form $\eta \in \Cmet_{(\Ds,I)}$ and associated Reeb vector field $\xi=\Reebmap^\eta$. Then for any $f\in C^\infty(N,\R_{>0})^\xi$ the total Tanaka scalar curvature of the contact-CR structure $(\Ds, I,f^{-1}\eta)$ is 
\begin{equation}\label{eq:positiveTOTcscK}
\bfS(\Ds, I,f^{-1}\eta) =\int_N \left(f^{-n} \scal(\eta)   + (n+1)n f^{-n-2}|df|^2_\eta\right) \eta\wedge (d\eta)^{[n]}.
\end{equation}
\end{lemma}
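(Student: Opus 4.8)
The plan is to derive \eqref{eq:positiveTOTcscK} directly from Lee's formula \eqref{eq:TannoFormula} by integrating against the volume form $\eta_f \wedge (d\eta_f)^{[n]}$ and then simplifying the Laplacian term using the divergence theorem. Since $f \in C^\infty(N,\R_{>0})^\xi$, I am in the $\xi$-invariant case, so \eqref{eq:TannoFormulaGENERAL} reduces to \eqref{eq:TannoFormula}; this is the only input I need about the Tanaka--Webster scalar curvature, everything else is calculus on $N$.

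The key computational point is the change of volume form: since $\eta_f = f^{-1}\eta$, we have $d\eta_f = f^{-1}d\eta - f^{-2}df\wedge\eta$, and because $\eta \wedge \eta = 0$ the only term of $(d\eta_f)^{[n]}$ that survives after wedging with $\eta_f = f^{-1}\eta$ is $f^{-n}(d\eta)^{[n]}$, giving $\eta_f \wedge (d\eta_f)^{[n]} = f^{-(n+1)}\,\eta\wedge(d\eta)^{[n]}$. So I would write
\begin{equation*}
\bfS_f = \int_N \scal(\eta_f)\, f^{-(n+1)}\,\eta\wedge(d\eta)^{[n]}
= \int_N \frac{\scal(\eta_f)}{f^{n+1}}\,\eta\wedge(d\eta)^{[n]},
\end{equation*}
and then substitute \eqref{eq:TannoFormula} divided by $f^{n+1}$. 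Rewriting the Laplacian and gradient terms via the identity \eqref{eq:laplacian_f^n} with $\nu = -n$ — namely $\Delta^\eta(f^{-n}) = -n f^{-n-1}\Delta^\eta f + n(n+1)f^{-n-2}|df|^2_\eta$ — converts the combination $-2(n+1)f^{-n-1}\Delta^\eta f - (n+1)(n+2)f^{-n-2}|df|^2_\eta$ into $2\bigl(1+\tfrac1n\bigr)\Delta^\eta(f^{-n}) + n(n+1)f^{-n-2}|df|^2_\eta$; this is exactly the algebraic rearrangement displayed just above the lemma. The point of this rewriting is that the problematic second-order term has been packaged as an exact Laplacian $\Delta^\eta(f^{-n})$.

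The main obstacle — though a mild one — is justifying that $\int_N \Delta^\eta(f^{-n})\,\eta\wedge(d\eta)^{[n]} = 0$, i.e. that $\eta\wedge(d\eta)^{[n]}$ is (up to a dimensional constant) the Riemannian volume form $dV_{g_{\Ds,I,\eta}}$ of the Tanaka--Webster metric $g_{\Ds,I,\eta}$, so that the integral of a Laplacian over the closed manifold $N$ vanishes by the divergence theorem. This follows from the definition \eqref{eqRiemCRcontacMetric}: on the splitting $TN = \R\Reebmap \oplus \Ds$ the metric is $\eta\otimes\eta$ on the Reeb factor and $\tfrac12 d\eta(\cdot, I^\Reebmap\cdot)$ on $\Ds$, and the top exterior power of the latter is proportional to $(d\eta)^{[n]}$ restricted to $\Ds$, with the constant depending only on $n$; one checks the sign matches the chosen co-orientation. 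Once this is in hand, $\int_N \Delta^\eta(f^{-n})\,\eta\wedge(d\eta)^{[n]} = \mathrm{const}\cdot\int_N \Delta^\eta(f^{-n})\,dV_g = 0$, and only the terms $f^{-n}\scal(\eta)$ and $n(n+1)f^{-n-2}|df|^2_\eta$ remain, yielding \eqref{eq:positiveTOTcscK}. I would also remark, as the text does, that this recovers \cite{JerisonLee} and specializes to \cite[Formula (31)]{BHL}, so no further argument is needed for the stated lemma.
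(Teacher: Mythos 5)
Your proposal is correct and follows essentially the same route as the paper: write $\bfS(\Ds,I,f^{-1}\eta)=\int_N \scal(\eta_f)\,f^{-(n+1)}\eta\wedge(d\eta)^{[n]}$, substitute Lee's formula~\eqref{eq:TannoFormula}, repackage the second-order terms via~\eqref{eq:laplacian_f^n} with $\nu=-n$, and discard the exact Laplacian term because $\eta\wedge(d\eta)^{[n]}$ is a dimensional constant times the Riemannian volume form of $g_{\Ds,I,\eta}$, so $\int_N\Delta^\eta(f^{-n})\,\eta\wedge(d\eta)^{[n]}=0$ on the closed manifold $N$. One small slip: with the paper's convention, \eqref{eq:laplacian_f^n} gives $\Delta^\eta(f^{-n})=-nf^{-n-1}\Delta^\eta f-n(n+1)f^{-n-2}\lvert df\rvert^2_\eta$ (minus, not plus, on the gradient term); it is with this sign that the combination $-2(n+1)f^{-n-1}\Delta^\eta f-(n+1)(n+2)f^{-n-2}\lvert df\rvert^2_\eta$ equals $2\bigl(1+\tfrac1n\bigr)\Delta^\eta(f^{-n})+n(n+1)f^{-n-2}\lvert df\rvert^2_\eta$, which is the rearrangement you invoke and which yields~\eqref{eq:positiveTOTcscK}.
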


\begin{corollary} Let $(N, \Ds, I)$ be a strictly pseudoconvex CR manifold and $\eta \in \Cmet_{(\Ds,I)}$ be a contact structure with associated Reeb vector field $\xi$. If $\eta$ has positive Tanaka--Webster scalar curvature (as a function) than any other compatible $\xi$--invariant contact $1$--form has a positive {\it total} Tanaka--Webster scalar curvature.   
\end{corollary}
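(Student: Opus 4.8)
The corollary is an immediate consequence of the integrated Lee formula~\eqref{eq:positiveTOTcscK}, so the plan is short: first I would show that every $\xi$--invariant compatible contact $1$--form is of the shape $f^{-1}\eta$ with $f\in C^\infty(N,\R_{>0})^\xi$, and then I would substitute this into~\eqref{eq:positiveTOTcscK} and read off the sign of the integrand.

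For the first step, let $\alpha\in\Cmet_{(\Ds,I)}^\xi$ be an arbitrary $\xi$--invariant compatible contact $1$--form. Since $\ker\alpha=\Ds=\ker\eta$, we may write $\alpha=g\,\eta$ with $g\in C^\infty(N,\R)$ nowhere vanishing; restricting to $\Ds$ one has $d\alpha(\cdot,I\cdot)|_\Ds=g\,d\eta(\cdot,I\cdot)|_\Ds$, so compatibility (positivity with respect to the co-orientation induced by $(\Ds,I)$) of both $\alpha$ and $\eta$ forces $g>0$ at every point; setting $f:=g^{-1}$ gives $\alpha=f^{-1}\eta$ with $f>0$. Because $\xi=\Reebmap^\eta$ satisfies $\mL_\xi\eta=0$ by~\eqref{e:contactFvsReeb}, the hypothesis $\mL_\xi\alpha=0$ becomes $(\xi.f^{-1})\,\eta=0$, hence $\xi.f=0$, i.e.\ $f\in C^\infty(N,\R_{>0})^\xi$.

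For the second step, substituting $\alpha=f^{-1}\eta$ into~\eqref{eq:positiveTOTcscK} gives
\[
\bfS(\Ds,I,f^{-1}\eta)=\int_N\Bigl(f^{-n}\,\scal(\eta)+n(n+1)\,f^{-n-2}\,\lvert df\rvert^2_\eta\Bigr)\,\eta\wedge(d\eta)^{[n]}.
\]
Under the standing assumption $\scal(\eta)>0$ pointwise on $N$, and since $f>0$, the first summand of the integrand is strictly positive while the second is non-negative; as $\eta\wedge(d\eta)^{[n]}$ is a positive volume form, the integrand is bounded below pointwise by $f^{-n}\scal(\eta)>0$, and integrating over the compact manifold $N$ yields $\bfS(\Ds,I,f^{-1}\eta)>0$, as claimed.

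There is essentially no obstacle to overcome: all of the analytic content is already carried by Lee's formula and its integrated form~\eqref{eq:positiveTOTcscK}, and the only point requiring a word of justification is the elementary identification of $\xi$--invariant compatible contact forms with $f\in C^\infty(N,\R_{>0})^\xi$ done in the first step. In fact the same argument proves slightly more, namely $\bfS(\Ds,I,f^{-1}\eta)\geq\int_N f^{-n}\,\scal(\eta)\,\eta\wedge(d\eta)^{[n]}$ for every $f\in C^\infty(N,\R_{>0})^\xi$, of which the corollary is the special case in which this lower bound is positive.
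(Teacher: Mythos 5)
Your argument is correct and is exactly the route the paper intends: the corollary is stated as an immediate consequence of the integrated Lee formula~\eqref{eq:positiveTOTcscK}, since for $f\in C^\infty(N,\R_{>0})^\xi$ the integrand $f^{-n}\scal(\eta)+n(n+1)f^{-n-2}\lvert df\rvert^2_\eta$ is pointwise positive when $\scal(\eta)>0$. Your preliminary check that every $\xi$--invariant compatible contact form has the shape $f^{-1}\eta$ with $f\in C^\infty(N,\R_{>0})^\xi$ is a harmless elaboration of the paper's standing parametrization of $\Cmet_{(\Ds,I)}^\xi$, not a different method.
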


\subsubsection{The non-invariant case}\label{sssNonINVcase} Whenever $f$ is not $\xi$--invariant, i.e $\xi.f\not\equiv 0$, we only have to add the term 
 $$-2(n+1) \xi.\xi.f + (n+1)(n+2)f^{-1}(\xi.f)^2$$ to Tanno's formula for $\scal(f^{-1}\eta)$ as recalled in~\eqref{eq:TannoFormula}. Integrating this term against $f^{-1-n}\eta\wedge (d\eta)^n$, using that 
 $$\xi.\left(\frac{\xi.f}{f^{n+1}} \right)= \frac{\xi.\xi.f}{f^{n+1}} - (n+1)\frac{(\xi.f)^2}{f^{n+2}}$$
 in addition to the fact that $\xi$ is the Reeb vector field of $\eta$ (so that $(\xi.h) \eta\wedge (d\eta)^n = dh\wedge (d\eta)^n$ for any function $h$) we get that the only term to add to~\eqref{eq:positiveTOTcscK} in the non-invariant case is  
\begin{equation}\label{eq:TOT_TWscalarNONinvariantcase}
-n(n+1)\int_N (\xi.f)^2\,f^{-1-n}\eta\wedge (d\eta)^{[n]}.
\end{equation}

\subsection{Variations along the vertical directions}\label{sec:vertical_variations}
The terminology {\it vertical directions} will be justified below, when we will see that the space of contact forms compatible with a fixed CR-structure, say $(N,\Ds,I)$, is the fibre of a bundle map. The calculation below can already be found in Jerison--Lee \cite{JerisonLee}, but in our case we are work on a smaller set of CR-contact structures, as we impose invariance by a compact torus. Nevertheless, their result, that the EH-functional detects cscTW structures, is still valid as we see below.  

Given a $C^1$--path of $\xi$--invariant positive functions $s\mapsto f_s>0$, with $f=f_0$ and $\dot{f}=(\partial_sf_s)_{s=0} $, the variation of the total Tanaka--Webster scalar curvature is 
\begin{equation*}
\begin{split}
\left(\frac{d}{ds} \bfS_{f_s}\right)_{|_{s=0}} &= \int_N\left(-n\dot{f} f^{-n-1} \scal(\eta)   - (n+2)(n+1)n \dot{f} f^{-n-3}|df|^2_\eta  \right. \\
&\qquad\qquad\qquad\qquad \qquad\qquad\qquad \left.  +2 n(n+1) f^{-n-2}g_\eta(df, d\dot{f})\right) \eta\wedge (d\eta)^{[n]}.
\end{split}
\end{equation*}
Integrating by parts to isolate $\dot{f}$, we then obtain
\begin{equation}\label{eq:VARscalTOTinvariant}
\begin{split}
\left(\frac{d}{ds} \bfS_{f_s}\right)_{|_{s=0}}
% &= -n\int_N\frac{\dot{f}}{f} \Big(f^{-n} \scal(\eta)   + (n+2)(n+1) f^{-n-2}|df|^2_\eta -2(n+1) f^{-n-1}\Delta^\eta f \\
% &\qquad\qquad\qquad\qquad\qquad -2(n+1)(n+2)f^{-n-2}|df|^2_\eta\Big) \eta\wedge (d\eta)^{[n]}\\
&= -n\int_N\frac{\dot{f}}{f^{n+2}} \left(f\,\scal(\eta)   -2(n+1)\Delta^\eta f -(n+1)(n+2)f^{-1}|df|^2_\eta\right) \eta\wedge (d\eta)^{[n]}\\
&= -n\int_N\frac{\dot{f}}{f} \,\scal(\eta_f) \, \eta_f\wedge (d\eta_f)^{[n]}.
\end{split}
\end{equation} 
Denoting the volume of $(N,\Ds,I,\eta_f)$ by $\bfV_f$, that is $$\bfV_f = \int_N \eta_f\wedge (d\eta_f)^{[n]},$$ we find that its first variation is
\begin{equation} \label{eq:VARvolumeFunctCONTACT}\dot{\bfV_f} = -(n+1)\int_N\frac{\dot{f}}{f}\,\eta_f\wedge (d\eta_f)^{[n]}.
\end{equation}
Combining the equations above we finally obtain, for the Einstein--Hilbert functional
\begin{equation}\label{eq:EH_definition}
\EH(f):=\EH(D,I,\eta_f):= \frac{\bfS_f}{\bfV_f^{\frac{n}{n+1}}},
\end{equation}
that its variation along a conformal transformation is
\begin{equation}\label{eq:Var_EH}
\left(\frac{d}{ds} \EH_{f_s}\right)_{|_{s=0}} =  -\frac{n}{\bfV_f^{\frac{n}{n+1}}} \int_N \frac{\dot{f}}{f} \left(\scal(\eta_f)-\frac{\bfS_f}{\bfV_f}\right) \eta_f\wedge (d\eta_f)^{[n]}.
\end{equation} 
Given any $\tilde{\eta}=\eta_f\in \Gamma_+(\Ds^0)^{\xi}$, the TW scalar curvature of the CR-contact structure, say $(\Ds,I,\tilde{\eta})$, associated to $\tilde{\eta}$ and $(\xi,\uI)$, is invariant by the torus induced by $\xi$ since the whole structure is invariant. Therefore, the TW scalar curvature $\scal(\tilde{\eta})$ is constant if and only if~\eqref{eq:Var_EH} vanishes for all $\dot{f} \in C^\infty(N,\R)^\xi$. From this, the following slight generalization of Jerison--Lee's result \cite{JerisonLee} holds.  

\begin{corollary} \label{coroJLcscTW}
Let $(N, \Ds, I)$ be a strictly pseudoconvex CR manifold and a compatible contact $1$--form $\eta \in \Cmet_{(\Ds,I)}$ with Reeb vector field $\xi=\Reebmap^\eta \in \con_+(N, \Ds)$. For any $\tilde{\eta}\in \Cmet_{(\Ds,I)}^{\xi}$, $(N,D,I,\tilde{\eta})$ has constant Tanaka--Webster scalar curvature if and only if $\tilde{\eta}$ is a critical point of the Einstein--Hilbert functional on $\Cmet_{(\Ds,I)}^{\xi}$.
\end{corollary}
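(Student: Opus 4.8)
The plan is to show that Corollary~\ref{coroJLcscTW} is an essentially immediate consequence of the first-variation formula~\eqref{eq:Var_EH}, once one is careful about what "critical point on $\Cmet_{(\Ds,I)}^\xi$" means. First I would fix $\tilde\eta = \eta_f \in \Cmet_{(\Ds,I)}^\xi$ and note that, by definition of $\Cmet_{(\Ds,I)}^\xi$, every element of this space is of the form $g^{-1}\eta$ with $g \in C^\infty(N,\R_{>0})^\xi$, so that the tangent space at $\tilde\eta$ is naturally identified with $C^\infty(N,\R)^\xi$ via $C^1$-paths $s \mapsto f_s$ with $f_0 = f$, $\dot f \in C^\infty(N,\R)^\xi$; conversely any such $\dot f$ is realised, e.g.\ by $f_s = f\,e^{s\dot f/f}$ (which stays positive and $\xi$-invariant). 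Thus $\tilde\eta$ is a critical point of $\EH$ on $\Cmet_{(\Ds,I)}^\xi$ precisely when the right-hand side of~\eqref{eq:Var_EH} vanishes for every $\dot f \in C^\infty(N,\R)^\xi$.

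Next I would rewrite~\eqref{eq:Var_EH} evaluated at $\tilde\eta=\eta_f$ as
\begin{equation*}
\left(\frac{d}{ds}\EH_{f_s}\right)_{s=0} = -\frac{n}{\bfV_f^{\frac{n}{n+1}}}\int_N \frac{\dot f}{f}\left(\scal(\tilde\eta) - \frac{\bfS_f}{\bfV_f}\right)\tilde\eta\wedge(d\tilde\eta)^{[n]},
\end{equation*}
and observe that $\phi := \dot f / f$ ranges over all of $C^\infty(N,\R)^\xi$ as $\dot f$ does, since $f > 0$. Writing $d\mu_{\tilde\eta} := \tilde\eta\wedge(d\tilde\eta)^{[n]}$ for the (positive) volume form and $\bar{\scal} := \bfS_f/\bfV_f = \big(\int_N \scal(\tilde\eta)\,d\mu_{\tilde\eta}\big)/\big(\int_N d\mu_{\tilde\eta}\big)$ for the average of $\scal(\tilde\eta)$, the vanishing of the variation for all $\phi \in C^\infty(N,\R)^\xi$ amounts to
\begin{equation*}
\int_N \phi\,\big(\scal(\tilde\eta) - \bar{\scal}\big)\,d\mu_{\tilde\eta} = 0 \qquad \text{for all }\phi \in C^\infty(N,\R)^\xi.
\end{equation*}
Since the whole structure $(\Ds,I,\tilde\eta)$ — hence its Tanaka--Webster scalar curvature $\scal(\tilde\eta)$ and its metric volume form $d\mu_{\tilde\eta}$ — is invariant under the compact torus generated by $\xi$ (as already noted in the text just before the statement), the function $\scal(\tilde\eta) - \bar{\scal}$ is itself $\xi$-invariant, so it is a legitimate test function $\phi$. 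Taking $\phi = \scal(\tilde\eta) - \bar{\scal}$ gives $\int_N \big(\scal(\tilde\eta)-\bar{\scal}\big)^2\,d\mu_{\tilde\eta} = 0$, whence $\scal(\tilde\eta) \equiv \bar{\scal}$ is constant. Conversely, if $\scal(\tilde\eta)$ is constant then $\scal(\tilde\eta) = \bar{\scal}$ and the integrand above vanishes identically, so $\tilde\eta$ is critical. This proves both implications.

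The only genuine subtlety — and the one step I would flag as requiring care rather than difficulty — is the $\xi$-invariance argument: one must know that the average $\bar{\scal}$ is attained and that $\scal(\tilde\eta)-\bar{\scal}$ lies in the space of admissible variations $C^\infty(N,\R)^\xi$, which is exactly why the restriction to $\xi$-invariant conformal factors (rather than Jerison--Lee's full conformal class) causes no loss: both $\scal(\tilde\eta)$ and $d\mu_{\tilde\eta}$ are built functorially from the $\xi$-invariant data $(\Ds,I,\tilde\eta)$ via the Tanaka--Webster connection, hence are $\mL_\xi$-closed. Everything else is the standard Yamabe-type argument that a critical point of a total-curvature-over-volume ratio has constant curvature, and it goes through verbatim with $\phi = \dot f/f$ as test function and the fact that $f$ is bounded away from $0$ (by compactness of $N$) so that $\phi \mapsto \dot f$ is a bijection of $C^\infty(N,\R)^\xi$. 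I would present this as a short corollary, citing~\eqref{eq:Var_EH} and the preceding remark on torus-invariance of $\scal(\tilde\eta)$.
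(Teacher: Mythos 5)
Your proof is correct and follows essentially the same route as the paper: both rely on the first-variation formula~\eqref{eq:Var_EH} together with the observation that $\scal(\tilde{\eta})$ is $\xi$-invariant (since the whole structure $(\Ds,I,\tilde{\eta})$ is), so that $\scal(\tilde{\eta})-\bfS_f/\bfV_f$ is an admissible test function and the vanishing of the variation over $C^\infty(N,\R)^\xi$ forces constancy. The extra details you spell out (the reparametrisation $\phi=\dot f/f$ and the explicit choice of test function) are exactly what the paper leaves implicit.
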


\subsubsection{The vertical Hessian of the Einstein--Hilbert functional}

We now show that the lack of pointwise convexity of the Einstein--Hilbert functional at a critical point $\eta\in\Cmet_{(D,I)}^\xi$ (so, at a cscTW CR-contact form) is measured by eigenspaces of the laplacian associated to eigenvalues bounded above by a multiple of the scalar curvature. This is reminiscent of the results in \cite{BHLTF1,BHLTF3}, and this phenomenon also appears in the classical Yamabe setting \cite{LPZ}.

\begin{lemma}\label{lemma:HessianEH_vertical}
Let $\eta$ be a critical point of $\EH : \Cmet^\xi_{(\Ds,I)} \ra \R$ with $C= \scal(\Ds,I,\eta)$.  Then, the Hessian of $\EH$ at $\eta$ evaluated on $u,v\in T_\eta \Cmet^\xi_{(\Ds,I)} \simeq C^\infty(N,\R)^\xi$ is
\begin{equation*}
{\mathrm{Hess}}(\EH)(u,v) = \frac{n}{\bfV^{n/n+1}}\left\langle \left(2(n+1)\Delta^\eta-C \right)u^0, v^0\right\rangle_{L^2(dv_\eta)},
\end{equation*}
where $u^0$, $v^0$ are the zero-average normalizations of $u$ and $v$, respectively, and $dv_\eta$ is the volume form defined by $\eta$, $dv_\eta:=\eta\wedge(d\eta)^{[n]}$.
\end{lemma}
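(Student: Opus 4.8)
The strategy is to compute the second variation of $\EH$ directly, starting from the first-variation formula \eqref{eq:Var_EH}, and then to simplify it at a critical point. First I would fix a critical point $\eta=\eta_f$ — so $\scal(\Ds,I,\eta) = C$ is constant and, because $\eta$ is compatible with a fixed CR structure, $\bfS = C\,\bfV$. Given $u,v\in C^\infty(N,\R)^\xi$, I would choose a two-parameter family $f_{s,r}$ of positive $\xi$-invariant functions with $f_{0,0}=f$ and prescribed first-order behaviour so that the infinitesimal conformal variations of $\eta_{f_{s,r}}$ in the $s$ and $r$ directions are (up to the sign and the factor $1/f$ coming from $\dot{\eta}_f = -(\dot f/f)\,\eta_f$) equal to $u$ and $v$ respectively. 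Then $\mathrm{Hess}(\EH)(u,v)$ is $\partial_r\partial_s\EH(f_{s,r})$ evaluated at the origin.

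The key simplification is that when differentiating \eqref{eq:Var_EH} a second time, every term in which the derivative falls on something other than the factor $\bigl(\scal(\eta_f) - \bfS_f/\bfV_f\bigr)$ produces a factor of $\scal(\eta) - \bfS/\bfV = C - C = 0$ at the critical point and hence drops out. So the only surviving contribution comes from differentiating $\scal(\eta_{f_{s,r}}) - \bfS_{f_{s,r}}/\bfV_{f_{s,r}}$ in the $r$-direction, evaluated at $s=r=0$, integrated against $(\dot f/f)\,dv_\eta$. This reduces the computation to: (i) the linearisation of $f\mapsto \scal(\eta_f)$ at $\eta$ in the conformal direction, which is read off from Lee's formula \eqref{eq:TannoFormula} (at $f=1$, i.e.\ linearising $\scal(\eta_f)$ around $\eta$, the zeroth-order and gradient-squared terms contribute only through the $f\,\scal(\eta)$ and $-2(n+1)\Delta^\eta f$ pieces, giving a linearisation of the form $\scal(\eta)\cdot(\text{variation}) - 2(n+1)\Delta^\eta(\text{variation})$); and (ii) the linearisation of the constant $\bfS_{f}/\bfV_{f}$, which by \eqref{eq:VARscalTOTinvariant} and \eqref{eq:VARvolumeFunctCONTACT} at a critical point involves $\int \frac{\dot f}{f}\,\scal(\eta)\,dv = C\int\frac{\dot f}{f}\,dv$ and the analogous volume term, so it only contributes a multiple of the average of the $r$-variation.

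Putting these together, the integrand becomes $u^0\bigl(2(n+1)\Delta^\eta - C\bigr)v^0$ up to the overall constant $n/\bfV^{n/(n+1)}$, where passing from $u,v$ to their zero-average parts $u^0,v^0$ is automatic: the constant parts of $u$ and $v$ pair to zero against $\Delta^\eta v^0$ (since $\Delta^\eta$ kills constants and is self-adjoint) and against the $-C$ term by the same averaging that kills the $\bfS/\bfV$-variation. Symmetry of the resulting bilinear form in $u^0,v^0$ is then manifest from self-adjointness of $2(n+1)\Delta^\eta - C$, which also confirms it is a genuine Hessian. The one point requiring a little care — and the main potential obstacle — is bookkeeping the chain-rule terms coming from the $f^{-n-2}$, volume-form, and $|df|^2$ factors in \eqref{eq:EH_definition}, \eqref{eq:positiveTOTcscK} and \eqref{eq:VARscalTOTinvariant} to be certain that all of them really are proportional to $\scal(\eta)-\bfS/\bfV$ at the critical point; once that vanishing is checked, the rest is the short computation above. (Choosing the family $f_{s,r}$ so that the mixed second-order term $\partial_r\partial_s f_{s,r}|_0$ is convenient — e.g.\ $f_{s,r} = f\exp(-s\,u/\!\sqrt{\phantom{x}} - r\,v)$-type with a normalisation, or simply noting the Hessian is independent of the choice of extension since $\eta$ is critical — removes any ambiguity.)
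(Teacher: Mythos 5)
Your proposal is correct and follows essentially the same route as the paper's proof: differentiate the first-variation formula~\eqref{eq:Var_EH} a second time, use criticality (constancy of the TW-scalar curvature) to discard all terms where the derivative does not hit $\scal(\eta_f)-\bfS_f/\bfV_f$, linearise the scalar curvature via Lee/Tanno's formula to get $C\,\dot f-2(n+1)\Delta^\eta\dot f$, and absorb the variation of $\bfS/\bfV$ into the zero-average normalisation of the tangent vectors.
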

\begin{proof}
Consider a path of contact forms $\eta(x,y)=f(x,y)^{-1}\eta$ such that $f(0,0)=1$. As $\eta$ is a critical point of the EH functional with TW-scalar curvature $C$,
\begin{equation*}
    \partial_x\partial_y\EH(\eta(x,y))\Bigr|_{x=0,y=0}=-\frac{n}{\bfV^{\frac{n}{n+1}}} \int_N f_{,x}\,\partial_y\left( \scal(\eta(0,y)) - \frac{\bfS(\eta(0,y))}{\bfV(\eta(0,y))} \right) dv_{\eta}.
\end{equation*}
From Tanno's formula~\eqref{eq:TannoFormulaGENERALbasic}, using $f(0,0)=1$ and $\scal(\eta)=C$ we find
\begin{equation*}
\partial_y\scal(\eta(0,y))=C\,f_{,y}-2(n+1)\Delta_{\eta}f_{,y}.
\end{equation*}
Here we are denoting the derivatives of $f$ as $f_{,y}:=\partial_yf$, and we stop indicating explicitly the dependence on $x$ and $y$. Recalling the derivatives for the volume and total scalar curvature from~\eqref{eq:VARscalTOTinvariant},~\eqref{eq:VARvolumeFunctCONTACT} then we obtain
\begin{equation*}
\begin{split}
    \partial_x\partial_y\EH(\eta(x,y))\Bigr|_{x=0,y=0}=&-\frac{n}{\bfV^{\frac{n}{n+1}}} \int_N f_{,x}\,\left(C\,f_{,y}-2(n+1)\Delta_{\eta}f_{,y} \right) dv_{\eta}\\
    &+\frac{n}{\bfV^{\frac{n}{n+1}}} \frac{C}{\bfV}\left(\int f_{,s} dv_\eta\right) \left(\int_N f_{,x}dv_{\eta}\right)
\end{split}
\end{equation*}
If we normalize $f_{,s}$ and $f_{,t}$ as $f^0_{,s}:=f_{,s}-\bfV^{-1}\int f_{,s} dv_\eta$, we obtain the desired expression
\begin{equation*}
    \mathrm{Hess}(\EH)_\eta(f_{,t},f_{,s})=-\frac{n}{\bfV^{\frac{n}{n+1}}} \int_N f^0_{,x}\,\left(C\,f^0_{,y}-2(n+1)\Delta_{\eta}f^0_{,y} \right) dv_{\eta}.\qedhere
\end{equation*}
\end{proof}
As a corollary, we obtain a criterion for the critical points of the Einstein--Hilbert functional to be local minima generalising \cite[Theorem 1.7]{BHLTF1}. 
\begin{proposition}\label{prop:eigenvalueHessEH}
Let $\eta$ be a critical point of $\EH : \Cmet_{(\Ds,I)} \ra \R$ and $C:=\scal(\Ds, I, \eta)$ be its constant scalar curvature, and assume that $R^\eta$, the Reeb vector field of $\eta$, induces the action of a compact torus $\bT$. Then $(\Ds, I, \eta)$ is a local strict minimum of $\EH : \Cmet_{(\Ds,I)}^\bT \ra \R$ if and only if the first non-zero eigenvalue $\lambda_1^\bT(\eta)$ of the Laplacian $\Delta_\eta$ on $\bT$--invariant functions satisfies
\begin{equation}\label{eq:eigenval_condition}
\lambda_1(\eta) >\frac{C}{2(n+1)}.
\end{equation}
Condition~\eqref{eq:eigenval_condition} holds when $C\leq 0$, or when $\eta$ is Sasaki-$\eta$-Einstein of positive curvature. 
\end{proposition}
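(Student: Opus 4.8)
The plan is to deduce Proposition~\ref{prop:eigenvalueHessEH} directly from Lemma~\ref{lemma:HessianEH_vertical}. By that lemma, at a critical point $\eta$ with $\scal(\Ds,I,\eta)=C$, the vertical Hessian is the quadratic form
\[
u\mapsto \frac{n}{\bfV^{n/n+1}}\bigl\langle \bigl(2(n+1)\Delta^\eta-C\bigr)u^0,u^0\bigr\rangle_{L^2(dv_\eta)},
\]
which depends only on the zero-average part $u^0$ of $u$. On the subspace of $\bT$-invariant functions with zero average, the operator $2(n+1)\Delta^\eta-C$ is self-adjoint with spectrum $\{2(n+1)\lambda - C : \lambda \text{ an eigenvalue of }\Delta_\eta\text{ on }\bT\text{-invariant functions},\ \lambda>0\}$, so it is positive-definite precisely when the lowest such eigenvalue $\lambda_1^\bT(\eta)$ satisfies $2(n+1)\lambda_1^\bT(\eta)-C>0$, i.e. when~\eqref{eq:eigenval_condition} holds. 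The kernel of the Hessian always contains the constants (the direction along which $\EH$ is constant by its $\R_+$-homogeneity), so ``local strict minimum'' should be understood modulo this direction, or equivalently on the slice of zero-average variations; I would state this explicitly. Thus the equivalence in the first part is essentially a restatement of Lemma~\ref{lemma:HessianEH_vertical} together with the spectral theorem and the fact that a smooth function whose Hessian is positive-definite transverse to a flat direction has a strict local minimum there (a standard Morse-lemma-type argument, using that the Hessian controls the function to second order).

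For the sufficient conditions, first if $C\le 0$ then $2(n+1)\lambda_1^\bT(\eta)-C\ge 2(n+1)\lambda_1^\bT(\eta)>0$ since $\lambda_1^\bT(\eta)>0$ (the first \emph{non-zero} eigenvalue of the Laplacian on a compact manifold is strictly positive), so~\eqref{eq:eigenval_condition} holds trivially. Next, in the Sasaki $\eta$-Einstein case of positive curvature I would invoke the transversal analogue of Lichnerowicz--Obata: for a Sasaki manifold whose transversal Ricci tensor is positive (here transversally Einstein with positive Einstein constant, which in the regular case corresponds to a positively curved Kähler--Einstein quotient), the first non-zero eigenvalue of the (basic, hence $\bT$-invariant since $\xi\in\Lie\bT$) Laplacian is bounded below by the transversal Einstein constant, and a short computation comparing that bound with $C/(2(n+1))$ gives~\eqref{eq:eigenval_condition}. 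Concretely, if $\Ric^T = \rho\, g^T$ with $\rho>0$ then $\scal(\eta)$ is a fixed multiple of $n\rho$ and the transversal Lichnerowicz bound $\lambda_1\ge \frac{\rho}{n}\cdot(\text{const})$ (the precise constant fixed by Convention~\ref{conventionTK}) is exactly what is needed; I would carry out this normalization check carefully since the factors of $2$ in~\eqref{eqRiemCRcontacMetric} and Convention~\ref{conventionTK} are the main bookkeeping hazard.

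The main obstacle I anticipate is not conceptual but this normalization bookkeeping: tracking the factors of $2$ relating $g_{\Ds,I,\eta}$, the transversal Kähler metric $g_\omega$, $\scal(\eta)$ versus $\scal(\omega_\eta)$, and the Laplacians $\Delta^\eta$, $\Delta_B$, $\Delta^\omega$, so that the inequality $\lambda_1^\bT(\eta)>\frac{C}{2(n+1)}$ comes out with the right constant from the transversal Lichnerowicz estimate. A secondary subtlety is the correct interpretation of ``strict local minimum'' in the presence of the flat constant direction; I would handle this by passing to the quotient $\Cmet_{(\Ds,I)}^\bT/\R_+$ (or fixing a volume normalization) where the Hessian is genuinely the zero-average form above, so that positive-definiteness of that form is equivalent to being a strict local minimum in the usual sense. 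Beyond these points the argument is immediate from the already-established Lemma~\ref{lemma:HessianEH_vertical}.
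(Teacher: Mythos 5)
Your argument is essentially the paper's proof: the paper likewise reads off the vertical Hessian from Lemma~\ref{lemma:HessianEH_vertical}, applies the Rayleigh min--max characterization of $\lambda_1$ to conclude that the second variation is positive definite (on zero-average $\bT$-invariant variations) exactly when $2(n+1)\lambda_1(\eta)>C$, notes this is automatic for $C\le 0$, and for the positive Sasaki-$\eta$-Einstein case simply cites \cite[Theorem 1.7]{BHLTF1}, whose content is precisely the transversal Lichnerowicz-type eigenvalue bound you outline. Your caveat about the flat constant direction (interpreting the strict minimum modulo the $\R_+$-scaling, i.e.\ on zero-average variations) is consistent with the paper's normalization, so no change of substance is needed.
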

\begin{proof}
Lemma~\ref{lemma:HessianEH_vertical} in particular tells us that for any $\xi$-invariant function $u$ on $N$
$$
{\mathrm{Hess}}(\EH)(u,u) =\frac{n}{\bfV^{\frac{n}{n+1}}}\left(2(n+1)\lVert du^0\rVert^2_{L^2(dv_\eta)}-C\lVert u^0\rVert^2_{L^2(dv_\eta)}\right)$$
The Rayleigh min-max characterization of $\lambda_1=\lambda_1(\Delta_\eta)$ gives
\begin{equation*}
   2(n+1)\int_N\lVert du^0\rVert^2_\eta dv_\eta-C\int_N(u^0)^2dv_\eta\geq\left(2(n+1)\lambda_1-C\right)\int_N(u^0)^2 dv_\eta
\end{equation*}
with equality if and only if $\Delta^\eta u^0=\lambda_1 u^0$. Since the sequence of Laplacian eigenvalues tends to $+\infty$ we see that the only way for the second derivative of $\EH$ to be sign definite is to be positive. From the last computation this happens if and only if $(n+1)\lambda_1(\eta) >C$, which is surely the case if $C\leq 0$. Otherwise, it follows from the same arguments of \cite[Theorem 1.7]{BHLTF1}. 
\end{proof}

\begin{example} \label{exampleNONcvx}There exists $n$--dimensional cscK metrics on polarized manifold whose first non-zero eigenvalue of the laplacian does not satisfy~\eqref{eq:eigenval_condition}.
This could be deduced from examples in \cite{BHLTF1} but we give here a simpler example of this phenomenon. We consider the projective line $(\pr^1,\omega)$ with its Fubini-Study metric of constant scalar curvature metric $2$ and for $a>0$ and $b>0$, the product $$(\pr^1 \times \pr^1, a \omega_1 +b\omega_2)$$ where $p_i:\pr^1 \times \pr^1 \ra \pr^1$ denote the projection on the $i$--factor and $\omega_i=p_i^*\omega$. Let $\mu: \pr^1 \ra \R$ be the momentum map of the circle action on $\omega$ that integrates to $0$ and recall that $\mu$ is an eigenfunction of the $\omega$-Laplacian, that is $\Delta^\omega \mu= 2 \mu$.
Let $\lambda_1(a \omega_1 +b\omega_2)$ be the first eigenvalue of the Laplacian of $a \omega_1 +b\omega_2$ and note that $\scal(a \omega_1 +b\omega_2) = \frac{2}{a} + \frac{2}{b}.$ We have  $\Delta^{a \omega_1 +b\omega_2}p_1^*\mu = \frac{2}{a} p_1^*\mu$ and the Rayleigh min-max characterisation implies that $$ \frac{2}{a} \geq \lambda_1(a \omega_1 +b\omega_2).$$ Therefore we cannot have a general inequality $$ \lambda_1(a \omega_1 +b\omega_2) \geq \frac{1}{6} \scal(a \omega_1 +b\omega_2)$$ since it would be wrong when $a\ra +\infty$ but $b$ stays fixed.
\end{example}

\subsection{The CR Yamabe problem}\label{sec:CRYamabe}
Given a be a strictly pseudoconvex CR manifold $(N,D,I)$, the \emph{CR Yamabe problem} refers to the search for compatible contact forms $\eta$ with constant Tanaka--Webster scalar curvature, i.e. critical points for the Einstein--Hilbert functional in $\Cmet_{(\Ds,I)}$. This problem shares many formal and concrete similarities with the classical Yamabe problem on Riemannian manifolds, hence the name. For example, $\Cmet_{(\Ds,I)}$ is analogous to a conformal class, as every two elements of $\Cmet_{(\Ds,I)}$ differ by multiplication by a positive function. Also, the cscTW equation is the Euler-Lagrange equation for the Eistein-Hilbert functional defined in~\eqref{eq:EH_definition}, while the metrics of constant scalar curvature in the conformal class of a Riemannian metric $g$ are the critical points of
\begin{equation}\label{eq:Riemannian_EH}
    \tilde{g}\mapsto\frac{\int\scal(\tilde{g})\Vol(\tilde{g})}{\left(\int\Vol(\tilde{g})\right)^{\frac{n}{n+1}}}
\end{equation}
which is formally identical to~\eqref{eq:EH_definition}; indeed, this is why we call~\eqref{eq:EH_definition} the \emph{Einstein--Hilbert} functional of the CR structure. The similarities between the two problems are not just formal, however. In a series of papers, Jerison and Lee showed that the CR-Yamabe problem can be studied along the lines of Aubin's work on the Riemannian Yamabe problem, proving that solutions exist whenever $N$ has dimension at least $5$ and $(N,D,I)$ is not locally equivalent to the sphere. Subsequent works by Gamara and Gamara-Jacoub showed that also in the missing cases there exist solutions of the CR-Yamabe problem. More precisely, we have the following result, analogous to the solution of the Yamabe conjecture by Aubin and Schoen \cite{Aubin_Yamabe}, \cite{Schoen_Yamabe}.
\begin{theorem}[\cite{JerisonLee, JerisonLee2, gamarayacoub_cryamabeFLAT, gamaran1}]\label{thm:YamabeCR}
For a strictly pseudoconvex CR manifold $(N,D,I)$, define the \emph{CR-Yamabe constant} as
\begin{equation}
    \YamCRcnst(N,D,I):=\inf_{\eta\in\Cmet(D,I)}\EH(\eta).
\end{equation}
This infimum is always realised, hence there is a cscTW contact form in $\Cmet(D,I)$. Moreover, if $\YamCRcnst(N,D,I)\leq 0$ then there is a unique cscTW contact form in $\Cmet(D,I)$.
\end{theorem}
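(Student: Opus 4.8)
The statement to prove is the CR-Yamabe theorem (Theorem~\ref{thm:YamabeCR}): existence of a cscTW contact form in $\Cmet(D,I)$, plus uniqueness when the CR-Yamabe constant is nonpositive. Since this is a survey-type statement collecting results of Jerison--Lee and Gamara--Yacoub, the plan is not to reprove it from scratch but to organize the argument along the lines of the Riemannian Yamabe problem. First I would set up the variational problem: by Lee's formula~\eqref{eq:TannoFormulaGENERAL} (equivalently~\eqref{eq:TannoFormulaGENERALbasic}), writing $\eta_u = u^{2/n}\eta$ (the parametrization $u = f^{-n/2}$ used by Jerison--Lee), the functional $\EH(\eta_u)$ becomes, up to a dimensional constant, the quotient
\begin{equation*}
    Q(u) = \frac{\int_N \left( b_n\, |d u|^2_\eta + \scal(\eta)\, u^2 \right) \eta\wedge(d\eta)^{[n]}}{\left(\int_N u^{2+2/n}\, \eta\wedge(d\eta)^{[n]}\right)^{n/(n+1)}}
\end{equation*}
with $b_n = 2(n+1)/n \cdot (\text{const})$, whose Euler--Lagrange equation is precisely the cscTW equation. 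The exponent $2 + 2/n = 2(n+1)/n$ is the \emph{critical} exponent for the Folland--Stein embedding $S^2_1(N) \hookrightarrow L^{2(n+1)/n}(N)$ associated with the subelliptic sublaplacian $\Delta_b$, which is the source of the difficulty: the embedding is continuous but not compact, so the direct method does not immediately yield a minimizer.

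The core of the argument is then the subcritical approximation and the threshold comparison, exactly as in Aubin's approach. I would: (i) for each $p < 2(n+1)/n$ solve the subcritical problem $\Delta_b u_p + \scal(\eta) u_p = \lambda_p u_p^{p-1}$ by the direct method, using compactness of the subcritical Folland--Stein embedding; (ii) establish the sharp Folland--Stein/Sobolev inequality on the Heisenberg group, whose best constant gives the threshold value $\YamCRcnst(S^{2n+1}) = \YamCRcnst(\mathbb{H}^n)$ for the round sphere; (iii) prove the strict inequality $\YamCRcnst(N,D,I) < \YamCRcnst(S^{2n+1})$ whenever $(N,D,I)$ is not CR-equivalent to the standard sphere — this is where one must produce good test functions. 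For $\dim N \geq 5$ (i.e. $n \geq 2$) Jerison--Lee do this by a delicate local expansion of the Tanaka--Webster curvature (the CR analogue of the positive mass/Weyl-tensor dichotomy), concentrating test functions at a point; (iv) pass to the limit $p \uparrow 2(n+1)/n$ and use the strict inequality to rule out concentration/bubbling, obtaining a genuine minimizer, which is smooth by subelliptic regularity and positive by the Harnack inequality for $\Delta_b$. The remaining low-dimensional / locally-spherical cases not covered by step (iii) are handled by the positive-mass-type arguments of Gamara and Gamara--Yacoub (using the CR Yamabe flow / a CR analogue of the positive mass theorem), which I would invoke as a black box.

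For the uniqueness claim when $\YamCRcnst(N,D,I) \leq 0$: this is the easy, non-concentration part. If $\eta_1 = f^{-1}\eta_2$ with both cscTW of scalar curvatures $C_1, C_2$, then from~\eqref{eq:TannoFormula} one gets an elliptic (subelliptic) equation for $f$ of the form $C_1 = f\, C_2 - 2(n+1)\Delta^{\eta_2} f - (n+1)(n+2) f^{-1}|df|^2_{\eta_2}$; evaluating at the max and min of $f$ and using the sign of the $C_i$ (which have the sign of $\YamCRcnst$, since the sign of the Yamabe invariant is a CR invariant), the maximum principle forces $f$ constant, hence $C_1 = C_2$ and, after normalizing volumes, $\eta_1 = \eta_2$. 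Concretely: at an interior maximum of $f$ one has $\Delta^{\eta_2} f \geq 0$ and $df = 0$, giving $C_1 \leq f_{\max} C_2$, and at a minimum $C_1 \geq f_{\min} C_2$; when $C_2 < 0$ these combine to $f_{\max} \leq f_{\min}$, forcing $f$ constant, and the case $C_2 = 0$ is similar (and then $C_1 = 0$ too, reducing to harmonicity of $f$ on a compact manifold).

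\textbf{Main obstacle.} The genuinely hard step is (iii), the strict inequality $\YamCRcnst(N) < \YamCRcnst(S^{2n+1})$ for non-spherical $N$: it requires either the refined CR normal-coordinate curvature expansions of Jerison--Lee (for $n \geq 2$) or a CR positive mass theorem (for the remaining cases, due to Gamara--Yacoub via the CR Yamabe flow). Everything else — the subcritical existence, subelliptic regularity, Harnack, and the uniqueness in the nonpositive case — is comparatively routine once the subelliptic analysis framework (Folland--Stein spaces, $\Delta_b$) is in place. Since the excerpt presents this as a known theorem, in the paper I would simply cite \cite{JerisonLee, JerisonLee2, gamarayacoub_cryamabeFLAT, gamaran1} and emphasize only the reduction of the cscTW equation to the Euler--Lagrange equation of $\EH$, which is immediate from Corollary~\ref{coroJLcscTW}, and the elementary maximum-principle argument for uniqueness.
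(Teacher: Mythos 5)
Your proposal is consistent with the paper: Theorem~\ref{thm:YamabeCR} is stated there purely as a quotation of \cite{JerisonLee, JerisonLee2, gamarayacoub_cryamabeFLAT, gamaran1} with no proof given, and your plan to cite these works while recording only the reduction of the cscTW equation to the Euler--Lagrange equation of $\EH$ (Corollary~\ref{coroJLcscTW}) and the maximum-principle uniqueness in the nonpositive case is exactly in that spirit. Your sketch of the cited argument (critical Folland--Stein exponent $2(n+1)/n$, subcritical approximation, comparison with the spherical constant, Gamara--Yacoub for $n=1$ and the locally spherical case, and the sign argument via Lee's formula~\eqref{eq:TannoFormula}, with uniqueness understood up to the natural $\R_{>0}$-scaling of $\eta$) is accurate.
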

In our situation, however, we are mostly interested in the subset of $\Cmet(D,I)$ consisting of those contact forms that are invariant under the action of a vector field, say $\xi$, which is also assumed to be the (regular) Reeb vector field of a Sasakian form $\eta\in\Cmet(D,I)$. Naturally $\Cmet(D,I)^\xi$ is rather smaller than $\Cmet(D,I)$, so in principle Theorem~\ref{thm:YamabeCR} does not guarantee the existence of a minimum of $\EH$ in $\Cmet(D,I)^\xi$. However, it turns out that a $\xi$-equivariant version of this result \emph{does} hold.
\begin{proposition}\label{prop:YamabeCR_equivariant}
Assume that there is a contact form $\eta\in\Cmet(D,I)$ that is Sasakian, and that its Reeb vector field $\xi$ is regular. Then, there is a minimizer of $\EH$ in $\Cmet(D,I)^\xi$.
\end{proposition}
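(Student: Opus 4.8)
The plan is to reduce the $\xi$-equivariant CR-Yamabe problem to a Kähler-geometric problem on the regular quotient $M = N/\mathbb{S}^1_\xi$, where the relevant functional becomes coercive, and then invoke the direct method. Since $\xi$ is regular and $\eta \in \Cmet(D,I)$ is Sasakian, by Theorem~\ref{theoWEBSTER} and Convention~\ref{conventionTK} the quotient $(M,\omega_\eta,\uI)$ is a compact polarised Kähler manifold, and every $\eta_f = f^{-1}\eta \in \Cmet(D,I)^\xi$ corresponds to a strictly positive function $f \in C^\infty(M,\R_{>0})$ (abusing notation for the descended function). By formula~\eqref{eq:positiveTOTcscK} and~\eqref{eq:VARvolumeFunctCONTACT}, both $\bfS_f$ and $\bfV_f$ descend to integrals over $(M,\omega_\eta)$ with respect to the fixed volume form $\omega_\eta^{[n]}$; concretely $\bfV_f = c_n\int_M f^{-n}\,\omega_\eta^{[n]}$ and $\bfS_f = c_n\int_M \big(f^{-n}\scal(\eta) + n(n+1)f^{-n-2}|df|^2_{\omega_\eta}\big)\omega_\eta^{[n]}$, where $|df|^2$ is now the transversal (basic) norm. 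Thus $\EH$ restricted to $\Cmet(D,I)^\xi$ is, up to the fixed dimensional constant, the functional
\begin{equation*}
 f \longmapsto \frac{\int_M \big(f^{-n}\scal(\eta) + n(n+1)f^{-n-2}|df|^2_{\omega_\eta}\big)\,\omega_\eta^{[n]}}{\Big(\int_M f^{-n}\,\omega_\eta^{[n]}\Big)^{\frac{n}{n+1}}}
\end{equation*}
on $C^\infty(M,\R_{>0})$, which is $0$-homogeneous.

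**Key steps.** First I would perform the substitution $u = f^{-n/2}$ (the standard CR-Yamabe change of variable, here adapted to the $\xi$-invariant setting), so that $u \in C^\infty(M,\R_{>0})$ and, after a routine computation, the numerator becomes $\int_M\big(\scal(\eta)u^2 + a_n|du|^2_{\omega_\eta}\big)\omega_\eta^{[n]}$ for an explicit positive constant $a_n = 4(n+1)/n$, while the denominator is $(\int_M u^2\,\omega_\eta^{[n]})^{n/(n+1)}$. Hence, up to constants, the functional to minimise is the \emph{standard Yamabe-type quotient}
\begin{equation*}
 Q(u) = \frac{\int_M\big(a_n|du|^2_{\omega_\eta} + \scal(\eta)\,u^2\big)\,\omega_\eta^{[n]}}{\Big(\int_M u^2\,\omega_\eta^{[n]}\Big)^{\frac{n}{n+1}}}
\end{equation*}
over $0 \le u \in W^{1,2}(M)$, $u \not\equiv 0$ — but note the crucial point: the exponent $2$ in the denominator is a \emph{subcritical} (in fact, since $M$ is compact of real dimension $2n$, the relevant Sobolev exponent is $\frac{4n}{2n-2}$ for $n>1$, strictly larger than $2$, or there is no constraint when we only have $L^2$), so the embedding $W^{1,2}(M) \hookrightarrow L^2(M)$ is compact. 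Second, I would show $\inf Q > -\infty$: the numerator is bounded below by $a_n\|du\|^2 - \|\scal(\eta)\|_{L^\infty}\|u\|^2_{L^2}$, and combining with the denominator's homogeneity and interpolation, $Q$ is bounded below. Third, the direct method: take a minimising sequence $u_k$ normalised by $\int_M u_k^2\,\omega_\eta^{[n]} = 1$; then $\|du_k\|_{L^2}$ is bounded, so up to a subsequence $u_k \rightharpoonup u_\infty$ weakly in $W^{1,2}$ and strongly in $L^2$ by Rellich, giving $\int u_\infty^2 = 1$ (so $u_\infty \not\equiv 0$) and, by weak lower semicontinuity of the Dirichlet energy, $Q(u_\infty) \le \liminf Q(u_k) = \inf Q$. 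Fourth, regularity and positivity: $u_\infty \ge 0$ solves the Euler–Lagrange equation $a_n\Delta_{\omega_\eta} u_\infty + \scal(\eta)u_\infty = \lambda u_\infty$ weakly; elliptic bootstrapping gives $u_\infty \in C^\infty(M)$, and the strong maximum principle (applied after shifting $\scal(\eta)$ by a large constant) forces $u_\infty > 0$ everywhere. Then $f_\infty := u_\infty^{-2/n}$ is a smooth positive function on $M$, its pullback to $N$ lies in $\Cmet(D,I)^\xi$, and it realises the infimum of $\EH$ on $\Cmet(D,I)^\xi$.

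**Main obstacle.** The one genuinely delicate point — and the reason Theorem~\ref{thm:YamabeCR} cannot simply be quoted — is that the original CR-Yamabe problem involves the \emph{critical} Sobolev exponent $2 + 2/n$ on the $(2n+1)$-dimensional manifold $N$, where compactness fails and one needs the full Jerison–Lee/Gamara machinery with the CR positive mass theorem. By contrast, once we descend to $\xi$-invariant functions, these are pulled back from $M$, and the effective Sobolev exponent relative to the $2n$-real-dimensional quotient $M$ drops to a \emph{subcritical} one (or, more precisely, the volume term appears with the $L^2$-power rather than the critical power), restoring compactness of the relevant embedding and making the naive direct method work. So the bulk of the argument is checking that the reduction genuinely lands in the subcritical regime; after that, the existence is soft. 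A secondary technical point is justifying that the minimiser over $W^{1,2}(M)$ of nonnegative functions is automatically invariant and pulls back correctly — but this is immediate since we never left the space of functions on $M$ — and that $C^{1,1}$ or weak solutions improve to smooth ones, which is classical elliptic theory. I would also remark that the uniqueness clause when $\YamCRcnst \le 0$ from Theorem~\ref{thm:YamabeCR} does \emph{not} transfer verbatim, since the $\xi$-invariant infimum may differ from the full CR-Yamabe constant; I would state the equivariant proposition only as an existence result.
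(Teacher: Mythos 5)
Your overall strategy is exactly the paper's: descend to the regular quotient $M=N/\mathbb{S}^1_\xi$, substitute $u=f^{-n/2}$, observe that the $\xi$-invariant problem is subcritical, and run the direct method followed by elliptic regularity and the maximum principle. However, as written your reduction contains a genuine error: the contact volume is $\bfV(f^{-1}\eta)=\int_N f^{-n-1}\eta\wedge(d\eta)^{[n]}=2\pi\int_M f^{-(n+1)}\omega_\eta^{[n]}$, not $c_n\int_M f^{-n}\omega_\eta^{[n]}$. With the wrong exponent your quotient is not $0$-homogeneous in $f$ (check: numerator scales as $\lambda^{-n}$, your denominator as $\lambda^{-n^2/(n+1)}$), contradicting your own remark, and it is simply not the Einstein--Hilbert functional. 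Consequently, after the substitution $u=f^{-n/2}$ the denominator must be $\lVert u\rVert_{L^q(\omega_\eta)}^2$ with $q=2(n+1)/n$, not $\lVert u\rVert_{L^2}^2$; the Euler--Lagrange equation is the semilinear equation $2q\,\Delta u+\scal(\omega_\eta)\,u=c\,u^{q-1}$ rather than the linear eigenvalue equation $a_n\Delta u+\scal\,u=\lambda u$ you wrote. The function you construct is the ground state of a Schr\"odinger-type operator, and nothing in your argument shows it minimises $\EH$ on $\Cmet(D,I)^\xi$, so the proof as written does not establish the proposition.

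The fix is small and lands you precisely on the paper's proof: with the correct volume exponent, $q=2(n+1)/n$ still satisfies $q<2n/(n-1)$ (the critical exponent on the $2n$-real-dimensional base), so $W^{1,2}(M)\hookrightarrow L^q(M)$ is compact; you normalise $\lVert u_k\rVert_{L^q}=1$, control $\lVert u_k\rVert_{L^2}$ via H\"older on the finite-measure space to get $W^{1,2}$-boundedness, extract a minimiser, and then bootstrap the subcritical semilinear Euler--Lagrange equation and apply the maximum principle to get a smooth positive minimiser $\tilde u$, so that $\tilde u^{-2/n}\eta$ minimises $\EH$ in $\Cmet(D,I)^\xi$. (The exact gradient coefficient after substitution is an immaterial dimensional constant, depending on the convention relating $g_\eta$ and $g_\omega$, so the discrepancy between your $4(n+1)/n$ and the paper's $2(n+1)/n$ is harmless; the volume exponent, by contrast, is essential.) Your closing remark that only existence, not the uniqueness clause, transfers to the equivariant setting is correct and consistent with the paper's discussion.
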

In the case $\YamCRcnst(N,D,I)\leq 0$ this follows from the uniqueness statement in Theorem~\ref{thm:YamabeCR} as the flow of $\xi$ preserves $(D,I)$, but we can prove this in general through a change of variables. An interesting point of the proof of Proposition~\ref{prop:YamabeCR_equivariant} is that, after transferring the problem to a PDE for a function defined on the quotient $N/\mathbb{S}^1_\xi$, we can use a Lemma established in Yamabe's approach to the existence of constant scalar curvature Riemannian metrics. In other words, Proposition~\ref{prop:YamabeCR_equivariant} can be seen as a byproduct of the Riemannian Yamabe problem.
\begin{proof}[Proof of Proposition~\ref{prop:YamabeCR_equivariant}]
    Let $\eta$ and $\xi=R^\eta$ be as in the statement, and let $M$ be the quotient of $N$ by the $\mathbb{S}^1$-action generated by $\xi$. Then $M$ is a compact K\"ahler manifold, with complex structure induced by $I$ through the projection $\pi:N\to M$ and a K\"ahler form $\omega$ defined through $\eta$ by $\pi^*\omega= d\eta$ (thus the Riemannian metric $g_\eta$, see~\eqref{eqRiemCRcontacMetric} restricts to $\pi^*g_\omega /2$). Notice that, for any $f\in C^\infty(N,\R_{>0})^\xi$, we have
    \begin{equation}\label{eq:baseintegrals}
    \begin{split}
        \bfS(f^{-1}\eta)=&\pi\,\int_M\left(f^{-n}\scal(\omega)   + (n+1)n f^{-n-2}|df|^2_\omega\right)\omega^{[n]}\\
        \bfV(f^{-1}\eta)=&2\pi\,\int_Mf^{-n-1}\omega^{[n]}
    \end{split}        
    \end{equation}
through fibre integration along $N\to M$. Changing variable to $u:=f^{-n/2}$ we obtain for the Einstein--Hilbert functional
\begin{equation*}
    \EH(u^{\frac{2}{n}}\eta)=c_n\frac{\int_M\left(u^2\,\scal(\omega)+2\frac{n+1}{n}|du|^2_\omega\right)\omega^{[n]}}{\left(\int_Mu^{2\frac{n+1}{n}}\omega^{[n]}\right)^{\frac{n+1}{n}}}=c_n\frac{\int_M\left(u^2\,\scal(\omega)+2q|du|^2_\omega\right)\omega^{[n]}}{\lVert u\rVert_{L^q(\omega)}^2}
\end{equation*}
for $q:=2(n+1)/n$ and $c_n$ an irrelevant dimensional constant. Note that $q<2n/(n-1)$, which is the Sobolev critical exponent on $M$. Hence, the embedding $W^{2,1}\hookrightarrow L^q$ is compact and we can apply the direct method of the calculus of variations to deduce the existence of a (smooth, positive) minimum of the functional
\begin{equation*}
    \widecheck{\EH}: u\mapsto\frac{\int_M\left(u^2\,\scal(\omega)+2q|du|^2_\omega\right)\omega^{[n]}}{\lVert u\rVert_{L^q(\omega)}^2}
\end{equation*}
along the lines of \cite[\S$4$]{LeeParker_Yamabe}, see in particular the proof of Proposition $4.2$ \emph{ibid.} We outline the argument, for the reader's convenience.

A first remark is that $\widecheck{\EH}$ is homogeneous, so that we can limit ourselves to consider $\widecheck{\EH}$ on the set of smooth, positive functions $u$ on $M$ such that $\lVert u\rVert^2_{L^q(\omega)}=1$. Clearly $\widecheck{\EH}$ is bounded below, so we can take a minimising sequence $u_i$ of smooth functions. We claim that any such sequence is bounded in $W^{2,1}$. Indeed,
\begin{equation*}
    2q\int_M\left(u^2+\lvert du\rvert^2\right)\omega^{[n]}=\widecheck{\EH}(u)+\int_Mu^2\left(2q-\scal(\omega)\right)\omega^{[n]}
\end{equation*}
and $\int_Mu^2\left(2q-\scal(\omega)\right)\omega^{[n]}$ is bounded by a constant depending only on $\omega$, $q$ and $n$, using H\"older's inequality and $\lVert u\rVert^2_{L^q}=1$. Then, the compact embedding $W^{2,1}\hookrightarrow L^q$ guarantees that there is $\tilde{u}\in W^{2,1}$ such that $\lVert \tilde{u}\rVert^2_{L^q(\omega)}=1$ and $\tilde{u}$ minimizes $\widecheck{\EH}$. Then, $\tilde{u}$ is a weak solution of the semilinear equation
\begin{equation*}
    u\,\scal(\omega)+2q\Delta u=c u^{q-1}
\end{equation*}
for $c=\inf\widecheck{\EH}$. By elliptic regularity, $\tilde{u}$ is in fact smooth, and the maximum principle implies that $\tilde{u}>0$. The required minimizer of $\EH$ then is $\tilde{u}^{-\frac{2}{n}}\eta\in\Cmet(D,I)$.
\end{proof}
\begin{remark}
The Jerison-Lee solution of the CR-Yamabe problem consists in showing that there is some $\underline{\eta}\in\Cmet(D,I)$ such that
\begin{equation*}
    \EH(\underline{\eta})=\YamCRcnst(D,I):=\inf_{\Cmet(D,I)}\EH
\end{equation*}
that in particular will be a cscTW-contact form. In Proposition~\ref{prop:YamabeCR_equivariant} instead we show that there is $\eta'\in\Cmet(D,I)^\xi$ minimising $\EH$ on the set of $\xi$-invariant contact forms,
\begin{equation*}
    \EH(\eta')=\YamCRcnst^\xi(D,I):=\inf_{\Cmet(D,I)^\xi}\EH\geq\YamCRcnst(D,I)
\end{equation*}
and in principle this inequality might be strict. As we mentioned above, if $\YamCRcnst(D,I)\leq 0$ (in particular, if $\YamCRcnst^\xi(D,I)\leq 0$) in fact $\YamCRcnst^\xi$ and $\YamCRcnst$ coincide, as any cscTW-contact form must be $\xi$-invariant by Jerison-Lee's uniqueness result, but in general the situation is less clear. Also, outside of this negative case, it is not guaranteed that a cscTW-contact form will be a minimum of the $\EH$-functional, Proposition~\ref{prop:eigenvalueHessEH} hints at when this might fail.

Note that there is a simple criterion that guarantees $\YamCRcnst^\xi(D,I)\leq 0$, under the assumptions of Proposition~\ref{prop:YamabeCR_equivariant}: indeed, let $\eta\in\Cmet(D,I)^\xi$ be the Sasakian contact form whose Reeb vector field is $\xi$. Then $ d\eta$ induces a K\"ahler form, say $\omega$, on the quotient manifold $M:=N/\mathbb{S}^1_\xi$. Then, $\EH(\eta)$ equals, up to a positive constant, $\mathrm{c}_1(M).[\omega]^{[n-1]}/[\omega]^{[n]}$. So if this quantity is non-positive we deduce that $\YamCRcnst^\xi(D,I)\leq 0$ and $\YamCRcnst(D,I)=\YamCRcnst^\xi(D,I)$. This simple observation will be particularly important in the situation considered in Section~\ref{sss:SASpolarizedManif}, as in that context $M$ and $[\omega]$ are given from the start.
\end{remark}

% \section{A bundle of contact CR structures over the space of K\"ahler metrics of a polarized manifold}
\section{The bundle of contact CR structures defined by a polarized manifold}\label{s:Cmet}
\subsection{Contact CR structures on a transversal holomorphic manifold}
In this section we consider a $2n+1$ dimensional transversal holomorphic manifold $(N,\uI,\xi)$ of Sasaki type. By definition, this means that $\xi$ is a nowhere-vanishing vector field and $\uI$ is an integrable almost complex structure on the local quotient by the flow of $\xi$, such that the following space of compatible Sasaki structures is not empty 
\begin{equation*}
\Sas(N,\uI,\xi):= \left\{ (\Ds,J, \eta)\,\middle|\, 
\begin{array}{c}
    (\Ds,J, \eta) \mbox{ is Sasaki with Sasaki-Reeb vector field } \xi\mbox{ and}\\[-.5pt]
    \mbox{the quotient map } \Ds \ra TN/\langle\xi\rangle \mbox{ intertwines } J \mbox{ and } \uI
\end{array}  \right\}.
\end{equation*}

   Elements of $\Sas(N,\uI,\xi)$ are caracterized by their contact form. Indeed, given a $1$--form $\eta\in \Omega^1(N)^\xi$ such that $\eta(\xi)$ never vanishes, there exists a unique endormorphism, say $J^\eta$, lifting $\uI$ on $\Ds^\eta:=\ker \eta$. Now $(\Ds^\eta,J^\eta, \eta)$ is in $\Sas(N,\uI,\xi)$ if and only if $\eta$ is a contact form with Reeb vector field $\xi$ (i.e $\eta\wedge (d\eta)^n>0$, $\eta(\xi)=1$, $\mL_\xi\eta=0$) such that $d\eta$ (which is thus $\xi$--basic) is of type $(1,1)$ with respect to $\uI$ on the local quotient. Alternatively, to check that $(\Ds,J)=(\Ds^\eta,J^\eta)$ is CR one needs to verify that for any sections $A,B \in \Gamma(\Ds)$ we have           
  \begin{equation}\label{eqCRcondition}
  \begin{split}
   & [A,B]- [J A,J B] \in \Gamma(\Ds)\\
   & [A, J B]- [J A,B] = J([A,B]- [J A,J B]).  
  \end{split}
 \end{equation} The second condition is ensured by the fact that $\uI$ is locally integrable as an almost complex structure on the local quotient by the flow of $\xi$, while the first is equivalent to the fact that $\eta([A,B])= \eta ([J A,J B])$ or, equivalently, that $d\eta(A,B)= d\eta(JA,JB)$. That is, $d\eta_{|_{\Ds}}$ is $(1,1)$ with respect to $J$. 
   \begin{notation}\label{notationSASsubsetFORMS} In view of the previous discussion, we consider $\Sas(N,\uI,\xi)$ as a subset of $\Omega^1(N)^\xi$ where for $\eta \in \Sas(N,\uI,\xi)$ the corresponding Sasaki structure is $(\ker \eta, J^\eta, \eta)$ where $J^\eta \in \End(\ker\eta)$ is the unique lift of $\uI$ to $\ker \eta$.  
  \end{notation}
  
Recall from \cite[Chapter 8]{BG:book},\cite[\S 2]{ACL} that picking an element $\eta_o \in \Sas(N,\uI,\xi)$, and denoting $\Omega^1_{\xi,{\rm cl}}(N)$ the space of closed $\xi$--basic $1$-forms on $N$, we have
$$\Sas(N,\uI,\xi)= \{ \eta_o+d^c_\xi \phi +\alpha\,|\, \alpha \in \Omega^1_{\xi,{\rm cl}}(N), d(\eta_o+ d^c_\xi\phi) (\cdot, \uI \cdot )>0 \},$$ and we define a slice of the action of $\Omega^1_{\xi,{\rm cl}}(N)$ on $\Sas(N,\uI,\xi)$ by 
$$\holSAS(N,\uI,\xi, [\eta_o]) := \{ \eta_o+d^c_\xi \phi \,|\,  d(\eta_o+ d^c_\xi\phi) (\cdot, \uI \cdot )>0 \}.$$

We consider the following space of CR-contact structures  
\begin{equation} \label{eqSliceCmetDEFN}
\begin{split}
\Cmet (\xi,\uI, [\eta_0]) &:= \{ (\Ds,I, f^{-1}\eta )\,|\, \eta= \eta_\Ds^\xi \in \Sc(\xi,\uI,[\eta_{0}]),\,  f\in C^\infty(N,\R_{>0})^\xi  \} \\
&= \left\{ (\Ds,I, \eta ) \,\left|\, (\Ds,I, \eta) \mbox{ compatible with } (\xi,\uI), \,\frac{\eta}{\eta(\xi)} \in \Sc(\xi,\uI,\eta_{0})\right.\right\}
\end{split}
\end{equation} which only depends on the slice $\Sc(\xi,\uI,[\eta_{0}])$ in which $\eta_0$ lies. More generally, we introduce the following space 
\begin{equation}
\begin{split}
\Cmet (\xi,\uI) &:= \{ (\Ds,I, f^{-1}\eta )\,|\, \eta= \eta_\Ds^\xi \in \Sas(\xi,\uI),\,  f\in C^\infty(N,\R_{>0})^\xi  \} \\
&= \left\{ (\Ds,I, \sigma ) \,\left|\, (\Ds,I, \sigma) \mbox{ compatible with } (\xi,\uI), \,\frac{\sigma}{\sigma(\xi)} \in \Sas(\xi, \uI)\right.\right\}.
\end{split}
\end{equation}

\begin{remark} Again, we denote a structure $(\Ds,I, \eta) \in \Cmet (\xi,\uI, [\eta_0])$ by its contact form, i.e $\eta \in \Cmet (\xi,\uI, [\eta_0])$ means $(\ker \eta,I, \eta)\in\Cmet (\xi,\uI, [\eta_0])$ with $I$ lifting $\uI$. Since $(\xi,\uI)$ is fixed, this should not cause any confusion. Therefore, we see $\Cmet  (\xi,\uI, [\eta_0])$ as a subset of $\Omega^1(N)^\xi$, the space of $\xi$-invariant $1$-forms on $N$.
\end{remark}

 There is a natural submersion making $\Cmet(\xi,\uI, [\eta_0])$ into a (trivial) bundle over $\Sc(\xi,\uI, [\eta_0])$ with fibre $C^{\infty}(N,\R_{>0})^\xi$, that is   
 \begin{equation}\label{eq:BdlZoverXi}\begin{array}{rccl}
  \kappa:& \Cmet(\xi,\uI, [\eta_0]) &\longrightarrow & \Sc(\xi,\uI, [\eta_0])\\
 & \tilde{\eta}&\mapsto & \tilde{\eta}(\xi)^{-1}\tilde{\eta}. 
 \end{array}\end{equation} Given $\eta \in  \Sc(\xi,\uI, [\eta_0])$, $\kappa^{-1}(\eta) = \Gamma((\ker \eta)_+^0)^\xi$ and each $\tilde{\eta}\in \kappa^{-1}(\eta)$ gives the same contact structure $\ker \tilde{\eta}= \ker \kappa(\tilde{\eta})$ and, thus, the same CR-structure lifting $\uI$.

 \begin{notation} Our main purpose in this paper is to study the functionals $\bfV,\bfS$, and $\EH$ defined in \S\ref{sec:TWconnection} and \S\ref{sec:vertical_variations}, on the space $\Cmet(\uI,\xi)$. As $(N,\uI,\xi)$ is fixed, for $\sigma \in \Cmet(\uI,\xi)$ and $F= \bfV,\bfS \mbox{ or } \EH$, there will be no confusion to write $F(\sigma)$ for $F(\ker \sigma, I^{\sigma}, \sigma)$.      
 \end{notation}

The action of $\Omega_{\xi,{\rm cl}}(N)$ on $\Sas(N,\uI,\xi)$ leaves invariant the transversal K\"ahler structure induced by elements of $\Sas(N,\uI,\xi)$, and for any $\eta\in \Sas(N,\uI,\xi)$ wedging with $(d\eta)^n$ kills $\xi$-basic forms. Based on this, one can deduce this basic result.
\begin{lemma}\label{lem:RestrictSlice}\cite{BG:book,FOW}
  The functionals $\bfV, \bfS$ and $\EH$ are constant along orbits of the action of the additive group $\Omega_{\xi,{\rm cl}}(N)$ on $\Cmet(\uI,\xi)$ given by $(\alpha, \sigma) \mapsto \sigma + \sigma(\xi)\alpha$.   
\end{lemma}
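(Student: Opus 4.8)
\textbf{Proof plan for Lemma~\ref{lem:RestrictSlice}.}
The plan is to show that each of the three functionals depends only on the data $(d\eta, \scal(\eta), \dots)$ that is unchanged when $\eta$ is replaced by $\eta + \eta(\xi)\alpha$ for a closed $\xi$-basic $1$-form $\alpha$, and then to combine these. First I would observe that the action is well-defined: if $\sigma \in \Cmet(\uI,\xi)$ and $\alpha \in \Omega^1_{\xi,{\rm cl}}(N)$, then $\sigma' := \sigma + \sigma(\xi)\alpha$ satisfies $\sigma'(\xi) = \sigma(\xi)$ (since $\alpha$ is $\xi$-basic, hence $\alpha(\xi)=0$), and $d\sigma' = d\sigma + d(\sigma(\xi))\wedge\alpha + \sigma(\xi)\,d\alpha$. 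Writing $f := \sigma(\xi)^{-1}$ and $\eta := f\sigma \in \Sc$, we have $\sigma(\xi)\alpha = f^{-1}\alpha$, and since the normalisation $\kappa(\sigma') = f\sigma' = \eta + \alpha$ lands in $\Sc(\xi,\uI,[\eta_0])$ by the description of $\Sas(N,\uI,\xi)$ recalled before the lemma, the structure $\sigma'$ is again in $\Cmet(\uI,\xi)$. The key point is that $d\kappa(\sigma') = d\eta + d\alpha = d\eta$ because $\alpha$ is closed, so the transversal Kähler structure, and in particular the Tanaka--Webster connection and its scalar curvature, are unchanged.

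Next I would treat $\bfV$. By Convention~\ref{conventionTK} and the formula $\sigma'\wedge(d\sigma')^{[n]} = \eta_{f}\wedge(d\eta_{f})^{[n]}$ where $\eta_f := f^{-1}\kappa(\sigma')$, it suffices to note that $d\eta_f = f^{-1}d\kappa(\sigma') + df^{-1}\wedge\kappa(\sigma')$ differs from the corresponding expression for $\sigma$ only through the replacement $\kappa(\sigma) \rightsquigarrow \kappa(\sigma) + \alpha$; since wedging with $(d\kappa(\sigma))^n$ annihilates the $\xi$-basic form $\alpha$ (as $d\kappa(\sigma)$ is $\xi$-basic of maximal transversal degree), the extra terms involving $\alpha$ all vanish upon wedging up to top degree. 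Hence $\sigma'\wedge(d\sigma')^{[n]} = \sigma\wedge(d\sigma)^{[n]}$ pointwise, and $\bfV(\sigma') = \bfV(\sigma)$. For $\bfS$, I would use formula~\eqref{eq:positiveTOTcscK} (together with~\eqref{eq:TOT_TWscalarNONinvariantcase} in the non-invariant case): every ingredient there --- $\scal(\eta)$, the metric norm $|df|^2_\eta$, the volume form $\eta\wedge(d\eta)^{[n]}$ --- is determined by the transversal Kähler structure $d\eta$ and the conformal factor $f$, both of which are unchanged by the action, so $\bfS(\sigma') = \bfS(\sigma)$. Finally, $\EH = \bfS/\bfV^{n/(n+1)}$ inherits the invariance immediately.

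The only mild subtlety, and the step I would be most careful about, is verifying that the "extra" terms produced by $d\alpha \equiv 0$ and by the wedge with $\alpha$ genuinely vanish rather than merely being exact: this rests on the structural facts that $\alpha$ is both closed \emph{and} $\xi$-basic, so that $d\alpha = 0$ kills the curvature change and $\iota_\xi\alpha = 0$ together with the maximality of the transversal form degree kills $\alpha\wedge(d\eta)^n$. These are exactly the two properties recorded in the sentence preceding the lemma ("leaves invariant the transversal Kähler structure" and "wedging with $(d\eta)^n$ kills $\xi$-basic forms"), so the argument is really a bookkeeping exercise once those are invoked; I would cite \cite{BG:book,FOW} for the underlying transversal Hodge-theoretic facts and keep the proof to a few lines.
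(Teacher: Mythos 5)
Your argument is correct and is essentially the paper's own proof: the paper records exactly the two facts you isolate (the action leaves the transversal K\"ahler structure invariant since $d(\eta+\alpha)=d\eta$ for closed basic $\alpha$, and wedging with $(d\eta)^n$ kills $\xi$-basic forms) and leaves the same bookkeeping to the reader. The only nuance is that the Tanaka--Webster \emph{connections} of $\eta$ and $\eta+\alpha$ are genuinely different objects (the contact distributions differ), but their scalar curvatures, the norms of basic forms such as $df$, and the volume forms all coincide because they are transversal quantities, which is all your computation actually uses.
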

Hence, we will mostly restrict to study these functionals on the slices $\Cmet(\xi,\underline{I},[\eta_0])$, for some $\eta_0\in\Sas(N,\underline{I},\xi)$.

\subsubsection{The Sasaki geometry of a polarized manifold}\label{sss:SASpolarizedManif}

Throughout this paper, $M= (M,\uI)$ is a compact (smooth) complex manifold and $L= (L, \hat{I})$ is a holomorphic ample line bundle over $M$, with $\pi : L\ra M$ the holomorphic bundle map. The letters $\uI$ and $\hat{I}$ denote respectively the integrable almost complex structures on $M$ and on the total space of $L$. It will not be confusing to denote as well $\pi : L^{-1}\ra M$ the dual holomorphic bundle map or its restriction to any of its subsets, and $\hat{I}$ also denotes the integrable almost complex structure on $L^{-1}$, induced by the one on $L$. Also, we identify the $0$-sections of these bundles with the manifold $M$ itself.

Any positively curved Hermitian metric $h$ on $L$, provides a Hermitian metric on $L^{-1}$ and a norm function $r_h: L^{-1}\ra\R_{\geq 0}$ so that we can define a Sasaki manifold as
\begin{equation*}
\left(N_h:= r_h^{-1}(1),\ \Ds^h:=TN_h\cap \hat{I}TN_h,\ \hat{I}_{\Ds^h},\ \eta_h:=\iota_{N_h}^*d^c\log r_h,\ \xi\right),
\end{equation*}
see for example \cite[Chapter 9]{BG:book}. Here $\iota_{N_h}$ is the inclusion of $N_h$ in $L^{-1}$ and $\xi$ is the generator of the fiberwise circle action on $\pi: L^{-1}\ra M$, the form $\eta_h$ is contact, $\Ds^h= \ker \eta_h$ and $\hat{I}_{\Ds^h}\in \End(\Ds^h)$ is the restriction of $\hat{I}$ to ${\Ds^h}$.

The construction depends only on the Hermitian metric $h$ and the complex structure on the total space of $L$. As pointed out in \cite[\S 2]{ACL}, when varying the Hermitian metric on $L$ it is often convenient to compare the Sasaki structures obtained on the common quotient by the radial $\R_{>0}$--action $N:=(L^{-1}\setminus M)/ \R_{>0}$ where, when fixing a Hermitian metric $h$, any other (positively curved) Hermitian metric $\tilde{h}=e^{\phi}h$, with $\phi\in \pi^*C^{\infty}(M,\R)$ gives a Sasaki structure
\begin{equation*}
\left(N,\ \Ds^{\tilde{h}}= \ker  ( d^c\log r_h + d^c_\xi\phi),\  \hat{I}_{\Ds^{\tilde{h}}},\ \eta_{\tilde{h}} =d^c\log r_h + d^c_\xi\phi,\  \xi\right)
\end{equation*}
where $d^c_\xi\phi = - d\phi \circ I^\xi$ and $I^\xi\in \End(TN)$ is defined by the conditions $I^\xi(a) = \hat{I}a$ if $a\in \Ds^h$ and $I^\xi(\xi)=0$. Note that the CR structures obtained as above all induce the same transversal holomorphic structure, say $(\uI, \xi)$, on $N$. As the notation suggests, the transversal holomorphic structure here is essentially the complex structure on $M$.

\bigbreak

We denote by $\holSAS(M,L)$ the set of Sasaki structures on $(N,\uI, \xi)$ obtained when varying the positively curved Hermitian metrics on $L$. Elements in $\holSAS(M,L)$ are determined by their contact form, since the holomorphic transversal structure is fixed, and we consider $\holSAS(M,L)$ as a subset of the $\xi$--invariant $1$--forms $\Omega^1(N)^\xi$.

It is well known, \cite[Chapter 8]{BG:book},\cite[\S 2]{ACL}, that $\holSAS(M,L)$ is connected and given $\eta_o \in \holSAS(M,L)$ we have $$\holSAS(M,L)\simeq \{ \eta_o+d^c_\xi \phi\,|\, d(\eta_o+ d^c_\xi\phi) (\cdot, \uI \cdot )>0 \}=: \holSAS(N,\uI,[\eta_o])$$ for $(N,\uI)$ determined as above (i.e $N=(L^{-1}\setminus M)/ \R_{>0}$). That is, given $\eta_1 \in \holSAS(M,L)$ there exists a unique $\alpha\in \Omega^1_\xi(N)$ such that $\eta_1-\alpha \in \holSAS(N,\uI,[\eta_o])$. Therefore, there is a one to one correspondence between $\holSAS(M,L)$ and the space of K\"ahler metrics in $2\pi c_1(L)$ since for any $\eta \in \holSAS(M,L)$ there exists a unique $\omega \in 2\pi c_1(L)$ such that $$\pi^* \omega= d\eta.$$

\begin{notationremark}\label{notrem:holSas}
Conversely, given a Sasaki manifold $(N,\Ds, I,\eta_0, \xi)$ which is quasi-regular, i.e when the orbits of the flow of $\xi$ are closed and thus $\xi$ is the generator of a circle action $\bS^1_\xi$, then $\Sc(\xi,\uI,[\eta_{0}])$ is in bijection with the K\"ahler metrics lying in the class $[\omega_0] \in H^{1,1}(M,\R)$ where $\pi^*\omega_0=d\eta_0$, on the orbifold K\"ahler quotient $M=N/\bS^1_\xi$. From the previous discussion, we get that $\Sc(\xi,\uI,[\eta_{0}])$ is parametrized by the positively curved Hermitian metrics on a holomorphic ample line bundle $(M,L)$. Therefore, when $\xi$ is quasi-regular, we denote $\Sc(M,L)=\Sc(\xi,\uI,[\eta_{0}])$ where it should be understood that the holomorphic structure on $L$ is fixed. In view of this correspondence, we denote $\Cmet(M,L)=\Cmet(\xi,\uI, [\eta_0])$.
\end{notationremark}

By Lemma~\ref{lem:RestrictSlice}, the discussion above, the Webster Theorem (recalled in Theorem~\ref{theoWEBSTER}) and Lee's formula~\ref{eq:TannoFormula}, for any functionals $F=\bfV,\bfS$ or $\EH$ and $\sigma \in \Cmet(M,L)$, the value $F(\sigma)$ only depends on the function $f=\sigma(\xi)^{-1}$ and the transversal K\"ahler structure $( \pi^*\omega =d (f\sigma), \uI)$. Therefore, for convenience, we often consider the functionals we study as functional on pairs $(\omega,f)$ such that $\omega$ is a K\"ahler metric in $c_1(L)$ on the compact complex orbifold $(M,\uI)$ and $f\in C^\infty(M,\R_{>0})$. When this precision is needed we denote
\begin{equation}\label{eq:notationWidecheck}
    \widecheck{\bfV}(\omega, f) := \frac{1}{2\pi} \bfV(f^{-1}\eta),\;\; \widecheck{\bfS}(\omega, f) := \frac{1}{\pi} \bfS(f^{-1}\eta)\; \mbox{ and } \; \widecheck{\EH}(\omega, f) := \frac{\widecheck{\bfS}(\omega, f)}{\widecheck{\bfV}(\omega, f)^{n/n+1}}
\end{equation} where $\pi^* \omega= d\eta$ and the Convention~\ref{conventionTK} is taken into account. Note that \begin{equation}\label{eq:notationWidecheck2}
\begin{split}
  \widecheck{\bfS}(\omega, f) &= \int_M  \left(\frac{\scal(\omega)}{f^n} +n(n+1)\frac{|df|^2_{\omega}}{f^{n+2}} \right) \omega^{[n]}\\
  &= 2\int_M \frac{\Ric(\omega) \wedge \omega^{[n-1]}}{f^n}  +n(n+1)\int_M\frac{|df|^2_{\omega}}{f^{n+2}} \omega^{[n]}. 
  \end{split}
\end{equation}  Of course, this makes sense only when $\xi$ is quasi-regular but using continuity with respect to the Reeb vector field of the various geometric quantities we study, it is often enough to treat the quasi-regular case.

\subsection{Variational formulas along the transversal directions} The {\it transversal directions} refer to the variations which are transversal to the fibre of the bundle map~\eqref{eq:BdlZoverXi}, that is $\kappa: \Cmet(\xi,\uI, [\eta_0]) \longrightarrow  \Sc(\xi,\uI, [\eta_0])$. In particular, for such transversal variation $t\mapsto\tilde{\eta}_t$ the contact structure $\ker{\tilde{\eta}_t}$ varies.

When $\xi$ is quasi-regular (thus associated to a polarized variety $(M,L)$), a transversal variation projects via the identification $\Sc(\xi,\uI, [\eta_0])\simeq \{\mbox{K\"ahler metrics in } 2\pi c_1(L)\}$ to a variation within the K\"ahler class, see \S\ref{sss:SASpolarizedManif}. 

\begin{lemma} \label{lem:varTransvDir} Let $(N,\uI, \eta_0)$ be a compact Sasaki manifold with Sasaki-Reeb vector field $\xi$ and, for $\epsilon>0$, let $(-\epsilon,\epsilon) \ni t\mapsto \phi_t \in C^\infty(N,\R)^\xi$ be a smooth path with $\phi_0=0$ such that $\eta_{t} := \eta+ d^c_\xi\phi_t \in \Xi(\xi,\uI, [\eta_0])$ for all $t \in (-\epsilon,\epsilon)$. For any $f\in C^\infty(N,\R_{>0})^\xi$, denoting $\eta_{t,f} := f^{-1}\eta_t$, $\eta_f= f^{-1}\eta$ and $dv_f= \eta_f \wedge (d\eta_{f})^{[n]}$, we have 
\begin{equation}\label{e:ScalvariationTRANS}
\frac{d}{dt}\!\!\left(\bfS(\eta_{t,f})\right)_{t=0} = \!n\!\int_N\!\! f^{-2} \left(\nabla^-d f , \nabla^-d \dot{\phi}\right)_{\eta_f}dv_f +n\!\int_N \!\frac{f^{-2}}{2}\scal(\eta_{f}) (df,d\dot{\phi})_{\eta_f}dv_f
\end{equation} 
\begin{equation}\label{e:EHvariationTRANS}
\frac{d}{dt}\!\!\left(\EH(\eta_{t,f})\right)_{t=0} =\frac{n}{\bfV_{\eta_f}^{\frac{n}{n+1}}}\int_N\!\! f^{-2}\left[ \left(\nabla^-d f , \nabla^-d \dot{\phi}\right)_{\eta_f} + \frac{1}{2}\left(\scal(\eta_{f}) - \frac{\bfS(\eta_f)}{\bfV(\eta_f)} \right) (df,d\dot{\phi})_{\eta_f}\right]dv_f \end{equation} 
where, for any $\xi$--basic $1$-form $\alpha$, $\nabla^-\alpha$ denotes the $I$--anti-invariant part of the bilinear form $\nabla\alpha$, and $\nabla$ is the Tanaka--Webster connection of $(\ker \eta, I,\eta)$. 
\end{lemma}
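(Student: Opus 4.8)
The plan is to reduce the computation to the quasi-regular case, where $\eta_f$ projects to a pair $(\omega,f)$ with $\pi^*\omega = d(f\eta_f) = d\eta$, and differentiate the explicit formula~\eqref{eq:notationWidecheck2} for $\widecheck{\bfS}(\omega,f)$, treating $f$ as fixed while $\omega_t = \omega + dd^c\dot\phi\, t + O(t^2)$ moves in the K\"ahler class. By continuity of all the relevant quantities with respect to the Reeb vector field (as remarked after~\eqref{eq:notationWidecheck2}), establishing the formula on the dense set of quasi-regular $\xi$ suffices; so from the start I would work downstairs on $M$. Write $\widecheck{\bfS}(\omega,f) = 2\int_M f^{-n}\,\Ric(\omega)\wedge\omega^{[n-1]} + n(n+1)\int_M f^{-n-2}|df|^2_\omega\,\omega^{[n]}$ and compute $\frac{d}{dt}\big|_{t=0}$ of each term.

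For the first term, $\frac{d}{dt}\big(\Ric(\omega_t)\wedge\omega_t^{[n-1]}\big)$: the variation of the Ricci form is $\dot{\Ric} = -\tfrac12 dd^c(\Delta_\omega\dot\phi)$ (using $\dot\omega = dd^c\dot\phi$ with the paper's normalisation), and $\frac{d}{dt}\omega^{[n-1]} = dd^c\dot\phi\wedge\omega^{[n-2]}$. Integrating by parts — moving $dd^c$ off the Ricci-variation term and using the Bianchi-type identity $d(\Ric\wedge\omega^{[n-2]}) = 0$, or more simply the standard fact that $\frac{d}{dt}\int_M h\,\Ric(\omega_t)\wedge\omega_t^{[n-1]} = \int_M\!\big(\Delta_\omega h - \langle \Ric(\omega), dd^c h\rangle\big)\cdots$ type manipulations — one rewrites everything in terms of $\dot\phi$, $\scal(\omega)$, and the Hessian of $f^{-n}$. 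For the second term, expand $|df|^2_{\omega_t}$ (only the metric moves, $f$ is fixed) to get $-\langle dd^c\dot\phi, df\otimes df\rangle_\omega$-type contributions, plus the $dd^c\dot\phi\wedge\omega^{[n-1]}$ from varying the volume form. Collecting all contributions and integrating by parts repeatedly to isolate $\dot\phi$ (equivalently, integrating by parts to put the derivatives onto $f$ and $\scal$), the terms should reorganise into $\int_M \dot\phi\,\big(\text{operator applied to } f^{-n-2}, \text{involving Hessians and } \scal\big)$; then one re-reads this, using~\eqref{eq:TannoFormula} and Lemma~\eqref{eq:positiveTOTcscK} backwards, as the claimed pairing $\int f^{-2}\big[(\nabla^- df,\nabla^-d\dot\phi) + \tfrac12\scal(\eta_f)(df,d\dot\phi)\big]$ against the volume $dv_f$ upstairs. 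Crucially, the terms involving $\Delta\dot\phi$ paired with $\Delta(f^{-n})$ (i.e. the $I$-invariant part of the Hessians) must cancel against the variation-of-volume terms, leaving only the $I$-anti-invariant (trace-free-type) Hessian pairing $(\nabla^- df, \nabla^- d\dot\phi)$; this cancellation is the reason $\nabla^-$ rather than the full $\nabla$ appears.

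The formula~\eqref{e:EHvariationTRANS} for $\EH$ then follows purely formally: $\EH = \bfS/\bfV^{n/(n+1)}$, so $\frac{d}{dt}\EH = \bfV^{-n/(n+1)}\big(\dot{\bfS} - \tfrac{n}{n+1}\tfrac{\bfS}{\bfV}\dot{\bfV}\big)$, and one needs $\dot{\bfV}$ along a transversal variation. Since $\bfV(\eta_f) = \int_N f^{-n-1}\eta\wedge(d\eta)^{[n]}$ with $d\eta = \pi^*\omega$ downstairs $= \int_M f^{-n-1}\omega^{[n]}$ (up to the $2\pi$), and $\frac{d}{dt}\omega^{[n]} = dd^c\dot\phi\wedge\omega^{[n-1]}$, integration by parts gives $\dot{\bfV} = \frac{n(n+1)}{?}\int f^{-n-2}(df,d\dot\phi)_\omega\,\omega^{[n]}$, i.e. upstairs $\dot{\bfV}_{\eta_f} = \tfrac{n}{2}\int_N f^{-2}(df,d\dot\phi)_{\eta_f}\,dv_f$ with the paper's factor conventions — note the volume moves only through the $(df,d\dot\phi)$ term, \emph{not} through a $\Delta\dot\phi$ term, precisely because $f^{-n-1}$ has no first-order pairing with $dd^c\dot\phi$ after integrating by parts twice (again the $I$-invariant Hessian piece integrates away against the volume change). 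Substituting $\dot{\bfS}$ and $\dot{\bfV}$ and combining the two $(df,d\dot\phi)$ coefficients yields the factor $\tfrac12(\scal(\eta_f) - \bfS(\eta_f)/\bfV(\eta_f))$ as stated.

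\textbf{Main obstacle.} The delicate point is the bookkeeping of the integration by parts for $\dot{\bfS}$: there are several a priori distinct second-order terms ($\Delta\dot\phi$ against $\Delta(f^{-n})$, Hessian against Hessian, $\Ric$-variation against $f^{-n}$, metric-variation inside $|df|^2_\omega$, and volume-variation), and the claim is that after all integrations by parts everything collapses to exactly the $I$-anti-invariant Hessian pairing plus a zeroth-order $\scal$ term — with the scalar curvature appearing in the combination dictated by~\eqref{eq:positiveTOTcscK}. Getting the cancellation of the $I$-invariant (Laplacian) pieces right, and correctly tracking the normalisation constants coming from Convention~\ref{conventionTK} and the $\pi$, $2\pi$ factors in~\eqref{eq:notationWidecheck}, is where the real care is needed; conceptually it should be viewed as the first variation of a "weighted" total scalar curvature, so one may alternatively borrow the weighted-K\"ahler first-variation formula of~\cite{Lahdili} with weight $\mathrm{v}(x)=x^{-n-1}$ (cf. Remark~\ref{rem:NONinvTANNO}) and specialise, which would shortcut the direct computation.
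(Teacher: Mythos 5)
Your overall strategy is the same as the paper's: reduce to the quasi-regular case by density of quasi-regular Reeb fields and continuity, then differentiate the explicit quotient formula~\eqref{eq:notationWidecheck2} with $f$ fixed and $\omega_t=\omega+t\,dd^c\dot\phi$, integrate by parts, and reorganise via Lee--Tanno's formula~\eqref{eq:TannoFormula}; your treatment of $\dot{\bfV}$ and the formal passage from $\dot{\bfS},\dot{\bfV}$ to~\eqref{e:EHvariationTRANS} are fine. The gap is in the one place where the lemma actually lives: the claimed cancellation mechanism producing $\nabla^-$ is not correct. If you vary $2\int_M f^{-n}\Ric(\omega_t)\wedge\omega_t^{[n-1]}$ directly, the term $\int\Delta\dot\phi\,\Delta(f^{-n})\,\omega^{[n]}$ does \emph{not} cancel against the volume-variation terms (those are of the form $\int f^{-n}\scal\,\Delta\dot\phi$ and $\int f^{-n-2}|df|^2\Delta\dot\phi$, a completely different shape); rather, after integration by parts it combines with the Ricci-contraction term $2\int\dot\phi\,(\Ric,dd^c(f^{-n}))$ and a first-order term into the Lichnerowicz-type operator, via the identity $(\nabla^-d)^*\nabla^-d h=\tfrac12\Delta^2h+(\Ric,dd^ch)+\tfrac12(d\scal,dh)$. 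This is exactly why the paper instead starts from the standard variation formula $\tfrac{d}{dt}\scal(\omega_t)=-2(\nabla^-d)^*\nabla^-d\dot\phi+(d\scal,d\dot\phi)$, so that the anti-invariant Hessian is present from the outset; the genuine cancellations in the paper then occur between the block coming from the variation of $|df|^2_{\omega_t}$ and of the volume form (handled with the K\"ahler identity $-\delta^c=[\Lambda,d]$) and the extra $\Delta f$ and $|df|^2$ terms generated when $\scal(\omega)$ is replaced by $f^{-1}\scal(\eta_f)+2(n+1)f^{-1}\Delta f+(n+1)(n+2)f^{-2}|df|^2$. Note also that identifying ``$\Delta\dot\phi$ paired with $\Delta(f^{-n})$'' with ``the $I$-invariant part of the Hessian pairing'' is only true after integration and up to curvature terms, so the bookkeeping you defer is precisely where the proof is, and as described it would not close up.

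A second caveat: the proposed shortcut via the weighted first-variation formula of Lahdili with $\mathrm{v}(x)=x^{-n-1}$ does not directly apply. In the weighted cscK framework the weight is $\mathrm{v}\circ\mu_\omega$ for a Killing potential/moment map that \emph{moves with} $\omega$ along the variation, whereas here $f$ is an arbitrary fixed positive function held constant while $\omega$ varies; this is exactly the distinction made in Remark~\ref{rem:NONinvTANNO}, and it is reflected in the fact that both terms of~\eqref{e:ScalvariationTRANS} vanish when $f$ is a Killing potential and the path is constrained accordingly, while for general $f$ they do not. (Minor point: with the paper's sign conventions, $\dot\Ric=+\tfrac12 dd^c\Delta\dot\phi$, not $-\tfrac12 dd^c\Delta\dot\phi$; harmless here, but the signs matter once you start tracking the cancellations.)
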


\begin{proof}
For simplicity and to rely on K\"ahler formalism we assume that $\xi$ is quasi-regular and thus associated to a polarized K\"ahler orbifold $(M,L, \omega)$. All the terms appearing in~\eqref{e:ScalvariationTRANS} and in~\eqref{e:EHvariationTRANS} are well-defined for more general CR-contact structure such that, at $t=0$, $\eta \in \Xi(\xi,\uI, [\eta_0])$ is Sasaki. Moreover, they depend continuously on the data (in particular on $\xi$) and the space of quasi regular vector fields is dense in the Sasaki-Reeb cone of a Sasaki manifolds, therefore the Lemma will follows from the quasi-regular case.

We work on the K\"ahler quotient $(M,\omega)$ where we consider $f,\phi\in C^\infty(M,\R)$ with $f>0$ fixed and smooth variation of the form $\omega_t=\omega + t\,dd^c\phi$. Up to a positive constant, see the notation ~\eqref{eq:notationWidecheck}, the total Tanaka--Webster scalar curvature is  
$$\widecheck{\bfS}(\omega_t,f) = \int_M f^{-n} (\scal_{\omega_t} +n(n+1)f^{-2}|df|_{\omega_t}^2 ) \omega_t^{[n]}$$ where $\scal_{\omega_t}$ is the scalar curvature of the K\"ahler structure $(\omega_t,\uI)$ on $M$. 

Recall the following well-known formulas, see e.g. \cite[Chapter 3]{PGbook}, where all the operators and norms are taken with respect to $\omega$
\begin{equation*}
\begin{split}
    &\frac{d}{dt}\omega^{[n]} =-\Delta\phi\,\omega^{[n]}\\
    &\frac{d}{dt} \scal(\omega_t)= -2(\nabla^-d)^* \nabla^-d \phi  +(d\scal ,d\phi)\\
    &\frac{d}{dt} |df|^2_\omega = -(df\wedge d^cf, dd^c\phi) 
\end{split}
\end{equation*}
the last one being a direct application of $|df|_{\omega}^2\omega^{[n]} = df\wedge d^cf \wedge \omega^{[n-1]}$.
Taking the derivative of $\widecheck{\bfS}(\omega_t,f)$, we get  
\begin{equation}\label{eq:derivStransv}
\begin{gathered}
\frac{d}{dt} \widecheck{\bfS}(\omega_t,f) =\int_M f^{-n}\left(-2(\nabla^-d)^* \nabla^-d \phi\right) \omega_t^{[n]}+\int_M f^{-n}\,(d\scal_{\omega_t} ,d\phi)_{\omega_t}\, \omega_t^{[n]}\\
-\int_M n(n+1)f^{-n-2}\,(df\wedge d^cf, dd^c\phi)_{\omega_t} \omega_t^{[n]}- \int_M f^{-n}  \left(\scal(\omega_t) +n(n+1)f^{-2}|df|_{\omega_t}^2\right)\Delta \phi \omega_t^{[n]}.
\end{gathered}
\end{equation}
First we simplify the terms where $\scal_\omega =\scal(\omega)$ appears, that is 
\begin{equation}\label{eqderivStransvScalterm}
    \begin{split}&\int_M f^{-n} (d\scal(\omega) ,d\phi) \omega_t^{[n]}  - \int_M f^{-n} \scal(\omega)\,\Delta \phi\,\omega_t^{[n]} \\
     &=\int_M f^{-n} (d\scal_{\omega} ,d\phi) \omega^{[n]}  - \int_M \left(d\left( f^{-n} \scal(\omega_t)\right) , d\phi\right)_{\omega} \omega^{[n]} \\
     &=-\int_M  \scal_{\omega} \, (d f^{-n},d\phi) \omega^{[n]}  = n\int_M  f^{-n-1}\scal_{\omega} \, (d f,d\phi)\omega^{[n]}  \\
     &=\int_M  f^{-n-1}\left(f^{-1}\scal(f^{-1}\eta) +2(n+1)f^{-1}\Delta^{\omega} f +(n+1)(n+2) f^{-2}|df|^2_{\omega} \right)(d f,d\phi)) \omega^{[n]}
 \end{split}
\end{equation} where we have replaced 
\[
\scal_\omega=f^{-1}\scal(f^{-1}\eta)+2(n+1)f^{-1}\Delta_\omega(f)+(n+1)(n+2)f^{-2}|df|^2_\omega.
\]
For the first term of~\eqref{eq:derivStransv} we compute  
$$-2 (\nabla^-d)^* \nabla^-d f^{-n} = 2n (\nabla^-d)^* \nabla^- f^{-n-1}d f = 2n (\nabla^-d)^*  [f^{-n-1} \nabla^-d f + (df^{-n-1} \otimes df)^{-}]$$ where $(df^{-n-1} \otimes df)^{-}$ is the $\uI$--anti-invariant part. Then writing $h=f^{-n-1}$ and using the definition of $\delta$ with respect to an orthonormal frame we have 
\begin{equation}
    \begin{split}2n(\nabla^-d)^*[(dh \otimes df)^{-}] &= 2n\delta\delta[(dh \otimes df)^{-}]= n\delta\delta[dh \otimes df - d^ch \otimes d^cf] \\
    &=n\delta[\Delta h df - \nabla_{\nabla h} df + \nabla_{J\nabla h} d^cf] \\
    &=n\delta[\Delta h df + (J\nabla h)\intprod\! dd^cf] 
 \end{split}
\end{equation}
The remaining terms in~\eqref{eq:derivStransv} are the third and the last, that give
\begin{equation}\label{eq:derivStransv3}
    \begin{split}
      &- n(n+1)\int_M  [\delta^c\delta ({f^{-2-n}}df\wedge d^cf) + \Delta(f^{-2-n}|df|_{\omega}^2)] \phi\,  \omega^{[n]}\\
        &= n(n+1)\int_M  \delta[\delta^c (f^{-2-n}df\wedge d^cf) - d(f^{-2-n}|df|_{\omega}^2)] \phi\,  \omega^{[n]}\\
        & =n(n+1)\int_M  \delta[  d(f^{-2-n}|df|_{\omega}^2) + \Lambda(f^{-n-2}df\wedge dd^cf) - d(f^{-2-n}|df|_{\omega}^2)] \phi\,  \omega^{[n]}\\
        & =n(n+1)\int_M  \delta[  \Lambda(f^{-n-2}df\wedge dd^cf)] \phi\,  \omega^{[n]} =-n\int_M  \delta[  \Lambda(df^{-n-1}\wedge dd^cf)] \phi\,  \omega^{[n]}\\
        & =-n\int_M  \delta[ (J\nabla h)\!\intprod\! dd^cf -\Delta f dh] \phi\,  \omega^{[n]}
    \end{split} 
\end{equation} using the K\"ahler identity $-\delta^c=[\Lambda,d]$. Therefore, the first, third and last terms of~\eqref{eq:derivStransv} sum up to  \begin{equation}\label{eq:derivStransv4}
    \begin{split}
      &\int_M f^{-n} \left[ (-2(\nabla^-d)^* \nabla^-d \phi - n(n+1){f^{-2}}(df\wedge d^cf, dd^c\phi) -n(n+1)f^{-2}|df|^2\Delta \phi \right] \omega_t^{[n]} \\
        &= n\int_M \phi [2(\nabla^-d)^*  (f^{-n-1} \nabla^-d f) +\delta((\Delta f^{-n-1}) df + \Delta f df^{-n-1})] \omega^{[n]}.
    \end{split}
\end{equation} Since $(\Delta f^{-n-1}) df + \Delta f df^{-n-1} = [-2(n+1)f^{-n- 2} \Delta f  -(n+1)(n+2)f^{-3-n}|df|^2] df$ the last term of~\eqref{eq:derivStransv4} cancels the last term of~\eqref{eqderivStransvScalterm} and we obtain
\begin{equation}\label{eq:derivStrans5}
      \frac{d}{dt} \widecheck{\bfS}(\omega_t,f)_{t=0} =  2n\int_M \phi (\nabla^-d)^*  (f^{-n-1} \nabla^-d f)\, \omega^{[n]} + n \int_M  \left(f^{-2-n}\scal(f^{-1}\eta) \right)(d f,d\phi)\, \omega^{[n]}
\end{equation}

To obtain the variation of the Einstein--Hilbert functional we use that $\frac{d}{dt} \widecheck{\bfV}(\omega_t,f) = (n+1)\int_M f^{-n-2}(df,d\phi)\omega^{[n]}$ and thus 
\begin{equation}\label{d-EH}
\begin{split}
    \frac{d}{dt}\widecheck{EH}&(\omega_t,f)=\frac{\dot{\widecheck{\bfS}}_{(\omega,f)}}{\widecheck{\bfV}_{(\omega,f)}^{n/n+1}} -  \frac{n}{n+1}\frac{\widecheck{\bfS}_{(\omega,f)}}{\widecheck{\bfV}_{(\omega,f)}}\frac{\dot{\widecheck{\bfV}}_{(\omega,f)}}{\widecheck{\bfV}_{(\omega,f)}^{n/n+1}} \\
    &= \frac{n}{\widecheck{\bfV}_{(\omega,f)}^{n/n+1}} \int_M  \left( 2f^{-n-1}(\nabla^-d\phi,  \nabla^-d f) + f^{-n-2}\left(\scal(f^{-1}\eta) -\frac{\widecheck{\bfS}_{(\omega,f)}}{\widecheck{\bfV}_{(\omega,f)}} \right)(df,d\phi) \right)\omega^{[n]}
    \end{split}
\end{equation} To get back to $N$ we express the inner products appearing in the last formulas in terms of the Riemannian metric associated to $(\eta_f, I)$, we observe that the bilinear forms and $1$--forms involved are $\xi$-basic. That is, they can be written locally in terms of dual basis of $\Ds=\ker\eta$ on which the Riemannian metric associated to $\eta_f$ is conformal to $(g_\eta)_{|_\Ds}$ with a ratio $f^{-1}$, thus on $k$--forms $(\alpha,\beta)_{\eta} = f^{-k}(\alpha,\beta)_{\eta_f}$. Finally $\pi^*g_\omega =2(g_\eta)_{|_\Ds}$ (c.f.~\eqref{eqRiemCRcontacMetric}) gives the formula.
\end{proof}

\begin{remark}\label{rem:otherREEB}  When $f$ is a Killing potential of $(\Ds:= \ker \eta, I, \eta)$, that is when the contact vector field, say $\xi_2$, associated to $\eta_f$ preserves the CR structure, then $(\Ds, I, \eta_f)$ is a Sasaki structure and its Tanaka--Webster scalar curvature is the scalar curvature of the associated Riemannian metric up to the addition by $2n$. In this case, the Reeb vector field of the CR-contact structure $(\Ds, I, \eta_f)$, which is $\xi_2$, generates a compact torus in $\Con(\Ds)\cap \mbox{CR}(\Ds,I)$ which commutes with the one induced by $\xi$. For the rest of this remark, we denote $\bT_2$ the compact torus generated by $\xi$ and $\xi_2$. By a result of Futaki-Ono-Wang \cite{FOW}, the total Tanaka scalar curvature is constant on the space of $\bT_2$--invariant Sasaki structures $\Sas(\xi_2,\uI)^{\bT_2}$ compatible with the induced transversal holomorphic structure $(\xi_2,\uI)$ and having $\xi_2$ as a Reeb vector field. Therefore, one might mistakenly believe that, in this situation, the total Tanaka--Webster scalar curvature we study in this paper $\bfS: \Cmet(\xi,\uI) \ra \R$ is constant on the slice $$\{ \tilde{\eta} \in \Cmet(\xi,\uI)^{\bT_2}\,|\, \tilde{\eta}(\xi)=f\}.$$ However, this slice does not coincide with the space of Sasaki structures $\Sas(\xi_2,\uI_2)^{\bT_2}$. Indeed, given a ${\bT_2}$--invariant $\xi$--basic form $\alpha$, the Reeb vector field of $f^{-1}(\eta+\alpha)$ is $\xi_2$ if and only if $\alpha$ is also $\xi_2$--basic which holds if and only if $\alpha(\xi_2)=0$. Observe that when $\alpha = d^c_{\xi} \phi$ for a function $\phi\in C^\infty(N,\R)^{\bT_2}$, we have $$\alpha(\xi_2)= -d\phi (I^\xi(\xi_2)) = d\eta(\grad_{d\eta}\phi, I\grad_{d\eta}f)= g_\eta(d\phi,df).$$ 
 
 Hence, if the path $\eta_{t,f}$ of the last Lemma belongs to $\Sas(\xi_2,\uI )^{\bT_2}$, the second term of~\eqref{e:ScalvariationTRANS} vanishes. Moreover by \cite[Lemma 1.23.2]{PGbook}, $f$ is a Killing potential if and only if $\nabla^-df=0$. In that case, the first term of~\eqref{e:ScalvariationTRANS} vanishes as well. Summarizing this discussion,~\eqref{e:ScalvariationTRANS} confirms that, as expected, the total Tanaka--Webster scalar curvature is locally constant on $\Cmet(\xi,\uI)\cap \Sas(\xi_2,\uI)^{\bT_2}$. More generally, \cite[Proposition 4.4]{FOW} states, in our notation, that $\bfS$ is constant on $\Sas(\xi_2,\uI)^{\bT_2}$.
 
 Finally, the set $\Cmet(\xi,\uI)\cap \Sas(\xi_2,\uI)^{\bT_2}$ does not contain an interesting family in general. Indeed, $\Xi(\xi,\uI, [\eta])\cap \Sas(\xi_2,\uI)^{\bT_2}$ is parameterized by functions $\phi\in C^\infty(M,\R)^{\bT_2}$ that are also invariant by the complex torus generated by $[\xi_2] \in \mbox{Lie}(T/S^1_\xi)$ and $\uI[\xi_2]$ in $\Aut(M,\uI)$. For example, when $(M,\omega,\uI)$ is toric and $\xi_2$ generic, any such function is constant and therefore $\Xi(\xi,\uI, [\eta])\cap \Sas(\xi_2,\uI)^{\bT_2}$ is a single point.  
 \end{remark}

\subsection{The Einstein--Hilbert functional on $\Cmet$ and cscS metrics}

Let $(N,\uI, \xi)$ be a transversal holomorphic manifold of Sasaki type such that $\xi$ induces the action of a compact torus $\bT$.

\begin{theorem}\label{lem:CRITEH} ({\bf Theorem}~\ref{theoEH=MabOnZ})
Let $\tilde{\eta} \in \Cmet(N,\uI,\xi)^\xi$ be a critical point of the Einstein--Hilbert functional $\EH: \Cmet(N,\uI,\xi)^\xi \ra\R$ then $f:=\tilde{\eta}(\xi)^{-1} \in C^\infty(N,\R_{>0})^\xi$ is a Killing potential and $(\tilde{\eta}, \ker\tilde{\eta}, I^{\tilde{\eta}})$ is a Sasaki structure with constant scalar curvature (cscS). Conversely any cscS structure in $\Cmet(N,\uI,\xi)^\xi$ is a critical point of $\EH$.    
\end{theorem}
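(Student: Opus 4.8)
The plan is to combine the two variational formulas that have already been established: the \emph{vertical} variation~\eqref{eq:Var_EH} along the fibres of $\kappa$, and the \emph{transversal} variation~\eqref{e:EHvariationTRANS} of Lemma~\ref{lem:varTransvDir}. Since $\xi$ is assumed to induce a compact torus action, by density of quasi-regular Reeb vector fields in the Sasaki-Reeb cone and continuity of all the quantities involved, it suffices to treat the quasi-regular case, so that we may write $\tilde\eta = f^{-1}\eta$ for some Sasaki form $\eta \in \holSAS(M,L)$ with transversal Kähler quotient $(M,\omega)$, $\pi^*\omega = d\eta$, and $f\in C^\infty(M,\R_{>0})$. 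The critical point equation then splits into two pieces according to whether we vary $f$ (keeping $\omega$ fixed) or vary $\omega$ within its class (keeping $f$ fixed).

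First I would exploit the vertical critical condition. By Corollary~\ref{coroJLcscTW}, vanishing of~\eqref{eq:Var_EH} for all $\dot f\in C^\infty(N,\R)^\xi$ is equivalent to $\scal(\tilde\eta)$ being constant on $N$; call this constant $C$. Next I would use the transversal critical condition: from~\eqref{e:EHvariationTRANS} (equivalently, from the Kähler-side formula~\eqref{d-EH}), criticality in the transversal directions forces
\begin{equation*}
2(\nabla^- d)^*\!\left(f^{-n-1}\nabla^- d f\right) + f^{-n-2}\!\left(\scal(f^{-1}\eta) - \tfrac{\widecheck{\bfS}}{\widecheck{\bfV}}\right) df \;\perp\; dd^c\phi
\end{equation*}
for all $\xi$-invariant $\phi$, i.e.\ the $L^2$-pairing of the bracketed term with $dd^c\phi$ vanishes identically. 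Since $\scal(f^{-1}\eta)=C$ is now constant, the second summand is $C' f^{-n-2}df$ for a constant $C'$, which is $d$ of a function, hence $L^2$-orthogonal to every $dd^c\phi$ automatically. Therefore the transversal condition reduces to $\int_M \phi\,(\nabla^- d)^*(f^{-n-1}\nabla^- d f)\,\omega^{[n]} = 0$ for all $\phi$, i.e.\ $(\nabla^- d)^*(f^{-n-1}\nabla^- d f) = 0$. Pairing this with $\log f$ (or with $f^{-n}$, after the appropriate integration by parts) and using that $\nabla^-$ is formally the adjoint-type operator appearing here gives $\int_M f^{-n-1}\lvert \nabla^- d f\rvert^2_\omega\,\omega^{[n]} = 0$, whence $\nabla^- df = 0$. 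By \cite[Lemma 1.23.2]{PGbook} (cited in Remark~\ref{rem:otherREEB}) this is exactly the statement that $f$ is a Killing potential for $(\ker\eta, I,\eta)$; equivalently the contact vector field $\xi_2$ associated to $\tilde\eta=f^{-1}\eta$ is CR and Killing, so $(\tilde\eta,\ker\tilde\eta,I^{\tilde\eta})$ is a Sasaki structure. Its Tanaka--Webster scalar curvature is then, up to the additive constant $2n$ noted in Remark~\ref{rem:otherREEB}, the Riemannian scalar curvature of the associated metric, and it is constant by the vertical step; hence the structure is cscS.

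For the converse, suppose $\tilde\eta = f^{-1}\eta \in \Cmet(N,\uI,\xi)^\xi$ is cscS. Then $\scal(\tilde\eta)$ is constant, so the vertical variation~\eqref{eq:Var_EH} vanishes at $\tilde\eta$; and being Sasaki with $f$ a Killing potential means $\nabla^- df=0$, so the first term of~\eqref{e:EHvariationTRANS} vanishes, while the second vanishes because $\scal(\tilde\eta)-\bfS/\bfV$ is constant and is paired with $(df,d\dot\phi)$, whose integral against the volume form is zero (or, more directly, one integrates by parts to move everything onto a total derivative). Thus all first variations of $\EH$ vanish at $\tilde\eta$ and it is a critical point.

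\textbf{Main obstacle.} The delicate point is the passage from "$(\nabla^- d)^*(f^{-n-1}\nabla^- d f)=0$ holds as an $L^2$-orthogonality against all $dd^c\phi$" to "$\nabla^- df = 0$". One must be careful that testing only against potentials $dd^c\phi$ (not against arbitrary $(1,1)$-forms) still pins down the term: this works because $(\nabla^- d)^*(f^{-n-1}\nabla^- d f)$ is (after the identities used in the proof of Lemma~\ref{lem:varTransvDir}) of the form $\delta$ of an exact-up-to-contraction expression, so its $L^2$-pairing with $\phi$ controls it completely; and then one needs the right integration by parts to produce the non-negative quantity $\int f^{-n-1}\lvert\nabla^- df\rvert^2$. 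Keeping track of the conformal weights $f^{-k}$ relating the $\eta$- and $\eta_f$-inner products (as in the last paragraph of the proof of Lemma~\ref{lem:varTransvDir}), and the $+2n$ shift between Tanaka--Webster and Riemannian scalar curvature, is the main bookkeeping hazard; the geometric content is otherwise a direct consequence of the two variational formulas already in hand.
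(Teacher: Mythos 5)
Your overall strategy is the paper's own: vertical criticality gives a cscTW structure via Corollary~\ref{coroJLcscTW}, the transversal formula of Lemma~\ref{lem:varTransvDir} then forces $\nabla^-df\equiv 0$, hence $f$ is a Killing potential, the Reeb field $\Reebmap^{\tilde{\eta}}=f\xi+\grad_{d\eta}f$ is CR and the structure is Sasaki with constant scalar curvature, and the converse is read off the same two variational formulas (the reduction to the quasi-regular case is also how the paper proceeds, though it performs it inside the proof of Lemma~\ref{lem:varTransvDir}). However, two of your justifications are incorrect as stated, even though the conclusions survive. First, the second term in~\eqref{d-EH}/\eqref{e:EHvariationTRANS} does not drop because ``$C'f^{-n-2}df$ is $d$ of a function, hence $L^2$-orthogonal to every $dd^c\phi$'': the pairing there is against $d\dot{\phi}$ with a weight, and an exact $1$-form is not orthogonal to all $d\dot{\phi}$ (integration by parts leaves terms of the form $\int h\,\Delta\dot{\phi}\,\omega^{[n]}$, which do not vanish in general). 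The correct and simpler reason, which is the one the paper uses, is that constant Tanaka--Webster scalar curvature forces $\scal(\tilde{\eta})=\bfS(\tilde{\eta})/\bfV(\tilde{\eta})$, so the prefactor $\scal-\bfS/\bfV$ is identically zero. The same slip reappears in your converse direction: $\int_M f^{-n-2}(df,d\dot{\phi})_\omega\,\omega^{[n]}$ does not vanish in general; it is again the vanishing prefactor that kills the term.

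Second, to get $\nabla^-df\equiv 0$ you do not need to extract the Euler--Lagrange equation and then pair with $\log f$ or $f^{-n}$; in fact neither of those test functions directly produces $\int f^{-n-1}\lvert\nabla^-df\rvert^2$, since $\nabla^-d(\log f)$ and $\nabla^-d(f^{-n})$ contain extra terms proportional to $(df\otimes df)^-$, so the resulting integrand is not manifestly sign-definite. The clean choice is $\dot{\phi}=f$ in~\eqref{d-EH}: with the second term already gone this gives $2\int_M f^{-n-1}\lvert\nabla^-df\rvert^2_\omega\,\omega^{[n]}=0$ at once, hence $\nabla^-df\equiv0$. With these two repairs your argument coincides with the paper's proof.
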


\begin{proof}
 If $\tilde{\eta} \in \Cmet(N,\uI,\xi)^\xi$ is a critical point of the Einstein--Hilbert functional then $\tilde{\eta}$ is a critical point of the restriction of $\EH$ on the vertical direction in the fibre of $\kappa$ containing $\tilde{\eta}$, that is $\EH: \Gamma((\ker \tilde{\eta})^0_+)^\xi \ra \R$. Thus  $(\tilde{\eta}, \ker\tilde{\eta}, I^{\tilde{\eta}})$ is cscTW (i.e $\scal(\tilde{\eta}) = \bfS(\tilde{\eta})/\bfV(\tilde{\eta})$)  by Corollary~\ref{coroJLcscTW}. 
 
 Now, we consider the variation in the transversal directions and use the formula of Lemma~\ref{lem:varTransvDir}. Since $\tilde{\eta}$ is cscTW the second term in~\eqref{e:EHvariationTRANS} is zero and since $\tilde{\eta} \in \mbox{crit}(\EH)$ the first term must vanishes for any variation $\dot{\phi}$. Thus $\nabla^-df \equiv 0$ where $\nabla^-d$ is the $I^{\eta}$-anti-invariant part of the hessian of $f$ with respect to the transversal K\"ahler structure associated to $(I^{\eta},\eta = f\tilde{\eta})$. This implies that $\mL_{\nabla^{g_{\eta}f}} \uI=0$. Therefore the Reeb vector field of $\tilde{\eta}$, which is $\Reebmap^{\tilde{\eta}}= f\xi+\grad_{d\eta}f$ (where $\grad_{d\eta}$ is the unique section of $\ker \eta$ such that $d\eta(\grad_{d\eta}, b) =-df(b)$, $\forall b\in \ker\eta$) is a transversally holomorphic contact vector field. Thus $\Reebmap^{\tilde{\eta}}$ is a CR vector field and $(\tilde{\eta}, \ker\tilde{\eta}, I^{\tilde{\eta}})$ is Sasaki, see Definition/Proposition\ref{defnPropDEFsasak}.   
\end{proof}
 \begin{remark}\label{rmk:Perelmanentropy}
As we briefly mentioned above, the Einstein--Hilbert functional has some similarities with Perleman's entropy functional considered in \cite{inoue-ent}. More precisely, the \emph{normalised W-entropy} $\check{\mathcal{W}}^\lambda$, for $\lambda=0$ is defined (\cite[\S$1.2.1$]{inoue-ent}) as, accounting for the fact that our scalar curvature is $2$ times that used in \cite{inoue-ent},
\begin{equation}\label{eq:Perelmanentropy}
    \check{\mathcal{W}}^0(\omega,\varphi)=-\frac{1}{2}\frac{\int_M\mathrm{e}^\varphi\left(\scal(\omega)+\lvert d\varphi\rvert^2_\omega\right)\omega^n}{\int_M\mathrm{e}^\varphi\omega^n}.
\end{equation}
The variables in~\eqref{eq:Perelmanentropy} are a K\"ahler metric $\omega$ in a fixed class, and a smooth function $\varphi$ on $M$. Now, interpreting the Einstein--Hilbert energy on $M$ as we did in this section we have (up to a dimensional constant, and performing a change of variables)
\begin{equation}\label{eq:EH_changevariables}
\widecheck{EH}(\omega,\mathrm{e}^{-\varphi/n})=\frac{\int_M \mathrm{e}^{\varphi}(\scal(\omega) +\frac{n+1}{n}\lvert d\varphi\rvert_{\omega}^2 ) \omega^{[n]}}{\left(\int_M\mathrm{e}^{\frac{n+1}{n}\varphi} \omega^{[n]}\right)^{\frac{n}{n+1}}}.
\end{equation}
The two functionals~\eqref{eq:Perelmanentropy} and~\eqref{eq:EH_changevariables} differ of course in the volume part, and they differ in the curvature part by the norm-like factor $n^{-1}\int\mathrm{e}^\varphi\lvert d\varphi\rvert^2\omega^{[n]}$. Of course, this makes the Euler-Lagrange equations of the two functionals quite different: the critical points in the $\varphi$-direction of the Einstein--Hilbert functional are cscTW CR-contact forms, while for the W-entropy, critical points in the $\varphi$-direction are solutions of (compare this with~\eqref{eq:TannoFormula})
\begin{equation*}
    \scal(\omega)+\Delta\varphi-\lvert d\varphi\rvert^2=\mathrm{const}.
\end{equation*}
Also, the Hessian of the $W$-entropy in this direction is negative-definite, while in our case the Hessian is definite only under the eigenvalue condition of~\ref{lemma:HessianEH_vertical}. Moreover, critical points of the $\EH$-functional on the whole space are cscS metrics, while critical points of the $W$-entropy are $\mu$-cscK metrics.

However, the two functionals share some important properties: first, in both cases $(\omega,0)$ is a critical point for the vertical direction if and only if $\omega$ is cscK. Secondly, for any fixed K\"ahler metric $\omega$, there always exist minima of the two functionals in the ``vertical'' directions, by Proposition~\ref{prop:YamabeCR_equivariant} for $\EH$ and \cite[Theorem $2.2$]{inoue-ent} for the normalised entropy.
\end{remark}

\begin{remark}\label{rmk:EinsteinHilbert_weighted}
Consider, for real numbers $p,q$, a K\"ahler metric $g$, and a positive function $f\in C^{\infty}(M,\R_{>0})$, the function
\begin{equation*}
    \mathrm{Scal}_{f^p}(g)=f^p\,\mathrm{Scal}(g)+2pf^{p-1}\Delta_g(f)-p(p-1)f^{p-2}\lvert df\rvert^2_g
\end{equation*}
and the equation
\begin{equation}\label{eq:Scal_pq}
\tag{$\star^p_q$}
    \mathrm{Scal}_{f^p}(g)=c\,f^q
\end{equation}
where $c$ is a constant. Choosing $p=-(n+1)$ and $q=-(n+2)$,~\eqref{eq:Scal_pq} becomes the equation for constant Tanaka-Webster scalar curvature contact forms (c.f. Tanno's formula in \S\ref{sss:TannoLeeForm}), while for the choice $p=-(2n-1)$ and $q=-(2n+1)$,~\eqref{eq:Scal_pq} is the constant scalar curvature equation for the conformally K\"ahler metric $f^{-2}g$. Each equation~\eqref{eq:Scal_pq} is the Euler-Lagrange equation, with respect to variations in $f$, of a \emph{$(p,q)$-Einstein-Hilbert functional}, defined as
\begin{equation}\label{eq:EH_pq}
\begin{split}
    \EH^{p,q}(f,\omega)=&\frac{\int_M\mathrm{Scal}_{f^p}(g)\,f\,\omega^{[n]}}{\left(\int_Mf^{q+1}\,\omega^{[n]}\right)^{\frac{p+1}{q+1}}}=\\
    =&\frac{\int_M\left(f^{p+1}\mathrm{Scal}(\omega)+p(p+1)f^{p-1}\lvert df\rvert^2_\omega\right)\omega^{[n]}}{\left(\int_Mf^{q+1}\,\omega^{[n]}\right)^{\frac{p+1}{q+1}}}.
\end{split}
\end{equation}
Clearly, choosing $p=-(2n-1)$ and $q=-(2n+1)$ gives the classical Einstein-Hilbert functional, studied in the Riemannian Yamabe problem, while our CR-Einstein-Hilbert functional corresponds to the choice $p=-(n+1)$ and $q=-(n+2)$. Following the computations for the CR case, the critical points $(f,\omega)$ of~\eqref{eq:EH_pq} (with respect to variations in both $f$ and $\omega$) are characterised by the conditions
\begin{equation*}
    \begin{cases}
        f\mbox{ is a Killing potential for the metric defined by }\omega\\
        \mathrm{Scal}_{f^p}(g)=c\,f^q.
    \end{cases}
\end{equation*}
Notice that these systems are all examples of weighted cscK equations $\mathrm{Scal}_v(g)=w$, for the special choice of weights $v=f^p$, $w=f^q$. We expect that much of the techniques in our paper can be directly applied to the existence of critical points of these ``weighted Einstein-Hilbert functionals'' $\EH^{p,q}$. Notice however that the behaviour of~\eqref{eq:Scal_pq} depends dramatically on the choice of $p$ and $q$: as an example, the equation corresponding to contact forms with constant Tanaka-Webster curvature is sub-critical (see the proof of Theorem~\ref{thm:YamabeCR}), while the Yamabe problem has critical Sobolev exponent, and other choices of weights lead to supercritical equations.
\end{remark}

\section{The Einstein--Hilbert functional and test configurations}\label{s:EHtestconfig}

\subsection{Sasaki test configurations and their ribbons of contact CR-structures}

\subsubsection{Polarized test configurations} The notion of test configuration for polarized varieties goes back to \cite{donaldson-toric,Tian} and has been thoroughly studied since, we refer to \cite{BHJ} for a recent account, although the perspective taken in this paper relies more on the differential-geometric approach of \cite{DR,zakarias}.  We recall in this subsection the basic definitions and facts of this theory to fix some notation and highlight some aspects we will use below.   

\begin{definition}\label{def:TC}
Let $(M,L)$ be a polarized K\"ahler manifold.
 An \emph{ample test configuration} (of exponent $1$) over $(M,L)$ is the following set of data :
\begin{itemize}
    \item[(i)] a normal polarized variety $(\tstM, \tstL)$ together with a linearized $\C^*$--action on $(\tstM,\tstL)$, that we denote for $\tau \in \C^*$, by $\biho^\zeta_\tau : \tstL \ra \tstL$;
    \item[(ii)] a (flat) surjective holomorphic map $\TCmap : \tstM\ra \pr^1= \C\sqcup \{+\infty\}$ which is $\C^*$ equivariant with respect to the above action and the standard action on the target. For $\tau \in \pr^1$, we denote $M_\tau:= \TCmap^{-1}(\tau)$;  
  \end{itemize}
satisfying the following conditions :
\begin{itemize}
    \item[(iii)] $\forall \tau \in \pr^1\backslash \{0\}$, $(M_\tau, \tstL_{|_{M_\tau}}) \simeq (M,L)$ as projective varieties; 
    \item[(iv)] the following (equivariant) biholomophism 
    \begin{equation}\label{eq:biholTC}
    \begin{array}{rccl}
        \biho: & \tstM\backslash (M_0\sqcup M_{+\infty}) &\longrightarrow &M\times \pr^1\backslash \{0,+\infty\} \\
        & x &\mapsto & ( \biho^\zeta_{\TCmap(x)^{-1}}(x), \TCmap(x)) 
    \end{array}
    \end{equation}
    extends to $\biho: \tstM\backslash M_0  \longrightarrow  M\times \pr^1\backslash \{0\}$.
\end{itemize}
\end{definition}
The actions on $\tstL$ and on $\tstM$ are denoted by the same symbol and often considered as maps between the $\TCmap$ level sets, that is $$\biho^\zeta_\tau : M_1 \ra M_\tau$$ where $\tau \in \C^*$. Thanks to condition (iii), we set $(M,L)=(M_1,L_1)$, without loss of generality, which explains the notation in (iv).

\smallbreak

The test configuration $({\tstM},\tstL)$ is \emph{smooth} if ${\tstM}$ is smooth and \emph{dominating} if~\eqref{eq:biholTC} extends to a $\C^*$-equivariant bimeromorphic morphism 
\begin{equation}\label{dominate}
 \Pi : {\tstM} \to M \times \mathbb{P}^1,
\end{equation}
that we still denote by $\Pi$. As explained in \cite{DR,zakarias}, in terms of $K$-semistability it is not restrictive to consider dominating and smooth test configurations, see Remark~\ref{rem:SmootTC} below.  

Following \cite{zakarias}, for a smooth dominating test configuration $({\tstM},\tstL)$, we have 
\[
\tstL=\Pi^* {\proj_1}^* L+D,
\]
for a unique $\mathbb{Q}$-Cartier divisor supported on the central fiber ${\tstM}_0$ and where ${\proj_1}$ is the projection on the first factor. If $D=\sum_j a_j D_j$ is the decomposition of $D$ into irreducible components $D_j:=\{f_j=0\}$, where $f_j$ are the local defining functions, then by the Poincar{\'e}-Lelong formula, there exist an $\mathbb{S}^1$-invariant Green function $\gamma_D$ on ${\tstM}$ defined up to adding a smooth function by $\gamma_D:=\sum_j a_j\log|f_j|$, so that the current of integration $\delta_D$ along $D$ satisfies 
\[
\delta_D=dd^c \gamma_D+\Theta_D
\]
where $\Theta_D$ is a smooth $\mathbb{S}^1$-invariant representative of the fundamental class $[D]$ of $D$. 

Let $\mathbb{D}\subset \mathbb{P}^1\setminus\{\infty\}=\mathbb{C}$ be the unit disk centered at the origin and $(\varphi_t)_{t>0}$ a ray of locally bounded $\omega$-psh functions on $(M,\omega)\cong {M}_1$, such that the corresponding $\mathbb{S}^1$-invariant function $\Phi(\cdot,\tau)=\varphi_{-\log|\tau|}$ on $M\times\mathbb{D}^*$ is a ${\proj_1}^*\omega$-psh on $M\times\mathbb{D}^*$. Following the terminology of \cite[section 4.3]{zakarias}, the ray $(\varphi_t)_{t\geq 0}$ is $C^{1,1}$-compatible (resp. $C^{\infty}$-compatible) with the test configuration $({\tstM},\tstL)$ if $\Pi^*\Phi+\gamma_D$ is $C^{1,1}$ (resp. $C^{\infty}$) on $\pi^{-1}(\mathbb{D})$.
 
 For any smooth dominating test configuration, there is a unique $C^{1,1}$-compatible geodesic ray \cite[Theorem 1.2]{CTW2}. The proof goes as follows (see \cite[Lemma 4.6]{zakarias}): assuming that $\tstL$ is ample, we can take a $\mathbb{S}^1$-invariant K\"ahler form $\Omega\in 2\pi c_1(\tstL)$ such that $\Omega=\Pi^*{\proj_1}^*\omega+\Theta_D$. By \cite[Corollary 1.3]{CTW1}, on the compact manifold with non-empty boundary ${\tstM}_{\mathbb{D}}:=\TCmap^{-1}(\mathbb{D})$ the boundary value problem:
\begin{equation}\label{BVP:G}
\begin{cases}
(\Omega+dd^c\Gamma)^{n+1}=0\\
\Gamma_{|\partial {\tstM}_{\mathbb{D}}}=\varphi_0+\gamma_D
\end{cases}    
\end{equation}
admits a unique $\mathbb{S}^1$-invariant solution $\Gamma\in C^{1,1}({\tstM}_{\mathbb{D}})$. Letting $\Phi$ be the function on $M\times\mathbb{D}^{*}$ given by $\Pi^*\Phi:=\Gamma-\gamma_D$, we get
\[
\Pi^*({\proj}_1^*\omega+dd^c\Phi)=\Omega+dd^c\Gamma-\delta_D,
\]
taking the $(n+1)$-wedge power of both sides shows that $\Phi$ solves 
\[
\begin{cases}
(\Omega+dd^c\Phi)^{n+1}=0 \quad \text{on }M\times\mathbb{D}^*\\
\Phi(\cdot,1)=\varphi_0.
\end{cases}    
\]
Hence $\Phi$ is the $\mathbb{S}^1$-invariant function corresponding to a $C^{1,1}$ geodesic ray $(\varphi_t)_{t\geq 0}$, which is $C^{1,1}$ compatible with the test configuration $({\tstM},\tstL)$. 

We consider the family of elliptic boundary value problems with parameter $\varepsilon>0$:
\begin{equation}\label{BVP:G:epsilon}
\begin{dcases}
    (\Omega+dd^c\Gamma_\varepsilon)^{n+1}=\varepsilon\,\Pi^*({\proj_1}^*\omega+\pi_2^*\omega_{\rm FS})^{n+1}\\
    {\Gamma_\epsilon}_{\,|\partial\tstM_{\mathbb{D}}}=\varphi_0+\gamma_D
\end{dcases}    
\end{equation}
For fixed $\varepsilon>0$, the boundary value problem~\eqref{BVP:G:epsilon} admits a unique smooth solution $\Gamma_\varepsilon$, such that the family $(\Gamma_\varepsilon)_{\varepsilon\geq 0}$ is decreasing in $\varepsilon>0$ and converges in the $C^{1,1}$ topology to the solution $\Gamma$ of~\eqref{BVP:G} as $\varepsilon\to 0$ (see \cite{CTW1}). The function $\Pi^*\Phi_\varepsilon:=\Gamma_\varepsilon-\gamma_D$ on $M\times\mathbb{D}^*$ is a smooth $\varepsilon$-geodesic solving~\eqref{eq:epsilon_geod} which is $C^\infty$ compatible with the test configuration $({\tstM},\tstL)$. Furthermore, 
 $(\Phi_\varepsilon)_{\varepsilon\geq 0}$ is decreasing in $\varepsilon>0$ and converges in the $C^{1,1}$ topology to the geodesic ray $\Phi$ which is $C^{1,1}$ compatible with $({\tstM},\tstL)$.

 Since the solution $\Gamma_\epsilon$ of~\eqref{BVP:G:epsilon} is a smooth $\Omega$-psh function on ${\tstM}_{\mathbb{D}}$, then it extends to a global smooth $\mathbb{S}^1$ invariant $\Omega$-psh function on ${\tstM}$ also called $\Gamma_\epsilon$ (see \cite[Proposition 8.8]{GZ_Book}). As $\varepsilon\to 0$, we obtain a global $C^{1,1}$ function $\Gamma$ on ${\tstM}$ which is $\mathbb{S}^1$-invariant and $\Omega$-psh extending the solution $\Gamma$ of~\eqref{BVP:G}. Using the expressions $\Pi^*\Phi=\Gamma-\gamma_D$, (resp. $\Pi^*\Phi_\varepsilon=\Gamma_\varepsilon-\gamma_D$)
we get an extension of the geodesic ray $\Phi$ (resp. $\varepsilon$-geodesic $\Phi_\varepsilon$) defined on $M\times \mathbb{D}^*$ into a $C^{1,1}$ function $\Phi$ (resp. smooth function $\Phi_\varepsilon$) on $M\times\mathbb{P}^1$.
 
In this context, following Odaka~\cite{odaka} and Wang~\cite{wang}, see also \cite[p.315]{donaldson-toric}, the {\it Donaldson--Futaki} invariant of $(\tstM,\tstL)$, can be defined as an intersection product
\begin{equation}\label{eqDFinv}
\DF(\tstM,\tstL) := \frac{ n}{n+1} \cst_{M,L}\, c_1(\tstL)^{n+1} - (c_1(\tstM)- \TCmap^*c_1(\pr^1)).c_1(\tstL)^n 
\end{equation}
where $\cst_{M,L} := c_1(M).c_1(L)^{n-1}/ c_1(L)^n.$ The polarized manifold $(M,L)$ is said {\it K--semistable} if for any ample test configuration $(\tstM,\tstL)$ over it, we have $$\DF(\tstM,\tstL) \geq 0.$$ 

\begin{remark}\label{rem:SmootTC}
It might be desirable to check the semistability only on \emph{smooth} ample test configurations, that is with smooth base space $\tstM$. This is possible thanks to a result of Dervan--Ross \cite[Proposition 2.23]{DR}, see also \cite{BHJ}. The idea is that if $\hat{\pi}: \hat{\tstM} \ra \tstM$ is an equivariant blow-up centred over the central fibre then $(\hat{\tstM},\hat{\pi}^*\tstL)$ with $\widehat{\TCmap}= \TCmap\circ \hat{\pi}$, satisfies the conditions (i) to (iv) but $\hat{\pi}^*\tstL$ fails to be ample. However, we can modify a little the class $c_1(\tstL)$, by adding a small multiple of the exceptional divisor. Since the expression~\eqref{eqDFinv} depends continuously on these parameters it is enough to work with smooth ample test configurations to test K-semistability. Moreover, it is sufficient to test $K$-semistability only on test configurations with smooth total space and with a reduced central fibre.  
\end{remark} 
 
 \subsubsection{Sasaki test configurations}\label{ss:testconfigSASAK} As pointed out in \cite[\S 6]{ACL}, any smooth ample compact test configuration $(\tstM,\tstL)$ gives rise to a test configuration $(\mY,\xi)$ of polarized cones over $(Y ,\xi)$ where $$Y = L^{-1}\backslash 0\mbox{--section and } \tstY= \tstL^{-1}\backslash 0\mbox{--section}$$ and $\xi$ denotes the Reeb vector field on both $Y$ and $\tstY$ which is induced by the $\bS^1$ action on the fibre of $L$ and $\tstL$ respectively. Indeed, the equivariant map $\tilde{\TCmap} : \tstY \ra \pr^1$ is the composition $\TCmap \circ \pi$ where $\pi : \tstL^{-1} \ra \tstM$ is the bundle map.
 
The test configuration $\C^*$-action includes a circle action on $\tstY$ whose generator will be denoted $\zeta$. We denote $\bT:=\bS^1_\xi \times \bS^1_\zeta$ the induced torus action and $\kt := \mbox{Lie}(\bT)$. Thus, the test configuration $\C^*$--action on $\tstM$ is induced by $\underline{\zeta}:= [\zeta]\in \mbox{Lie}(\bT/\bS^1_\xi)$.

 \begin{notation}\label{notationSUBSCRPITtau} The subscript $\tau$ on a form, tensor, etc. means the pullback by the inclusion of the fibre over $\tau$ which, itself, is denoted $\iota_\tau : M_\tau \hookrightarrow \tstM$, $\iota_\tau : Y_\tau \hookrightarrow \tstY$, etc.  When needed, the inclusion of a subset $A\subset B$ could be denoted $\iota_A$. We will denote by $J$ the integrable almost complex structures of $\tstY$, $\tstL$, $L$, $\tstM$, $M$ and $Y$. Since one restricts to the other via the inclusion, this should not cause any confusion.  
 \end{notation}

  In \cite[\S 6]{ACL}, given a smooth ample test configuration $(\tstM,\tstL)$ and a positively curved Hermitian metric on $\tstL$, encoded by its norm function $\tilde{r}: \tstY \ra \R_{>0}$, called cone or {\it radial potential} \cite[Definition 2.2]{ACL} the following Sasaki manifold  
 \begin{equation}\label{eq:DefsasakiTC}
  (\tstN = \{\tilde{r}=1\}, \tstD= T\tstN\cap J T\tstN, J =J_{|_\tstD}, \widetilde{\eta} := \iota_\tstN^*(d^c\log \tilde{r}), \xi)
 \end{equation} plays the role of a Sasaki test configuration over the Sasaki manifold
 $$(N = \{r=1\} , \Ds= TN\cap JTN, J=J_{|_\Ds}, \eta = \iota_N^*(d^c\log r), \xi).$$ They are both regular Sasaki manifolds, over $\tstM= \tstN / \bS^1_\xi$ and $M= N / \bS^1_\xi$ respectively. Moreover, $N =\tilde{\TCmap}_{|_\tstN}^{-1}(1) \subset \tstN \subset \tstY$.

The key, although simple, observation we need is the following and it holds for (non-ample) smooth compact test configurations of the form $(\hat{\tstM},\hat{\pi}^*\tstL)$ as in the last Remark.  
\begin{lemma}\label{lem:RibboncontactCR}
There exists $\epsilon >0$ such that $f_s:=\tilde{\eta}( \xi-s\zeta) >0$ on $\tstN$ for all $s\in (-\epsilon,\epsilon)$. In particular, \begin{equation}\label{eq:RibboncontactCR}
   \iota^*_\tau(f_s^{-1}\widetilde{\eta}) \in \Cmet(M_\tau,L_\tau) \qquad \qquad \qquad \forall (s,\tau) \in  (-\epsilon,\epsilon) \times \pr^1\backslash \{0\}\end{equation} and we call the resulting $2$--parameters family a {\emph{ribbon}} of CR-contact structures.  
\end{lemma}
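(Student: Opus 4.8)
The plan is to treat the two halves of the statement separately. The positivity is immediate: since $\xi$ is the Reeb vector field of $\tilde\eta$ we have $\tilde\eta(\xi)\equiv 1$, hence $f_s=1-s\,\tilde\eta(\zeta)$; as $\tstN$ is compact the smooth function $\tilde\eta(\zeta)$ is bounded, say $\sup_{\tstN}|\tilde\eta(\zeta)|=C$, and taking $\epsilon:=(1+C)^{-1}$ gives $f_s\geq 1-|s|C>0$ on $\tstN$ for all $|s|<\epsilon$ (any $\epsilon$ works if $\tilde\eta(\zeta)\equiv 0$).

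For the membership~\eqref{eq:RibboncontactCR}, fix $\tau\in\pr^1\setminus\{0\}$ and set $\eta_\tau:=\iota^*_\tau\tilde\eta$ and $g_{s,\tau}:=f_s|_{N_\tau}$, so that $\iota^*_\tau(f_s^{-1}\tilde\eta)=g_{s,\tau}^{-1}\eta_\tau$. First I would check $\eta_\tau\in\holSAS(M_\tau,L_\tau)$: the fibre $M_\tau=\TCmap^{-1}(\tau)$ and the corresponding cone $Y_\tau\subset\tstY$ are complex submanifolds (of $\tstM$, resp. $\tstY$), and since $d^c$ commutes with pullback along holomorphic maps, $\eta_\tau$ equals the restriction to $N_\tau$ of $d^c\log(\tilde r|_{Y_\tau})$; because $\tilde r$ is the norm function of a positively curved Hermitian metric on $\tstL$ and the restriction to the complex submanifold $M_\tau$ of a positive closed $(1,1)$-form is again positive, $\tilde r|_{Y_\tau}$ is the norm function of a positively curved Hermitian metric on $L_\tau=\tstL|_{M_\tau}$, which is exactly the data producing an element of $\holSAS(M_\tau,L_\tau)$ (see \S\ref{sss:SASpolarizedManif}). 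Second, $f_s$ is $\xi$-invariant, since $\mathcal{L}_\xi\tilde\eta=0$ ($\xi$ being the Reeb field of $\tilde\eta$) and $[\xi,\zeta]=0$ ($\xi$ and $\zeta$ generating the abelian torus $\bT$), so that $\mathcal{L}_\xi\bigl(\tilde\eta(\zeta)\bigr)=(\mathcal{L}_\xi\tilde\eta)(\zeta)+\tilde\eta([\xi,\zeta])=0$; hence $g_{s,\tau}\in C^\infty(N_\tau,\R_{>0})^\xi$ by the positivity just established. Now $\iota^*_\tau(f_s^{-1}\tilde\eta)=g_{s,\tau}^{-1}\eta_\tau$ is the product of a positive $\xi$-invariant function with an element of $\holSAS(M_\tau,L_\tau)$, which is precisely the definition~\eqref{eqSliceCmetDEFN} of belonging to $\Cmet(M_\tau,L_\tau)$ (see also Notation/Remark~\ref{notrem:holSas}); running $(s,\tau)$ over $(-\epsilon,\epsilon)\times(\pr^1\setminus\{0\})$ gives the ribbon.

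I do not expect a genuine obstacle — this is the simple observation the statement refers to — but the point to keep in mind is that $\zeta$ is \emph{not} tangent to the fibres $M_\tau$, so the restricted structure $\iota^*_\tau(f_s^{-1}\tilde\eta)$ is in general neither Sasakian nor has $\xi-s\zeta$ as its Reeb field; all that is used is that it is a conformal rescaling of the Sasaki-contact form $\eta_\tau$ (whose Reeb field is $\xi$) by the positive $\xi$-invariant function $g_{s,\tau}$, which is exactly the shape of a general element of $\Cmet(M_\tau,L_\tau)$. The same argument applies to the non-ample test configurations $(\hat\tstM,\hat\pi^*\tstL)$ of Remark~\ref{rem:SmootTC}: over each $M_\tau$ with $\tau\neq 0$ the blow-down $\hat\pi$ is an isomorphism, so $\hat\pi^*\tstL$ still restricts there to an ample line bundle carrying a positively curved Hermitian metric inducing the given transversal holomorphic structure, while the positivity of $f_s$ used only compactness of $\hat\tstN$ and $\tilde\eta(\xi)=1$.
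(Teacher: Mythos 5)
Your proof is correct, and it supplies exactly the straightforward verification the paper leaves implicit (the lemma is stated there as a ``simple observation'' with no written proof): positivity of $f_s=1-s\,\tilde\eta(\zeta)$ by compactness, $\xi$-invariance of $f_s$ from $\mathcal{L}_\xi\tilde\eta=0$ and $[\xi,\zeta]=0$, and the fact that $\iota_\tau^*\tilde\eta\in\holSAS(M_\tau,L_{|_{M_\tau}})$ because the positively curved Hermitian metric on $\tstL$ restricts to one on $L_{|_{M_\tau}}$ over the smooth complex fibre $M_\tau$, $\tau\neq 0$. Your closing remark that the restricted form is merely a conformal rescaling of the Sasaki form $\eta_\tau$ (and in general neither Sasakian nor with Reeb field $\xi-s\zeta$) is also the right point to emphasise and is consistent with the paper's use of the ribbon.
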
 
According to Notation~\ref{notationSUBSCRPITtau}, from now on we let  $\iota^*_\tau(f_s^{-1}\widetilde{\eta}) = f_{s,\tau}^{-1}\widetilde{\eta}_\tau$. The $\bT$-invariant function $$f_s:=\tilde{\eta}( \xi-s\zeta),$$ defined on $\tstN$ is the pullback of a function on the $\tstM$, that we will denote by the same symbol $f_s$. Note that $$f_s=1-s\mu$$ where $\mu$ is a hamiltonian function for the action of vector field $\uzeta$ induced by $\zeta$ on $(\tstM,\Omega)$ for $\pi^*\Omega=d\tilde{\eta}$. This section is devoted to the study of the Einstein--Hilbert functional, the total scalar curvature and the volume along the ribbon of CR-contact structures $f_{s,\tau}^{-1}\widetilde{\eta}_\tau$ for a given test configuration $(\tstM,\tstL)$ and a chosen $\tilde{\eta}\in \holSAS(\tstM,\tstL)$. Of course, the values of these functionals on $\iota^*_\tau(f_s^{-1}\widetilde{\eta})$ depend on the choice of the contact form and on $s$ but we will fix such a pair $(\tilde{\eta},s) \in \holSAS(\tstM,\tstL) \times (-\epsilon,\epsilon)$ satisfying~\eqref{eq:RibboncontactCR} for the remaining subsections and consider the functions  
   \begin{equation}\label{eq:3functonRIBBONS}
       \pr^1\backslash \{0\} \ni \tau \longmapsto \EH(f_{s,\tau}^{-1}\widetilde{\eta}_\tau), \bfV(f_{s,\tau}^{-1}\widetilde{\eta}_\tau), \bfS(f_{s,\tau}^{-1}\widetilde{\eta}_\tau).
   \end{equation}
One of the main result of this section is the following explicit formulas for the limit at $\tau=0$ of the volume and the total Tanaka--Webster scalar curvature on the ribbon of CR-contact structure obtained from a smooth ample test configuration.

\begin{theorem}\label{theo:Globalformula}
Let $(\tstM,\tstL)$ be a smooth ample test configuration for $(M,L)$ with reduced central fibre and generating vector field $\zeta$. Fix $\tilde{\eta}\in\holSAS(\tstM,\tstL)$ and let $\mu:=\tilde{\eta}(\zeta)$. Then, along a compatible ribbon of CR-structures, we have
\begin{equation}
\begin{split}
    \lim_{\tau\to 0}\bfS(f_{s,\tau}^{-1} \eta_\tau)=&\frac{4\pi}{(1-s\mu_{\max})^{n}}\int_{M_\infty} \Ric(\Omega_{|M_\infty})\wedge(\Omega_{|M_\infty})^{[n-1]}\\
    &-2 n s \int_{{\tstM}}(\Ric(\Omega)-\pi^*\omega_{\rm FS})\wedge\frac{\Omega^{[n]}}{\left(1-s\mu\right)^{n+1}} \\
    &-s^2n(n+1) \int_{{\tstM}}(\Delta_{\Omega}(\mu)-\Delta_{\omega_{\rm FS}}(\mu_{\rm FS}))\frac{\Omega^{[n+1]}}{\left(1-s\mu\right)^{n+2}};\\
    \lim_{\tau\to 0}\bfV(f_{s,\tau}^{-1} \eta_\tau) =& 2\pi\int_{M_\infty} \frac{(\Omega_{|M_\infty})^{[n]}}{(1-s\mu_{\max})^{n+1}}-  (n+1) s \int_{{\tstM}}\frac{\Omega^{[n+1]}}{\left(1-s\mu\right)^{n+2}}. 
\end{split}
\end{equation}
In particular, letting $\bfS_0 =\pi\widecheck{\bfS}(\Omega_{|M_\infty}, 1)$, $\bfV_0 =2\pi\widecheck{\bfV}(\Omega_{|M_\infty}, 1)$, and $\EH_0= \bfS_0/\bfV_0^{n/n+1}$, we obtain
\begin{equation}\label{eq:centralEH}
\begin{split}
    \lim_{\tau\to 0}&\EH(f_{s,\tau}^{-1} \eta_\tau)=\lim_{\tau\to 0}\frac{\bfS(f_{s,\tau}^{-1} \eta_\tau)}{\bfV(f_{s,\tau}^{-1} \eta_\tau)^{\frac{n}{n+1}}}\\
    =&\EH_0  + \frac{ns}{\bfV_0^{\frac{n}{n+1}}}(1-s\mu_{\max})^{n+1}\frac{\bfS_0}{\bfV_0}\int_{{\tstM}}\frac{\Omega^{[n+1]}}{\left(1-s\mu\right)^{n+2}}\\
    &-2\frac{ns}{\bfV_0^{\frac{n}{n+1}}}(1-s\mu_{\max})^n\int_{{\tstM}}(\Ric(\Omega)-\pi^*\omega_{\rm FS})\wedge\frac{\Omega^{[n]}}{\left(1-s\mu\right)^{n+1}}+O(s^2).
\end{split}
\end{equation}
\end{theorem}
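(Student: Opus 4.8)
The plan is to fix a smooth $\bS^1_\zeta$-invariant Sasaki-contact form $\tilde\eta\in\holSAS(\tstM,\tstL)$, equivalently a smooth $\bS^1_\zeta$-invariant K\"ahler form $\Omega\in 2\pi c_1(\tstL)$ on the (smooth) total space $\tstM$ with $d\tilde\eta=\pi^*\Omega$, and to reduce the statement to a fibre-integration problem for $\TCmap:\tstM\to\pr^1$. The first step is to pass to K\"ahler quantities: for $\tau\neq 0$ the fibre $N_\tau$ is a regular Sasaki manifold over $(M_\tau,\tstL|_{M_\tau})$ with $d\tilde\eta_\tau=\pi^*\Omega_\tau$ for $\Omega_\tau:=\Omega|_{M_\tau}$ and $f_{s,\tau}=(1-s\mu)|_{M_\tau}$, $\mu=\tilde\eta(\zeta)$, so that by Webster's Theorem~\ref{theoWEBSTER} and the normalisations~\eqref{eq:notationWidecheck}--\eqref{eq:notationWidecheck2} it suffices to analyse, as $\tau\to 0$, the three fibre integrals
\[
\int_{M_\tau}\frac{\Omega_\tau^{[n]}}{(1-s\mu)^{n+1}},\qquad
\int_{M_\tau}\frac{\Ric(\Omega_\tau)\wedge\Omega_\tau^{[n-1]}}{(1-s\mu)^{n}},\qquad
s^2\int_{M_\tau}\frac{d\mu\wedge d^c\mu\wedge\Omega_\tau^{[n-1]}}{(1-s\mu)^{n+2}},
\]
where $\scal(\Omega_\tau)\Omega_\tau^{[n]}=2\,\Ric(\Omega_\tau)\wedge\Omega_\tau^{[n-1]}$ and $|df_{s,\tau}|^2_{\Omega_\tau}\Omega_\tau^{[n]}=s^2\,d\mu\wedge d^c\mu\wedge\Omega_\tau^{[n-1]}$ have been used; the dimensional constants of~\eqref{eq:notationWidecheck} are reinstated only at the end.

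The core of the argument is to compare each such fibre integral, regarded as a function of $\tau$ (depending only on $|\tau|$ by $\bS^1_\zeta$-invariance of the data), with its value at the fixed fibre $\tau=\infty$, by Stokes' theorem on $\TCmap^{-1}(\gamma)$ for a radial path $\gamma$ joining $\tau$ to $\infty$ (which avoids $M_0$). For the volume this is transparent: putting $\Psi:=(1-s\mu)^{-n-1}\Omega^{[n]}$ on $\tstM$ and using that $\Omega$ is closed, $d\Psi=(n+1)s\,(1-s\mu)^{-n-2}\,d\mu\wedge\Omega^{[n]}$, which the Hamiltonian identity $\iota_\zeta\Omega=-d\mu$ rewrites as $d\Psi=-(n+1)s\,\iota_\zeta\!\bigl((1-s\mu)^{-n-2}\Omega^{[n+1]}\bigr)$. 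Stokes then relates $\int_{M_\tau}\Psi$, $\int_{M_\infty}\Psi$ and $\int_{\TCmap^{-1}(\gamma)}d\Psi$; letting $\tau\to 0$ — legitimate since $\TCmap$ is flat with \emph{reduced} central fibre, so the fibre integrals extend continuously to $\tau=0$ while $d\Psi$ is a fixed smooth form on the compact $\tstM$ — the last integral converges, and, its integrand being of the form $\iota_\zeta(\,\cdot\,)$ for a $\bS^1_\zeta$-invariant top-degree form, it ``unfolds'' over the $\bS^1_\zeta$-orbits (a Fubini computation) into a multiple of $\int_{\tstM}(1-s\mu)^{-n-2}\Omega^{[n+1]}$. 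Since the $\C^*$-action is trivial on $M_\infty$ we have $\mu|_{M_\infty}\equiv\mu_{\max}$, so the boundary term is a multiple of $(1-s\mu_{\max})^{-n-1}\int_{M_\infty}(\Omega|_{M_\infty})^{[n]}$; restoring the constant of~\eqref{eq:notationWidecheck} gives the stated formula for $\lim_{\tau\to 0}\bfV$.

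The analysis of $\bfS$ runs along the same lines but is genuinely more delicate, and this is the main obstacle. The gradient piece is dealt with by integrating by parts on each fibre to turn $d\mu\wedge d^c\mu$ into a fibre Laplacian of $\mu$; it contributes nothing at $\tau=\infty$ (there $d\mu|_{M_\infty}=0$) and, after the Stokes--unfolding step, produces the $s^2n(n+1)$ bulk term, the correction by $\Delta_{\omega_{\rm FS}}(\mu_{\rm FS})$ accounting for the difference between the ambient Laplacian of $\mu$ on $\tstM$ and the intrinsic fibre Laplacian, which is carried by the $\pr^1$-factor. The Ricci piece is the subtle one: $\Ric(\Omega_\tau)$ is the \emph{intrinsic} Ricci form of the fibre, which is not the restriction of $\Ric(\Omega)$, so the correct global closed $2$-form to feed into Stokes is, by adjunction (the normal bundle of a fibre $M_\tau$ is $\TCmap^*T\pr^1$), the representative $\Ric(\Omega)-\TCmap^*\omega_{\rm FS}$ of the relative class $2\pi(c_1(\tstM)-\TCmap^*c_1(\pr^1))$ --- exactly the class occurring in~\eqref{eqDFinv}. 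One substitutes it, controls the residual $dd^c$-term relating $\Ric(\Omega_\tau)$ to $(\Ric(\Omega)-\TCmap^*\omega_{\rm FS})|_{M_\tau}$ by a further fibrewise integration by parts (which recombines with the gradient-term contributions), and runs Stokes; because $\Ric(\Omega)-\TCmap^*\omega_{\rm FS}$ is closed and $\zeta$-invariant, its contraction with $\zeta$ is again closed, and its exact part contributes, besides the expected bulk integral $\int_{\tstM}(\Ric(\Omega)-\TCmap^*\omega_{\rm FS})\wedge(1-s\mu)^{-n-1}\Omega^{[n]}$, a further boundary term at $M_\infty$. Carefully assembling all boundary and bulk pieces, keeping track of signs and of the constants of~\eqref{eq:notationWidecheck}, yields the stated formula for $\lim_{\tau\to 0}\bfS$; reducedness of $M_0$ is precisely what makes the adjunction identities, and the continuity of the fibre integrals at $\tau=0$, valid.

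Finally, the Einstein--Hilbert expansion~\eqref{eq:centralEH} is obtained by division and Taylor expansion: writing $\EH=\bfS\,\bfV^{-n/(n+1)}$ and substituting the two limits above, one expands in $s$ near $s=0$. At $s=0$ all bulk terms vanish and $(1-s\mu_{\max})=1$, so $\lim_{\tau\to 0}\bfV=\bfV_0+O(s)$ and $\lim_{\tau\to 0}\bfS=\bfS_0+O(s)$ with $\bfV_0=2\pi\widecheck{\bfV}(\Omega|_{M_\infty},1)$ and $\bfS_0=\pi\widecheck{\bfS}(\Omega|_{M_\infty},1)$ as in the statement, while the $s^2n(n+1)$-Laplacian term in $\lim_{\tau\to 0}\bfS$ is $O(s^2)$ and only affects the remainder. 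Using $\bfV^{-n/(n+1)}=\bfV_0^{-n/(n+1)}\bigl(1-\tfrac{n}{n+1}\bfV_0^{-1}(\bfV-\bfV_0)\bigr)+O(s^2)$, expanding the product and collecting the coefficient of $s$ yields~\eqref{eq:centralEH}. (The expressions are written in terms of the chosen $\Omega$; that the final ones are independent of $\Omega$, equivalently of $\tilde\eta$, and smooth in $s$, follows since after expansion they become intersection-theoretic --- this is the content of Theorem~\ref{theINTRO_TCdl}, proved along these lines.)
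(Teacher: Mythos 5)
Your reduction to K\"ahler fibre integrals, your volume computation, and the final Taylor expansion of $\EH=\bfS\,\bfV^{-n/(n+1)}$ are sound. For the volume, the direct Stokes argument on $\TCmap^{-1}(\gamma)$ combined with the $\bS^1_\zeta$-unfolding of $\iota_\zeta$ is a genuinely more elementary route than the paper's, which obtains the same identity by computing $dd^c$ of the primitive $\widecheck{\actvol}_s$ along the compatible geodesic as a current on $\pr^1$ (Lemmas~\ref{lem:ddcV_TC} and~\ref{lem:AsymSlopeVol}) and then changing the K\"ahler representative via \cite[Lemma 2]{Lahdili}; since your integrand $(1-s\mu)^{-n-1}\Omega^{[n]}$ is the restriction of a smooth ambient form, nothing more is needed there, and reducedness plays no role in that part.

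The scalar curvature part has a genuine gap. After replacing $\Ric(\Omega_\tau)$ by $(\Ric(\Omega)-\TCmap^*\omega_{\rm FS})|_{M_\tau}$ and integrating by parts on the fibre, the residual is (up to constants)
\begin{equation*}
\int_{M_\tau}\log\!\left(\frac{\Omega_\tau^{[n]}\wedge\TCmap^*\omega_{\rm FS}}{\Omega^{[n+1]}}\right)\,dd^c\!\left(\frac{1}{(1-s\mu)^{n}}\right)\wedge\Omega_\tau^{[n-1]},
\end{equation*}
and this does not ``recombine with the gradient-term contributions'': it is a separate term whose integrand is \emph{not} the restriction of a smooth form on $\tstM$, because the relative log-density blows up along the locus where the fibration degenerates as $\tau\to 0$. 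Your appeal to ``continuity of the fibre integrals at $\tau=0$'' via flatness and reducedness is exactly the step that fails; indeed, if this term could be disposed of by such soft arguments, the same proof would yield the stated formula for non-reduced central fibres, where the analogous slope formula is known to acquire a correction supported on the non-reduced components (cf.\ \cite{BHJ,zakarias}), so the argument cannot be complete. Controlling this term is the actual analytic content of the paper's proof: it passes to the primitive $\widecheck{\actscal}_s$, introduces the modified functional $\widecheck{\actscal}^{\Psi}_s$ built from a smooth Hermitian metric on $K_{\tstM/\pr^1}$ (Lemmas~\ref{lem:ddcA_TC} and~\ref{lem:AsymCurvPSI}), and shows that the two have the same asymptotic slope (Lemma~\ref{lem:equalslopes}) by combining an upper bound coming from plurisubharmonicity, entropy positivity for the lower bound, and \cite[Lemma 5.8]{zakarias}; this last estimate, bounding $\log\int_M e^{\psi_t}$ by a multiple of $\log t$, is where reducedness of $M_0$ actually enters --- not through ``adjunction identities''. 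To repair your route you must either prove outright that the displayed term tends to $0$ as $\tau\to0$ (a statement of comparable difficulty to the paper's Lemma~\ref{lem:equalslopes}), or revert to the primitive/slope comparison, in which case converting a statement about slopes of primitives into one about $\lim_{\tau\to0}\bfS$ itself requires a convexity or monotonicity input of the type provided by Theorem~\ref{theo:ActionPointwiseCvxC0}. The bookkeeping producing the $\Delta_{\Omega}(\mu)-\Delta_{\omega_{\rm FS}}(\mu_{\rm FS})$ bulk term is also only sketched, but that is a matter of detail; the log-density term is the missing idea.
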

Section~\ref{ss:globalformulaEH} is dedicated to the proof of Theorem~\ref{theo:Globalformula}. This result has many possible applications, as it allows us to relate properties of the Einstein--Hilbert functional with K-stability. The first indication of this is the following result, relating the Donaldson--Futaki invariant and the Einstein--Hilbert functional. See Corollary~\ref{cor:Ksemistab} for an application of this.
\begin{corollary}\label{cor:derivEH=DF}
Under the hypothesis of Theorem~\ref{theo:Globalformula}, the limits $$\lim_{\tau\to 0}\bfS(f_{s,\tau}^{-1} \eta_\tau), \;\; \lim_{\tau\to 0}\bfV(f_{s,\tau}^{-1} \eta_\tau) \mbox{ and } \; \lim_{\tau\to 0}\EH(f_{s,\tau}^{-1} \eta_\tau)$$ do not depend of the chosen representative $\tilde{\eta}\in \holSAS(\tstM,\tstL)$, so that we can define the Einstein--Hilbert functional of the test configuration as $\EH_s(\tstM,\tstL):=\lim_{\tau\to 0}\EH(f_{s,\tau}^{-1} \eta_\tau)$. Moreover,
\begin{equation}
    \frac{d}{ds}\Bigr|_{s=0}\EH_s(\tstM,\tstL)=\frac{2n\,\DF(\tstM,\tstL)}{(2\pi)^{n+1}\bfV_0^{\frac{n}{n+1}}n!}.
\end{equation}
\end{corollary}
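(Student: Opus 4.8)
The plan is to establish Corollary~\ref{cor:derivEH=DF} as a direct consequence of Theorem~\ref{theo:Globalformula}, which supplies global formulas for $\lim_{\tau\to 0}\bfS(f_{s,\tau}^{-1}\eta_\tau)$ and $\lim_{\tau\to 0}\bfV(f_{s,\tau}^{-1}\eta_\tau)$ in terms of intersection-theoretic data on $\tstM$. The independence of $\tilde{\eta}\in\holSAS(\tstM,\tstL)$ should follow by inspection of those formulas: each term on the right-hand side of Theorem~\ref{theo:Globalformula} is expressed via the K\"ahler form $\Omega$ with $\pi^*\Omega=d\tilde{\eta}$, and changing $\tilde{\eta}$ within $\holSAS(\tstM,\tstL)$ amounts to changing $\Omega$ within $2\pi c_1(\tstL)$ by an $\bS^1_\zeta$-invariant $dd^c$-exact form. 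Since all the quantities appearing (the term $\int_{M_\infty}\Ric(\Omega_{|M_\infty})\wedge(\Omega_{|M_\infty})^{[n-1]}$, the integrals over $\tstM$ against powers of $(1-s\mu)$, and $\mu_{\max}$) are either cohomological or, by Stokes/integration-by-parts, invariant under such a change --- this is exactly the content invoked in passing from Lee's formula to the global formula, using that the weighted total scalar curvature is a cohomological invariant in the sense of \cite{Lahdili} --- the limit functionals descend to $(\tstM,\tstL)$. Thus $\EH_s(\tstM,\tstL)$ is well-defined, and likewise $\bfS_s(\tstM,\tstL):=\lim_{\tau\to0}\bfS(f_{s,\tau}^{-1}\eta_\tau)$ and $\bfV_s(\tstM,\tstL):=\lim_{\tau\to0}\bfV(f_{s,\tau}^{-1}\eta_\tau)$.

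For the derivative formula, I would differentiate $\EH_s=\bfS_s/\bfV_s^{n/(n+1)}$ at $s=0$ using the quotient/chain rule:
\begin{equation*}
\frac{d}{ds}\Bigr|_{s=0}\EH_s(\tstM,\tstL)=\frac{1}{\bfV_0^{n/(n+1)}}\left(\frac{d}{ds}\Bigr|_{s=0}\bfS_s-\frac{n}{n+1}\frac{\bfS_0}{\bfV_0}\frac{d}{ds}\Bigr|_{s=0}\bfV_s\right).
\end{equation*}
From the formulas in Theorem~\ref{theo:Globalformula}, differentiating under the integral sign at $s=0$ (where $1-s\mu=1$, $\Omega^{[n+1]}$ integrates to the cohomological volume, etc.) yields $\frac{d}{ds}|_{s=0}\bfV_s=-(n+1)\int_{\tstM}\Omega^{[n+1]}+(\text{boundary contribution from }\mu_{\max})$, and similarly for $\bfS_s$ the derivative picks up $-2n\int_{\tstM}(\Ric(\Omega)-\pi^*\omega_{\rm FS})\wedge\Omega^{[n]}$ together with the first-order terms coming from the $M_\infty$-integral. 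The key algebraic point is then to recognize the resulting combination as the intersection-theoretic expression~\eqref{eqDFinv} for $\DF(\tstM,\tstL)$: namely $c_1(\tstL)^{n+1}$ corresponds (after the factor $(2\pi)^{n+1}$) to $\int_{\tstM}\Omega^{[n+1]}$, $(c_1(\tstM)-\TCmap^*c_1(\pr^1)).c_1(\tstL)^n$ to $\int_{\tstM}(\Ric(\Omega)-\pi^*\omega_{\rm FS})\wedge\Omega^{[n]}$ (since $\Ric(\Omega)$ represents $2\pi c_1(\tstM)$ and $\pi^*\omega_{\rm FS}$ represents $2\pi\TCmap^*c_1(\pr^1)$), and the constant $\cst_{M,L}=c_1(M).c_1(L)^{n-1}/c_1(L)^n$ emerges from $\bfS_0/\bfV_0$ after using~\eqref{eq:notationWidecheck2}, which expresses $\bfS_0$ via $\int_{M_\infty}\Ric\wedge\Omega^{[n-1]}$ --- here one uses $(M_\infty,L_\infty)\cong(M,L)$. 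Collecting these with the correct combinatorial prefactors $n/(n+1)$ gives precisely $\frac{2n}{(2\pi)^{n+1}\bfV_0^{n/(n+1)}n!}\DF(\tstM,\tstL)$.

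The main obstacle I anticipate is bookkeeping: tracking all the factors of $2\pi$, the $\pi$ versus $2\pi$ discrepancy in the $\widecheck{\bfS}$ versus $\widecheck{\bfV}$ normalizations from~\eqref{eq:notationWidecheck}, the dimensional constant $n!$ relating $\Omega^{[n+1]}=\Omega^{n+1}/(n+1)!$ to $c_1(\tstL)^{n+1}$, and the sign and placement of the terms involving $\mu_{\max}$ --- one must verify that the boundary/$M_\infty$ contributions to $\frac{d}{ds}|_{s=0}\bfV_s$ and $\frac{d}{ds}|_{s=0}\bfS_s$ either cancel in the combination above or reassemble into the $\cst_{M,L}c_1(\tstL)^{n+1}$ term. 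A secondary subtlety is justifying differentiation under the integral sign and the $\tau\to 0$ limit commuting with $d/ds$, but this is guaranteed by the smoothness-in-$s$ assertion already contained in Theorem~\ref{theINTRO_TCdl}/Theorem~\ref{theo:Globalformula}, so the argument reduces to the purely algebraic identification of~\eqref{eqDFinv} within the differentiated global formula.
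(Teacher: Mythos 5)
Your proposal is correct and takes essentially the same route as the paper: the paper simply differentiates the already-combined expansion~\eqref{eq:centralEH} at $s=0$ and matches the result with the intersection formula~\eqref{eqDFinv}, which is exactly your quotient-rule computation applied to the $\bfS_s$ and $\bfV_s$ formulas of Theorem~\ref{theo:Globalformula}, with the $\tilde{\eta}$-independence coming from the cohomological invariance of the weighted integrals (\cite[Lemma 2]{Lahdili}) as you indicate. The $\mu_{\max}$-boundary terms you flagged do cancel exactly: the $M_\infty$-terms of $\bfS_s$ and $\bfV_s$ scale as $(1-s\mu_{\max})^{-n}$ and $(1-s\mu_{\max})^{-(n+1)}$, so their first-order contributions $n\mu_{\max}\bfS_0$ and $(n+1)\mu_{\max}\bfV_0$ drop out of the combination $\dot{\bfS}_0-\tfrac{n}{n+1}\tfrac{\bfS_0}{\bfV_0}\dot{\bfV}_0$, leaving precisely the two bulk integrals that reassemble into $\DF(\tstM,\tstL)$.
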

\begin{proof}
    From equation~\eqref{eq:centralEH} we get
\begin{equation*}
    \frac{d}{ds}\Bigr|_{s=0}\lim_{\tau\to 0}\EH(f_{s,\tau}^{-1} \eta_\tau)= \frac{n}{\bfV_0^{\frac{n}{n+1}}}\frac{\bfS_0}{\bfV_0}\int_{{\tstM}}\Omega^{[n+1]}-2\frac{n}{\bfV_0^{\frac{n}{n+1}}}\int_{{\tstM}}(\Ric(\Omega)-\pi^*\omega_{\rm FS})\wedge\Omega^{[n]}
\end{equation*}
Now, recall that $\bfS_0=2(2\pi)^{-n-1}c_1(M).c_1(L)^{[n-1]}$ and $\bfV_0=(2\pi)^{-n-1}c_1(L)^{[n]}$, so that we get
\begin{equation*}
\begin{gathered}
    \frac{d}{ds}\Bigr|_{s=0}\lim_{\tau\to 0}\widecheck{\EH}(f_{s,\tau}^{-1} \eta_\tau)=\\
    =\frac{2n}{\bfV_0^{\frac{n}{n+1}}}\left(-\frac{(c_1(\tstM)-\TCmap^*c_1(\pr^1)).c_1(\tstL)^{[n]}}{(2\pi)^{n+1}}+\frac{c_1(M).c_1(L)^{[n-1]}}{c_1(L)^{[n]}}\frac{c_1(\tstL)^{[n+1]}}{(2\pi)^{n+1}}\right)\\
    =\frac{2n}{(2\pi)^{n+1}\bfV_0^{\frac{n}{n+1}}n!}\left(-(c_1(\tstM)-\TCmap^*c_1(\pr^1)).c_1(\tstL)^n+\frac{n}{n+1}\frac{c_1(M).c_1(L)^{n-1}}{c_1(L)^{n}}c_1(\tstL)^{n+1}\right)\\
    =\frac{2n\,\DF(\tstM,\tstL)}{(2\pi)^{n+1}\bfV_0^{\frac{n}{n+1}}n!}
\end{gathered}
\end{equation*}
using the expression~\eqref{eqDFinv} for the Donaldson--Futaki Invariant of $(\tstM,\tstL)$.
\end{proof}

\subsubsection{From ribbons of CR-structures to weighted K\"ahler functionals}
   In the next subsections~\ref{ss:RibbonGeod0} and~\ref{ss:RibbonGeod1}, we work away from the central fibre and in general on the trivial locus $\TCmap^{-1}(\pr^1\backslash \{0,+\infty\}) \overset{\Pi}{\simeq} M\times \C^*$. By assumption the test configuration action is linearized and condition (iii) of Definition~\ref{def:TC} implies that $$(Y_\tau := (\TCmap\circ \pi)^{-1}(\tau), \xi_\tau)\simeq (Y,\xi)$$ as complex cones. Here $\xi_\tau$ is the fibrewise Reeb vector field of $\tstY$ along $Y_\tau$, this makes sense since this vector field is vertical with respect to the bundle map $\pi$. Condition (iii) combined with the biholomophism~\eqref{eq:biholTC} already uses that $\mO(-1)$ is trivial over $\pr^1\backslash \{0\}$  and yields a biholomorphism\begin{equation}\label{eqTCdecompRADIALcones}\tstY\backslash Y_0 \simeq Y\times \pr^1\backslash \{0\}\end{equation} which is equivariant with respect to the complex torus action $\C_\xi^*\times \C_\zeta^*$ on the left and $\C_\xi^*$ times the standard action on $\pr^1$. Pulling back a $\xi$--radial potential of $(\tstY,\xi)$ using~\eqref{eqTCdecompRADIALcones} it gives a family of radial potentials on $(Y,\xi)$ parametrized by $\tau \in \pr^1\backslash \{0\}$. By \cite[Proposition 2.9]{ACL} the pullback has thus the following form
  \begin{equation}\label{eqTCdecompRADIALpot}
\tilde{r}(y,\tau) = e^{\phi_t(y)}r(y) \end{equation} where again (and as always) $t=-\log |\tau|$ and $\phi_t \in C^\infty(M)\simeq C^\infty(Y)^{\C^*_\xi}\simeq C^\infty(N)^{\bS^1_\xi}$. This implies that through (the restriction of) the above equivariant biholomophism the contact CR manifold $$(N_\tau, \Ds_\tau, \CR_{|_{\Ds_\tau}}, \iota^*_\tau(\tilde{f}_s^{-1}\widetilde{\eta})),$$ which is~\eqref{eq:RibboncontactCR}, is sent to the contact CR manifold determined by \begin{equation} f_{s,t}^{-1}(\eta + d^c_\xi\phi_t) \in \Cmet(M,L) \end{equation} where $f_{s,t}$ is the pull-back (by the biholomophism~\eqref{eq:biholTC}) of the restriction to $N_\tau$ of the function $f_s = \tilde{\eta}( \xi-s\zeta)$ and $\eta + d^c_\xi\phi_t \in \Xi(N,\uI, \xi, [\eta])$ is a Sasaki structure compatible with the holomorphic structure $(\uI, \xi)$ induced by $(Y,\xi)$ (equivalently $(M,L)$) on $N$, see \cite[\S 6]{ACL}.

\begin{notation}
 In what follows, for any tensor $\alpha_{s,t}$ depending of the parameters $(t,s)\in \R\times (-\epsilon,\epsilon)$, we denote $$\dot{\alpha}_{s,t}:= \frac{d}{dt} \alpha_{s,t}=: \partial_t (\alpha_{s,t})$$ the variation with respect to the variable $t$. \end{notation}

\begin{lemma}\label{lemmaDERIVATIVE} $f_{s,t} = 1+ s\dot{\phi_t}$. 
\end{lemma}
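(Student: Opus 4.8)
The identity $f_{s,t} = 1 + s\dot{\phi}_t$ should follow by unwinding the definitions of $f_s$, $\widetilde{\eta}$, and the radial potential $\tilde{r}$, together with the description~\eqref{eqTCdecompRADIALpot} of how $\tilde{r}$ restricts to the fibres. First I would recall that $f_s = \tilde{\eta}(\xi - s\zeta) = \tilde{\eta}(\xi) - s\,\tilde{\eta}(\zeta) = 1 - s\mu$ on $\tstN$, where $\mu = \tilde{\eta}(\zeta)$; so the content of the lemma is the explicit identification $-\mu = \dot{\phi}_t$ when everything is transported to $N$ via the equivariant biholomorphism~\eqref{eqTCdecompRADIALcones} and restricted to the fibre over $\tau = e^{-t}$. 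The key computational input is that $\widetilde{\eta} = \iota_{\tstN}^*(d^c \log \tilde{r})$ with $\tilde r(y,\tau) = e^{\phi_t(y)} r(y)$, so $d^c \log \tilde r = d^c \log r + d^c \phi_t$, and one has to see how the extra term $d^c\phi_t$ pairs with the test-configuration vector field $\zeta$.

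The main step is therefore to compute $\tilde{\eta}(\zeta)$ using the splitting $\tstY \setminus Y_0 \simeq Y \times (\pr^1 \setminus \{0\})$, under which $\zeta$ is the generator of the $\C^*_\zeta$-action on $\pr^1$. Since $t = -\log|\tau|$ is (half of) the real part of the logarithmic coordinate on $\C^*_\zeta$, the vector field $\zeta$ acts on the second factor; writing $\tau = e^{-t + i\theta}$, the circle generator $\zeta$ is (up to the normalisation fixing $\tilde\eta(\xi)=1$) $\partial_\theta$, and $J\zeta$ corresponds to $-\partial_t$. Now $d^c \log \tilde r = -d\log\tilde r\circ J$, so $d^c\log\tilde r\,(\zeta) = -d\log\tilde r(J\zeta) = d\log\tilde r(\partial_t)$-type term; the piece coming from $\log r$ is $\xi$-basic and vertical for $\pi$, hence does not see the $\pr^1$-direction, while the piece coming from $\phi_t(y)$ contributes exactly $\partial_t \phi_t = \dot\phi_t$. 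Restricting to $\tstN$ (i.e. to $\tilde r = 1$) and to the fibre over $\tau$, and transporting back to $N$ via~\eqref{eq:biholTC}, this gives $\mu = \tilde\eta(\zeta) = -\dot\phi_t$, hence $f_s = 1 - s\mu = 1 + s\dot\phi_t$, which under the identification $f_{s,t}$ is the pullback of $f_s$ is the claimed formula.

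The point requiring the most care is getting the signs and normalisations right: one must track the conventions for $d^c$, for the identification of $\zeta$ with $\partial_\theta$ versus $-\partial_\theta$, for the relation $t = -\log|\tau|$, and for the $J$-action relating $\zeta$ and the radial/$t$-direction on the cone $\tstY$. In particular one should double-check against the already-established fact (stated just before Lemma~\ref{lem:RibboncontactCR}) that $f_s = 1 - s\mu$ with $\mu$ a Hamiltonian for $\uzeta$ on $(\tstM, \Omega)$, $\pi^*\Omega = d\tilde\eta$; combined with $\mu = -\dot\phi_t$ this is consistent, since along the trivial locus the geodesic-type family $\omega_t = \omega + dd^c\phi_t$ has $\dot\phi_t$ as (minus) the Hamiltonian generating the flow in the $\tau$-direction, which is precisely $\uzeta$. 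Once the identification $\mu = -\dot\phi_t$ is pinned down, the lemma is immediate. I do not expect any genuine analytic difficulty here; it is purely a matter of carefully composing the biholomorphisms~\eqref{eq:biholTC} and~\eqref{eqTCdecompRADIALcones} with the formula~\eqref{eqTCdecompRADIALpot} and reading off the pairing with $\zeta$.
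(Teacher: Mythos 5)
Your argument is essentially the paper's own proof: both compute $\tilde\eta(\zeta)=-d\log\tilde r(\widetilde J\zeta)$ on the splitting $\tstY\setminus Y_0\simeq Y\times\C^*$ using $\tilde r=e^{\phi_t}r$, observe that the $\log r$ term does not see the $t$-direction, and conclude $\tilde\eta(\zeta)=-\dot\phi_t$, whence $f_s=\tilde\eta(\xi-s\zeta)=1+s\dot\phi_t$. The one sign to fix is your identification of $J\zeta$: with $\tau=e^{-t+i\theta}$ and $\zeta=\partial_\theta$ one has $\widetilde J\zeta=+\partial_t$ (as in the paper), and it is this choice, combined with $d^c=-d\circ J$, that makes the signs land on $\mu=\tilde\eta(\zeta)=-\dot\phi_t$ as you correctly conclude via your cross-check with the Hamiltonian picture.
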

\begin{proof} To clarify this argument, lets denote $\widetilde{J}$ the complex structure on $\tstY$ and consider, in the decomposition~\eqref{eqTCdecompRADIALpot}, the function $\phi(\tau,y)= \phi_t(y)$ as a function on $Y\times \C^*$.  

Observe that with the identification $\tstY \backslash Y_0 \simeq Y\times \C^*$, the vector field $\widetilde{J}\zeta$ is sent to $\frac{\partial}{\partial t}$. Therefore, since $\widetilde{\eta}$ is the restriction of $d^c \log \tilde{r} =  d^c \phi + d^c \log r$ on $\tstN$, we have that
\begin{equation*}
\widetilde{\eta} (\zeta) = - d\phi(\widetilde{J}\zeta) - d\log r(\widetilde{J}\zeta) = - \dot{\phi_t}.\qedhere
\end{equation*}
\end{proof}  

From the above discussion and notation~\eqref{eq:notationWidecheck}, we conclude that studying the functionals~\eqref{eq:3functonRIBBONS} boils down to the study of the functionals  
\begin{equation}\label{eq:3functonTCweightedK0}
    \R \ni t \longmapsto  \widecheck{\bfV}(\omega + dd^c\phi_t, (1+s\dot{\phi}_t)),\quad\widecheck{\bfS}(\omega + dd^c\phi_t, (1+s\dot{\phi}_t)).
\end{equation}
The second functional is more delicate and most of our arguments concern the primitive, namely the {\it action functional}
\begin{equation}\label{eq:3functonTCweightedK}  
    \R \ni t \longmapsto  \widecheck{\actscal}(s,t):=\int_0^t \widecheck{\bfS}(\omega + dd^c\phi_u, (1+s\dot{\phi}_u))du.
\end{equation} 
Here is the plan for the next three subsections.
\begin{itemize}
    \item In \S~\ref{ss:RibbonGeod0}, we give variational formulas of the functionals along smooth rays and over the {\it trivial locus} of $\tstM$, namely $\TCmap^{-1}(\pr^1\backslash \{0, +\infty\})$. This motivates and guides the study of the volume and total scalar curvature functionals on $C^{1,1}$-geodesic rays.   
    \item In \S~\ref{ss:RibbonGeod1}, we show that the three functionals, or at least their primitive in a suitable sense, are defined along $C^{1,1}$-geodesic rays and we show that the action functional is convex along such a ray.
    \item In \S~\ref{ss:globalformulaEH}, we compute the limits of the three functionals at $\tau=0$, equivalently $t=+\infty$, and show that it does not depend on the choice of the contact form $\tilde{\eta}\in \holSAS(\tstM,\tstL)$, assuming $\tstM$ smooth and $\tstL$ ample, proving Theorem~\ref{theo:Globalformula}.
\end{itemize}
   
\subsection{The Einstein--Hilbert functional on ribbons of contact CR-structures}\label{ss:RibbonGeod0}

Consider a ribbon of contact forms $f^{-1}_{s,t}\eta_t= (1+s\dot{\phi}_t)^{-1}(\eta +d^c_\xi\phi_t)$, for a \emph{smooth} path of K\"ahler potentials $\phi_t$. We now compute the $t$-variation of the volume and total curvature functional along this ribbon of contact forms. Notice that, except for the smoothness assumption, this would be the ribbon of contact structures determined by a test configuration $(\tstM,\tstL)$ and the choice of a positively curved (outside the central fibre) Hermitian metric on $\tstL$, where~\eqref{eqTCdecompRADIALpot} determines $\phi_t$.

\begin{lemma}\label{lem:Vol-variation}
For a path of K\"ahler potentials $\varphi_t$, the variation of the volume along the ribbon of CR-contact forms $f^{-1}_{s,t}\eta_t:= (1+s\dot{\phi}_t)^{-1}(\eta +d^c_\xi\phi_t)$ satisfies       
\begin{equation}\label{Vol-variation}
    \frac{d}{dt}\bfV(f_{s,t}^{-1}(\eta_t)) = - s(n+1) \int_N \left(\ddot{\phi_{t}}- \frac{1}{2} |\grad_{\eta_t} \dot{\phi}|^2_{\eta_t}\right)  {f_{s,t}^{-2-n}}   \eta_t \wedge (d\eta_t)^{[n]}
\end{equation} where ${\eta_t}$ as a subscript means that the quantity is computed with respect to the Riemannian metric~\eqref{eqRiemCRcontacMetric} associated to $(\uI,\xi,\eta_t)$.    
\end{lemma}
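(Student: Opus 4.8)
Since Section~\ref{s:EHtestconfig} works in the regular (equivalently, polarized) case, the cleanest route is to pass to the K\"ahler quotient and reuse the scheme of the proof of Lemma~\ref{lem:varTransvDir}. The ribbon $f_{s,t}^{-1}\eta_t=(1+s\dot\phi_t)^{-1}(\eta+d^c_\xi\phi_t)$ descends to the pair $(\omega_t,f_{s,t})$ on $M$, where $\omega_t:=\omega+dd^c\phi_t$ satisfies $\pi^*\omega_t=d\eta_t$, and $f_{s,t}=1+s\dot\phi_t$ by Lemma~\ref{lemmaDERIVATIVE}. Using~\eqref{eq:notationWidecheck} and~\eqref{eq:baseintegrals} we have $\bfV(f_{s,t}^{-1}\eta_t)=2\pi\int_M f_{s,t}^{-n-1}\,\omega_t^{[n]}$, so it suffices to differentiate this integral in $t$. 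The case $s=0$ is immediate, since then $\bfV$ equals the cohomological pairing $\int_M\omega_t^{[n]}$, which is constant; so I assume $s\neq 0$ from now on.

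Differentiating $\int_M f_{s,t}^{-n-1}\omega_t^{[n]}$ and using $\dot{f}_{s,t}=s\ddot\phi_t$ together with $\tfrac{d}{dt}\omega_t^{[n]}=dd^c\dot\phi_t\wedge\omega_t^{[n-1]}$, I get
\begin{equation*}
\frac{d}{dt}\int_M f_{s,t}^{-n-1}\omega_t^{[n]}=\int_M\left(-(n+1)\,s\,\ddot\phi_t\,f_{s,t}^{-n-2}\,\omega_t^{[n]}+f_{s,t}^{-n-1}\,dd^c\dot\phi_t\wedge\omega_t^{[n-1]}\right).
\end{equation*}
The first term is precisely the $\ddot\phi_t$ contribution in~\eqref{Vol-variation}, so the point is to integrate the second term by parts. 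The key algebraic identity (a Leibniz expansion of $dd^c(f^{-n})$, in the spirit of~\eqref{eq:laplacian_f^n}) is
\begin{equation*}
f^{-n-1}\,dd^c f=-\tfrac1n\,dd^c(f^{-n})+(n+1)\,f^{-n-2}\,df\wedge d^c f,
\end{equation*}
which, applied to $f=f_{s,t}$ (so that $df_{s,t}=s\,d\dot\phi_t$ and $f_{s,t}^{-n-1}dd^c\dot\phi_t=\tfrac1s f_{s,t}^{-n-1}dd^c f_{s,t}$), yields
\begin{equation*}
f_{s,t}^{-n-1}\,dd^c\dot\phi_t\wedge\omega_t^{[n-1]}=-\tfrac1{ns}\,dd^c(f_{s,t}^{-n})\wedge\omega_t^{[n-1]}+(n+1)\,s\,f_{s,t}^{-n-2}\,d\dot\phi_t\wedge d^c\dot\phi_t\wedge\omega_t^{[n-1]}.
\end{equation*}
Integrating over the closed manifold $M$, the first term vanishes by Stokes since $d\omega_t^{[n-1]}=0$, and the second equals $(n+1)s\int_M f_{s,t}^{-n-2}|d\dot\phi_t|^2_{\omega_t}\,\omega_t^{[n]}$ by the identity $d\dot\phi_t\wedge d^c\dot\phi_t\wedge\omega_t^{[n-1]}=|d\dot\phi_t|^2_{\omega_t}\,\omega_t^{[n]}$ already used in the proof of Lemma~\ref{lem:varTransvDir}.

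Collecting terms gives $\tfrac{d}{dt}\int_M f_{s,t}^{-n-1}\omega_t^{[n]}=-(n+1)s\int_M\big(\ddot\phi_t-|d\dot\phi_t|^2_{\omega_t}\big)f_{s,t}^{-n-2}\,\omega_t^{[n]}$. Finally one translates back to $N$: multiplying by $2\pi$ and performing fibre integration along $N\to M$, together with the metric normalisation $\pi^*g_\omega=2(g_{\Ds,I,\eta})|_\Ds$ of Convention~\ref{conventionTK} — which gives $|d\dot\phi_t|^2_{\omega_t}=\tfrac12|\grad_{\eta_t}\dot\phi|^2_{\eta_t}$ — produces exactly formula~\eqref{Vol-variation}. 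I do not expect a genuine obstacle here: the argument is a routine integration by parts, and the only points requiring care are the bookkeeping of the normalisation constants (the factor $2\pi$ coming from the circle fibre and the factor $\tfrac12$ relating $g_\eta|_\Ds$ to the pullback of $g_\omega$) when moving between the contact and K\"ahler-quotient pictures. As an alternative one can bypass the quotient entirely, working on $N$ via $\bfV(\alpha)=\int_N\alpha\wedge(d\alpha)^{[n]}$ and $\tfrac{d}{dt}\bfV=(n+1)\int_N\dot\alpha\wedge(d\alpha)^{[n]}$ (Stokes), expanding $\dot\alpha\wedge(d\alpha)^{[n]}$ for $\alpha=f_{s,t}^{-1}\eta_t$ and using that the $\xi$-basic one-forms involved pair with $(d\eta_t)^{[n-1]}$ through the transversal metric; this is equivalent but demands more attention to signs arising from reordering wedge factors with $\eta_t$.
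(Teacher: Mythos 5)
Your proof is correct and follows essentially the same route as the paper's: the paper differentiates $\int_N f_{s,t}^{-1-n}\eta_t\wedge(d\eta_t)^{[n]}$ directly on $N$ and integrates by parts against $df_{s,t}=s\,d\dot\phi_t$, which is exactly your computation transported to the K\"ahler quotient via $\widecheck{\bfV}$, with the same identity $d\dot\phi_t\wedge d^c\dot\phi_t\wedge\omega_t^{[n-1]}=|d\dot\phi_t|^2_{\omega_t}\omega_t^{[n]}$ and the same bookkeeping of the $2\pi$ and $\tfrac12$ normalisations. Your separate treatment of $s=0$ and the Leibniz identity for $dd^c(f^{-n})$ are just a repackaging of the paper's integration by parts, so there is nothing substantive to add.
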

\begin{proof} A direct calculation gives 
\begin{equation*}
\begin{split}
  \frac{d}{dt}&\bfV(f_{s,t}^{-1}(\eta_t)) =  \frac{d}{dt}\int_N f_{s,t}^{-1-n}\eta_t \wedge (d\eta_t)^{[n]}  \\
  & = -(n+1) \int_N \frac{\dot{f_{s,t}}}{f^{2+n}} \eta_t \wedge (d\eta_t)^{[n]} + \int_N f_{s,t}^{-1-n}\dot{\eta_t} \wedge (d\eta_t)^{[n]} + \int_N f_{s,t}^{-1-n}\eta_t \wedge \dot{d\eta}_t\wedge (d\eta_t)^{[n-1]}.
\end{split}
\end{equation*}
The second term vanishes since $\dot{\eta_t} \wedge (d\eta_t)^{[n]} = d^c_\xi\phi_t\wedge (d\eta_t)^{[n]}$ is a $\xi$--basic $(2n+1)$ form on $N$. Thus, using Lemma~\ref{lemmaDERIVATIVE} on the first term and exterior differential rules on the last we get
\begin{equation}\label{eq:varVOL_TC}
\begin{split}
  \frac{d}{dt}\bfV(f_{s,t}^{-1}(\eta_t)) & = -(n+1) \int_N \frac{\dot{f_{s,t}}}{f^{2+n}} \eta_t \wedge (d\eta_t)^{[n]}  +  \int_N f_{s,t}^{-1-n}\eta_t \wedge \dot{d\eta_t}\wedge (d\eta_t)^{[n-1]}\\
   & = -s(n+1) \int_N \frac{\ddot{\phi_{t}}}{f_{s,t}^{2+n}} \eta_t \wedge (d\eta_t)^{[n]}  + \int_N f_{s,t}^{-1-n}\eta_t \wedge dd^c\dot{\phi_t}\wedge (d\eta_t)^{[n-1]}\\
    %& = -s(n+1) \int_N \frac{\ddot{\phi_{t}}}{f_{s,t}^{2+n}} \eta_t \wedge (d\eta_t)^{[n]} - \int_N d\left(f_{s,t}^{-1-n}\eta_t \wedge d^c\dot{\phi_t}\wedge (d\eta_t)^{[n-1]} \right)\\
    %&\qquad \qquad - (n+1)\int_N  \frac{df_{s,t}}{f_{s,t}^{2+n}}\wedge\eta_t \wedge d^c\dot{\phi_t}\wedge (d\eta_t)^{[n-1]} \\
    & =- s(n+1) \int_N \frac{\ddot{\phi_{t}}}{f_{s,t}^{2+n}} \eta_t \wedge (d\eta_t)^{[n]} - (n+1)\int_N  \frac{df_{s,t}}{f_{s,t}^{2+n}}\wedge\eta_t \wedge d^c\dot{\phi_t}\wedge (d\eta_t)^{[n-1]} \\  
     & = -s(n+1) \int_N \frac{\ddot{\phi_{t}}}{f_{s,t}^{2+n}} \eta_t \wedge (d\eta_t)^{[n]} -s (n+1)\int_N  \frac{d\dot{\phi_t}}{f_{s,t}^{2+n}}\wedge\eta_t \wedge d^c\dot{\phi_t}\wedge (d\eta_t)^{[n-1]}. 
\end{split}
\end{equation}
  From this, we have   
\begin{equation*}
\begin{split}
  \frac{d}{dt}\bfV(f_{s,t}^{-1}(\eta_t))  & = -s(n+1) \left( \int_N \frac{\ddot{\phi_{t}}}{f_{s,t}^{2+n}} \eta_t \wedge (d\eta_t)^{[n]} +  \int_N  {f_{s,t}^{-2-n}} \eta_t \wedge d\dot{\phi_t}\wedge d^c\dot{\phi_t}\wedge (d\eta_t)^{[n-1]} \right)\\
   & =- s(n+1) \int_N \left(\ddot{\phi_{t}}- \frac{1}{2} |\grad_\omega \dot{\phi}|^2_\omega\right)  {f_{s,t}^{-2-n}}   \eta_t \wedge (d\eta_t)^{[n]}.
\end{split}
\end{equation*}
For the last line we used that $d\dot{\phi_t}\wedge d^c\dot{\phi_t}$ is a $\xi$-basic form, the pull-back by $\pi: N\ra M$ of a $(1,1)$-form that we still denote $d\dot{\phi_t}\wedge d^c\dot{\phi_t}$.
This $(1,1)$-form satisfies
$$d\dot{\phi_t}\wedge d^c\dot{\phi_t} \wedge \omega_t^{[n-1]} = \Lambda_{\omega_t}(d\dot{\phi_t}\wedge d^c\dot{\phi_t})\omega_t^{[n]}= |\grad_{\omega_t}\dot{\phi_t}|^2_{\omega_t}\omega_t^{[n]}$$ with respect to the K\"ahler form $\pi^*\omega_t= d\eta_t$. Pulling back to $N$, as $g_{\omega_t}= 2g_{\eta}$ on $\xi$-basic forms, we get the following  
\begin{equation} \label{eq:EQUnomr=trace}
    d\dot{\phi_t}\wedge d^c\dot{\phi_t} \wedge (d\eta_t)^{[n-1]} = \frac{1}{2} |\grad_{\omega_t}\dot{\phi_t}|^2_{\omega_t}(d\eta_t)^{[n]}.\qedhere
\end{equation}
\end{proof}
 
Next, we compute the variation of the total Tanaka--Webster scalar curvature.
\begin{lemma}
With the previous notation, the variation of the total scalar curvature along the ribbon of CR-contact forms $f^{-1}_{s,t}\eta_t:= (1+s\dot{\phi}_t)^{-1}(\eta +d^c_\xi\phi_t)$ satisfies
\begin{equation}\label{eq:varTC_scalTOT}
  \frac{d}{dt}\bfS(f_{s,t}^{-1}\eta_t) =sn\int_N f_{s,t}^{-n-1} \left( |\nabla^-d \dot{\phi}|^2_{\eta_t} - \frac{\scal_{s,t}}{ f_{s,t}}\left( \ddot{\phi} - \frac{1}{2} |d\dot{\phi}|^2_{\eta_t} \right)\right)\eta_t \wedge (\eta_t)^{[n]}.
\end{equation}
\end{lemma}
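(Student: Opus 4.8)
The plan is to differentiate the total Tanaka--Webster scalar curvature $\bfS(f_{s,t}^{-1}\eta_t)$ with respect to $t$ by combining the two types of infinitesimal variations that occur simultaneously along a ribbon: the variation of the conformal factor $f_{s,t}=1+s\dot\phi_t$ (a ``vertical'' variation within a fixed fibre of $\kappa$), and the variation of the transversal K\"ahler structure $d\eta_t=d\eta+dd^c_\xi\phi_t$ (a ``transversal'' variation). As in the proof of Lemma~\ref{lem:varTransvDir}, I would first reduce to the quasi-regular case and work on the K\"ahler quotient $(M,\omega_t)$ with $\pi^*\omega_t=d\eta_t$, using the notation $\widecheck\bfS(\omega_t,f_{s,t})$ from~\eqref{eq:notationWidecheck2}; the general case then follows by density of quasi-regular Reeb vector fields and continuity of all quantities involved, exactly as in that lemma.

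The key computation splits into two pieces. First, freeze the transversal structure and vary only $f$: by~\eqref{eq:VARscalTOTinvariant} (the vertical first variation), the contribution is
\begin{equation*}
-n\int_M \frac{\dot f_{s,t}}{f_{s,t}}\,\scal(f_{s,t}^{-1}\eta_t)\,f_{s,t}^{-1-n}\,\omega_t^{[n]} = -sn\int_M \frac{\ddot\phi_t}{f_{s,t}}\,\scal_{s,t}\,f_{s,t}^{-1-n}\,\omega_t^{[n]},
\end{equation*}
using $\dot f_{s,t}=s\ddot\phi_t$ from Lemma~\ref{lemmaDERIVATIVE}, where I write $\scal_{s,t}:=\scal(f_{s,t}^{-1}\eta_t)$. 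Second, freeze $f$ and vary only the transversal K\"ahler potential by $\dot\phi_t$: this is precisely the content of formula~\eqref{e:ScalvariationTRANS} of Lemma~\ref{lem:varTransvDir}, giving
\begin{equation*}
n\int_M f_{s,t}^{-2}\left(\nabla^-df_{s,t},\nabla^-d\dot\phi_t\right)_{\eta_t}dv_{f_{s,t}} + n\int_M \frac{f_{s,t}^{-2}}{2}\,\scal_{s,t}\,(df_{s,t},d\dot\phi_t)_{\eta_t}\,dv_{f_{s,t}}.
\end{equation*}
Now substitute $df_{s,t}=s\,d\dot\phi_t$, so that $\nabla^-df_{s,t}=s\,\nabla^-d\dot\phi_t$ and $(df_{s,t},d\dot\phi_t)_{\eta_t}=s\,|d\dot\phi_t|^2_{\eta_t}$. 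The first transversal term becomes $sn\int_M f_{s,t}^{-2}|\nabla^-d\dot\phi_t|^2_{\eta_t}\,dv_{f_{s,t}}$, and converting $dv_{f_{s,t}}=f_{s,t}^{-1-n}dv_{\eta_t}$ gives the $sn\int f_{s,t}^{-n-1}|\nabla^-d\dot\phi|^2_{\eta_t}$ term in the claimed formula (absorbing the $f^{-2}$ correctly against the conformal rescaling of the $(2,0)$-type bilinear form, as spelled out at the end of the proof of Lemma~\ref{lem:varTransvDir}). The remaining scalar-curvature terms to collect are
\begin{equation*}
-sn\int_M f_{s,t}^{-n-2}\,\ddot\phi_t\,\scal_{s,t}\,dv_{\eta_t} + \frac{sn}{2}\int_M f_{s,t}^{-n-2}\,\scal_{s,t}\,|d\dot\phi_t|^2_{\eta_t}\,dv_{\eta_t},
\end{equation*}
which is exactly $sn\int_M f_{s,t}^{-n-1}\left(-\dfrac{\scal_{s,t}}{f_{s,t}}\bigl(\ddot\phi-\tfrac12|d\dot\phi|^2_{\eta_t}\bigr)\right)dv_{\eta_t}$. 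Adding the two groups of terms and rewriting on $N$ (where $dv_{\eta_t}=\eta_t\wedge(d\eta_t)^{[n]}$ and the pairings transfer via $g_{\omega_t}=2g_{\eta_t}$ on $\xi$-basic objects) yields precisely~\eqref{eq:varTC_scalTOT}.

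The main subtlety I anticipate is bookkeeping the conformal weights: the inner products $(\nabla^-d\dot\phi,\nabla^-d\dot\phi)$ and $|d\dot\phi|^2$ are computed with respect to $g_{\eta_t}$ in~\eqref{e:ScalvariationTRANS} but get multiplied by powers of $f_{s,t}$ depending on the tensor degree (degree-$2$ for the Hessian pairing, degree-$1$ for $|d\dot\phi|^2$), so one must be careful that the powers of $f$ in the final formula are internally consistent — and in particular that the factor $f_{s,t}^{-2}dv_{f_{s,t}}=f_{s,t}^{-n-3}dv_{\eta_t}$ rather than $f_{s,t}^{-n-1}dv_{\eta_t}$ before the rescaling of the inner products is taken into account. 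A second minor point is to justify that differentiating under the integral sign and commuting $\partial_t$ past the two variational formulas is legitimate here: since $\phi_t$ is a \emph{smooth} path of K\"ahler potentials by hypothesis (we are in subsection~\ref{ss:RibbonGeod0}, working over the trivial locus), all integrands are smooth in $t$ and this is routine. No other genuine obstacle arises; the statement is an infinitesimal identity obtained by carefully superposing the vertical variation~\eqref{eq:VARscalTOTinvariant} and the transversal variation~\eqref{e:ScalvariationTRANS} already established in the paper.
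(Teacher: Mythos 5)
Your proposal is correct and follows essentially the same route as the paper: the paper's proof likewise just superposes the vertical variation \eqref{eq:VARscalTOTinvariant} (with $\dot f_{s,t}=s\ddot\phi_t$) and the transversal variation \eqref{e:ScalvariationTRANS} (with $df_{s,t}=s\,d\dot\phi_t$), then collects the powers of $f_{s,t}$ exactly as you do. The only quibble is notational: in your displayed transversal contribution the inner products should carry the subscript of the metric of $f_{s,t}^{-1}\eta_t$ as in \eqref{e:ScalvariationTRANS}, rather than that of $\eta_t$, a point you in fact handle correctly when you absorb the $f_{s,t}^{-2}$ through the conformal rescaling of the degree-one and degree-two pairings.
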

\begin{proof}
We denote $\scal_{s,t}=\scal(f^{-1}_{s,t}\eta_t)$. Using the variation along the vertical direction~\eqref{eq:VARscalTOTinvariant} and the transversal direction~\eqref{e:ScalvariationTRANS}, with $\dot{f} = s\ddot{\phi}$, since $f^{-1}_{s,t}\eta_t= (1+s\dot{\phi}_t)^{-1}(\eta +d^c_\xi\phi_t)$, we have 
\begin{equation}
\begin{split}
  \frac{d}{dt}&\bfS(f_{s,t}^{-1}\eta_t) = n\int_N f_{s,t}^{-n-1} \left( (\nabla^-df_{s,t}, \nabla^-d\dot{\phi})_{\eta_t} + \frac{\scal_{s,t}}{2 f_{s,t}}(df_{s,t},d\dot{\phi})\right)\eta_t \wedge (\eta_t)^{[n]}\\
  & \qquad \qquad  -n\int_N \frac{\dot{f_{s,t}}}{f_{s,t}^{n+2}}\scal_{s,t} \eta_t \wedge (\eta_t)^{[n]}\\
  &=sn\int_N f_{s,t}^{-n-1} \left( |\nabla^-d \dot{\phi}|^2_{\eta_t} - \frac{\scal_{s,t}}{ f_{s,t}}\left( \ddot{\phi} - \frac{1}{2} |d\dot{\phi}|^2_{\eta_t} \right)\right)\eta_t \wedge (\eta_t)^{[n]}.\qedhere
\end{split}
\end{equation}
\end{proof}
These two computations show that, along a \emph{smooth} K\"ahler geodesic $\varphi$, the volume functional is constant along the ribbon $(1+s\dot{\phi}_t)^{-1}(\eta +d^c_\xi\phi_t)$, while the total scalar curvature (hence, the Einstein--Hilbert functional) is non-decreasing, and it is in fact increasing unless $\varphi_t$ is a trivial geodesic, induced by the flow of a biholomorphism. However, to apply this result to study test-configurations, we first need to generalise them to non-smooth geodesics. This is essentially the content of the next subsection, where we show how to extend the above result for variations of the volume in Lemma~\ref{lem:VolC11welldefined}, and the larger part of the subsection is dedicated to the proof of Theorem~\ref{theo:ActionPointwiseCvxC0}, extending the above computation for the variation of the total scalar curvature.

\subsection{The Einstein--Hilbert functional on ribbons of weak geodesics}\label{ss:RibbonGeod1}
Let $(M,\omega)$ be a compact K\"ahler manifold with a fixed reference K\"ahler metric $\omega$, and let $\mathbb{D}^{*}=\{0 <|\tau|\leq 1\}\subset \mathbb{C}$ be the punctured unit disc endowed with the natural $\mathbb{S}^1$-action by multiplication. There is a natural correspondence between rays of K\"ahler potentials $\varphi_t$ on $(M,\omega)$ and $\mathbb{S}^1$-invariant functions $\Phi$ on $M\times\mathbb{D}^*$, given by
\begin{equation}\label{S1_inv_funct}
     \Phi(x,\tau)=\varphi_t(x),\quad \tau=e^{-t+i\theta}.
\end{equation}
Let $\proj_1:M\times \mathbb{D}^*\to M$ be the projection on the first factor. We recall the definition of (sub-)geodesic rays in the space of K\"ahler potentials of $(M,\omega)$.
\begin{definition}
    A ray of plurisubharmonic functions $(\varphi_t)_{t\geq 0}\in {\rm PSH}(M,\omega)$ is a subgeodesic ray if the associated $\mathbb{S}^1$-invariant function $\Phi$ is $\proj_{1}^*\omega$-plurisubharmonic function on $M\times \mathbb{D}^*$. A locally bounded ray $(\varphi_t)_{t\geq 0}\in {\rm PSH}(M,\omega)$ is called a weak geodesic ray if it is a sub-geodesic  satisfying
    \[\big(\proj_1^*\omega+dd^{c}\Phi\big)^{n+1}=0,\]
    in the sense of Bedford--Taylor.
\end{definition}
By a result of Chen \cite{Chen}, with complements of Blocki \cite{Blocki} and the more recent work of Chu--Tossati--Weinkove \cite{CTW1}, we have the existence of weak geodesic rays $(\varphi_t)_{t\geq 0}\in {\rm PSH}(M,\omega)\cap C^{1,1}(M)$ such that the corresponding $\mathbb{S}^1$-invariant function $\Phi\in C^{1,1}(M\times \mathbb{D}^*)$ and $\proj_1^*\omega+dd^{c}\Phi$ is a positive current with bounded coefficients, up to the boundary.

Suppose that $(M,\uI,\omega)$ is the regular quotient of a smooth Sasaki structure $(N,\Ds,J,\eta,\xi)$ by the action of $\xi$ and $\pi^*\omega=d\eta$. Following \cite{VC, HeLi_MA}, a ray of plurisubharmonic functions $(\varphi_t)_{t\geq 0}\in {\rm PSH}(N,\eta)$ is a subgeodesic ray in the space of Sasaki structures $\Xi(\uI,\xi,[\eta])$ if the associated $\mathbb{S}^1$-invariant function $\Phi$ is $\proj_{1}^*(\pi^*\omega)$-plurisubharmonic on $N\times \mathbb{D}^*$. Furthermore, a locally bounded ray $(\varphi_t)_{t\geq 0}\in {\rm PSH}(N,\eta)$ is called a weak geodesic ray in the space of Sasaki structures $\Xi(\uI,\xi,[\eta])$ if is a sub-geodesic and satisfies
\[
\big(\proj_1^*(\pi^*\omega)+dd^{c}\Phi\big)^{n+1}\wedge\eta=0,
\]
on $N\times \mathbb{D}^*$ in the sense of the transverse Bedford--Taylor convergence theory established in \cite{VC, HeLi_MA}.

For a weak (sub)geodesic ray $(\varphi_t)_{t\geq 0}\in\Xi(\uI,\xi,[\eta])$, we have $f_{s,t}:=1+s\dot{\varphi}_t >0$ for $0<s<1$ small enough, which gives rise to what we call a \emph{ribbon of weak geodesics} on $(N,D,J,\xi)$, $(f_{s,t})^{-1}\eta_{\varphi_t}$. One of the goals of this Section is to show how one can define the Einstein--Hilbert functional on such weak geodesic ribbons of contact forms. It is then natural to enlarge the spaces $\Xi$ and $\Cmet$ to include CR-contact forms with $ C^{1,1}$-regularity.
% The Bedford--Taylor convergence theory has been successfully adapted to the Sasaki setting by Van Coevering \cite{VC} and He--Li \cite{HeLi_MA}. Thus, for a smooth Sasaki structure $(N,\Ds,J,\eta,\xi)$ with transversal K\"ahler structure $\omega$ the Monge--Amp\`ere operator $\phi\mapsto \omega_\phi^n\wedge \eta$ gives a well-defined measure whenever $\phi$ is locally bounded and transversally plurisubharmonic.                    
\begin{definition}
Given a smooth Sasaki manifold $(N,\Ds,J,\eta,\xi)$ compatible with a given transversally holomorphic structure $(\uI,\xi)$, we denote by $\Xi^{1,1}(\uI,\xi,[\eta])$ the slice of compatible $C^{1,1}$-Sasaki structures, that is
\begin{equation*}
    \Xi^{1,1}(\uI,\xi,[\eta]) := \{ \eta_\phi:= \eta+ d^c_\xi \phi \,|\,  \phi\in C^{1,1}(N,\R)^\xi, \eta\wedge (d\eta_\phi)^n>0 \mbox{ (as a current)}\}.
\end{equation*}
Similarly, we can define
\begin{equation*}
    \Cmet^{1,1}(\uI,\xi,[\eta]) := \{ f^{-1}\eta_\phi\,|\, \eta_\phi \in \Xi^{1,1}(\uI,\xi,[\eta]),\, f\in C^{0,1}(N,\R_{>0})^\xi\},
\end{equation*}
and we have analogous versions of these spaces for $\Sas^{1,1}(N,\uI,\xi)$ and $\Cmet^{1,1}(N,\uI,\xi)$. When $(N,\uI,\xi)$ is the transversally holomorphic circle bundle over a polarized manifold $(M,L)$, we denote these spaces by $\Sas^{1,1}(M,L)$ and $\Cmet^{1,1}(M,L)$, respectively. 
\end{definition}
Notice that the contact forms in $\Cmet^{1,1}(\uI,\xi,[\eta])$ are just $ C^0$ on $N$. Defining a Tanaka--Webtser connection, and the Tanaka--Webster scalar curvature, for a $C^0$ form might be delicate.
%In the previous definition, the $1$-form $\alpha_\phi=f^{-1}\eta_\phi$ and the function $f := \alpha_\phi(\xi)^{-1}>0$ are $C^0$ on $N$. However, defining a Tanaka--Webtser connection for such $C^0$ form, and CR-structure, might be delicate.
Hence, we will instead consider the functionals $\bfV$ and $\bfS$ defined on pairs of smooth functions $(f,\phi)$, as in the discussion around~\eqref{eq:notationWidecheck}, and show that they extend to well-defined functionals defined on the set of pairs $(f,\phi)$ where $\phi$ is just transversally $C^{1,1}$ and $f$ is Lipschitz. This is fairly straightforward for the volume functional.
\begin{lemma}\label{lem:VolC11welldefined}
 The total volume functional $\bfV$ extends to $\Cmet^{1,1}(N,\uI,\xi)$, and this extension is affine along ribbons of weak geodesic rays. 
\end{lemma}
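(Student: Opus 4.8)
The starting point is the explicit formula~\eqref{eq:baseintegrals}, valid when $\xi$ is quasi-regular, which expresses
\[
\widecheck{\bfV}(\omega,f)=\int_M f^{-n-1}\,\omega^{[n]}
\]
as an integral of a fixed smooth function of $f$ (namely $f\mapsto f^{-n-1}$) against the top power of the K\"ahler form $\omega=\omega_\varphi$ on the quotient $M$. Along a ribbon of weak geodesics we have $\omega_t=\omega+dd^c\varphi_t$ and $f=f_{s,t}=1+s\dot\varphi_t$, both expressed in terms of the $C^{1,1}$-function $\Phi$ (equivalently, the bounded positive current $\proj_1^*\omega+dd^c\Phi$). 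So the first step is to observe that $f^{-n-1}\omega_t^{[n]}$ makes sense as a well-defined measure even when $\varphi$ is only transversally $C^{1,1}$: the Monge--Amp\`ere-type products $\omega_t^{[n]}$ are bounded measures by the Bedford--Taylor / Chu--Tosatti--Weinkove theory already recalled in \S\ref{ss:RibbonGeod1}, and $f^{-n-1}$ is a bounded continuous function since $f=1+s\dot\varphi_t$ is Lipschitz and bounded away from $0$ by the choice of $s$. This gives the extension of $\bfV$ to $\Cmet^{1,1}(N,\uI,\xi)$, and by density (approximating $\Phi$ by the smooth $\varepsilon$-geodesics $\Phi_\varepsilon$ of~\eqref{BVP:G:epsilon}, which converge in $C^{1,1}$) the extension is continuous and agrees with the smooth definition.

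The second step is to prove affineness in $t$. The clean way is to rewrite the integrand as an exact expression on the total space. Exactly as in the computation leading to~\eqref{eq:varVOL_TC}--\eqref{eq:EQUnomr=trace}, for a \emph{smooth} ribbon one has
\[
\frac{d}{dt}\,\widecheck{\bfV}(\omega_t,f_{s,t})=-s(n+1)\int_M\Bigl(\ddot\varphi_t-\tfrac12|d\dot\varphi_t|^2_{\omega_t}\Bigr)f_{s,t}^{-n-2}\,\omega_t^{[n]},
\]
and the geodesic equation $\ddot\varphi_t-\tfrac12|d\dot\varphi_t|^2_{\omega_t}=0$ (which is precisely the statement that $\proj_1^*\omega+dd^c\Phi$ is degenerate Monge--Amp\`ere) shows the right-hand side vanishes. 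Hence $t\mapsto\widecheck{\bfV}$ is constant, a fortiori affine, along smooth geodesics. For the $C^{1,1}$ case I would run this through the $\varepsilon$-geodesic approximation: along $\Phi_\varepsilon$ the Monge--Amp\`ere mass is $\varepsilon\,(\proj_1^*\omega+\pi_2^*\omega_{\rm FS})^{n+1}$ rather than $0$, so $\frac{d}{dt}\widecheck{\bfV}$ is an $O(\varepsilon)$ term (uniformly in $t$ on compact intervals, using the uniform $C^{1,1}$ bounds), whence $\widecheck{\bfV}$ converges as $\varepsilon\to0$ to an affine — in fact constant — function of $t$. Alternatively, and perhaps more robustly, one can argue by pluripotential theory directly: $\widecheck{\bfV}(\omega_t,f_{s,t})=\int_M (1+s\dot\varphi_t)^{-n-1}(\omega+dd^c\varphi_t)^{[n]}$ is, after expanding, a finite sum of mixed Monge--Amp\`ere energies of the bounded psh function $\Phi$ on $M\times\mathbb D^*$, each of which is well known to be affine in $t$ along a weak geodesic (this is the contact/Sasaki analogue of the fact that the Monge--Amp\`ere energy $E(\varphi_t)$ and its higher-order cousins are affine/convex along geodesics; cf.\ the references \cite{BoB,DR,zakarias} cited just before the lemma).

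The main obstacle, and the point requiring the most care, is the non-smoothness: the geodesic equation $\ddot\varphi-\tfrac12|d\dot\varphi|^2=0$ holds only in the weak (Bedford--Taylor) sense, so one cannot naively differentiate under the integral sign. The cleanest resolution is to avoid differentiating altogether and instead show directly that $t\mapsto\int_M f_{s,t}^{-n-1}\omega_t^{[n]}$ is affine, either (a) via the $\varepsilon$-regularisation together with the uniform estimates of \cite{CTW1} as above, or (b) by recognising the integral as a combination of Deligne-pairing / intersection-theoretic quantities attached to the test configuration, which are manifestly linear in $t=-\log|\tau|$ because they compute intersection numbers on $\tstM_{\mathbb D}$. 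Either route is short once the measure-theoretic well-definedness of the first step is in place; the substance of the lemma is really the observation that $f^{-n-1}$ is a bounded weight and that weak-geodesic affineness of Monge--Amp\`ere masses is already available in the literature.
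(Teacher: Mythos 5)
Your proposal is correct and follows essentially the same route as the paper: well-definedness comes from the quotient formula $\widecheck{\bfV}(\omega_\varphi,f)=\int_M f^{-n-1}\omega_\varphi^{[n]}$, where $\omega_\varphi^{[n]}$ is a positive measure with bounded density for a $C^{1,1}$ $\omega$-psh potential and $f$ is continuous and bounded away from zero, while affineness (indeed constancy) along ribbons of weak geodesics follows from the variation formula of Lemma~\ref{lem:Vol-variation} together with the homogeneous Monge--Amp\`ere equation, the paper being terser about the $\varepsilon$-geodesic regularisation that you spell out. The only inaccuracy is in your optional alternative: $\int_M(1+s\dot\varphi_t)^{-n-1}\omega_{\varphi_t}^{[n]}$ does not expand into a finite sum of mixed Monge--Amp\`ere energies, and the clean pluripotential statement you want there is instead the $t$-invariance along geodesics of the pushforward of $\omega_{\varphi_t}^{[n]}$ by $\dot\varphi_t$, i.e. \cite[Proposition 2.2]{BoB}, which the paper invokes elsewhere.
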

\begin{proof} 
Given $f^{-1}\eta_\phi \in \Cmet(\uI,\xi,[\eta])$ with $(f,\phi)$ smooth and $\eta_\phi =\eta+ d^c_\xi \phi\in \Sas(\uI,\xi,[\eta])$, we have, on the complex K\"ahler quotient $(M,\omega)$,
\begin{equation*}
\bfV(f^{-1}\eta_\phi) =2\pi \widecheck{\bfV}(\omega_\varphi,f)= 2\pi\int_M f^{-n-1} \omega_\phi^{[n]}, 
\end{equation*} see Notation~\ref{eq:notationWidecheck}.
For a $C^{1,1}$ function $\varphi$ which is $\omega$-PSH, $\omega_\varphi^{[n]}$ is a positive measure with continuous coefficients and $f$ descends to a $C^0$ function on $M$, hence $\widecheck{\bfV}(\omega_\varphi, f)$ is well defined. From~\eqref{Vol-variation}, it is clear that $\bfV$ is affine along ribbons $(f_{s,t})^{-1}\eta_{\varphi_t}$ of weak geodesic rays.
\end{proof}
Extending the total scalar curvature functional to the space of less regular functions is instead much more delicate. Indeed, the total Tanaka--Webster scalar curvature may not be well-defined on the whole of $\Cmet^{1,1}$. However we can prove, in a slightly indirect way, that the $t$-primitive of the total scalar curvature functional along a ribbon of weak geodesics is a well-defined continuous and convex function, which will allow us to deduce important properties of $\bfS$.

So, given a ribbon of CR-contact forms $f_{s,t}\eta_t$ we consider the $t$-primitive of $\widecheck{\bfS}$, i.e. the \emph{action functional}
\begin{equation*}
    \widecheck{\actscal}_s(t):=\int_0^t \widecheck{\bfS}(\omega_{\varphi_u},f_{s,u})du
\end{equation*}
Heuristically, the idea behind introducing such a function is that the $t$-derivative of $\widecheck{\actscal}_s(t)$ satisfies
\begin{equation*}
    \frac{d}{dt}\widecheck{\actscal}_s(t)=\int_M \scal_{f_{s,t}^{-(n+1)}}(\omega_{\varphi_t})\,\omega_{\varphi_t}^{[n]} +s\int_M \scal_{f_{s,t}^{-(n+1)}}(\omega_{\varphi_t})\,\dot{\varphi}_t\omega_{\varphi_t}^{[n]}
\end{equation*}
if $f_{s,t}$ was a Killing potential on $M$, then the first integral would be a constant, independent from $t$, and the second one would be the derivative of the $f_{s,t}^{-(n+1)}$-weighted Mabuchi energy \cite{Lahdili} which is known to be well defined and pointwise convex along $C^{1,1}$-geodesics.

The main result of this Section is the following, which confirms this expectation.
\begin{theorem}\label{theo:ActionPointwiseCvxC0}
    Along a weak geodesic ray $(\phi_t)_{t\geq 0}$, the function $t\mapsto \widecheck{\actscal}_s(t)$ is pointwise convex and continuous on $[0,\infty)$.
\end{theorem}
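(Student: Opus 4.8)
The plan is to reduce the statement about $\widecheck{\actscal}_s(t)$ to the known regularity and convexity properties of weighted Mabuchi functionals along $C^{1,1}$-geodesics, via an $\varepsilon$-regularisation argument. First I would recall, from the discussion following Lemma~\ref{lemmaDERIVATIVE} and equation~\eqref{eq:notationWidecheck2}, that for a \emph{smooth} ray $\varphi_t$ the integrand $\widecheck{\bfS}(\omega_{\varphi_t},f_{s,t})$ decomposes, using $f_{s,t}=1+s\dot\varphi_t$ and Lee's formula, into a Ricci term $2\int_M f_{s,t}^{-n}\Ric(\omega_{\varphi_t})\wedge\omega_{\varphi_t}^{[n-1]}$ and a gradient term $n(n+1)\int_M f_{s,t}^{-n-2}|df_{s,t}|^2_{\omega_{\varphi_t}}\omega_{\varphi_t}^{[n]}$. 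Integrating in $t$, the key observation (exactly as in Inoue \cite{inoue-ent}) is that $\widecheck{\actscal}_s$ is, up to the boundary term and a multiple of the weighted Monge--Amp\`ere energy, the $v$-weighted Mabuchi energy $\mathcal{M}_v$ of Lahdili \cite{Lahdili} with weight $v=f_{s,t}^{-(n+1)}$ — but here $f_{s,t}$ is \emph{not} the pullback of an affine function on a moment polytope, so one cannot invoke \cite{Lahdili} directly. The way around this is to treat the extra variable $t$ itself as a moment-map variable: on $M\times\mathbb{D}^*$ the function $\Phi$ is $\proj_1^*\omega$-psh and the ribbon is built from the product weight, so the whole construction can be recast on the total space $\tstM_{\mathbb{D}}$ where $\mu=\tilde\eta(\zeta)$ \emph{is} a genuine Killing potential for the $\zeta$-action with respect to the K\"ahler form $\Omega$.

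Concretely, the main steps would be: (1) Pass from the fibrewise K\"ahler forms $\omega_{\varphi_t}$ on $M$ to the $(n+1)$-dimensional picture on $\tstM_{\mathbb{D}}$, writing $\Omega_\Phi=\Omega+dd^c\Gamma$ with $\Gamma=\Pi^*\Phi+\gamma_D$, and expressing $\widecheck{\actscal}_s(t)$ (modulo the affine-in-$t$ boundary term, which is harmless since adding an affine function preserves convexity and continuity) as a weighted energy on the slab $\TCmap^{-1}(\{e^{-t}\le|\tau|\le 1\})$. (2) For the $\varepsilon$-geodesics $\Phi_\varepsilon$ solving~\eqref{BVP:G:epsilon}, which are smooth and $\Omega$-psh, invoke the smooth computation of \S\ref{ss:RibbonGeod0} — in particular equation~\eqref{eq:varTC_scalTOT} — to see that $t\mapsto\widecheck{\actscal}_s^\varepsilon(t)$ has non-negative second derivative (the integrand is $sn\int f^{-n-1}(|\nabla^-d\dot\varphi|^2 - f^{-1}\scal_{s,t}(\ddot\varphi-\tfrac12|d\dot\varphi|^2))$, and along the $\varepsilon$-geodesic $\ddot\varphi-\tfrac12|d\dot\varphi|^2\ge 0$ up to an $O(\varepsilon)$ error, with the sign of $s$ small and fixed), hence $\widecheck{\actscal}_s^\varepsilon$ is convex, and it is manifestly continuous. (3) Use the decreasing $C^{1,1}$-convergence $\Phi_\varepsilon\downarrow\Phi$ together with uniform $C^{1,1}$ bounds (from \cite{CTW1}) to pass to the limit: since $\omega_{\varphi_t}^{[n]}$ converges as a positive measure with uniformly bounded continuous densities and $f_{s,t}=1+s\dot\varphi_t$ converges uniformly, the gradient and Ricci parts of $\widecheck{\bfS}$ converge, so $\widecheck{\actscal}_s^\varepsilon(t)\to\widecheck{\actscal}_s(t)$ pointwise; a uniform locally-Lipschitz-in-$t$ bound (again from the $C^{1,1}$ estimates, which control $\dot\varphi,\ddot\varphi$) upgrades this to uniform convergence on compact $t$-intervals, and a uniform-on-compacts limit of convex continuous functions is convex and continuous. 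Finally (4), handle the Ricci term's possible singularity along $D$: because $\Ric(\Omega_\Phi)=\Ric(\Omega)-dd^c\log\frac{\Omega_\Phi^{n+1}}{\Omega^{n+1}}$ and the density is bounded above (a consequence of the $C^{1,1}$ estimate and the Monge--Amp\`ere equation), integration by parts against the bounded weight is justified and the Ricci contribution depends continuously on the data; alternatively, one rewrites $\int f^{-n}\Ric\wedge\omega^{[n-1]}$ using $\Ric\wedge\omega^{[n-1]}=\frac12\scal\,\omega^{[n]}$ and argues as for the entropy term in \cite{BDL,inoue-ent}.

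The main obstacle I expect is precisely Step~(3)--(4): justifying that the total scalar curvature functional — which, unlike the volume, involves a second-order quantity in $\varphi$ and the Ricci curvature — behaves well under the $C^{1,1}$ (not $C^2$) degeneration $\Phi_\varepsilon\to\Phi$. The gradient term $\int f^{-n-2}|df|^2_{\omega_\varphi}\omega_\varphi^{[n]}$ is continuous under $C^{1,1}$-convergence of potentials (it only sees first derivatives of $\varphi$ through $f$ and the metric, plus the uniformly-bounded Monge--Amp\`ere measure), so it is fine; the genuinely delicate point is the Ricci/entropy term, and the safe route is to never differentiate it pointwise but to keep it packaged inside the \emph{integrated} action $\widecheck{\actscal}_s$, exactly as Inoue does for the entropy, so that only the \emph{value} (not the $t$-derivative) of the weighted Mabuchi-type energy is needed along the $C^{1,1}$-geodesic, where its continuity and convexity are exactly the Berman--Darvas--Lu / Lahdili-type results. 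One then recovers properties of $\bfS$ itself a posteriori from those of its primitive $\widecheck{\actscal}_s$. I would also need the elementary fact that along a weak geodesic ray $\ddot\varphi-\tfrac12|d\dot\varphi|^2\ge 0$ in the appropriate weak sense (this is the statement that the geodesic equation forces the ``Nijenhuis-type'' quantity to vanish, with subgeodesics giving $\ge 0$), which is standard but should be cited carefully.
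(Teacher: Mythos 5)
Your plan breaks down at exactly the point you flag as delicate, and the fallback you propose for it is circular. First, Step (2) is not correct: along an $\varepsilon$-geodesic, formula~\eqref{eq:varTC_scalTOT} gives $\frac{d}{dt}\bfS = sn\int_N f_{s,t}^{-n-1}\bigl(|\nabla^-d\dot\phi|^2 - f_{s,t}^{-1}\scal_{s,t}(\ddot\phi-\tfrac12|d\dot\phi|^2)\bigr)$, and the defect term $\ddot\phi-\tfrac12|d\dot\phi|^2\geq 0$ is multiplied by the Tanaka--Webster scalar curvature $\scal_{s,t}$, which has no sign and, worse, is not uniformly controlled as $\varepsilon\to 0$: it involves four derivatives of $\Phi_\varepsilon$, while the available estimates are only $C^{1,1}$. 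So the approximations $\widecheck{\actscal}^{\varepsilon}_s$ are \emph{not} convex, the error is not uniformly small, and the claimed uniform local Lipschitz bound in $t$ (which would require a uniform bound on the weighted total scalar curvature, a non-cohomological quantity here because the weight $f_{s,t}^{-(n+1)}$ is not a Killing potential) is not provided by $C^{1,1}$ control; similarly, the entropy-type part of $\widecheck{\bfS}$ is only lower semicontinuous under this convergence, not continuous, since the fibrewise densities are merely bounded measurable. Second, your ``safe route'' --- keeping the entropy packaged in the integrated action and then invoking Berman--Darvas--Lu/Lahdili convexity of the weighted Mabuchi-type energy along $C^{1,1}$ geodesics --- is precisely what is unavailable: as you note yourself at the outset, $f_{s,t}=1+s\dot\varphi_t$ depends on the ray and is not induced by a fixed Hamiltonian torus action, so those results do not apply, and asserting them here amounts to assuming the theorem to be proved. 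Recasting everything on $\tstM_{\mathbb{D}}$, where $\mu$ is a genuine Hamiltonian, is used in the paper only to compute asymptotic slopes (\S\ref{ss:globalformulaEH}); it does not by itself produce convexity in $t$.

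What the proof actually requires, and what the paper's argument supplies, is an adaptation of Berman--Berndtsson. One rewrites the action via~\eqref{ch-A} in terms of the entropy density $\Psi=\log\omega_{\varphi_t}^{[n]}$, introduces the modified functional $\widecheck{\actscal}^{\Psi}_s$ for an \emph{auxiliary} smooth or Lipschitz $\Psi$, and computes its complex Hessian as a current on $\mathbb{D}^*$ (Lemma~\ref{lem:ddc_A^Psi-smooth}, Corollary~\ref{cor:ddc_A^Psi}): along the weak geodesic, $dd^c\widecheck{\actscal}^{\Psi}_s = ns\int_M (1+s\dot\Phi)^{-(n+1)}\,dd^c\Psi\wedge({\proj_1}^*\omega+dd^c\Phi)^{[n]}$, which is non-negative as soon as $dd^c\Psi\wedge({\proj_1}^*\omega+dd^c\Phi)^{[n]}\geq 0$. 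Since the true $\Psi$ is neither smooth nor bounded along a weak geodesic, one then needs the regularization of Lemma~\ref{prop-BB} (a decreasing family $\Psi_A$ with $dd^c\Psi_A\geq 0$, and Lipschitz $\Psi_{A,k}$ making $T_{A,k}$ positive) to pass to the limit and conclude $dd^c\widecheck{\actscal}_s\geq 0$; continuity then follows from convexity combined with the lower semicontinuity coming from the entropy expression (Corollary~\ref{cor:A-def}), not from uniform convergence of smooth approximants. None of these ingredients --- the $dd^c$-identity for the modified action, the positivity hypothesis on $dd^c\Psi\wedge({\proj_1}^*\omega+dd^c\Phi)^{[n]}$, and the regularization of the entropy density --- appear in your outline, and without them the argument does not close.
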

Another important result, inspired by \cite[Lemma $3.7$]{inoue-ent} is a \emph{slope inequality} for the action functional. 
\begin{proposition}\label{prop:A>EH}
Along a $C^{1,1}$ geodesic ray in the space of K\"ahler potentials of $(M,\omega)$, we have
\begin{equation*}
    \frac{\frac{d}{dt}_{|0^+}\widecheck{\actscal}_s(t)}{\widecheck{\V}(\omega,1+s\dot{\varphi}_0)^{\frac{n}{n+1}}}\geq \widecheck{\EH}(\omega,1+s\dot{\varphi}_0).
\end{equation*}
\end{proposition}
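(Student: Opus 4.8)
The plan is the following. Since $\omega$ is smooth and, for $|s|$ small, $1+s\dot\varphi_0$ is bounded below by a positive constant, $\widecheck{\V}(\omega,1+s\dot\varphi_0)$ is a fixed positive number; dividing through by its $\tfrac{n}{n+1}$-th power, the asserted inequality becomes $\frac{d}{dt}\big|_{0^+}\widecheck{\actscal}_s(t)\geq\widecheck{\bfS}(\omega,1+s\dot\varphi_0)$. By Theorem~\ref{theo:ActionPointwiseCvxC0}, $t\mapsto\widecheck{\actscal}_s(t)$ is continuous and convex on $[0,\infty)$ with $\widecheck{\actscal}_s(0)=0$, so its right derivative at $0$ exists and equals $\lim_{t\to0^+}\widecheck{\actscal}_s(t)/t$, and since $t\mapsto\widecheck{\actscal}_s(t)/t$ is non-decreasing it is enough to prove the slope bound $\widecheck{\actscal}_s(t)\geq t\,\widecheck{\bfS}(\omega,1+s\dot\varphi_0)$ for all small $t>0$.

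To obtain this I would work with the smooth $\varepsilon$-geodesic rays $(\varphi^\varepsilon_t)$ approximating the given $C^{1,1}$-geodesic ray (as in \cite{CTW1,CTW2}), which share the initial potential $\varphi^\varepsilon_0=\varphi_0$ and converge to $(\varphi_t)$ in $C^{1,1}$ as $\varepsilon\to0$. For fixed $\varepsilon$ the function $\widecheck{\actscal}^\varepsilon_s(t):=\int_0^t\widecheck{\bfS}(\omega_{\varphi^\varepsilon_u},1+s\dot\varphi^\varepsilon_u)\,du$ is smooth, vanishes at $t=0$, and has $(\widecheck{\actscal}^\varepsilon_s)'(0)=\widecheck{\bfS}(\omega,1+s\dot\varphi^\varepsilon_0)$. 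Differentiating once more, inserting the variation formula~\eqref{eq:varTC_scalTOT}, and using that along an $\varepsilon$-geodesic the defect $\ddot\varphi^\varepsilon_t-\tfrac12|d\dot\varphi^\varepsilon_t|^2_{\omega_{\varphi^\varepsilon_t}}$ is a nonnegative function of size $O(\varepsilon)$, one should get $(\widecheck{\actscal}^\varepsilon_s)''(t)\geq-\delta_\varepsilon(t)$ with $\delta_\varepsilon\geq0$ and $\int_0^t\delta_\varepsilon\to0$ as $\varepsilon\to0$, uniformly for $t$ in a compact interval. Integrating twice yields $\widecheck{\actscal}^\varepsilon_s(t)\geq t\,\widecheck{\bfS}(\omega,1+s\dot\varphi^\varepsilon_0)-o_\varepsilon(1)$.

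Then I would let $\varepsilon\to0$. First, $\widecheck{\actscal}^\varepsilon_s(t)\to\widecheck{\actscal}_s(t)$, which is part of (the proof of) Theorem~\ref{theo:ActionPointwiseCvxC0}. Secondly, $C^{1,1}$-convergence of the $\varepsilon$-geodesics gives $\dot\varphi^\varepsilon_0\to\dot\varphi_0$ uniformly and $d\dot\varphi^\varepsilon_0\to d\dot\varphi_0$ weakly in $L^2$ with uniformly bounded $L^\infty$-norm; since for fixed smooth $\omega$ the map $v\mapsto\widecheck{\bfS}(\omega,1+sv)=\int_M\scal(\omega)(1+sv)^{-n}\omega^{[n]}+n(n+1)s^2\int_M(1+sv)^{-n-2}|dv|^2_\omega\,\omega^{[n]}$ is weakly lower semicontinuous in $v$ (the gradient term is a convex integrand in $dv$, and the curvature term depends on $v$ only through the uniform convergence of $(1+sv)^{-n}$), we get $\liminf_{\varepsilon\to0}\widecheck{\bfS}(\omega,1+s\dot\varphi^\varepsilon_0)\geq\widecheck{\bfS}(\omega,1+s\dot\varphi_0)$. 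Combining the two limits gives $\widecheck{\actscal}_s(t)\geq t\,\widecheck{\bfS}(\omega,1+s\dot\varphi_0)$, and dividing by $t$ and letting $t\to0^+$ concludes.

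The hard part will be the uniform bound $\int_0^t\delta_\varepsilon\to0$: it amounts to controlling $\int_M\scal(\omega_{\varphi^\varepsilon_u})\,(1+s\dot\varphi^\varepsilon_u)^{-n-2}\big(\ddot\varphi^\varepsilon_u-\tfrac12|d\dot\varphi^\varepsilon_u|^2\big)\,\omega_{\varphi^\varepsilon_u}^{[n]}$ (the essential unbounded contribution to $\scal_{s,u}$, by Tanno's formula) uniformly in $\varepsilon$, even though the scalar curvature of the degenerating metrics $\omega_{\varphi^\varepsilon}$ need not stay bounded as $\varepsilon\to0$. This is the same difficulty already resolved in the proof of Theorem~\ref{theo:ActionPointwiseCvxC0}, following \cite{inoue-ent} and mirroring the convexity of the weighted Mabuchi energy along $C^{1,1}$-geodesics \cite{Lahdili,zakarias}: one rewrites $\scal(\omega_{\varphi^\varepsilon})\,\omega_{\varphi^\varepsilon}^n=2\Ric(\omega)\wedge\omega_{\varphi^\varepsilon}^{n-1}-2\,dd^c\!\log(\omega_{\varphi^\varepsilon}^n/\omega^n)\wedge\omega_{\varphi^\varepsilon}^{n-1}$, integrates the last term by parts onto the bounded factors, and uses $\omega_{\varphi^\varepsilon}\leq C\omega$ together with $\big(\ddot\varphi^\varepsilon-\tfrac12|d\dot\varphi^\varepsilon|^2\big)\omega_{\varphi^\varepsilon}^n=\varepsilon\cdot(\text{uniformly bounded form})$ coming from the $\varepsilon$-geodesic equation.
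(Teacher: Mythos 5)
Your reduction (convexity of $\widecheck{\actscal}_s$ plus $\widecheck{\actscal}_s(0)=0$, so it suffices to bound the slope $\widecheck{\actscal}_s(t)/t$ from below) is fine, and you correctly identify that $(\widecheck{\actscal}^\varepsilon_s)'(0)=\widecheck{\bfS}(\omega,1+s\dot\varphi^\varepsilon_0)$ with $\varphi^\varepsilon_0=\varphi_0$. But the heart of your argument -- the approximate convexity $(\widecheck{\actscal}^\varepsilon_s)''\geq-\delta_\varepsilon$ with $\int_0^t\delta_\varepsilon\to0$ uniformly, together with $\widecheck{\actscal}^\varepsilon_s(t)\to\widecheck{\actscal}_s(t)$ -- is exactly the part you do not justify, and your proposed fix does not work as stated. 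By~\eqref{eq:varTC_scalTOT}, the defect you must control is $\int_0^t\int f_{s,u}^{-n-2}\,\scal_{s,u}\,(\ddot\varphi^\varepsilon_u-\tfrac12|d\dot\varphi^\varepsilon_u|^2)\,dv$, and the $\varepsilon$-geodesic equation only tells you that the \emph{measure} $(\ddot\varphi^\varepsilon-|d\dot\varphi^\varepsilon|^2_{\varphi^\varepsilon})\omega^{[n]}_{\varphi^\varepsilon}$ equals $\varepsilon$ times a fixed bounded volume form; it does not tame $\scal(\omega_{\varphi^\varepsilon})$ (nor the $\Delta^{\eta}f_{s}=s\Delta\dot\varphi^\varepsilon$ term in Tanno's formula), because after this substitution you are left with $\varepsilon\int_M\scal(\omega_{\varphi^\varepsilon_u})\,(\text{bdd})\,dV$ against a \emph{fixed} volume form, and neither $\int_M|\scal(\omega_{\varphi^\varepsilon})|\,dV$ nor $\int_M\operatorname{tr}_{\omega_{\varphi^\varepsilon}}\omega\,e^{-\psi^\varepsilon}dV$ is $o(\varepsilon^{-1})$ from the available uniform $C^{1,1}$ bounds (only quantities paired with $\omega^{[n]}_{\varphi^\varepsilon}$ or $\omega^{[n-1]}_{\varphi^\varepsilon}$ are cohomologically controlled). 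Your integration-by-parts suggestion fails for the same reason: moving $dd^c\psi^\varepsilon$ onto the other factors produces $dd^c$ of $f_{s,u}^{-n-2}(\ddot\varphi^\varepsilon_u-\tfrac12|d\dot\varphi^\varepsilon_u|^2)$, i.e.\ third and fourth mixed space--time derivatives of $\Phi^\varepsilon$, which are not uniformly bounded, and $\psi^\varepsilon$ itself is only controlled from above, not in $L^1$ against a fixed measure. Moreover, saying this is ``the same difficulty already resolved in Theorem~\ref{theo:ActionPointwiseCvxC0}'' mischaracterizes that proof: there the $\varepsilon$-geodesics are only used in Lemma~\ref{lem:ddc_A^Psi-smooth}/Corollary~\ref{cor:ddc_A^Psi} with a \emph{fixed smooth} weight $\Psi$, so only $dd^c\Psi$ wedged with uniformly bounded positive forms appears and $\scal(\omega_{\varphi^\varepsilon})$ is never estimated; the singular weight $\log\omega^{[n]}_{\varphi_t}$ is then handled by the Berman--Berndtsson regularization of Lemma~\ref{prop-BB} along the limit geodesic, not along $\varepsilon$-geodesics. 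For the same reason the convergence $\widecheck{\actscal}^\varepsilon_s(t)\to\widecheck{\actscal}_s(t)$ is \emph{not} ``part of'' that proof: it would require continuity (not mere lower semicontinuity) of the entropy-type term and smallness of the middle term of~\eqref{ch-A} for $\Phi^\varepsilon$, where $\Psi^\varepsilon=\log(\omega^n_{\varphi^\varepsilon}/\omega^n)$ is unbounded below.

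For comparison, the paper's proof avoids all of this by a different decomposition: using~\eqref{ch-A-Ent}, $\widecheck{\actscal}_s(t)$ splits into a relative-entropy term (nonnegative and vanishing at $t=0$, hence contributing nonnegatively to the difference quotient at $0^+$), the smooth Ricci integral, and the term $\int_M\log(1+s\dot\varphi_0)\,\omega^{[n]}_{\varphi_t}(1+s\dot\varphi_t)^{-(n+1)}$; only for this last term are $\varepsilon$-geodesics used, and its $t$-derivative involves nothing beyond first and second derivatives of $\Phi^\varepsilon$, which converge uniformly, giving a $C^1$ limit with derivative $-s\int_M d\dot\varphi_0\wedge d^c\dot\varphi_t\wedge\omega^{[n-1]}_{\varphi_t}(1+s\dot\varphi_0)^{-1}(1+s\dot\varphi_t)^{-(n+1)}$. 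Summing the three contributions and using $2\Ric(\omega)\wedge\omega^{[n-1]}=\scal(\omega)\,\omega^{[n]}$ together with~\eqref{eq:notationWidecheck2} yields exactly $\widecheck{\bfS}(\omega,1+s\dot\varphi_0)=\widecheck{\EH}(\omega,1+s\dot\varphi_0)\widecheck{\V}(\omega,1+s\dot\varphi_0)^{\frac{n}{n+1}}$ as the lower bound. If you want to salvage your route, you would have to either prove the uniform estimate on the defect term along $\varepsilon$-geodesics (which is an open analytic issue, not a formality) or restructure the argument so that, as in the paper, no curvature of the degenerating metrics ever needs to be integrated against a fixed volume form.
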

As an application of Theorem~\ref{theo:Globalformula}, Theorem~\ref{theo:ActionPointwiseCvxC0}, and Proposition~\ref{prop:A>EH}, we see that the Einstein--Hilbert functional detects K-semistability of cscK manifolds.
\begin{corollary}\label{cor:Ksemistab}(={\bf Corollary}~\ref{coro:Crit=>Kss})
    Assume that $\EH$ admits a critical point in $\holSAS(M,L)$, i.e. that there is a cscK metric in $c_1(L)$ (c.f. Theorem~\ref{lem:CRITEH}). Then $(M,L)$ is K-semistable.
\end{corollary}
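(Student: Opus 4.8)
The plan is to combine Corollary~\ref{cor:derivEH=DF}, the slope inequality of Proposition~\ref{prop:A>EH}, and the convexity/continuity of the action functional from Theorem~\ref{theo:ActionPointwiseCvxC0}. Suppose $\omega\in 2\pi c_1(L)$ is cscK, and let $(\tstM,\tstL)$ be a test configuration for $(M,L)$. By Remark~\ref{rem:SmootTC} it suffices to treat the case where $(\tstM,\tstL)$ is a smooth ample dominating test configuration with reduced central fibre. Attach to it the $C^{1,1}$-compatible geodesic ray $(\varphi_t)_{t\geq 0}$ emanating from $\omega$ (built via the solution of~\eqref{BVP:G}), and form the associated action functional $t\mapsto\widecheck{\actscal}_s(t)$. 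The target is the chain
\begin{equation}\label{eq:Kss_chain}
\left(\frac{d}{ds}\right)_{s=0}\!\!\EH_s(\tstM,\tstL)\ \propto\ \DF(\tstM,\tstL)\ \text{and}\ \DF(\tstM,\tstL)\geq 0.
\end{equation}

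First I would record, via Corollary~\ref{cor:derivEH=DF}, that $\DF(\tstM,\tstL)$ is a fixed positive multiple of $\left(\tfrac{d}{ds}\right)_{s=0}\EH_s(\tstM,\tstL)$, where $\EH_s(\tstM,\tstL)=\lim_{\tau\to0}\EH(f_{s,\tau}^{-1}\eta_\tau)$; so it is enough to show this $s$-derivative is $\geq 0$. Next, fix $s$ small and estimate $\EH_s(\tstM,\tstL)$ from below using the action functional along the geodesic ray. The key point, exactly as in Inoue's argument, is that $\widecheck{\actscal}_s$ is convex and continuous on $[0,\infty)$ by Theorem~\ref{theo:ActionPointwiseCvxC0}, so its slope at infinity dominates its right-derivative at $0$; combined with Proposition~\ref{prop:A>EH}, which gives $\big(\tfrac{d}{dt}\big|_{0^+}\widecheck{\actscal}_s\big)/\widecheck{\V}(\omega,1+s\dot\varphi_0)^{n/(n+1)}\geq\widecheck{\EH}(\omega,1+s\dot\varphi_0)$, and with the identification of the asymptotic slope of $\widecheck{\actscal}_s$ with (a constant times) $\EH_s(\tstM,\tstL)$ coming from the global formula of Theorem~\ref{theo:Globalformula}, one obtains
\[
\EH_s(\tstM,\tstL)\ \geq\ \widecheck{\EH}(\omega,1+s\dot\varphi_0)\qquad\text{(up to the fixed positive normalising constants).}
\]
At $s=0$ the ribbon degenerates to the Sasaki point $\eta=\eta_\omega$ itself, so $\widecheck{\EH}(\omega,1)=\EH_0(\tstM,\tstL)=\lim_{\tau\to0}\EH(\eta_\tau)$ and the two sides of the last inequality agree at $s=0$. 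Hence $s\mapsto \EH_s(\tstM,\tstL)-\widecheck{\EH}(\omega,1+s\dot\varphi_0)$ is a nonnegative function vanishing at $s=0$, and I would conclude by differentiating: since $\omega$ is cscK, it is a critical point of $\EH$ on $\holSAS(M,L)$ by Theorem~\ref{lem:CRITEH}, and moreover by Corollary~\ref{coroJLcscTW} (applied in the vertical direction) the function $s\mapsto\widecheck{\EH}(\omega,1+s\dot\varphi_0)$ has vanishing $s$-derivative at $s=0$, being the restriction of $\EH$ to a path in a fibre of $\kappa$ through a cscTW point. Therefore $\big(\tfrac{d}{ds}\big)_{s=0}\EH_s(\tstM,\tstL)\geq 0$, which by Corollary~\ref{cor:derivEH=DF} gives $\DF(\tstM,\tstL)\geq0$, i.e. K-semistability.

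The main obstacle I expect is the matching of the ``asymptotic slope of $\widecheck{\actscal}_s$ along the $C^{1,1}$-geodesic'' with the ``limit $\EH_s(\tstM,\tstL)$ on the central fibre'': Theorem~\ref{theo:Globalformula} computes the latter as an intersection-theoretic quantity on $\tstM$ (a $\tau\to0$ limit), while Theorem~\ref{theo:ActionPointwiseCvxC0} controls the former as a $t\to\infty$ limit of an energy along a weak geodesic. Bridging these requires care: one must show $\tfrac{1}{t}\widecheck{\actscal}_s(t)\to$ (a constant times) $\widecheck{\bfS}$ evaluated on the central fibre data, using the $\varepsilon$-geodesic approximation~\eqref{BVP:G:epsilon}, the convergence $\Gamma_\varepsilon\to\Gamma$ in $C^{1,1}$, and the continuity in $\tstL$ of the coefficients appearing in Theorem~\ref{theo:Globalformula}. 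A secondary subtlety is checking that the right-derivative at $t=0$ of $\widecheck{\actscal}_s$ coincides with $\widecheck{\bfS}(\omega,1+s\dot\varphi_0)$ (rather than merely dominating it), which is where the cscK hypothesis and the vanishing of the first variation~\eqref{eq:Var_EH} at $\omega$ enter to make the slope inequality of Proposition~\ref{prop:A>EH} an equality at first order in $s$ about $s=0$.
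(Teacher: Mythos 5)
Your proposal is correct and follows essentially the same route as the paper's proof: reduce to smooth test configurations with reduced central fibre, combine the convexity/continuity of $\widecheck{\actscal}_s$ (Theorem~\ref{theo:ActionPointwiseCvxC0}) with the slope inequality of Proposition~\ref{prop:A>EH} and the central-fibre formulas of Theorem~\ref{theo:Globalformula} to get $\EH_s(\tstM,\tstL)\geq\widecheck{\EH}(\omega,1+s\dot\varphi_0)$, and conclude via Corollary~\ref{cor:derivEH=DF}. The only (harmless) deviation is the endgame: the paper expands $s\mapsto\widecheck{\EH}(\omega,1+s\dot\varphi_0)$ to second order and adds a quadratic correction to bound it below by $\widecheck{\EH}_0-Cs^2$, whereas you invoke directly that $s=0$ is a critical point of this function (by cscK and the vertical first-variation formula) and compare difference quotients — both give $\left(\tfrac{d}{ds}\right)_{s=0}\EH_s(\tstM,\tstL)\geq 0$, hence $\DF(\tstM,\tstL)\geq 0$, and the "secondary subtlety" you flag (equality of the $t=0^+$ slope with $\widecheck{\bfS}(\omega,1+s\dot\varphi_0)$) is in fact not needed, since only the inequality of Proposition~\ref{prop:A>EH} together with the cohomological identity $\EH_0(\tstM,\tstL)=\widecheck{\EH}(\omega,1)$ enters the argument.
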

\begin{proof}
Let $\eta\in\holSAS(M,L)$ be a critical point of $\EH$, and let $\omega\in 2\pi c_1(L)$ the corresponding cscK form. Let $({\tstM},\tstL)$ be a smooth test configuration for $(M,\omega)$ with $(\varphi_t)_{t\geq 0}$ the associated weak geodesic ray. By definition of $\EH_s$, we have
\begin{equation*}
\EH_s({\tstM},\tstL)=\frac{\lim_{t\to+\infty}\frac{d}{dt}\widecheck{\actscal}_s(t)}{\lim_{t\to+\infty}{\widecheck{\V}(\omega_t,1+s\dot{\varphi}_t)^{\frac{n}{n+1}}}}=\frac{\lim_{t\to+\infty}\frac{d}{dt}\widecheck{\actscal}_s(t)}{{\widecheck{\V}(\omega,1+s\dot{\varphi}_0)^{\frac{n}{n+1}}}}
\end{equation*}
since $\widecheck{\V}$ is constant along geodesics, c.f. Lemma~\ref{lem:Vol-variation}. As $\widecheck{\actscal}_s(t)$ is convex in $t$ by Theorem~\ref{theo:ActionPointwiseCvxC0}, using Proposition~\ref{prop:A>EH} we obtain
\begin{equation*}
\EH_s({\tstM},\tstL)\geq\frac{\frac{d}{dt}_{|0^+}\widecheck{\actscal}_s(t)}{\widecheck{\V}(\omega,1+s\dot{\varphi}_0)^{\frac{n}{n+1}}}\geq\widecheck{\EH}(\omega,1+s\dot{\varphi}_0).
\end{equation*}
Now, we know that $s=0$ is a critical point of $s\mapsto \widecheck{\EH}(\omega,1+s\dot{\varphi}_0)$. Computing the second derivative at $s=0$ we find
\begin{equation*}
\frac{d^2}{ds^2}_{|s=0}\widecheck{\EH}(\omega,1+s\dot{\varphi}_0)=\frac{n}{\widecheck{\V}^{\frac{n}{n+1}}}\int_M \left(2(n+1)\lvert d\dot{\varphi}_0\rvert^2_{\omega}-\cst_o \dot{\varphi}_0^2\right)\omega^{[n]},
\end{equation*}
so that we find
\begin{equation*}
\frac{d^2}{ds^2}_{|s=0}\left(\widecheck{\EH}(\omega,1+s\dot{\varphi}_0)+\frac{n\cst_o\lVert\dot{\varphi}\rVert^2}{2\widecheck{\V}^{\frac{n}{n+1}}}s^2\right)=\frac{2(n+1)n}{\widecheck{\V}^{\frac{n}{n+1}}}\int_M |d\dot{\varphi}_0|^2_{\omega}\omega^{[n]}>0
\end{equation*}
where $\lVert\dot{\varphi}\rVert^2=\int_M\dot{\varphi}_t^2\omega_t^{[n]}$ is the $L^2$-norm of the tangent vector $\dot{\varphi}_t$, which is independent of $t$. It follows that, for small $s$,
\begin{equation*}
\widecheck{\EH}(\omega,1+s\dot{\varphi}_0)+\frac{n\cst_o\,\lVert\dot{\varphi}\rVert^2}{2\widecheck{\V}^{\frac{n}{n+1}}}s^2\geq \widecheck{\EH}(\omega,1)=\widecheck{\EH}_0
\end{equation*}
which gives the inequality
\begin{equation*}
\EH_s({\tstM},\tstL)\geq\widecheck{\EH}(\omega,1+s\dot{\varphi}_0)\geq \widecheck{\EH}_0 -\frac{n\cst_o\,\lVert\dot{\varphi}\rVert^2}{2\widecheck{\V}^{\frac{n}{n+1}}}s^2.
\end{equation*}
From this inequality and Corollary~\ref{cor:derivEH=DF}, we finally obtain
\begin{equation*}
\begin{split}
\frac{n\,\DF(\tstM,\tstL)}{2\widecheck{V}_0^{n/n+1}}=&\frac{d}{ds}_{|0^+}\EH_s({\tstM},\tstL)\\
=&\underset{s\to 0^+}{\lim}\frac{\EH_s({\tstM},\tstL)-\widecheck{\EH}_0}{s}\geq \underset{s\to 0^+}{\lim} -\frac{n\cst_o\,\lVert\dot{\varphi}\rVert^2}{2\widecheck{\V}^{\frac{n}{n+1}}}s=0
\end{split}
\end{equation*}
so that $\DF(\tstM,\tstL)$ must be non-positive, for any test configuration.
\end{proof}

The remainder of this Section is devoted to the proofs of Theorem~\ref{theo:ActionPointwiseCvxC0} and Proposition~\ref{prop:A>EH}, by obtaining explicit expressions for the action functional and its complex Hessian.

The first step is to obtain an expression for this action functional along a smooth ribbon of CR-contact forms $(1+s\dot{\varphi}_t)^{-1}(\eta+d^c\varphi_t)$. As mentioned above however, here and until the end of this section we work on the K\"ahler quotient and on the product $M\times\mathbb{D}^{*}$ where $\mathbb{D}^{*}=\{0 <|\tau|\leq 1\}\subset \mathbb{C}$ is the punctured unit disc corresponding to $t := -\log |\tau| \geq 0$ for the variable $\tau:=e^{-t+i\theta} \in \C^*$. A ray $(\varphi_t)_{t\geq 0}$ of smooth K\"ahler potentials on $(M,\omega)$ then induces a smooth $\mathbb{S}^1$-invariant function on $M\times\mathbb{D}^{*}$ defined by $\Phi(x,\tau)=\varphi_{t}(x)$. 
\begin{lemma}\label{lem:chS-eq}
Let $\psi_t:=\log\left(\frac{\omega_{\varphi_t}^n}{\omega^n}\right)$ with $\Psi(x,e^{-t+i\theta}):=\psi_t(x)$ the induced $\mathbb{S}^1$-invariant function on $M\times\mathbb{D}^{*}$ and $f_{s,t}=1+s\dot{\varphi}$. The total Tanaka--Webster scalar curvature $\widecheck{\bfS}(\omega_{\varphi_t},f_{s,t})$ (see Notation~\ref{eq:notationWidecheck}) admits the following expression
\begin{align}
\begin{split}\label{Eq:wbfS}
    \widecheck{\bfS}(\omega_{\varphi_t},f_{s,t})=& ns\frac{d}{dt}\left(\int_M\psi_t \frac{\omega_{\varphi_t}^{[n]}}{f_{s,t}^{n+1}}-(n+1)s\int_0^t\int_{M} \Psi\frac{\partial_\theta \intprod  (\omega+dd^c\Phi)^{[n+1]}}{(1+s\dot{\Phi})^{n+2}}\right)    \\
    &+ 2\int_M \Ric(\omega)\wedge \frac{\omega_{\varphi_t}^{[n-1]}}{f_{s,t}^{n}},
\end{split}
\end{align}
where $\partial_\theta$ is the generator of the standard $\mathbb{S}^1$-action on $\mathbb{D}^{*}$.
\end{lemma}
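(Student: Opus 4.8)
The plan is to prove Lemma~\ref{lem:chS-eq} by starting from the formula~\eqref{eq:notationWidecheck2} for $\widecheck{\bfS}(\omega_{\varphi_t},f_{s,t})$, namely
\[
\widecheck{\bfS}(\omega_{\varphi_t},f_{s,t})=2\int_M \frac{\Ric(\omega_{\varphi_t})\wedge\omega_{\varphi_t}^{[n-1]}}{f_{s,t}^n}+n(n+1)\int_M\frac{|df_{s,t}|^2_{\omega_{\varphi_t}}}{f_{s,t}^{n+2}}\omega_{\varphi_t}^{[n]},
\]
and then massaging each of the two pieces separately so that they fit together into the claimed expression. First I would rewrite the Ricci term using $\Ric(\omega_{\varphi_t})=\Ric(\omega)-\tfrac{1}{2}dd^c\psi_t$ (the standard identity for the change of Ricci form under a K\"ahler potential, with the factor matching Convention~\ref{conventionTK}); this immediately separates out the term $2\int_M\Ric(\omega)\wedge f_{s,t}^{-n}\omega_{\varphi_t}^{[n-1]}$ that already appears on the right-hand side of~\eqref{Eq:wbfS}, leaving a remainder involving $dd^c\psi_t$ that must be combined with the gradient term.

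The second step is to recognise that the remaining integrals are precisely the $t$-derivative of an explicit functional. Concretely, I expect that after integrating by parts on $M$ (to move $dd^c$ off $\psi_t$ and onto $f_{s,t}^{-n}$) and using $f_{s,t}=1+s\dot\varphi_t$, $\dot f_{s,t}=s\ddot\varphi_t$, one can identify the sum of the $dd^c\psi_t$-term and the $|df_{s,t}|^2$-term with $ns\,\tfrac{d}{dt}\bigl(\int_M\psi_t\,f_{s,t}^{-n-1}\omega_{\varphi_t}^{[n]}\bigr)$ up to a correction term. The correction term — arising because $\psi_t$ itself varies in $t$ via $\dot\psi_t=\Delta_{\omega_{\varphi_t}}\dot\varphi_t$, and because $\omega_{\varphi_t}^{[n]}$ varies — should be exactly what is being rewritten as the iterated integral $-(n+1)s\int_0^t\int_M\Psi\,\tfrac{\partial_\theta\intprod(\omega+dd^c\Phi)^{[n+1]}}{(1+s\dot\Phi)^{n+2}}$. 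Here the key identity is the standard one relating the $t$-derivative of a fibre integral over $M$ to an integral over $M\times\mathbb{D}^*$ against $\partial_\theta\intprod(\cdot)$: for an $\mathbb{S}^1$-invariant $(n+1,n+1)$-form $\Xi$ on $M\times\mathbb{D}^*$ one has $\tfrac{d}{dt}\int_{M\times\{\tau\}}\iota_\tau^*(\ldots)=-\int_{M\times\{\tau\}}\partial_\theta\intprod\Xi$-type relations; integrating back from $0$ to $t$ is what produces the $\int_0^t$.

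I would carry this out by first doing the computation along a \emph{smooth} ray (as the statement implicitly assumes here, before the regularisation in the next subsection), where all manipulations — change of Ricci form, integration by parts on $M$, Leibniz rule in $t$ — are classical. The cleanest route is probably to differentiate the candidate right-hand side of~\eqref{Eq:wbfS} in $t$ and check it matches $\tfrac{d}{dt}\widecheck{\actscal}_s(t)=\widecheck{\bfS}(\omega_{\varphi_t},f_{s,t})$, i.e.\ verify the formula ``in primitive form'' rather than integrating a messy expression; this reduces the problem to a pointwise identity of $(n,n)$-forms on $M$ after pairing with the $t$-derivative of the potential, which should follow from the variational formulas~\eqref{eq:varVOL_TC},~\eqref{eq:varTC_scalTOT} already established (the $t$-variation of $\bfS$ along the smooth ribbon) together with the identity $\partial_t\psi_t=\Delta_{\omega_{\varphi_t}}\dot\varphi_t$ and the relation between $\partial_\theta\intprod(\omega+dd^c\Phi)^{[n+1]}$ and the geodesic operator $\ddot\varphi_t-\tfrac12|d\dot\varphi_t|^2_{\omega_{\varphi_t}}$ times $\omega_{\varphi_t}^{[n]}$.

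The main obstacle I anticipate is bookkeeping: correctly tracking the numerical constants $n$, $n+1$, $n+2$ and the powers of $f_{s,t}$, the sign and factor $\tfrac12$ coming from Convention~\ref{conventionTK} (the transversal metric being twice the contact metric on $\Ds$), and especially the precise form of the ``total derivative'' manipulation that turns a $t$-derivative of a fibre integral over $M$ into the divergence-type term $\partial_\theta\intprod(\omega+dd^c\Phi)^{[n+1]}$ on $M\times\mathbb{D}^*$ — this last point is where one must be careful that the $\mathbb{S}^1$-invariance is used correctly and that no boundary contribution at $t=0$ is dropped (the lower limit of the $\int_0^t$ is what absorbs it). Once the smooth identity is established, one notes that both sides make sense and are continuous for $C^{1,1}$-compatible rays, so~\eqref{Eq:wbfS} persists by approximation — but that observation properly belongs to the proof of Theorem~\ref{theo:ActionPointwiseCvxC0}, which builds on this lemma.
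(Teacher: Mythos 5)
Your proposal follows essentially the same route as the paper's proof: decompose the curvature term via $\Ric(\omega_{\varphi_t})=\Ric(\omega)-\tfrac12 dd^c\psi_t$ (equivalently $\scal(\omega_{\varphi_t})=2\Lambda_{\varphi_t}\Ric(\omega)\wedge\cdot+\Delta_{\varphi_t}\psi_t$), integrate by parts, and recognise the leftover integrals as $ns\frac{d}{dt}\int_M\psi_t f_{s,t}^{-(n+1)}\omega_{\varphi_t}^{[n]}$ plus a correction that is rewritten as the iterated integral through the identity $(\omega+dd^c\Phi)^{[n+1]}=-(\ddot\varphi_t-|d\dot\varphi_t|^2_{\varphi_t})\,\omega_{\varphi_t}^{[n]}\wedge dt\wedge d\theta$. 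The only caveat is the factor convention you yourself flagged: with the paper's normalisation $d\dot\varphi\wedge d^c\dot\varphi\wedge\omega_{\varphi}^{[n-1]}=|d\dot\varphi|^2_{\varphi}\,\omega_{\varphi}^{[n]}$ on $M$, the geodesic operator is $\ddot\varphi_t-|d\dot\varphi_t|^2_{\varphi_t}$ rather than $\ddot\varphi_t-\tfrac12|d\dot\varphi_t|^2$ (the $\tfrac12$ only appears after pulling back to $N$), a bookkeeping point and not a gap in the approach.
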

\begin{proof}
We have the identity
\begin{equation*}
\scal(\omega_{\varphi_t})=2\Lambda_{\omega_{\varphi_t}}\Ric(\omega_{\varphi_t})=2\left(\frac{\Ric(\omega)\wedge \omega_{\varphi_t}^{[n-1]}}{\omega_{\varphi_t}^{[n]}}\right)+\Delta_{\omega_{\varphi_t}}\psi_t.
\end{equation*}
Plugging the above expression in the very definition of $\widecheck{\bfS}(\omega_{\varphi_t},f_{s,t})$ and integrating the Laplacian term by parts yields
\begin{equation}\label{Eq:wbfS:1}
\begin{split}
    \widecheck{\bfS}(\omega_{\varphi_t},f_{s,t})=&
%    \int_M \left(2f_{s,t}^{-n}\Ric(\omega)\wedge \omega_{\varphi_t}^{[n-1]}+n(n+1)f_{s,t}^{-(n+2)}|df_{s,t}|^2\omega_{\varphi_t}^{[n]}\right)\\
%    &-\int_M n(d\psi_t,df_{s,t})_{\varphi_t}f_{s,t}^{-(n+1)}\omega^{[n]}_{\varphi_t}\\
%    =&
\int_M \left(2f_{s,t}^{-n}\Ric(\omega)\wedge \omega_{\varphi_t}^{[n-1]}+n(n+1)s^2 f_{s,t}^{-(n+2)} \lvert d\dot{\varphi}_t\rvert^2\omega_{\varphi_t}^{[n]}\right)\\
   &-\int_M ns ( d\psi_t, d\dot{\varphi}_t)_{\varphi_t} f_{s,t}^{-(n+1)}\omega^{[n]}_{\varphi_t},
\end{split}
\end{equation}
where as always we used Lemma~\ref{lemmaDERIVATIVE} to relate $f_{s,t}$ with $\dot{\varphi}_t$.
  
Now, we compute
\begin{equation*}
\begin{gathered}
\frac{d}{dt}\left(\psi_t f_{s,t}^{-(n+1)}\omega_{\varphi_t}^{[n]}\right)=\dot{\psi}_t f_{s,t}^{-(n+1)}\omega_{\varphi_t}^{[n]}-\psi_t f_{s,t}^{-(n+1)}\Delta_{\varphi_t}(\dot{\varphi}_t)\omega_{\varphi_t}^{[n]}-(n+1)\psi_t \dot{f}_{s,t} f_{s,t}^{-(n+2)}\omega_{\varphi_t}^{[n]}\\
   =-\Delta_{\varphi_t}(\dot{\varphi}_t)  f_{s,t}^{-(n+1)}\omega_{\varphi_t}^{[n]}-\psi_t f_{s,t}^{-(n+1)}\Delta_{\varphi_t}(\dot{\varphi}_t)\omega_{\varphi_t}^{[n]}-(n+1)s\psi_t \ddot{\varphi}_{t} f_{s,t}^{-(n+2)}\omega_{\varphi_t}^{[n]}.
\end{gathered}
\end{equation*}
Integrating both sides on $M$ yields
\begin{equation*}      
\begin{split}
   \frac{d}{dt}\Big(\int_M\psi_t f_{s,t}^{-(n+1)}&\omega_{\varphi_t}^{[n]}\Big)= -\int_M \Delta_{\varphi_t}(\dot{\varphi}_t) f_{s,t}^{-(n+1)}\omega_{\varphi_t}^{[n]}-\int_M\psi_t f_{s,t}^{-(n+1)}\Delta_{\varphi_t}(\dot{\varphi}_t)\omega_{\varphi_t}^{[n]}\\
   &\quad\quad\quad\ -\int_M (n+1)s\psi_t \ddot{\varphi}_{t} f_{s,t}^{-(n+2)}\omega_{\varphi_t}^{[n]}\\
=&\int_M (n+1) (d\dot{\varphi}_t ,df_{s,t})_{\varphi_t} f_{s,t}^{-(n+2)}\omega_{\varphi_t}^{[n]}- \int_M(d\psi, d\dot{\varphi}_t)_{\varphi_t} f_{s,t}^{-(n+1)}\omega_{\varphi_t}^{[n]} \\
 &+ \int_M (n+1)(df_{s,t}, d\dot{\varphi}_t)_{\varphi_t} \psi f_{s,t}^{-(n+2)}\omega_{\varphi_t}^{[n]} - \int_M (n+1)s\psi_t \ddot{\varphi}_{t} f_{s,t}^{-(n+2)}\omega_{\varphi_t}^{[n]}\\
   =&\int_M (n+1)s |d\dot{\varphi}_t|_{\varphi_t}^2 f_{s,t}^{-(n+2)}\omega_{\varphi_t}^{[n]}- \int_M(d\psi_t, d\dot{\varphi}_t)_{\varphi_t} f_{s,t}^{-(n+1)}\omega_{\varphi_t}^{[n]} \\
   &+n(n+1)s^2 \int_M \psi_t \left(\ddot{\varphi}_{t} - |d\dot{\varphi}_{t}|^2_{\varphi_t}\right) f_{s,t}^{-(n+2)}\omega_{\varphi_t}^{[n]}.
\end{split}
\end{equation*}
Multiplying both sides in the above equality by $ns$ and rearranging the terms, we obtain
\begin{align*}      
   \begin{split}
 &\int_M ns(d\psi_t, d\dot{\varphi}_t)_{\varphi_t} f_{s,t}^{-(n+1)}\omega_{\varphi_t}^{[n]}\\
   =&-ns\frac{d}{dt}\left(\int_M\psi_t f_{s,t}^{-(n+1)}\omega_{\varphi_t}^{[n]}\right) +\int_M n(n+1)s^2 |d\dot{\varphi}_t|_{\varphi_t}^2 f_{s,t}^{-(n+2)}\omega_{\varphi_t}^{[n]} \\
   & +n(n+1)s^2 \int_M \psi_t \left(\ddot{\varphi}_{t} - |d\dot{\varphi}_{t}|^2_{\varphi_t}\right) f_{s,t}^{-(n+2)}\omega_{\varphi_t}^{[n]}
              \end{split}
       \end{align*}
       Substituting the above equality back in~\eqref{Eq:wbfS:1} gives 
       \begin{align*}
    \begin{split}
        \widecheck{\bfS}(\omega_{\varphi_t},f_{s,t})=& \int_M 2f_{s,t}^{-n}\Ric(\omega)\wedge \omega_{\varphi_t}^{[n-1]}  +ns\frac{d}{dt}\left(\int_M\psi_t f_{s,t}^{-(n+1)}\omega_{\varphi_t}^{[n]}\right)  \\
   &- n(n+1)s^2\frac{d}{dt}\int_0^t\int_M  (\ddot{\varphi}_{u} -|d\dot{\varphi}_{u}|^2_{\varphi_u})\psi_u f_{s,u}^{-(n+2)}\omega_{\varphi_u}^{[n]}du
        \end{split}
    \end{align*}
    Using that 
    $(\omega+dd^c\Phi)^{[n+1]}=-(\ddot{\varphi}_{u} -|d\dot{\varphi}_{u}|^2_{\varphi_u})\omega_{\varphi_u}^{[n]}\wedge du\wedge d\theta$ on $M\times \mathbb{D}^*$, the last integral becomes
    \[
    - n(n+1)s^2\int_0^t\int_{M} \Psi\frac{\partial_\theta \intprod  (\omega+dd^c\Phi)^{[n+1]}}{(1+s\dot{\Phi})^{n+2}}
    \]
     and~\eqref{Eq:wbfS} follows.
\end{proof}
Lemma~\ref{lem:chS-eq} gives us an explicit expression for the action functional 
\begin{equation}\label{ch-A}
\begin{split}
    \widecheck{\actscal}_s(t):=&\int_0^t \widecheck{\bfS}(\omega_{\varphi_u},f_{s,u})du\\
    =&ns\int_M\psi_t \frac{\omega_{\varphi_t}^{[n]}}{(1+s\dot{\varphi}_t)^{n+1}}-n(n+1)s^2\int_0^t\int_{M} \Psi\frac{\partial_\theta \intprod  (\omega+dd^c\Phi)^{[n+1]}}{(1+s\dot{\Phi})^{n+2}}\\
    &+ 2\int_0^t\int_M \Ric(\omega)\wedge \frac{\omega_{\varphi_u}^{[n-1]}}{(1+s\dot{\varphi}_u)^{n}} du.
\end{split}
\end{equation}  
It is easy to check that this action functional is well-defined along a $C^{1,1}$ \emph{geodesic} ray $(\varphi_t)_{t\geq 0}$ in the space of K\"ahler potentials on $(M,\omega)$. Note first that as $\dot{\Phi}$ is uniformly bounded on $M\times\mathbb{D}^*$, we can ensure that $1+s\dot{\Phi}>0$ on $M\times\mathbb{D}^*$ for $s$ small enough. From $(\omega+dd^c\Phi)^{[n+1]}=0$, the second integral in~\eqref{ch-A} vanishes and it is clear that the last Ricci integral is $C^1$. We can write the first integral in~\eqref{ch-A} as
\begin{equation}\label{Ent}
\int_M\psi_t \frac{\omega_{\varphi_t}^{[n]}}{(1+s\dot{\varphi}_t)^{n+1}}=\int_M \log\left(\frac{d\lambda_{t,s}}{d\lambda_{0,s}}\right)\frac{d\lambda_{t,s}}{d\lambda_{0,s}} d\lambda_{0,s}+(n+1)\int_M \log\left(\frac{1+s\dot{\varphi}_t}{1+s\dot{\varphi}_0}\right) \frac{\omega_{\varphi_t}^{[n]}}{(1+s\dot{\varphi}_t)^{n+1}}.
\end{equation}
The first term is the entropy of the measure $ d\lambda_{t,s}:=\frac{\omega_{\varphi_t}^{[n]}}{(1+s\dot{\varphi}_t)^{n+1}}$ relative to the reference measure $d\lambda_{0,s}$. Using \cite[Proposition 2.2]{BoB}, the remaining integral is given by 
\[
(n+1)\int_M \log\left(\frac{1+s\dot{\varphi}_t}{1+s\dot{\varphi}_0}\right) \frac{\omega_{\varphi_t}^{[n]}}{(1+s\dot{\varphi}_t)^{n+1}}=C(s)-(n+1)\int_M \log\left(1+s\dot{\varphi}_0\right) \frac{\omega_{\varphi_t}^{[n]}}{(1+s\dot{\varphi}_t)^{n+1}}
\] 
where $C(s)$ is a constant depending only on $s$ and the remaining part is clearly well-defined. The lower semi-continuity of the entropy with respect to the weak topology on the space of measures (see \cite{BB}) yields the following
\begin{corollary}\label{cor:A-def}
Along a $C^{1,1}$ geodesic ray in the space of K\"ahler metrics on $(M,\omega)$, the function $\widecheck{\actscal}_s(t)$ is well defined and lower semi-continuous on $[0,\infty)$.
\end{corollary}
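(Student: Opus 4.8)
\textbf{Proof proposal for Corollary~\ref{cor:A-def}.}
The plan is to extract everything from the explicit expression~\eqref{ch-A} for $\widecheck{\actscal}_s(t)$ obtained in Lemma~\ref{lem:chS-eq}, and to check that each of its three terms passes to a $C^{1,1}$ geodesic ray. First I would fix $s$ small enough that $1+s\dot{\Phi}>0$ on $M\times\mathbb{D}^{*}$; this is legitimate because along a $C^{1,1}$ geodesic $\Phi$ is $C^{1,1}$ on $M\times\mathbb{D}^{*}$, so $\dot{\Phi}$ is Lipschitz, hence bounded and continuous, and in particular $1+s\dot{\varphi}_t$ is continuous in $t$ and uniformly bounded away from $0$. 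The geodesic equation $(\omega+dd^c\Phi)^{[n+1]}=0$ makes the middle double integral in~\eqref{ch-A} vanish identically. The last term $2\int_0^t\int_M\Ric(\omega)\wedge\omega_{\varphi_u}^{[n-1]}/(1+s\dot{\varphi}_u)^n\,du$ has an integrand which is a bounded continuous function of $(x,u)$, since the coefficients of $\omega_{\varphi_u}^{n-1}$ are bounded and continuous by the Chen--B\l ocki--Chu--Tosatti--Weinkove regularity of the geodesic and $1+s\dot{\varphi}_u$ is bounded below; hence this term is $C^1$ in $t$, in particular finite and continuous.

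The only delicate term is the first one, $ns\int_M\psi_t\,\omega_{\varphi_t}^{[n]}/(1+s\dot{\varphi}_t)^{n+1}$, which I would treat exactly as in the decomposition~\eqref{Ent}. Writing $d\lambda_{t,s}:=\omega_{\varphi_t}^{[n]}/(1+s\dot{\varphi}_t)^{n+1}$, the first piece of~\eqref{Ent} is the entropy of $d\lambda_{t,s}$ relative to the \emph{fixed} reference measure $d\lambda_{0,s}$; since $x\log x\geq -1/e$, this relative entropy is bounded below, hence well-defined with values in $(-\infty,+\infty]$. For the second piece I would invoke \cite[Proposition~2.2]{BoB}, which gives that $\int_M\log(1+s\dot{\varphi}_t)\,d\lambda_{t,s}$ is independent of $t$ (this is a ``mass''-type identity, compatible with the constancy of $\bfV$ along geodesics from Lemma~\ref{lem:Vol-variation}); consequently the second piece of~\eqref{Ent} equals $C(s)-(n+1)\int_M\log(1+s\dot{\varphi}_0)\,d\lambda_{t,s}$, and the integral here is finite because $\dot{\varphi}_0$ is bounded and $d\lambda_{t,s}$ is a finite measure. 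Thus $\widecheck{\actscal}_s(t)$ is a well-defined element of $(-\infty,+\infty]$ for every $t\in[0,\infty)$.

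For lower semicontinuity I would argue that the measures $d\lambda_{t,s}$ vary weakly continuously in $t$: this follows from the continuity of $t\mapsto\varphi_t$ in the appropriate topology (so that $\omega_{\varphi_t}^n\rightharpoonup\omega_{\varphi_{t_0}}^n$ weakly) together with the $C^0$-convergence of the densities $(1+s\dot{\varphi}_t)^{-(n+1)}$, again from the $C^{1,1}$-regularity of $\Phi$. Granting this, the Ricci term and the term $-(n+1)\int_M\log(1+s\dot{\varphi}_0)\,d\lambda_{t,s}$ are continuous in $t$, so it remains to note that the relative-entropy term $t\mapsto\int_M\log(d\lambda_{t,s}/d\lambda_{0,s})\,d\lambda_{t,s}$ is lower semicontinuous; this is precisely the classical lower semicontinuity of (relative) entropy with respect to weak convergence of measures, for which I would cite \cite{BB}. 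Adding a continuous function to a lower semicontinuous one yields the lower semicontinuity of $t\mapsto\widecheck{\actscal}_s(t)$ on $[0,\infty)$.

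The step I expect to be the main obstacle is the rigorous use of the weight identity: namely checking that the hypotheses of \cite[Proposition~2.2]{BoB} genuinely apply to the family $d\lambda_{t,s}$, which are complex Monge--Amp\`ere measures of $C^{1,1}$ potentials rescaled by the positive bounded density $(1+s\dot{\varphi}_t)^{-(n+1)}$, and that the constant $C(s)$ is truly $t$-independent. Once this ``mass-conservation'' identity is in place, the remaining ingredients---elliptic/Bedford--Taylor regularity of the geodesic and the standard lower semicontinuity of entropy---combine routinely. One should also keep in mind throughout that ``well-defined'' here must be read as taking values in $(-\infty,+\infty]$, the upper value $+\infty$ being a genuine possibility for the entropy term while its lower boundedness is automatic from Jensen's inequality against the fixed reference measure.
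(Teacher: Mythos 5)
Your proposal is correct and follows essentially the same route as the paper: the same decomposition~\eqref{Ent} of the first term of~\eqref{ch-A} into a relative entropy plus a weight term, the same appeal to \cite[Proposition 2.2]{BoB} to make that weight term $t$-independent up to the constant $C(s)$, and the same use of lower semicontinuity of the entropy with respect to weak convergence of measures (\cite{BB}). The extra details you supply (weak continuity of $t\mapsto d\lambda_{t,s}$ from the $C^{1,1}$-regularity of $\Phi$, and the Jensen lower bound making the entropy well defined in $(-\infty,+\infty]$) are correct and simply make explicit what the paper leaves implicit.
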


%%%%%%%%%%%%%%%%%%%%%%%%%%%%%%%%%%%%%%%

We now turn to the proof of Theorem~\ref{theo:ActionPointwiseCvxC0}. Following the scheme of Berman--Berndtsson's proof for the convexity of the Mabuchi energy along weak geodesics, we start by introducing a modified version of $\widecheck{\actscal}_s(\tau):=\widecheck{\actscal}_s(e^{-t+i\theta})$, seen as a function on $\mathbb{D}^*$ and we compute its derivative in the sense of currents.

Let $\Theta_\tau$ by a family of volume forms on $M$ and $\Psi:=\log (\Theta_\tau)$ seen as a function on $M\times\mathbb{D}^*$.  On each local holomorphic chart, $U\subset M$ the function $\Psi$ is given by $\Psi=\log(\Theta_\tau/{\rm vol}_U)$ where ${\rm vol}_U$ is the volume form of the local euclidean metric on $U$. Along a weak geodesic ray $(\varphi_t)_{t\geq0}$ on $(M,\omega)$ we define a modified function $\widecheck{\actscal}^\Psi_s(\tau)$ on $\mathbb{D^*}$ by the expression
\begin{align}
\begin{split}\label{eq:A_Psi}
\widecheck{\actscal}^\Psi_s(\tau):=& ns\int_M\log\left(\frac{\Theta_\tau}{\omega^{[n]}} \right)\frac{\omega_{\varphi_t}^{[n]}}{(1+s\dot{\varphi}_t)^{n+1}}+ 2\int_0^{-\log|\tau|}\int_M \Ric(\omega)\wedge \frac{\omega_{\varphi_u}^{[n-1]}}{(1+s\dot{\varphi}_u)^{n}} du\\
=&ns\int_M\log\left(\frac{e^{\psi_\tau}}{\omega^{[n]}} \right)\frac{\omega_{\varphi_t}^{[n]}}{(1+s\dot{\varphi}_t)^{n+1}}+ 2\int_0^{-\log|\tau|}\int_M \Ric(\omega)\wedge \frac{\omega_{\varphi_u}^{[n-1]}}{(1+s\dot{\varphi}_u)^{n}} du,
\end{split}    
\end{align}
where $\psi_\tau=\Psi(\cdot,\tau)$ stands for the restriction to the fiber $M$ of $\Psi:=\log(\Theta_\tau)$.

In the following Lemma we compute the second variation of $\widecheck{\actscal}^\Psi(s,\cdot)$ defined along a smooth ray $(\varphi_t)_{t\geq 0}$ of K\"ahler potentials such that the associated $\mathbb{S}^1$-invariant function $\Phi$ is only $\proj_1^*\omega$-psh on $M\times\mathbb{D}^*$ (i.e. a sub-geodesic) using the expression 
\begin{equation}
\begin{split}\label{eq:A_Psi-general}
\widecheck{\actscal}^\Psi(s,\tau)=&ns\int_M\log\left(\frac{e^{\psi_\tau}}{\omega^{[n]}} \right)\frac{\omega_{\varphi_t}^{[n]}}{(1+s\dot{\varphi}_t)^{n+1}} + 2\int_0^{-\log|\tau|}\int_M \Ric(\omega)\wedge \frac{\omega_{\varphi_u}^{[n-1]}}{(1+s\dot{\varphi}_u)^{n}} du\\
&-n(n+1)s^2\int_0^{-\log|\tau|}\int_{M} \log\left(\frac{e^{\psi_u}}{\omega^{[n]}} \right)\frac{\partial_\theta \intprod  (\omega+dd^c\Phi)^{[n+1]}}{(1+s\dot{\Phi})^{n+2}}.
\end{split}
\end{equation}
\begin{lemma}\label{lem:ddc_A^Psi-smooth}
    Let $(\varphi_t)_{t\geq 0}$ be a smooth ray of K\"ahler potentials on $(M,\omega)$ with $\Phi$ the associated $\mathbb{S}^1$-invariant function on $M\times\mathbb{D}^*$ and let $(\Theta_\tau)_{\tau\in \mathbb{D}^*}$ be a family of $\mathbb{S}^1$-invariant (wrt $\tau$) volume forms on $M$ such that the function $\Psi:=\log(\Theta_\tau)$ is smooth. The following identity holds in the weak sense of currents
    \begin{equation}\label{eq:ddc-APsi0}
        \frac{1}{ns}dd^c\widecheck{\actscal}^\Psi_s=\int_{M} \frac{dd^c\Psi\wedge(\proj_1^*\omega+dd^c\Phi)^{[n]}}{(1+s\dot{\Phi})^{n+1}} + (n+1)s (d^c\Psi)(\partial_\theta)\frac{(\proj_1^*\omega+dd^c\Phi)^{[n+1]}}{(1+s\dot{\Phi})^{n+2}}.
    \end{equation}
    where $\widecheck{\actscal}^\Psi_s(\tau)$ is given by~\eqref{eq:A_Psi-general}, $\proj_1:M\times\mathbb{D}^*\to M$ is the projection on the first factor and $\int_M$ stands for the integration along the fibres of $M\times\mathbb{D}^*\to \mathbb{D}^*$.
\end{lemma}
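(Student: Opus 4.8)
The plan is to compute $dd^c\widecheck{\actscal}^\Psi_s$ directly from the expression~\eqref{eq:A_Psi-general}, treating the three summands separately and using the standard fibre-integration calculus on $M\times\mathbb{D}^*$. First I would recall that for a smooth function $G$ on $M\times\mathbb{D}^*$ which is $\mathbb{S}^1$-invariant in $\tau$, writing $\tau=e^{-t+i\theta}$, one has $dd^c$ acting on a fibre-integral $\int_M G\,\beta$ (with $\beta$ a family of forms on $M$) producing a combination of $\int_M dd^c_{M\times\mathbb{D}^*}(G\,\beta)$-type terms; the key point is that $dd^c$ of a function depending only on $t$ picks out the radial part, and the operators $\partial_t$ and $\partial_\theta$ interact with $dd^c$ on $\mathbb{D}^*$ via $dd^c(-\log|\tau|)=0$ away from the origin, while $d(-\log|\tau|)\wedge d^c(-\log|\tau|)$ contributes the ``$\partial_\theta\intprod$'' terms. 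The basic identity to exploit repeatedly is $(\proj_1^*\omega+dd^c\Phi)^{[n+1]}=-(\ddot\varphi_u-|d\dot\varphi_u|^2_{\varphi_u})\,\omega_{\varphi_u}^{[n]}\wedge dt\wedge d\theta$ on $M\times\mathbb{D}^*$, already used in the proof of Lemma~\ref{lem:chS-eq}, together with the derivative of $f_{s,t}=1+s\dot\varphi_t$ in $t$.

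The concrete steps I would carry out: \textbf{(1)} For the Ricci term $2\int_0^{-\log|\tau|}\!\int_M\Ric(\omega)\wedge\omega_{\varphi_u}^{[n-1]}(1+s\dot\varphi_u)^{-n}\,du$, note it depends on $\tau$ only through $t=-\log|\tau|$, so its $dd^c$ on $\mathbb{D}^*$ vanishes identically — no contribution. \textbf{(2)} For the first term $ns\int_M\Psi\,\omega_{\varphi_t}^{[n]}(1+s\dot\varphi_t)^{-(n+1)}$, I would write $\omega_{\varphi_t}^{[n]}(1+s\dot\varphi_t)^{-(n+1)}$ as the fibrewise restriction of a globally defined form on $M\times\mathbb{D}^*$, namely of $(\proj_1^*\omega+dd^c\Phi)^{[n]}(1+s\dot\Phi)^{-(n+1)}$, then apply $dd^c_{M\times\mathbb{D}^*}$ to the product $\Psi\cdot(\text{that form})$ and integrate along the fibre. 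Here $dd^c(\Psi\,\beta)=dd^c\Psi\wedge\beta+2\,\partial\Psi\wedge\bar\partial^c\beta$-type cross terms plus $\Psi\,dd^c\beta$; the point is that $dd^c$ of $(\proj_1^*\omega+dd^c\Phi)^{[n]}(1+s\dot\Phi)^{-(n+1)}$ is computed using $dd^c(1+s\dot\Phi)^{-(n+1)}$ and the fact that $\proj_1^*\omega+dd^c\Phi$ is $dd^c$-closed; a Leibniz/Stokes computation collapses this to the two terms on the right of~\eqref{eq:ddc-APsi0}, the second one arising precisely from the $d\Psi\wedge d^c(1+s\dot\Phi)^{-(n+1)}$ cross term which, after the fibre integration and using the $\partial_\theta$-structure, produces the factor $(d^c\Psi)(\partial_\theta)$ times $(\proj_1^*\omega+dd^c\Phi)^{[n+1]}(1+s\dot\Phi)^{-(n+2)}$. \textbf{(3)} For the third term $-n(n+1)s^2\int_0^{-\log|\tau|}\!\int_M\Psi\,(\partial_\theta\intprod(\proj_1^*\omega+dd^c\Phi)^{[n+1]})(1+s\dot\Phi)^{-(n+2)}$, I would differentiate the $t$-integral: applying $dd^c$ in $\tau$ to $\int_0^{-\log|\tau|}(\cdots)du$ produces a radial-derivative term ($\partial_t$ of the integrand times the $dt\wedge d\theta$ from $dd^c$ applied to $-\log|\tau|$, which however vanishes) plus — more importantly — one must keep track that this term is itself designed to cancel the non-closedness contributions from step (2) in the subgeodesic case. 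I would verify that the sum of the cross-terms and the $t$-boundary terms from (2) and (3) telescopes to exactly the RHS of~\eqref{eq:ddc-APsi0}.

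The main obstacle I anticipate is \textbf{bookkeeping the distributional/current terms correctly on $\mathbb{D}^*$}: since $\Phi$ is only assumed $\proj_1^*\omega$-psh (a subgeodesic, not a genuine geodesic), $(\proj_1^*\omega+dd^c\Phi)^{[n+1]}$ need not vanish, so every integration by parts in the fibre and base directions must be justified, and the interplay between $\partial_\theta\intprod$, the $dt\wedge d\theta$ volume factor, and $dd^c$ on the punctured disc must be handled with care to avoid sign errors and spurious boundary contributions at $\partial\mathbb{D}^*$. The cleanest route is probably to first establish the identity when everything is smooth by a pure differential-form computation on the open manifold $M\times\mathbb{D}^*$ (so all $dd^c$'s are classical), organizing the computation as: express all three summands as fibre integrals of globally defined forms on $M\times\mathbb{D}^*$, apply $dd^c_{M\times\mathbb{D}^*}$ under the fibre integral (legitimate for smooth data by commuting $dd^c$ with pushforward up to the $dt$-derivative boundary terms absorbed in the third summand's construction), use $dd^c(\proj_1^*\omega+dd^c\Phi)=0$ and the explicit form of $(\proj_1^*\omega+dd^c\Phi)^{[n+1]}$, and collect terms. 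Then the statement ``in the weak sense of currents'' for this smooth case is just the classical identity tested against forms on $\mathbb{D}^*$. The degenerate case is deferred to the later part of the section (regularization via the $\varepsilon$-geodesics $\Phi_\varepsilon$ of~\eqref{BVP:G:epsilon}).
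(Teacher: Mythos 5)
There is a genuine gap at your step \textbf{(1)}. An $\mathbb{S}^1$-invariant function on $\mathbb{D}^*$ is radial, not harmonic: writing $g=g(t)$ with $t=-\log|\tau|$, one has $dd^cg=g''(t)\,dt\wedge d^ct$, because $dd^ct=0$ on $\mathbb{D}^*$ while $dt\wedge d^ct$ is a nonvanishing area form. So the Ricci primitive $2\int_0^t\int_M\Ric(\omega)\wedge\omega_{\varphi_u}^{[n-1]}(1+s\dot{\varphi}_u)^{-n}du$ does contribute; indeed, by your reasoning every summand of $\widecheck{\actscal}^\Psi_s$ (which is $\mathbb{S}^1$-invariant) would have vanishing $dd^c$, contradicting the very identity you are proving. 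In the paper this contribution is computed explicitly, see~\eqref{eq:ddcRic}: differentiating the primitive twice in $t$ (i.e.\ once the fibre integral) and integrating by parts on $M$ gives $dd^c\bigl(\int_0^t\int_M\Ric(\omega)\wedge\omega_{\varphi_u}^{[n-1]}(1+s\dot{\varphi}_u)^{-n}du\bigr)=ns\int_M\proj_1^*\Ric(\omega)\wedge(\proj_1^*\omega+dd^c\Phi)^{[n]}(1+s\dot{\Phi})^{-(n+1)}$ as a fibre integral, and this term is essential: together with the factor $2$ it cancels the $-2\proj_1^*\Ric(\omega)$ produced by the first summand, whose integrand is $\log(e^{\psi_\tau}/\omega^{[n]})$ and therefore satisfies $dd^c\log\bigl(e^{\Psi}/\proj_1^*\omega^{[n]}\bigr)=dd^c\Psi-2\proj_1^*\Ric(\omega)$, not $dd^c\Psi$.

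This interacts with a second slip: in steps \textbf{(2)} and \textbf{(3)} you quote the first and third summands of~\eqref{eq:A_Psi-general} with $\Psi$ in place of $\log(e^{\Psi}/\proj_1^*\omega^{[n]})$. The discrepancy $\proj_1^*\log\omega^{[n]}$ is $\tau$-independent but not constant on $M$, and it is exactly what generates the Ricci term above (while $(d^c\proj_1^*\log\omega^{[n]})(\partial_\theta)=0$ since that term is pulled back from $M$). Your two inaccuracies formally compensate — dropping the Ricci-primitive contribution while also dropping the $\omega^{[n]}$ normalisation points at the right-hand side of~\eqref{eq:ddc-APsi0} — but this is not a proof of the stated identity for the functional actually defined in~\eqref{eq:A_Psi-general}: correcting either slip alone leaves an uncancelled term $\pm 2ns\int_M\proj_1^*\Ric(\omega)\wedge(\proj_1^*\omega+dd^c\Phi)^{[n]}(1+s\dot{\Phi})^{-(n+1)}$. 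Apart from this, your plan — test against functions on $\mathbb{D}^*$, integrate by parts, use $\partial_\theta\intprod(\proj_1^*\omega+dd^c\Phi)^{[n+1]}=d\dot{\Phi}\wedge(\proj_1^*\omega+dd^c\Phi)^{[n]}$ to produce the $(d^c\Psi)(\partial_\theta)$ term, and let the cross terms between the remaining summands cancel — is the paper's route; and your worry about boundary contributions at $\partial\mathbb{D}^*$ is immaterial, since the identity is tested against compactly supported test functions on the punctured disc.
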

\begin{proof}
    For a test function $\chi(\tau)$ on $\mathbb{D}^*$ we compute $\langle dd^c\widecheck{\actscal}^\Psi_s,\chi\rangle$ as
    \begin{align}
        \begin{split}\label{eq:ddc-APsi1}
        \int_{\mathbb{D}^*} \widecheck{\actscal}^\Psi_s(\tau) dd^c \chi=&ns\int_{\mathbb{D}^*} (dd^c \chi) \int_M\log\left(\frac{e^{\psi_\tau}}{\omega^{[n]}} \right) \frac{\omega_{\varphi_t}^{[n]}}{(1+s\dot{\varphi}_t)^{n+1}}  \\
        -n(n+1)&s^2\int_{\mathbb{D}^*} (dd^c \chi)\int_0^{-\log|\tau|}\int_{M} \log\left(\frac{e^{\psi_u}}{\omega^{[n]}} \right) \frac{\partial_\theta \intprod  (\omega+dd^c\Phi)^{[n+1]}}{(1+s\dot{\Phi})^{n+2}}\\
        &+2 \int_{\mathbb{D}^*} (dd^c \chi) \int_0^{-\log|\tau|}\int_M \Ric(\omega)\wedge \frac{\omega_{\varphi_u}^{[n-1]}}{(1+s\dot{\varphi}_u)^{n}} du\\
        =&ns\int_{M\times \mathbb{D}^*} \log\left(\frac{e^{\Psi}}{\proj_1^*\omega^{[n]}} \right) dd^c \chi\wedge\frac{(\proj_1^*\omega+dd^c\Phi)^{[n]}}{(1+s\dot{\Phi})^{n+1}} \\
        +n(n+1)&s^2\int_{\mathbb{D}^*} d^c \chi\wedge d\left(\int_0^{-\log|\tau|}\int_{M} \log\left(\frac{e^{\Psi}}{\proj_1^*\omega^{[n]}} \right) \frac{\partial_\theta \intprod  (\omega+dd^c\Phi)^{[n+1]}}{(1+s\dot{\Phi})^{n+2}}\right)\\
        &+2 \int_{\mathbb{D}^*} \chi dd^c_{\tau}\left(\int_0^{-\log|\tau|}\int_M \Ric(\omega)\wedge \frac{\omega_{\varphi_u}^{[n-1]}}{(1+s\dot{\varphi}_u)^{n}} du\right).
        \end{split}
    \end{align}
We start by simplifying the third integral in~\eqref{eq:ddc-APsi1}. 
\begin{equation}
\begin{split}
&dd^c\left(\int_0^t\int_M \Ric(\omega)\wedge \frac{\omega_{\varphi_u}^{[n-1]}}{(1+s\dot{\varphi}_u)^{n}} du\right)=-\frac{d^2}{dt^2}\left(\int_0^{t}\int_M \Ric(\omega)\wedge \frac{\omega_{\varphi_u}^{[n-1]}}{(1+s\dot{\varphi}_u)^{n}} du\right)dt\wedge d\theta\\
=&-\frac{d}{dt}\left(\int_M \Ric(\omega)\wedge \frac{\omega_{\varphi_t}^{[n-1]}}{(1+s\dot{\varphi}_t)^{n}} \right)dt\wedge d\theta\\
=&-\int_M\left[ \Ric(\omega)\wedge dd^c\dot{\varphi}_t \wedge \frac{\omega_{\varphi_t}^{[n-2]}}{(1+s\dot{\varphi}_t)^{n}} + ns\ddot{\varphi}_t\Ric(\omega)\wedge  \frac{\omega_{\varphi_t}^{[n-1]}}{(1+s\dot{\varphi}_t)^{n+1}}\right]dt\wedge d\theta.
\end{split}
\end{equation}
Integrating by parts then we get
\begin{equation}\label{eq:ddcRic}
\begin{split}
&dd^c\left(\int_0^t\int_M \Ric(\omega)\wedge \frac{\omega_{\varphi_u}^{[n-1]}}{(1+s\dot{\varphi}_u)^{n}} du\right)=
\\=&ns\int_M\left[-\Ric(\omega)\wedge d\dot{\varphi}_t\wedge d^c\dot{\varphi}_t \wedge \frac{\omega_{\varphi_t}^{[n-2]}}{(1+s\dot{\varphi}_t)^{n+1}} + \ddot{\varphi}_t\Ric(\omega)\wedge  \frac{\omega_{\varphi_t}^{[n-1]}}{(1+s\dot{\varphi}_t)^{n+1}}\right]dt\wedge d\theta\\
=&ns\int_M\frac{1}{(1+s\dot{\varphi}_t)^{n+1}}\left[ (\Ric(\omega), d\dot{\varphi}_t\wedge d^c\dot{\varphi}_t )_{\varphi_t}\omega_{\varphi_t}^{[n]} + (\ddot{\varphi}_t-|d\dot{\varphi}_t|_{\varphi_t}^2)\Ric(\omega)\wedge  \omega_{\varphi_t}^{[n-1]}\right]dt\wedge d\theta\\
=&ns\int_{M} \proj_1^*\Ric(\omega)\wedge\frac{(\proj_1^*\omega+dd^c\Phi)^{[n]}}{(1+s\dot{\Phi})^{n+1}}
 \end{split}
\end{equation}
where we used $\tau=e^{-t+i\theta}$ and the identity
\begin{equation*}
\begin{split}
\proj_1^*\Ric(\omega)\wedge (\proj_1^*\omega+dd^c\Phi)^{[n]} =&(\Ric(\omega), d\dot{\varphi}_t\wedge d^c\dot{\varphi}_t )_{\varphi_t}\omega_{\varphi_t}^{[n]}\wedge dt\wedge d\theta\\
&+ (\ddot{\varphi}_t-|d\dot{\varphi}_t|_{\varphi_t}^2)\Ric(\omega)\wedge  \omega_{\varphi_t}^{[n-1]}\wedge dt\wedge d\theta.
\end{split}
\end{equation*}
For the second integral in~\eqref{eq:ddc-APsi1}, note first that the identities
\begin{equation}\label{eq:int_dtheta0}
\begin{split}
    d\Phi=&d\varphi_t+\dot{\varphi}_t dt\\
    d\dot{\Phi}=&d\dot{\varphi}_t+\ddot{\varphi}_t dt\\
    {\proj_1}^*\omega+dd^c\Phi=&\omega_{\varphi_t}-d^c\dot{\varphi}_t\wedge dt-d\dot{\varphi}_t\wedge d\theta-\ddot{\varphi}_t dt\wedge d\theta.
\end{split}    
\end{equation}
allow us to obtain
\begin{equation}\label{eq:int_dtheta}
\partial_\theta\intprod ({\proj_1}^*\omega+dd^c\Phi)^{[n+1]}= d\dot{\Phi}\wedge ({\proj_1}^*\omega+dd^c\Phi)^{[n]}.
\end{equation}
Then, the second integral in~\eqref{eq:ddc-APsi1} becomes
\begin{equation}\label{eq:log-ddf0}
    \begin{split}
        &n(n+1)s^2\int_{\mathbb{D}^*} d^c \chi\wedge d\left(\int_0^{-\log|\tau|}\int_{M} \log\left(\frac{e^{\Psi}}{\proj_1^*\omega^{[n]}} \right) \frac{\partial_\theta \intprod  (\omega+dd^c\Phi)^{[n+1]}}{(1+s\dot{\Phi})^{n+2}}\right)\\
        =&n(n+1)s^2\int_{M\times \mathbb{D}^*} \log\left(\frac{e^{\Psi}}{\proj_1^*\omega^{[n]}} \right) d^c \chi\wedge \frac{\partial_\theta \intprod  (\omega+dd^c\Phi)^{[n+1]}}{(1+s\dot{\Phi})^{n+2}}\\
        =&n(n+1)s^2\int_{M\times \mathbb{D}^*} \log\left(\frac{e^{\Psi}}{\proj_1^*\omega^{[n]}} \right) d^c \chi\wedge d\dot{\Phi}\wedge \frac{ (\proj_1^*\omega+dd^c\Phi)^{[n]}}{(1+s\dot{\Phi})^{n+2}}.
    \end{split}
\end{equation}

Integration by parts gives this expression for the first integral in~\eqref{eq:ddc-APsi1}
\begin{equation}
  \begin{split}
    &ns\int_{M\times \mathbb{D}^*} \log\left(\frac{e^{\Psi}}{{\proj_1}^*\omega^{[n]}} \right) dd^c \chi\wedge\frac{({\proj_1}^*\omega+dd^c\Phi)^{[n]}}{(1+s\dot{\Phi})^{n+1}}\\
    =&ns\int_{M\times \mathbb{D}^*} \chi dd^c \left[\frac{1}{(1+s\dot{\Phi})^{n+1}}\log\left(\frac{e^{\Psi}}{{\proj_1}^*\omega^{[n]}} \right)\right]\wedge ({\proj_1}^*\omega+dd^c\Phi)^{[n]}\\
    =&ns\int_{M\times \mathbb{D}^*} \chi dd^c\log\left(\frac{e^{\Psi}}{{\proj_1}^*\omega^{[n]}} \right)\wedge\frac{({\proj_1}^*\omega+dd^c\Phi)^{[n]}}{(1+s\dot{\Phi})^{n+1}}\\
    &+n(n+1)s^2\int_{M\times \mathbb{D}^*} \chi d^c\log\left(\frac{e^{\Psi}}{{\proj_1}^*\omega^{[n]}} \right) \wedge \frac{d\dot{\Phi}\wedge ({\proj_1}^*\omega+dd^c\Phi)^{[n]}}{(1+s\dot{\Phi})^{n+2}}\\
    &-n(n+1)s^2\int_{M\times \mathbb{D}^*}   \log\left(\frac{e^{\Psi}}{{\proj_1}^*\omega^{[n]}} \right) \frac{d\chi\wedge d^c\dot{\Phi} \wedge({\proj_1}^*\omega+dd^c\Phi)^{[n]}}{(1+s\dot{\Phi})^{n+2}}.
\end{split}
\end{equation}
At this point, notice that we can use~\eqref{eq:int_dtheta} to obtain
\begin{equation*}
\begin{split}
d^c\log&\left(\frac{e^{\Psi}}{{\proj_1}^*\omega^{[n]}} \right) \wedge d\dot{\Phi}\wedge ({\proj_1}^*\omega+dd^c\Phi)^{[n]} \\=&  d^c\log\left(\frac{e^{\Psi}}{{\proj_1}^*\omega^{[n]}} \right) \wedge\left(\partial_\theta\intprod ({\proj_1}^*\omega+dd^c\Phi)^{[n+1]}\right)\\
=&[d^c\Psi(\partial_\theta) -{\proj_1}^*(d^c\log \omega^{[n]} )(\partial_\theta)] ({\proj_1}^*\omega+dd^c\Phi)^{[n+1]}\\
=&d^c\Psi(\partial_\theta) ({\proj_1}^*\omega+dd^c\Phi)^{[n+1]}.
\end{split}
\end{equation*}
So the first integral in~\eqref{eq:ddc-APsi1} can be rewritten as
\begin{equation}\label{eq:log-ddf}
\begin{split}    
&ns\int_{M\times \mathbb{D}^*} \log\left(\frac{e^{\Psi}}{{\proj_1}^*\omega^{[n]}} \right) dd^c \chi\wedge\frac{({\proj_1}^*\omega+dd^c\Phi)^{[n]}}{(1+s\dot{\Phi})^{n+1}}\\
    =&ns\int_{M\times \mathbb{D}^*} \chi dd^c\log\left(\frac{e^{\Psi}}{{\proj_1}^*\omega^{[n]}} \right)\wedge\frac{({\proj_1}^*\omega+dd^c\Phi)^{[n]}}{(1+s\dot{\Phi})^{n+1}}\\
    &+n(n+1)s^2\int_{M\times \mathbb{D}^*} \chi (d^c\Psi)(\partial_\theta)\frac{({\proj_1}^*\omega+dd^c\Phi)^{[n+1]}}{(1+s\dot{\Phi})^{n+2}}\\
    &-n(n+1)s^2\int_{M\times \mathbb{D}^*}   \log\left(\frac{e^{\Psi}}{{\proj_1}^*\omega^{[n]}} \right) \frac{d\chi\wedge d^c\dot{\Phi} \wedge({\proj_1}^*\omega+dd^c\Phi)^{[n]}}{(1+s\dot{\Phi})^{n+2}}.
\end{split}  
\end{equation}
Substituting~\eqref{eq:ddcRic},~\eqref{eq:log-ddf0}, and~\eqref{eq:log-ddf} back into~\eqref{eq:ddc-APsi0}, and using the relation $$dd^c\log\left(\frac{e^{\Psi}}{{\proj_1}^*\omega^{[n]}} \right)=dd^c\Psi-2{\proj_1}^*\Ric(\omega),$$ we finally obtain 
\begin{equation*}
\begin{split}
    \langle dd^c\widecheck{\actscal}^\Psi_s&,\chi\rangle= ns\int_{M\times \mathbb{D}^*} \chi dd^c\log\left(\frac{e^{\Psi}}{{\proj_1}^*\omega^{[n]}} \right)\wedge\frac{({\proj_1}^*\omega+dd^c\Phi)^{[n]}}{(1+s\dot{\Phi})^{n+1}} \\
    &\qquad \qquad \qquad +2 ns\int_{M\times \mathbb{D}^*} {\proj_1}^*\Ric(\omega)\wedge \frac{({\proj_1}^*\omega+dd^c\Phi)^{[n]}}{(1+s\dot{\Phi})^{n+1}}\\
    &\qquad \qquad \qquad +n(n+1)s^2\int_{M\times \mathbb{D}^*} \chi (d^c\Psi)(\partial_\theta)\frac{({\proj_1}^*\omega+dd^c\Phi)^{[n+1]}}{(1+s\dot{\Phi})^{n+2}}\\
    =&ns\int_{M\times \mathbb{D}^*} \chi \left( dd^c\Psi\wedge\frac{({\proj_1}^*\omega+dd^c\Phi)^{[n]}}{(1+s\dot{\Phi})^{n+1}}+ (n+1)s (d^c\Psi)(\partial_\theta)\frac{({\proj_1}^*\omega+dd^c\Phi)^{[n+1]}}{(1+s\dot{\Phi})^{n+2}}\right)
        \end{split}
\end{equation*}
which completes the proof of~\eqref{eq:ddc-APsi0}.
\end{proof}

Now, we can use Lemma~\ref{lem:ddc_A^Psi-smooth} to compute the second variation of $\widecheck{\actscal}^{\Psi}_s$ along a weak geodesic ray $(\varphi_t)_{t\geq 0}$.
\begin{corollary}\label{cor:ddc_A^Psi}
    Let $(\varphi_t)_{t\geq 0}$ be a $C^{1,1}$ weak geodesic ray on $(M,\omega)$ with $\Phi$ the associated $\mathbb{S}^1$-invariant function on $M\times\mathbb{D}^*$ and let $(\Theta_\tau)_{\tau\in \mathbb{D}^*}$ be a family of $\mathbb{S}^1$-invariant (wrt $\tau$) volume forms on $M$ such that the function $\Psi:=\log(\Theta_\tau)$ is smooth. The following identity holds in the weak sense of currents
    \begin{equation}\label{eq:ddc-APsi}
            dd^c\widecheck{\actscal}^\Psi_s=ns\int_{M} \frac{dd^c\Psi\wedge({\proj_1}^*\omega+dd^c\Phi)^{[n]}}{(1+s\dot{\Phi})^{n+1}},
    \end{equation}
    where ${\proj_1}:M\times\mathbb{D}^*\to M$ is the projection on the first factor and $\int_M$ stands for the integration along the fibres of $M\times\mathbb{D}^*\to \mathbb{D}^*$.
\end{corollary}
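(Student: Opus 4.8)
The plan is to deduce Corollary~\ref{cor:ddc_A^Psi} from Lemma~\ref{lem:ddc_A^Psi-smooth} by a regularization argument: one approximates the $C^{1,1}$ weak geodesic ray $\Phi$ by the smooth $\varepsilon$-geodesics $\Phi_\varepsilon$ solving~\eqref{BVP:G:epsilon}, applies Lemma~\ref{lem:ddc_A^Psi-smooth} to each $\Phi_\varepsilon$, and passes to the limit $\varepsilon\to0$ in the sense of currents, using that the geodesic equation $({\proj_1}^*\omega+dd^c\Phi)^{[n+1]}=0$ annihilates the second term on the right-hand side of~\eqref{eq:ddc-APsi0}. Recall that the family $(\Phi_\varepsilon)_{\varepsilon>0}$ decreases to $\Phi$ and converges to it in the $C^{1,1}$ topology; in particular $\dot\Phi_\varepsilon\to\dot\Phi$ uniformly on compact subsets of $M\times\mathbb{D}^*$, and for $s$ small enough $1+s\dot\Phi_\varepsilon\geq c>0$ uniformly in $\varepsilon$. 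Applying Lemma~\ref{lem:ddc_A^Psi-smooth} to $\Phi_\varepsilon$ and the fixed smooth function $\Psi$ yields, weakly on $\mathbb{D}^*$,
\begin{equation*}
  \frac{1}{ns}dd^c\widecheck{\actscal}^\Psi_{s,\varepsilon}=\int_{M} \frac{dd^c\Psi\wedge({\proj_1}^*\omega+dd^c\Phi_\varepsilon)^{[n]}}{(1+s\dot\Phi_\varepsilon)^{n+1}} + (n+1)s\,(d^c\Psi)(\partial_\theta)\frac{({\proj_1}^*\omega+dd^c\Phi_\varepsilon)^{[n+1]}}{(1+s\dot\Phi_\varepsilon)^{n+2}},
\end{equation*}
where $\widecheck{\actscal}^\Psi_{s,\varepsilon}$ denotes the functional~\eqref{eq:A_Psi-general} evaluated along $\Phi_\varepsilon$.

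Next I would check that $\widecheck{\actscal}^\Psi_{s,\varepsilon}\to\widecheck{\actscal}^\Psi_s$ in $L^1_{\rm loc}(\mathbb{D}^*)$, where $\widecheck{\actscal}^\Psi_s$ is the expression~\eqref{eq:A_Psi} attached to the geodesic $\Phi$; this already gives $dd^c\widecheck{\actscal}^\Psi_{s,\varepsilon}\to dd^c\widecheck{\actscal}^\Psi_s$ as currents. Indeed the entropy-type term of~\eqref{eq:A_Psi-general} converges pointwise in $\tau$, since $\Psi$ is smooth and bounded, $\omega_{\varphi^\varepsilon_t}^{[n]}\to\omega_{\varphi_t}^{[n]}$ weakly, and $(1+s\dot\varphi^\varepsilon_t)^{-(n+1)}\to(1+s\dot\varphi_t)^{-(n+1)}$ uniformly; the Ricci term depends continuously on the data; and the remaining $n(n+1)s^2$-integral in~\eqref{eq:A_Psi-general} is $O(\varepsilon)$, because on the trivial locus $({\proj_1}^*\omega+dd^c\Phi_\varepsilon)^{[n+1]}$ equals $\varepsilon$ times a fixed smooth positive form (equivalently, it converges weakly to $({\proj_1}^*\omega+dd^c\Phi)^{[n+1]}=0$ with locally uniformly bounded mass). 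Dominated convergence, together with the uniform bounds above, upgrades the pointwise convergence to $L^1_{\rm loc}$.

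It then remains to pass to the limit on the right-hand side. The second summand tends to $0$: the currents $({\proj_1}^*\omega+dd^c\Phi_\varepsilon)^{[n+1]}$ converge weakly to $0$ with locally uniformly bounded mass, while the coefficient $(d^c\Psi)(\partial_\theta)\,(1+s\dot\Phi_\varepsilon)^{-(n+2)}$ is smooth in the $M$-variables and converges uniformly. For the first summand I would invoke the Bedford--Taylor continuity of mixed Monge--Amp\`ere currents along decreasing sequences of bounded ${\proj_1}^*\omega$-plurisubharmonic functions, which gives $dd^c\Psi\wedge({\proj_1}^*\omega+dd^c\Phi_\varepsilon)^{[n]}\to dd^c\Psi\wedge({\proj_1}^*\omega+dd^c\Phi)^{[n]}$ weakly, again with locally uniformly bounded mass; multiplying by the uniformly convergent weight $(1+s\dot\Phi_\varepsilon)^{-(n+1)}$ and integrating along the fibres of $M\times\mathbb{D}^*\to\mathbb{D}^*$ preserves this convergence. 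Combining the two limits with the convergence of the left-hand side yields~\eqref{eq:ddc-APsi}.

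The hard part will be this last passage to the limit, namely justifying the convergence of $dd^c\Psi\wedge({\proj_1}^*\omega+dd^c\Phi_\varepsilon)^{[n]}$ when tested against the \emph{merely continuous} weight $\chi\,(1+s\dot\Phi_\varepsilon)^{-(n+1)}$ for a test function $\chi$ on $\mathbb{D}^*$: the point is that this weight converges \emph{uniformly}, so that weak convergence of the positive currents, with uniformly bounded masses on the support of $\chi$, suffices to conclude; the mass bounds themselves come from Bedford--Taylor theory since $\Phi_\varepsilon\searrow\Phi$ with $\Phi$ bounded. The analogous care is needed to make sure that $dd^c$ commutes with the $L^1_{\rm loc}$ limit on the left, which is automatic once the uniform integrability of the integrands defining $\widecheck{\actscal}^\Psi_{s,\varepsilon}$ is established as above.
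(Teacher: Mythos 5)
Your proposal is correct and follows essentially the same route as the paper: approximate $\Phi$ by the smooth $\varepsilon$-geodesics, apply Lemma~\ref{lem:ddc_A^Psi-smooth}, observe that the second term is $O(\varepsilon)$ by the $\varepsilon$-geodesic equation, and pass to the limit using the $C^{1,1}$ convergence $\Phi^\varepsilon\to\Phi$ together with the weak convergence $dd^c\widecheck{\actscal}^{\Psi}_{s,\varepsilon}\to dd^c\widecheck{\actscal}^{\Psi}_s$ coming from pointwise convergence of the functionals. Your extra appeal to Bedford--Taylor continuity for the mixed term is a valid (slightly more detailed) justification of the same limit the paper takes.
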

\begin{proof}
For a general $C^{1,1}$ geodesic ray $\varphi_t$ on $(M,\omega)$ with corresponding $\mathbb{S}^1$-invariant function $\Phi$ on $M\times \mathbb{D}^*$, let $\Phi^\varepsilon$ be the $\varepsilon$-geodesic solving the elliptic problem
\begin{equation}\label{eq:epsilon_geod}
({\proj_1}^*\omega+dd^c\Phi^\varepsilon)^{[n+1]}=\varepsilon\left({\proj_1}^*\omega+\frac{id\tau\wedge d\bar{\tau}}{2|\tau|^2}\right)^{[n+1]},\quad \Phi^\varepsilon(1,\cdot)=\varphi_0.
\end{equation}
It is known that (see \cite{CTW1}) the family of smooth solutions $(\Phi^\varepsilon)_{\varepsilon>0}$ is decreasing in $\epsilon$ and, as $\varepsilon\to 0$, it converges uniformly in $C^{1,1}(M\times \mathbb{D}^*)$ topology to the solution of the homogeneous Monge-Ampere equation 
\begin{equation}\label{MA}
({\proj_1}^*\omega+dd^c\Phi)^{[n+1]}=0,\quad \Phi(1,\cdot)=\varphi_0.
\end{equation}
Let $\widecheck{\actscal}_{s,\varepsilon}^\Psi$ be~\eqref{eq:A_Psi-general} with $\varphi_t$ substituted by the family of K\"ahler potentials $\varphi^\varepsilon_t:=\Phi^\varepsilon(\cdot,\tau)$. Applying Lemma~\ref{lem:ddc_A^Psi-smooth} to $\widecheck{\actscal}_{s,\varepsilon}^\Psi$, we get
  \begin{equation}\label{ddc-A-eps}
   \begin{split}
  \frac{1}{ns}dd^c\widecheck{\actscal}_{s,\varepsilon}^\Psi=&\int_{M} \frac{dd^c\Psi\wedge({\proj_1}^*\omega+dd^c\Phi^\varepsilon)^{[n]}}{(1+s\dot{\Phi}^\varepsilon)^{n+1}} + (n+1)s (d^c\Psi)(\partial_\theta)\frac{({\proj_1}^*\omega+dd^c\Phi^\varepsilon)^{[n+1]}}{(1+s\dot{\Phi}^\varepsilon)^{n+2}}\\
  =&\int_{M} \frac{dd^c\Psi\wedge({\proj_1}^*\omega+dd^c\Phi^\varepsilon)^{[n]}}{(1+s\dot{\Phi}^\varepsilon)^{n+1}} + (n+1)s \varepsilon (d^c\Psi)(\partial_\theta)\frac{({\proj_1}^*\omega+dd^c\Phi^\varepsilon)^{[n+1]}}{(1+s\dot{\Phi}^\varepsilon)^{n+2}}.
\end{split}
\end{equation}
From the pointwise convergence $\widecheck{\actscal}_{s,\varepsilon}^\Psi \to \widecheck{\actscal}^\Psi_s$ as $\varepsilon\to 0$ we get the weak convergence  $dd^c\widecheck{\actscal}_{s,\varepsilon}^\Psi\to dd^c\widecheck{\actscal}_s^\Psi$ as $\varepsilon\to 0$ for the lhs of~\eqref{ddc-A-eps}. Passing to the limit as $\varepsilon\to0$ of the rhs~\eqref{ddc-A-eps} we get
    \[
    dd^c\widecheck{\actscal}^\Psi_s=ns\int_{M} \frac{dd^c\Psi\wedge({\proj_1}^*\omega+dd^c\Phi)^{[n]}}{(1+s\dot{\Phi})^{n+1}}
    \]
    which completes the proof.
\end{proof}

We are now in a position to show the continuity and the weak subharmonicity of the function $\widecheck{\actscal}_s$, proving Theorem~\ref{theo:ActionPointwiseCvxC0}. We will need the following regularization result which is the main ingredient in Berman--Berndtsson's proof for the convexity of the Mabuchi energy along geodesics.

\begin{lemma}[\cite{BB}]\label{prop-BB}
Let $(\phi_t)_{t\geq 0}$ be a weak geodesic ray on $(M,\omega)$ with $\Phi$ be the corresponding $\mathbb{S}^1$-invariant weak solution of~\eqref{MA} and let $\psi_t:=\log(\omega_{\varphi_t}^{[n]})$ with $\Psi$ the induced $\mathbb{S}^1$-invariant function on $M\times \mathbb{D}^*$.
\begin{enumerate}
\item\label{prop-BB-i} There exist a family of locally bounded $\mathbb{S}^1$-invariant functions $(\Psi_A)_{A>0}$ on $M\times \mathbb{D}^*$, such that $dd^{c}\Psi_A\geq 0$ in the weak sens of currents, and $\Psi_A\searrow\Psi$ as $A\to\infty$.
\item\label{prop-BB-ii} For fixed $A>0$, there exist a family of $\mathbb{S}^1$-invariant Lipschitz continuous functions $(\Psi_{k,A})_{A>0}$ on $M\times\mathbb{D}^*$, such that the currents 
\[
T_{A,k}:=dd^{c}\Psi_{k,A}\wedge({\proj_1}^{*}\omega+dd^{c}\Phi)^{[n]}
\]
are positive and $\Psi_{k,A}\to\Psi_A$ pointwise almost everywhere on $M$ and everywhere on $\mathbb{D}^*$ as $k\to\infty$.
\end{enumerate}
\end{lemma}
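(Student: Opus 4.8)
The plan is to deduce both statements from the regularisation results of Berman--Berndtsson \cite{BB}, which are proved without reference to the circle action; the only additional point is to check that their construction can be carried out $\mathbb{S}^1$-equivariantly, which is possible because the solution $\Phi$ of~\eqref{MA} and all the ambient geometry on $M\times\mathbb{D}^*$ are $\mathbb{S}^1$-invariant.

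For \eqref{prop-BB-i}, I would first observe that, up to adding a fixed smooth function, $\Psi$ is the fibrewise logarithmic density of the positive closed $(n,n)$-current $({\proj_1}^*\omega+dd^c\Phi)^n$, i.e.\ $\Psi=\log\bigl(\omega_{\varphi_t}^{[n]}/\omega^{[n]}\bigr)$, and then invoke the Bergman-kernel quantisation of \cite[\S 2]{BB}: approximating $\Psi$ from above by the (suitably normalised) logarithms $\Psi_A$ of the Bergman kernels of the spaces of holomorphic sections of $L^{A}$ over the fibres, equipped with the fibrewise metric determined by $\Phi$, produces locally bounded functions with $dd^c\Psi_A\geq 0$ — by Berndtsson's positivity theorem — and with $\Psi_A\searrow\Psi$, the identification of the decreasing limit being precisely where the homogeneity $({\proj_1}^*\omega+dd^c\Phi)^{n+1}=0$ of the Monge--Amp\`ere equation is used. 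Since the relevant weighted $L^2$-norms are invariant under the induced unitary action on sections, each $\Psi_A$ can be taken $\mathbb{S}^1$-invariant (averaging over $\mathbb{S}^1$ preserves both $dd^c\Psi_A\geq 0$ and $\Psi_A\geq\Psi$).

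For \eqref{prop-BB-ii}, with $A$ fixed, $\Psi_A$ is a locally bounded plurisubharmonic function on $M\times\mathbb{D}^*$, and I would regularise it by a Demailly--Richberg procedure carried out on a finite cover by holomorphic coordinate charts and glued with a regularised maximum, obtaining $\mathbb{S}^1$-invariant Lipschitz functions $\Psi_{k,A}$ that are $\theta_k$-psh for smooth forms $\theta_k\searrow 0$ and converge to $\Psi_A$ pointwise almost everywhere in the fibre variable and everywhere in $\tau$, the latter being built into the regularisation. Because $\Phi\in C^{1,1}$ by \cite{CTW1}, the current $({\proj_1}^*\omega+dd^c\Phi)^n$ has bounded coefficients, so for each locally bounded psh $\Psi_{k,A}$ the wedge $T_{A,k}=dd^c\Psi_{k,A}\wedge({\proj_1}^*\omega+dd^c\Phi)^n$ is well defined in the Bedford--Taylor sense and positive, as required.

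I expect the main obstacle to be the assertion in \eqref{prop-BB-i} that the plurisubharmonic approximants decrease \emph{exactly to} $\Psi$ rather than to a strictly larger psh function: this is the technical core of \cite{BB} and relies essentially on the degeneracy of the complex Monge--Amp\`ere operator of $\Phi$. Once this is granted, the $\mathbb{S}^1$-equivariance is a formality, and \eqref{prop-BB-ii} is a routine — if delicate — application of pluripotential regularisation theory together with the continuity of the mixed Monge--Amp\`ere operator.
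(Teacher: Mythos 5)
First, a framing remark: the paper does not prove this lemma at all — it is quoted from Berman--Berndtsson \cite{BB} and used as a black box in the proof of Theorem~\ref{theo:ActionPointwiseCvxC0} — so the only meaningful comparison is with the argument of \cite{BB}, which you are attempting to reconstruct. Your sketch of part~\ref{prop-BB-i} is consistent in spirit with that argument (semipositively curved, decreasing approximants of the metric $e^{\Psi}$ on the relative canonical bundle coming from fibrewise Bergman kernels/positivity of direct images, with the identification of the decreasing limit as the hard core, which you explicitly defer to \cite{BB}), and the $\mathbb{S}^1$-invariance is indeed free by averaging, since the action is holomorphic and $\omega$, $\Phi$, $\Psi$ are invariant. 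Two imprecisions worth noting: the natural Hilbert spaces are sections of the adjoint bundles $K_M+AL$ (or an equivalent normalisation), not of $L^{A}$ alone, if the Bergman metrics are to live on the relative canonical bundle and converge to the Monge--Amp\`ere density; and the Bergman approximants are not monotone in $A$ as they stand, so even granting convergence one needs an extra step (e.g.\ suprema over tails with upper semicontinuous regularisation, or the specific construction of \cite{BB}) to get a genuinely decreasing family.

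The genuine gap is in part~\ref{prop-BB-ii}. You regularise $\Psi_A$ by a Demailly--Richberg procedure and concede that the output $\Psi_{k,A}$ is only $\theta_k$-psh for smooth forms $\theta_k\searrow 0$, yet you then assert that $T_{A,k}=dd^c\Psi_{k,A}\wedge(\proj_1^*\omega+dd^c\Phi)^{[n]}$ is positive ``as required''. That does not follow: such approximants only give $T_{A,k}\geq -\theta_k\wedge(\proj_1^*\omega+dd^c\Phi)^{[n]}$, and it is a standard fact that generic regularisation of a closed positive $(1,1)$-current cannot in general preserve semipositivity. The content of the lemma (and of the corresponding step in \cite{BB}) is precisely that the Lipschitz approximants are constructed \emph{adapted to the geodesic current}, so that the wedge with that specific current — a much weaker requirement than $dd^c\Psi_{k,A}\geq 0$ — is genuinely positive; the unusual convergence mode in the statement (almost everywhere on $M$, everywhere on $\mathbb{D}^*$) is a trace of that special construction and is not produced by a generic smoothing. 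To repair your route you must either reproduce the adapted construction of \cite{BB}, or weaken the conclusion to positivity up to the error $\theta_k\wedge(\cdot)$ and verify that the proof of Theorem~\ref{theo:ActionPointwiseCvxC0} survives these error terms in the limit; as written, the positivity claim is unjustified.
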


\begin{proof}[Proof of Theorem~\ref{theo:ActionPointwiseCvxC0}]
We first note that if $\Psi$ is only Lipschitz on $X\times\mathbb{D}^*$ such that 
   \begin{equation}\label{positivity}
       dd^c\Psi\wedge ({\proj_1}^*\omega+dd^c\Phi)^{[n]} \geq 0
   \end{equation}
   as a current, then the following equality holds in the weak sense of currents
    \begin{equation}\label{eq:ddc-APsi-weak}
            dd^c\widecheck{\actscal}^\Psi_s=ns\int_{M} \frac{dd^c\Psi\wedge({\proj_1}^*\omega+dd^c\Phi)^{[n]}}{(1+s\dot{\Phi})^{n+1}},
    \end{equation}
where $(1+s\dot{\Phi})^{-(n+1)}\,dd^c\Psi\wedge({\proj_1}^*\omega+dd^c\Phi)^{[n]}$ is a well defined Radon measure as $dd^c\Psi\wedge({\proj_1}^*\omega+dd^c\Phi)^{[n]}$ is a current of order 0 by the positivity assumption~\eqref{positivity}. The proof of~\eqref{eq:ddc-APsi-weak} follows the same arguments as \cite[Theorem 3.9]{inoue-ent}, that only make use of the Lipschitz continuity of $\Psi,\dot{\Phi}$ and are independent from the weight $e^{-\dot{\Phi}}$, which we can replace by $(1+s\dot{\Phi})^{-(n+1)}$.

As each function in the family $\Psi_{k,A}$ of Lemma~\ref{prop-BB} %\ref{prop-BB-ii}
is Lipschitz and satisfies~\eqref{positivity}, then 
\[
dd^c\widecheck{\actscal}^{\Psi_{A,k}}_s=ns\int_{M} (1+s\dot{\Phi})^{-(n+1)}T_{A,k}.
\]
By the dominated convergence theorem we have the pointwise convergence $\widecheck{\actscal}^{\Psi_{A,k}}_s\to\widecheck{\actscal}^{\Psi_{A}}_s$ as $k\to\infty$ and since the sequence $(1+s\dot{\Phi})^{-(n+1)} T_{A,k}$ is non-negative and converges weakly to $(1+s\dot{\Phi})^{-(n+1)}T_{A}$ as $k\to \infty$, then $dd^{c}\widecheck{\actscal}^{\Psi_A}_s\geq 0$. From Lemma~\ref{prop-BB}~\ref{prop-BB-ii}, the dominated convergence theorem gives the pointwise limit $\widecheck{\actscal}^{\Psi_{A}}_s\to \widecheck{\actscal}^{\Psi}_s=\widecheck{\actscal}_s$ as $A\to\infty$. It follows that $dd^{c}\widecheck{\actscal}_s\geq 0$, as a limit of positive currents $dd^{c}\widecheck{\actscal}^{\Psi_A}_s$, showing that $\tau\mapsto\widecheck{\actscal}(s,\tau)$ is a weakly subharmonic function. 

The continuity of $\widecheck{\actscal}_s$ also follows from the existence of the regularizing sequence $\Psi_{A,k}$ of Lemma~\ref{prop-BB} (see \cite[Theorem 3.4]{BB}). Indeed, by continuity of $\Psi_{A,k}$, it is clear that $\widecheck{\actscal}^{\Psi_{A,k}}_s$ is also continuous on $\mathbb{D}^*$ and also weakly subharmonic, hence it is pointwise convex as a function of $t=-\log|\tau|$. Since the sequence $\widecheck{\actscal}^{\Psi_{A,k}}_s$ converges pointwise to $\widecheck{\actscal}_s$ as $k,A\to\infty$ then $t\mapsto\widecheck{\actscal}_s(t)$ is also a convex function. As a convex function on $[0,\infty)$, $\widecheck{\actscal}_s$ is continuous on $(0,+\infty)$ and upper-semi continuous on $[0,+\infty)$. From Corollary~\ref{cor:A-def}, $\widecheck{\actscal}_s$ is in fact continuous.
\end{proof}

%%%%%%%%%%%%%%%%%%%%%%%%%%%%%%%%%%%%%%%%%%%%

Combining~\eqref{ch-A} and~\eqref{Ent} we can write
\begin{equation}
    \begin{split}\label{ch-A-Ent}
  \widecheck{\actscal}_s(t)=&ns\widecheck{\V}(\omega,1+s\dot{\varphi}_0)\int_M \log\left(\frac{d\tilde{\lambda}_{t,s}}{d\tilde{\lambda}_{0,s}}\right)\frac{d\tilde{\lambda}_{t,s}}{d\tilde{\lambda}_{0,s}} d\tilde{\lambda}_{0,s}+ 2\int_0^t\int_M \Ric(\omega)\wedge \frac{\omega_{\varphi_u}^{[n-1]}}{(1+s\dot{\varphi}_u)^{n}} du\\
  &-n(n+1)s\int_M \log\left(1+s\dot{\varphi}_0\right) \frac{\omega_{\varphi_t}^{[n]}}{(1+s\dot{\varphi}_t)^{n+1}}+C(s),
  \end{split}
\end{equation}
where $d\tilde{\lambda}_{t,s}:=d\lambda_{t,s}(\int_M d\lambda_{t,s})^{-1}$ is the probability measure induced by the volume form $d\lambda_{t,s}=\frac{\omega_{\varphi_t}^{[n]}}{(1+s\dot{\varphi}_t)^{n+1}}$ and $C(s)$ is a constant depending only on $s$. Note that $\int_M d\lambda_{t,s}$ is independent of $t$, as $\varphi_t$ is a geodesic, so that $\int_M d\lambda_{t,s}=\widecheck{\V}(\omega,1+s\dot{\varphi}_0)$.

Similarly to \cite[Lemma 3.7]{inoue-ent}, Proposition~\ref{prop:A>EH} now follows from the positivity of the entropy on the space of probability measures.
\begin{proof}[Proof of Proposition~\ref{prop:A>EH}]
We have
\[
    \frac{d}{dt}_{|0^+}\int_M \log\left(\frac{d\tilde{\lambda}_{t,s}}{d\tilde{\lambda}_{0,s}}\right)\frac{d\tilde{\lambda}_{t,s}}{d\tilde{\lambda}_{0,s}} d\tilde{\lambda}_{0,s}=\underset{t\to0^+}{\lim}\frac{1}{t}\int_M \log\left(\frac{d\tilde{\lambda}_{t,s}}{d\tilde{\lambda}_{0,s}}\right)\frac{d\tilde{\lambda}_{t,s}}{d\tilde{\lambda}_{0,s}} d\tilde{\lambda}_{0,s}\geq 0,
\]
    since the entropy is positive on the space of probability measures. For a smooth $\epsilon$-geodesic ray $\Phi^{\epsilon}$ we compute
\begin{equation*}
\begin{split}
    &\frac{d}{dt}\int_M \log\left(1+s\dot{\varphi}^{\epsilon}_0\right) \frac{\omega_{\varphi^{\epsilon}_t}^{[n]}}{(1+s\dot{\varphi}^{\epsilon}_t)^{n+1}}=\\
    =&-\int_M \log\left(1+s\dot{\varphi}^{\epsilon}_0\right) \Delta_{\varphi_t^{\epsilon}}(\dot{\varphi}^{\epsilon}_t) \frac{\omega_{\varphi^{\epsilon}_t}^{[n]}}{(1+s\dot{\varphi}^{\epsilon}_t)^{n+1}}-(n+1)s \int_M \log\left(1+s\dot{\varphi}^{\epsilon}_0\right) \ddot{\varphi}^{\epsilon}_t\frac{\omega_{\varphi^{\epsilon}_t}^{[n]}}{(1+s\dot{\varphi}^{\epsilon}_t)^{n+2}}.
\end{split}
\end{equation*}
Integrating by parts the Laplacian term one finds
\begin{equation*}
\begin{split}
&\frac{d}{dt}\int_M \log\left(1+s\dot{\varphi}^{\epsilon}_0\right) \frac{\omega_{\varphi^{\epsilon}_t}^{[n]}}{(1+s\dot{\varphi}^{\epsilon}_t)^{n+1}}=\\
=&-(n+1)s\int_M \log\left(1+s\dot{\varphi}^{\epsilon}_0\right) (\ddot{\varphi}^{\epsilon}_t-|d\dot{\varphi}_t^{\epsilon}|_{\varphi_t^{\epsilon}}^2 ) \frac{\omega_{\varphi^{\epsilon}_t}^{[n]}}{(1+s\dot{\varphi}^{\epsilon}_t)^{n+2}}\\&-s\int_M d\dot{\varphi}^{\epsilon}_0\wedge d^c\dot{\varphi}^{\epsilon}_t \frac{\omega_{\varphi^{\epsilon}_t}^{[n-1]}}{(1+s\dot{\varphi}^{\epsilon}_0)(1+s\dot{\varphi}^{\epsilon}_t)^{n+1}}\\
=&-(n+1)s\epsilon\int_M \log\left(1+s\dot{\varphi}^{\epsilon}_0\right)  \frac{\omega_{\varphi^{\epsilon}_t}^{[n]}}{(1+s\dot{\varphi}^{\epsilon}_t)^{n+2}}-s\int_M d\dot{\varphi}^{\epsilon}_0\wedge d^c\dot{\varphi}^{\epsilon}_t \frac{\omega_{\varphi^{\epsilon}_t}^{[n-1]}}{(1+s\dot{\varphi}^{\epsilon}_0)(1+s\dot{\varphi}^{\epsilon}_t)^{n+1}}
\end{split}
\end{equation*}
Since $\Phi^{\epsilon}\to \Phi$ uniformly in $C^{1,1}$, it follows that
\begin{equation*}
\begin{split}
    \lim_{\epsilon\to 0} \int_M \log\left(1+s\dot{\varphi}^{\epsilon}_0\right) \frac{\omega_{\varphi^{\epsilon}_t}^{[n]}}{(1+s\dot{\varphi}^{\epsilon}_t)^{n+1}}=&\int_M \log\left(1+s\dot{\varphi}_0\right) \frac{\omega_{\varphi_t}^{[n]}}{(1+s\dot{\varphi}_t)^{n+1}}\\
    \lim_{\varepsilon\to 0} \frac{d}{dt}\int_M \log\left(1+s\dot{\varphi}^{\epsilon}_0\right) \frac{\omega_{\varphi^{\epsilon}_t}^{[n]}}{(1+s\dot{\varphi}^{\epsilon}_t)^{n+1}}=&-s\int_M d\dot{\varphi}_0\wedge d^c\dot{\varphi}_t \frac{\omega_{\varphi^{\epsilon}_t}^{[n-1]}}{(1+s\dot{\varphi}_0)(1+s\dot{\varphi}_t)^{n+1}}
\end{split}
\end{equation*}
    uniformly on each segment contained in $[0,+\infty)$ (the range of $t$). It follows that the function 
    \[
    t\mapsto \int_M \log\left(1+s\dot{\varphi}_0\right) \frac{\omega_{\varphi_t}^{[n]}}{(1+s\dot{\varphi}_t)^{n+1}}
    \]
    is $C^1$, with a derivative given by
    \[
    \frac{d}{dt}\int_M \log\left(1+s\dot{\varphi}_0\right) \frac{\omega_{\varphi_t}^{[n]}}{(1+s\dot{\varphi}_t)^{n+1}}= -s\int_M d\dot{\varphi}_0\wedge d^c\dot{\varphi}_t\wedge \frac{\omega_{\varphi^{\epsilon}_t}^{[n-1]}}{(1+s\dot{\varphi}_0)(1+s\dot{\varphi}_t)^{n+1}}.
    \]
It follows that
\begin{equation*}
\begin{split}
    \frac{d}{dt}_{|0^+}\widecheck{\actscal}_s(t)\geq& 2\int_M \Ric(\omega)\wedge \frac{\omega^{[n-1]}}{(1+s\dot{\varphi}_0)^{n}}+s^2n(n+1)\int_M\lvert d\dot{\varphi}_0\rvert_{\varphi_0}^2\frac{\omega^{[n]}}{(1+s\dot{\varphi}_0)^{n+2}}\\
    =&\int_M\left((1+s\dot{\varphi}_0)^2\scal_\omega+n(n+1)|d(1+s\dot{\varphi}_0)|_\omega^2\right)\frac{\omega^{[n]}}{(1+s\dot{\varphi}_0)^{n+2}}\\
    =&\widecheck{\EH}(\omega,1+s\dot{\varphi}_0)\widecheck{\V}(\omega,1+s\dot{\varphi}_0)^{\frac{n}{n+1}},
\end{split}
\end{equation*}
as required.
\end{proof}

%%%%%%%%%%%%%%%%%%%%%%%%%%%%%%%%%%%%%%%%%%%%

\subsection{The Einstein--Hilbert functional near the central fibre of a test configuration}\label{ss:globalformulaEH}
We will now prove Theorem~\ref{theo:Globalformula}, by computing the value of $\bfS$ and $\bfV$ near the central fibre of a test configuration. It will however be convenient to slightly change the setting. For a smooth dominating test configuration $({\tstM},\tstL)$ with $(\varphi_t)_{t\geq 0}$ the $C^{1,1}$-compatible geodesic ray, we are interested in the asymptotic slope as $t\to\infty$ of the anti-derivative of the volume and the action functional:
\[
\begin{split}\widecheck{\actvol}_s(t):=&\int_0^t\widecheck{\V}_s(u)du=\int_0^t\frac{\omega_u^{[n]}}{(1+s\dot{\varphi}_u)^{n+1}}du\\
\widecheck{\actscal}_s(t)=&ns\int_M\log\left(\frac{\omega_{\varphi_t}^n}{\omega^n}\right) \frac{\omega_{\varphi_t}^{[n]}}{(1+s\dot{\varphi}_t)^{n+1}}+ 2\int_0^t\int_M \Ric(\omega)\wedge \frac{\omega_{\varphi_u}^{[n-1]}}{(1+s\dot{\varphi}_u)^{n}} du.
\end{split}
\]
\subsubsection{The slope of the anti-derivative of the volume functional along a test configuration}
For a smooth dominating test configuration $({\tstM},\tstL)$ with $(\varphi_t)_{t\geq 0}$ the $C^{1,1}$-compatible geodesic ray the anti-derivative of the volume $\widecheck{\actvol}_s(t)$ extends to an $\mathbb{S}^1$-invariant function on $\mathbb{C}^*\subset\mathbb{P}^1$ defined by
 \[
\widecheck{\actvol}_s(\tau)=\int_0^t\int_M\left(\frac{(\omega+dd^c\Phi)^{[n]}}{(1+s\dot{\Phi})^{n+1}}\right)_{|M}du,\quad\tau=e^{-t+i\theta}
 \]
where $\Phi$ is the extended $\mathbb{S}^1$-invariant function on $M\times\mathbb{P}^1$ corresponding to the geodesic ray $(\varphi_t)_{t\geq 0}$. In the following Lemma, we compute $dd^c\widecheck{\actvol}_s$ as a current on $\mathbb{P}^1$.
\begin{lemma}\label{lem:ddcV_TC}
    The following equality holds in the weak sense of currents
    \begin{equation}\label{eq:ddcV_TC}
        \frac{1}{(n+1)s} dd^c\widecheck{\actvol}_s=-\TCmap_\star\left( \frac{(\Omega+dd^c\Gamma)^{[n+1]}}{\left(1-s\mu_{\Gamma}\right)^{n+2}}\right)
    \end{equation}
    where $\Gamma$ is the solution of the boundary value problem~\eqref{BVP:G} extended to the total space ${\tstM}$ and $\mu_\Gamma:=\mu+(d^c\Gamma)(\zeta)$.
\end{lemma}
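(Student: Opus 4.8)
\emph{Approach.} The plan is to follow the scheme used for the action functional in Lemma~\ref{lem:ddc_A^Psi-smooth} and Corollary~\ref{cor:ddc_A^Psi}: establish the current identity first for the smooth $\varepsilon$-geodesic rays of~\eqref{eq:epsilon_geod}, and then pass to the $C^{1,1}$ limit. Since $\widecheck{\actvol}_s$ is $\mathbb{S}^1$-invariant, as a function of $\tau=e^{-t+i\theta}$ it depends only on $t$, so, with the sign convention of the proof of Lemma~\ref{lem:ddc_A^Psi-smooth}, $dd^c\widecheck{\actvol}_s=-\tfrac{d^2}{dt^2}\widecheck{\actvol}_s\,dt\wedge d\theta=-\tfrac{d}{dt}\widecheck{\V}_s(t)\,dt\wedge d\theta$ on $\mathbb{C}^*$, and everything reduces to the $t$-variation of $\widecheck{\V}_s$. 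On the K\"ahler quotient, Lemma~\ref{lem:Vol-variation} (rewritten through $\widecheck{\V}=(2\pi)^{-1}\bfV$ and~\eqref{eq:EQUnomr=trace}, and using $1+s\dot\varphi_t=f_{s,t}$ from Lemma~\ref{lemmaDERIVATIVE}) gives
\[
\frac{d}{dt}\widecheck{\V}_s(t)=-s(n+1)\int_M\bigl(\ddot\varphi_t-|d\dot\varphi_t|^2_{\varphi_t}\bigr)\frac{\omega_{\varphi_t}^{[n]}}{(1+s\dot\varphi_t)^{n+2}}.
\]
Combining this with the identity $(\proj_1^*\omega+dd^c\Phi)^{[n+1]}=-(\ddot\varphi_t-|d\dot\varphi_t|^2_{\varphi_t})\,\omega_{\varphi_t}^{[n]}\wedge dt\wedge d\theta$ on $M\times\mathbb{D}^*$ already exploited in the proof of Lemma~\ref{lem:chS-eq}, and integrating by parts against a test function on $\mathbb{C}^*$ exactly as in Lemma~\ref{lem:ddc_A^Psi-smooth}, one obtains, as currents,
\[
\frac{1}{(n+1)s}\,dd^c\widecheck{\actvol}_s=-\int_M\frac{(\proj_1^*\omega+dd^c\Phi)^{[n+1]}}{(1+s\dot\Phi)^{n+2}},
\]
where $\int_M$ denotes fibre integration along $\proj_2\colon M\times\mathbb{P}^1\to\mathbb{P}^1$. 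For a smooth (sub)geodesic ray this is an elementary computation, so in particular it holds for the $\varepsilon$-geodesics $\Phi^\varepsilon$, with $\widecheck{\actvol}_{s,\varepsilon}$ in place of $\widecheck{\actvol}_s$ (the right-hand side of~\eqref{eq:epsilon_geod} only contributes an overall factor $\varepsilon$).

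\emph{Passage to the $C^{1,1}$ limit.} Next I would let $\varepsilon\to 0$. By \cite{CTW1} the $\Phi^\varepsilon$ decrease to the $C^{1,1}$-compatible geodesic ray $\Phi$ and converge to it in the $C^{1,1}$ topology; hence $\widecheck{\actvol}_{s,\varepsilon}\to\widecheck{\actvol}_s$ pointwise, so weakly, while $(\proj_1^*\omega+dd^c\Phi^\varepsilon)^{n+1}\to(\proj_1^*\omega+dd^c\Phi)^{n+1}$ weakly and $(1+s\dot\Phi^\varepsilon)^{-(n+2)}\to(1+s\dot\Phi)^{-(n+2)}$ uniformly (for $s$ small, keeping all denominators positive and bounded away from $0$). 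Just as in the passage to the limit in Corollary~\ref{cor:ddc_A^Psi}, the displayed identity survives, now for the $C^{1,1}$-compatible geodesic ray attached to the test configuration.

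\emph{Transfer to the total space.} It remains to rewrite the fibre integral as a $\TCmap$-pushforward over $\tstM$. Using $\Pi^*(\proj_1^*\omega+dd^c\Phi)=\Omega+dd^c\Gamma-\delta_D$ with $D$ supported on $M_0$, and $\Pi^*(1+s\dot\Phi)=1-s\mu_\Gamma$ with $\mu_\Gamma:=\mu+(d^c\Gamma)(\zeta)$ the $(\Omega+dd^c\Gamma)$-Hamiltonian of $\uzeta$ — the normalization being the correct one since $\mu=\tilde\eta(\zeta)$ corresponds to $-\dot\Phi$ through Lemma~\ref{lemmaDERIVATIVE} and the decomposition~\eqref{eqTCdecompRADIALpot} — and recalling $\TCmap=\proj_2\circ\Pi$ with $\Pi$ a $\mathbb{C}^*$-equivariant bimeromorphic morphism which is an isomorphism over $\mathbb{P}^1\setminus\{0\}$, one concludes
\[
\int_M\frac{(\proj_1^*\omega+dd^c\Phi)^{[n+1]}}{(1+s\dot\Phi)^{n+2}}=\TCmap_\star\!\left(\frac{(\Omega+dd^c\Gamma)^{[n+1]}}{(1-s\mu_\Gamma)^{n+2}}\right),
\]
because $(\Omega+dd^c\Gamma)^{n+1}$ is the Bedford--Taylor Monge--Amp\`ere of the globally $C^{1,1}$, $\Omega$-psh function $\Gamma$ and the discrepancy between it and the $\Pi$-transport of $(\proj_1^*\omega+dd^c\Phi)^{n+1}$ is a current carried by $D\subset M_0$. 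Combined with the previous step, this is~\eqref{eq:ddcV_TC}.

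\emph{Main difficulty.} The computational core is elementary: it is the volume analogue of Lemma~\ref{lem:chS-eq}, with the geodesic identity for $(\proj_1^*\omega+dd^c\Phi)^{[n+1]}$ doing all the work. The genuinely delicate point is the last step — identifying the fibre integral over $M\times\mathbb{P}^1$ with the $\TCmap$-pushforward over $\tstM$ when $\Pi$ contracts part of $\tstM$ onto $M_0$ and all forms involved have only $L^\infty$ (or measure) coefficients; here one must keep precise track of the divisor $D$ and use crucially that the $C^{1,1}$, $\Omega$-psh regularity of $\Gamma$ is what makes $(\Omega+dd^c\Gamma)^{n+1}$ well-defined and lets it absorb the central-fibre contributions. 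A secondary technical point is the regularization, which rests on the $C^{1,1}$-approximation of \cite{CTW1} and on continuity of the Monge--Amp\`ere operator under the decreasing convergence $\Phi^\varepsilon\searrow\Phi$.
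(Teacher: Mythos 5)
Your proposal is correct and follows essentially the same route as the paper: the smooth computation via Lemma~\ref{lem:Vol-variation} together with the wedge identity for $(\proj_1^*\omega+dd^c\Phi)^{[n+1]}$, the identification $\mu_\Gamma=-\Pi^*\dot\Phi=\mu+(d^c\Gamma)(\uzeta)$ through Lemmas~\ref{lem:RibboncontactCR} and~\ref{lemmaDERIVATIVE}, and the $\varepsilon$-geodesic approximation of \cite{CTW1}. The only (immaterial) difference is the order of operations — the paper transfers to $\tstM$ for each smooth $\Gamma_\varepsilon$ and then lets $\varepsilon\to 0$ using $\Gamma_\varepsilon\to\Gamma$ in $C^{1,1}$ away from $M_0$, whereas you pass to the limit on $M\times\mathbb{P}^1$ first; moreover your worry about central-fibre discrepancies is harmless, since the identity is tested against functions supported in $\mathbb{C}^*$, over which $\Pi$ is an isomorphism and $D\subset M_0$ never enters.
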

\begin{proof}
    We start by showing the equation~\eqref{eq:ddcV_TC} for $(\varphi_t)_{t\geq 0}$ smooth such that $\Phi$ is ${\proj_1}^*\omega$-psh. We will then use approximations by smooth $\varepsilon$-geodesics to get~\eqref{eq:ddcV_TC} for a general geodesic ray. By~\eqref{Vol-variation}, for any smooth subgeodesic ray $\Phi$ which is $C^{\infty}$-compatible with $(\mathcal{M},\mathcal{L})$ we have
 \[
    \frac{1}{(n+1)s}dd^c\widecheck{\actvol}_s=-\proj_{2,\star}\left( \frac{({\proj_1}^*\omega+dd^c\Phi)^{[n+1]}}{(1+s\dot{\Phi})^{n+2}}\right).
    \]
    where ${\proj_1}:M\times\mathbb{C}^*\to M$ is the projection on the first factor and $\proj_{2,\star}$ stands for the integration along the fibres of $\proj_2: M\times\mathbb{C}^*\to \mathbb{P}^1$. 
    By~\eqref{eq:int_dtheta0} we have
    \[
       \partial_\theta\intprod (\proj^*\omega+dd^c\Phi)= d\dot{\Phi},
    \]
    where $\frac{\partial}{\partial \theta}$ is the generator of the $\mathbb{S}^1$-action on $\mathbb{D}$. Pulling both sides of the above equality back to ${\tstM}\setminus M_0$ using the domination map $\Pi$ (see~\eqref{dominate}) and using that $dd^c\gamma_D=0$ away from the central fiber, we obtain
\[
    \uzeta\intprod (\Omega+dd^c\Gamma)= -d(-\Pi^*\dot{\Phi}).
    \]
Hence, $-\Pi^*\dot{\Phi}$ is a hamiltonian for the $\mathbb{S}^1$-action on ${\tstM}$ with respect to $\Omega+dd^c\Gamma$. By Lemma~\ref{lem:RibboncontactCR}, for the deformed contact structure $\tilde{\eta}_\Gamma:=\tilde{\eta}+d^c\Gamma$ on $\mathcal{N}$, we have  $f_s^\Gamma:=\tilde{\eta}_\Gamma(\xi-s\zeta)>0$ for $s$ small enough, and furthermore it fixes a hamiltonian function $\mu_\Gamma$ for $\uzeta$ on $(\mathcal{M},\Omega+dd^c\Gamma)$ through the equation $f_s^{\Gamma}=1-s\mu_\Gamma$. On the other hand, by Lemma~\ref{lemmaDERIVATIVE}, since $\Pi^*(\proj_1\omega+dd^c\Phi)=\Omega+dd^c\Gamma$ (away from $M_0$) the function $f_{s,t}^{\Gamma}:=(f_{s}^\Gamma)_{|M_\tau}$ satisfies $f_{s,t}^{\Gamma}=1+s\dot{\varphi}_t=1-s(-\Pi^*\dot{\Phi})_{|M_\tau}$. Thus, $\mu_\Gamma=-\Pi^*\dot{\Phi}$ and we have
\[
\mu_\Gamma=\mu+(d^c\Gamma)(\uzeta).
\]
where $\mu$ is a hamiltonian of $\uzeta$ with respect to $\Omega$. By $\TCmap=\Pi\circ \proj_2$, the following equality holds on ${\tstM}\setminus M_0$ in the weak sense of currents
\[
\begin{split}
    \frac{1}{(n+1)s} dd^c\widecheck{\actvol}_s=&-\proj_{2,\star}\left( \frac{({\proj_1}^*\omega+dd^c\Phi)^{[n+1]}}{(1+s\dot{\Phi})^{n+2}}\right)\\
    =&-(\proj_{2}\circ\Pi)_\star\left( \frac{\Pi^*({\proj_1}^*\omega+dd^c\Phi)^{[n+1]}}{(1+s(\Pi^*\dot{\Phi}))^{n+2}}\right)\\
    =&-\TCmap_\star\left( \frac{(\Omega+dd^c\Gamma)^{[n+1]}}{\left(1-s\mu_{\Gamma}\right)^{n+2}}\right).
\end{split}
\]
When $(\varphi_t)_{t\geq 0}$ is the $C^{1,1}$-compatible geodesic ray, we can apply the above to the $\varepsilon$-geodesic ray $\Phi^\varepsilon$ which is $C^{\infty}$-compatible with the test configuration to get
\[
\frac{1}{(n+1)s} dd^c\widecheck{\actvol}^\varepsilon_s=-\TCmap_\star\left( \frac{(\Omega+dd^c\Gamma_\varepsilon)^{[n+1]}}{\left(1-s\mu_{\Gamma_\varepsilon}\right)^{n+2}}\right)
\]
Since $\Gamma_\varepsilon$ converges in $C^{1,1}$ topology to $\Gamma$ on ${\tstM}\setminus M_0$, we get
\[
\frac{1}{(n+1)s} dd^c\widecheck{\actvol}_s=-\TCmap_\star\left( \frac{(\Omega+dd^c\Gamma)^{[n+1]}}{\left(1-s\mu_{\Gamma}\right)^{n+2}}\right)
\]
where we define $\mu_\Gamma:= \mu+(d^c\Gamma)(\uzeta)$, which we justify by passing to the pointwise limit as $\varepsilon\to 0$ in the equality $\mu_{\Gamma_\varepsilon}=\mu+(d^c\Gamma_\varepsilon)(\uzeta)$.
\end{proof}
Through a simple integration by parts, Lemma~\ref{lem:ddcV_TC} allows us to compute the asymptotic slope $\widecheck{\actvol}_s$, and so the limit of the volume functional.
\begin{lemma}\label{lem:AsymSlopeVol}
For any smooth dominating test configuration $({\tstM},\tstL)$, with $(\varphi_t)_{t\geq 0}$ the $C^{1,1}$-compatible geodesic ray, the limit of the volume functional $\bfV(f_{s,t}^{-1}\eta_t)$ on the central fibre exists and is independent on the choice of $\tilde{\eta}\in\holSAS(\tstM,\tstL)$. Moreover,
\begin{equation}\label{AsymSlopeVol}
\underset{t\to +\infty}{\lim}\widecheck{\V}_s(t)=-\frac{(n+1)s}{2\pi}\int_{{\tstM}}\frac{\Omega^{[n+1]}}{(1-s\mu)^{n+2}} +  \int_{M_\infty}\frac{(\Omega_{|M_\infty})^{[n]}}{(1-s\mu_{\max})^{n+1}}.    
\end{equation}
\end{lemma}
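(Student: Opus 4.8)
The plan is to read the asymptotic slope off Lemma~\ref{lem:ddcV_TC}, bootstrapping from the easy fibre $M_\infty$. First I would record that along the $C^{1,1}$-compatible geodesic ray the volume $\widecheck{\V}_s(t)$ is constant in $t$ (Lemma~\ref{lem:Vol-variation}, extended to $C^{1,1}$ geodesics by Lemma~\ref{lem:VolC11welldefined}), consistently with the fact that the measure appearing in Lemma~\ref{lem:ddcV_TC} vanishes over $\TCmap^{-1}(\mathbb{D})$; and that over the product region $\TCmap^{-1}(\pr^1\setminus\mathbb{D})\overset{\Pi}{\simeq}M\times(\pr^1\setminus\mathbb{D})$ all data are smooth, so $\widecheck{\V}_s(t)$ is continuous up to $t=-\infty$ (i.e.\ up to $\tau=\infty$) there. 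Since $\zeta$ vanishes along $M_\infty$, the Hamiltonian $\mu_\Gamma:=\mu+(d^c\Gamma)(\uzeta)$ restricts on $M_\infty$ to the constant $\mu_{\max}$, and $(\Omega+dd^c\Gamma)_{|M_\infty}$ is cohomologous to $\Omega_{|M_\infty}$; hence
\[
\lim_{t\to-\infty}\widecheck{\V}_s(t)=\lim_{\tau\to\infty}\int_{M_\tau}\frac{(\Omega+dd^c\Gamma)_{|M_\tau}^{[n]}}{(1-s\mu_\Gamma)_{|M_\tau}^{n+1}}=\int_{M_\infty}\frac{(\Omega_{|M_\infty})^{[n]}}{(1-s\mu_{\max})^{n+1}}.
\]

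Next I would relate the two ends by integrating over $\mathbb{C}^*$ the identity $dd^c\widecheck{\actvol}_s=-(n+1)s\,\TCmap_\star\nu_\Gamma$ of Lemma~\ref{lem:ddcV_TC}, where $\nu_\Gamma:=(1-s\mu_\Gamma)^{-(n+2)}(\Omega+dd^c\Gamma)^{[n+1]}$. For $s$ small the density $(1-s\mu_\Gamma)^{-(n+2)}$ is bounded, so $\nu_\Gamma$ is a finite (signed) measure on $\tstM$ (the potential $\Gamma$ being $C^{1,1}$), vanishing on $\TCmap^{-1}(\mathbb{D})$ because $(\Omega+dd^c\Gamma)^{n+1}=0$ there. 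Applying Stokes on the annuli $\{\delta\le|\tau|\le R\}$ and letting $\delta\to0$, $R\to\infty$: the left-hand side equals (up to the normalisation constant, namely $2\pi$, relating the total mass of $dd^c$ of a rotationally invariant function to the total variation of its derivative in $t=-\log|\tau|$) the difference $\lim_{t\to+\infty}\widecheck{\V}_s(t)-\lim_{t\to-\infty}\widecheck{\V}_s(t)$, with no interior contribution appearing near $\tau=0$ since $\TCmap_\star\nu_\Gamma$ vanishes there; the right-hand side is $-(n+1)s$ times the full mass of $\nu_\Gamma$, because $M_0\cup M_\infty$ is $\nu_\Gamma$-null. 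In particular $\lim_{t\to+\infty}\widecheck{\V}_s(t)$ exists, and
\[
\lim_{t\to+\infty}\widecheck{\V}_s(t)=\lim_{t\to-\infty}\widecheck{\V}_s(t)-\frac{(n+1)s}{2\pi}\int_{\tstM}\frac{(\Omega+dd^c\Gamma)^{[n+1]}}{(1-s\mu_\Gamma)^{n+2}}.
\]

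It remains to replace $\int_{\tstM}\nu_\Gamma$ by the cohomological weighted volume $\int_{\tstM}(1-s\mu)^{-(n+2)}\Omega^{[n+1]}$. I would do this via the smooth $\varepsilon$-geodesics $\Gamma_\varepsilon$ of~\eqref{BVP:G:epsilon}, extended to global smooth $\Omega$-psh functions on $\tstM$: for each $\varepsilon$, $\Omega+dd^c\Gamma_\varepsilon$ with its $\uzeta$-Hamiltonian $\mu_{\Gamma_\varepsilon}=\mu+(d^c\Gamma_\varepsilon)(\uzeta)$ represents the same $\bT$-equivariant cohomology class as $(\Omega,\mu)$, so the equivariant-Stokes (Duistermaat--Heckman) argument of \cite{ACL,Lahdili} gives that $\int_{\tstM}(1-s\mu_{\Gamma_\varepsilon})^{-(n+2)}(\Omega+dd^c\Gamma_\varepsilon)^{[n+1]}$ is independent of $\varepsilon$ and equal to $\int_{\tstM}(1-s\mu)^{-(n+2)}\Omega^{[n+1]}$; letting $\varepsilon\to0$ and using the $C^{1,1}$-convergence $\Gamma_\varepsilon\to\Gamma$ on $\tstM\setminus M_0$ (and that the masses of the corresponding measures over $\TCmap^{-1}(\mathbb{D})$ tend to $0$) transfers the equality to $\Gamma$. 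Substituting into the previous display gives~\eqref{AsymSlopeVol}. The independence of $\tilde{\eta}\in\holSAS(\tstM,\tstL)$ is then automatic: the right-hand side only involves the equivariantly cohomological quantity $\int_{\tstM}(1-s\mu)^{-(n+2)}\Omega^{[n+1]}$ together with $c_1(L)^{n}$ and the number $\mu_{\max}$, which is the weight of the $\C^*$-action at the fixed component $M_\infty$.

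The step I expect to be the main obstacle is the low-regularity bookkeeping around the central fibre: each clean identity used above (Lemma~\ref{lem:ddcV_TC}, the Stokes computation, the equivariant invariance of the weighted volume) is literally valid only for smooth data, so the real work is to carry everything out for the approximating $\varepsilon$-geodesics $\Gamma_\varepsilon$ and justify all the limits as $\varepsilon\to0$ — in particular checking that no mass of the limiting measures concentrates on $M_0$ and that the boundary circles in the Stokes argument contribute nothing — which is where the reduced-central-fibre hypothesis and the convergence theory of \cite{CTW1} are needed.
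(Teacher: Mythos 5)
Your argument is correct and follows essentially the same route as the paper's proof: the identity of Lemma~\ref{lem:ddcV_TC}, the Stokes/slope computation on $\mathbb{C}^*$ identifying the difference of the two asymptotic volumes with $-\tfrac{(n+1)s}{2\pi}$ times the total mass of the $\Gamma$-deformed measure, the evaluation of the $t\to-\infty$ limit on the fixed fibre $M_\infty$, and the replacement of that mass by the cohomological weighted volume of $(\Omega,\mu)$ via the $\varepsilon$-geodesic approximation together with the invariance result of \cite[Lemma 2]{Lahdili}. The only slips are cosmetic: $\Gamma$ is merely $C^{1,1}$ (not smooth) over $\TCmap^{-1}(\pr^1\setminus\mathbb{D})$, which is enough for the continuity you invoke, and the reduced-central-fibre hypothesis is not actually needed for this volume statement (it enters only later, in the comparison of slopes for the action functional).
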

\begin{proof}
From Lemma~\ref{lem:ddcV_TC}, we find
\begin{equation*}
\begin{split}
\int_{{\tstM}}\frac{(\Omega+dd^c\Gamma)^{[n+1]}}{\left(1-s\mu_{\Gamma}\right)^{n+2}}=&\lim_{\varepsilon\to 0}\int_{{\tstM}\setminus \TCmap^{-1}(\mathbb{D}_\epsilon)}\frac{(\Omega+dd^c\Gamma)^{[n+1]}}{\left(1-s\mu_{\Gamma}\right)^{n+2}}=\lim_{\varepsilon\to 0}\int_{\mathbb{C}\setminus \mathbb{D}_\epsilon} \TCmap_\star\left( \frac{(\Omega+dd^c\Gamma)^{[n+1]}}{\left(1-s\mu_{\Gamma}\right)^{n+2}}\right)\\
=&\frac{-1}{(n+1)s}\lim_{\varepsilon\to 0}\int_{\mathbb{C}\setminus \mathbb{D}_\epsilon} dd^c\widecheck{\actvol}_s=\frac{-1}{(n+1)s}\lim_{\varepsilon\to 0}\int_{0}^{2\pi}\int_{-\log\varepsilon}^{-\infty}-\frac{d^2 \widecheck{\actvol}_s}{dt^2} dt\,d\theta\\
% =&\frac{-1}{(n+1)s}\lim_{\varepsilon\to 0}\int_{0}^{2\pi}\int_{\{\varepsilon < |\tau| < +\infty\}} -\frac{d^2 \widecheck{\actvol}_s}{dt^2} dt\wedge d\theta,\quad\text{ where }\tau=e^{-t+i\theta} \\
=&\frac{-2\pi}{(n+1)s}\lim_{\varepsilon\to 0}\int^{-\log\varepsilon}_{-\infty} \frac{d^2 \widecheck{\actvol}_s}{dt^2} dt\\
=&\frac{-2\pi}{(n+1)s}\left(\lim_{\varepsilon\to 0}\left.\frac{d}{dt}\right|_{t=-\log\varepsilon}\widecheck{\actvol}_s -\lim_{t\to -\infty}\frac{d}{dt}\widecheck{\actvol}_s\right)\\
=&\frac{-2\pi}{(n+1)s}\left(\lim_{t\to +\infty}\frac{d}{dt}\widecheck{\actvol}_s(t) - \lim_{t\to -\infty}\frac{d}{dt}\widecheck{\actvol}_s(t)\right).
\end{split}
\end{equation*}
Since the total volume function $\widecheck{\V}_s$ is well defined along the weak geodesic ray $(\varphi_t)_{t\geq0}$ induced from
 the test configuration and $\frac{d}{dt}\widecheck{\actvol}_s(t)=\widecheck{\V}_s(t)$, we obtain
\begin{equation}
    \begin{split}\label{eq:intV=slop}
    \frac{2\pi}{(n+1)s}\left(\underset{t\to +\infty}{\lim}\widecheck{\V}_s(t) - \underset{t\to -\infty}{\lim}\widecheck{\V}_s(t)\right)=-\int_{{\tstM}}\frac{(\Omega+dd^c\Gamma)^{[n+1]}}{\left(1-s\mu_{\Gamma}\right)^{n+2}}.
    \end{split}
\end{equation}
For a K\"ahler metric $\Omega'\in[\Omega]$ we let $\mu'$ be a hamiltonian function for $\uzeta$ with respect to $\Omega'$. Using \cite[Lemma 2]{Lahdili}, the following integral 
\begin{equation}\label{intMV-const}
\int_{{\tstM}}\frac{(\Omega')^{[n+1]}}{\left(1-s\mu'\right)^{n+2}}
\end{equation}
is independent of the choice of the K\"ahler metric $\Omega'\in[\Omega]$.  In particular, for $\Omega'=\Omega+dd^c\Gamma$ where $\Gamma$ is the extended solution of~\eqref{BVP:G}, using the approximation of $\Gamma$ by the smooth $\Omega$-psh functions $\Gamma_\varepsilon$ and using~\eqref{intMV-const} we get 
\begin{equation}\label{intMV-tot}
    \begin{split}
\int_{{\tstM}}\frac{(\Omega+dd^c\Gamma)^{[n+1]}}{\left(1-s\mu_{\Gamma}\right)^{n+2}}=\int_{{\tstM}}\frac{\Omega^{[n+1]}}{\left(1-s\mu\right)^{n+2}}.
\end{split}
\end{equation}
Since the fiber $M_{\infty}$ is fixed by the $\mathbb{S}_{\uzeta}^1$-action on ${\tstM}$ we have $\mu_{|M_{\infty}}=\mu_{\max}$ and
\[
\begin{split}
\underset{t\to-\infty}{\lim}\widecheck{\V}_s=&\underset{\tau\to\infty}{\lim} \int_{M_\tau}\left(\frac{(\Omega+dd^c\Gamma)^{[n]}}{\left(1-s\mu_{\Gamma}\right)^{n+1}}\right)_{|M_\tau}=\int_{M_\infty}\frac{(\Omega+dd^c\Gamma)_{|M_\infty}^{[n]}}{(1-s\mu_{\max})^{n+1}}\\
=&\frac{1}{(1-s\mu_{\max})^{n+1}}\int_{M_\infty}(\Omega_{|M_\infty}+dd^c\Gamma_{|M_\infty})^{[n]}=\frac{1}{(1-s\mu_{\max})^{n+1}}\int_{M_\infty}(\Omega_{|M_\infty})^{[n]}.
\end{split}
\]
Combining this with~\eqref{eq:intV=slop} and~\eqref{intMV-tot} gives the thesis.
\end{proof}

\subsubsection{The slope of the action functional along a test configuration} Since the $\mathbb{S}^1$-invariant function $\log \omega_{\varphi_\tau}^n$, seen as a function on ${\tstM}$ by~\eqref{dominate}, may blow up near the central fibre $M_0$ (as $t\to\infty$) we introduce a modified version of $\widecheck{\actscal}$ which has the same asymptotic slope up to an explicit error term. For an arbitrary $\mathbb{S}^1$-invariant smooth Hermitian metric $e^\Psi$ on $K_{\mathcal{X}/\mathbb{P}^1}:=K_{\mathcal{X}}-\TCmap^* K_{\mathbb{P}^1}$, using the $\C^*$-action on ${\tstM}$ we produce a ray of smooth Hermitian metrics $e^{\psi_t}:=(\rho(\tau)^*e^\Psi)_{M_1}$ on $K_M$ (where $M\equiv M_1$). We define the $\mathbb{S}^1$-invariant function $\widecheck{\actscal}^{\Psi}_s$ on $\mathbb{C}^*\subset\mathbb{P}^1$ by the expression
\[
\begin{split}
\widecheck{\actscal}^\Psi_s(\tau)=&ns\int_M\log\left(\frac{e^{\psi_\tau}}{\omega^{[n]}} \right)\frac{\omega_{\varphi_t}^{[n]}}{(1+s\dot{\varphi}_t)^{n+1}} + 2\int_0^{-\log|\tau|}\int_M \Ric(\omega)\wedge \frac{\omega_{\varphi_u}^{[n-1]}}{(1+s\dot{\varphi}_u)^{n}} du\\
&-n(n+1)s^2\int_0^{-\log|\tau|}\int_{M} \log\left(\frac{e^{\Psi}}{\omega^{[n]}} \right)\frac{\partial_\theta \intprod  (\omega+dd^c\Phi)^{[n+1]}}{(1+s\dot{\Phi})^{n+2}}.
\end{split}
\]
where $\Phi$ is the extended $\mathbb{S}^1$-invariant function on $M\times\mathbb{P}^1$ corresponding to the geodesic ray $(\varphi_t)_{t\geq 0}$. In the following Lemma, we compute $dd^c\widecheck{\actscal}^{\Psi}_s$ as a current on $ \mathbb{P}^1$. 
\begin{lemma}\label{lem:ddcA_TC}
    Let $({\tstM},\tstL)$ be a smooth dominating test configuration with $(\varphi_t)_{t\geq 0}$ the $C^{1,1}$-compatible geodesic ray. The following equality holds in the weak sense of currents
    \begin{equation}\label{eq:ddcA_TC}
        \frac{1}{ns} dd^c\widecheck{\actscal}^\Psi_s=\TCmap_\star\left( \frac{dd^c\Psi\wedge(\Omega+dd^c\Gamma)^{[n]}}{\left(1-s\mu_{\Gamma}\right)^{n+1}}+ (n+1)s(d^c\Psi)(\uzeta) \frac{(\Omega+dd^c\Gamma)^{[n+1]}}{\left(1-s\mu_{\Gamma}\right)^{n+2}}\right)
    \end{equation}
    where $\Gamma$ is the solution of the boundary value problem~\eqref{BVP:G} extended to the total space ${\tstM}$ and $\mu_\Gamma:=\mu+(d^c\Gamma)(\uzeta)$.
\end{lemma}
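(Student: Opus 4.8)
The plan is to follow the template already used for Lemma~\ref{lem:ddcV_TC}, the only difference being that the variational input is now the more involved Lemma~\ref{lem:ddc_A^Psi-smooth} rather than Lemma~\ref{lem:Vol-variation}. First I would prove~\eqref{eq:ddcA_TC} for a smooth subgeodesic ray that is $C^\infty$-compatible with $({\tstM},\tstL)$, by transporting the computation of Lemma~\ref{lem:ddc_A^Psi-smooth} from the product $M\times\C^*$ to ${\tstM}$ through the domination map $\Pi$ of~\eqref{dominate}. Then I would deduce the $C^{1,1}$-compatible case by approximation with the smooth $\varepsilon$-geodesics of~\eqref{BVP:G:epsilon}.

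For the smooth case, let $\Phi$ be a smooth $\mathbb{S}^1$-invariant ${\proj_1}^*\omega$-psh function on $M\times\C^*$ which is $C^\infty$-compatible with $({\tstM},\tstL)$, so that $\Gamma:=\Pi^*\Phi+\gamma_D$ extends smoothly across $M_0$, hence to all of ${\tstM}$. Over $\C^*\subset\pr^1$ the map $\Pi$ restricts to a biholomorphism ${\tstM}\setminus(M_0\cup M_\infty)\simeq M\times\C^*$, away from $M_0$ one has $\Pi^*({\proj_1}^*\omega+dd^c\Phi)=\Omega+dd^c\Gamma$ (since $dd^c\gamma_D=0$ there) while $\Pi^*\Psi$ is the given Hermitian metric on ${\tstM}$; moreover, exactly as in the proof of Lemma~\ref{lem:ddcV_TC}, $\Pi$ carries $\partial_\theta$ to $\uzeta$ and $-\Pi^*\dot\Phi$ to the Hamiltonian $\mu_\Gamma=\mu+(d^c\Gamma)(\uzeta)$ of $\uzeta$ relative to $\Omega+dd^c\Gamma$, so that $1+s\dot\Phi$ pulls back to $1-s\mu_\Gamma$ and the fibre integration $\proj_{2,\star}$ becomes $\TCmap_\star$. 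I would then apply Lemma~\ref{lem:ddc_A^Psi-smooth} with the family of smooth $\mathbb{S}^1$-invariant volume forms $\Theta_\tau:=e^{\psi_\tau}$, where $e^{\psi_\tau}$ is the restriction of $e^\Psi$ to $M_\tau\cong M$ — the identity there being local in the base variable $\tau$, it is valid over all of $\C^*$ — and pull the resulting equality back through $\Pi$, which gives~\eqref{eq:ddcA_TC} for such smooth $\Phi$, $\Gamma$.

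Next I would pass to the $C^{1,1}$-compatible geodesic ray $(\varphi_t)_{t\ge0}$ associated to $({\tstM},\tstL)$, with $\Gamma$ the extended solution of~\eqref{BVP:G}. The $\varepsilon$-geodesics $\Phi^\varepsilon$ solving~\eqref{BVP:G:epsilon} are smooth and $C^\infty$-compatible, and their potentials $\Gamma_\varepsilon$ decrease to $\Gamma$ in $C^{1,1}$ as $\varepsilon\to0$. Writing $\widecheck{\actscal}^\Psi_{s,\varepsilon}$ for the functional~\eqref{eq:A_Psi-general} built from $\varphi^\varepsilon_t:=\Phi^\varepsilon(\cdot,\tau)$ and $\mu_{\Gamma_\varepsilon}:=\mu+(d^c\Gamma_\varepsilon)(\uzeta)$, the previous step applies to each $\varepsilon>0$, and I would let $\varepsilon\to0$. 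On the left-hand side the uniform $C^{1,1}$ bounds together with dominated convergence give $\widecheck{\actscal}^\Psi_{s,\varepsilon}\to\widecheck{\actscal}^\Psi_s$ pointwise on $\C^*$, hence $dd^c\widecheck{\actscal}^\Psi_{s,\varepsilon}\to dd^c\widecheck{\actscal}^\Psi_s$ weakly. On the right-hand side $\Gamma_\varepsilon\to\Gamma$ in $C^{1,1}_{\mathrm{loc}}$ away from $M_0$, so the wedge powers $(\Omega+dd^c\Gamma_\varepsilon)^{[k]}$ converge weakly there by continuity of the Bedford--Taylor product, $\mu_{\Gamma_\varepsilon}\to\mu_\Gamma$ uniformly, and $(d^c\Psi)(\uzeta)$ is fixed; it then remains to control the pushforward $\TCmap_\star$ near the central fibre, where the $dd^c\Psi\wedge(\Omega+dd^c\Gamma)^{[n]}$ term has locally finite mass (uniformly in $\varepsilon$, by the $C^{1,1}$ estimate) and the $(\Omega+dd^c\Gamma)^{[n+1]}$ term contributes nothing over $\TCmap^{-1}(\mathbb{D})$ since $\Gamma$ solves the homogeneous complex Monge--Amp\`ere equation~\eqref{BVP:G} there. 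Passing to the limit inside $\TCmap_\star$ then gives~\eqref{eq:ddcA_TC}. As a shortcut, over the disc $\mathbb{D}^*$ one may instead apply Corollary~\ref{cor:ddc_A^Psi} directly to the geodesic ray, in which case the $(n+1)$-power term drops out from the geodesic equation; the regularisation is genuinely needed only in the region $\{|\tau|>1\}$, where the extended $\Gamma$ is no longer a geodesic.

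The step I expect to be the main obstacle is this last passage to the limit under $\TCmap_\star$: one must be sure that weak convergence of the Monge--Amp\`ere masses $(\Omega+dd^c\Gamma_\varepsilon)^{[n]}$ and $(\Omega+dd^c\Gamma_\varepsilon)^{[n+1]}$ away from $M_0$, combined with a uniform local mass bound near $M_0$, suffices to pass to the limit in the whole right-hand side. This should reduce to the continuity of the Bedford--Taylor product and the uniform $C^{1,1}$-estimates of \cite{CTW1} already used to construct $\Gamma$, together with the observation that the only potentially singular term at the central fibre, the $(n+1)$-fold power, in fact vanishes identically on $\TCmap^{-1}(\mathbb{D})$.
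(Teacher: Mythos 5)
Your proposal follows the paper's proof essentially verbatim: first establish~\eqref{eq:ddcA_TC} for smooth rays compatible with $(\tstM,\tstL)$ by transporting Lemma~\ref{lem:ddc_A^Psi-smooth} through the domination map $\Pi$ (using $\mu_\Gamma=-\Pi^*\dot{\Phi}$, $\mu_\Gamma=\mu+(d^c\Gamma)(\uzeta)$ as in Lemma~\ref{lem:ddcV_TC}), then pass to the $C^{1,1}$-compatible geodesic via the $\varepsilon$-geodesics $\Gamma_\varepsilon\to\Gamma$. The ``main obstacle'' you flag is not actually one: since $\widecheck{\actscal}^\Psi_s$ is defined on $\mathbb{C}^*$, the identity is tested against functions supported away from $\tau=0$, so only the $C^{1,1}$ convergence of $\Gamma_\varepsilon$ on $\tstM\setminus M_0$ is needed (exactly what the paper invokes), and no uniform mass control near the central fibre enters.
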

\begin{proof}
We start by showing the equation~\eqref{eq:ddcA_TC} for $(\varphi_t)_{t\geq 0}$ smooth such that $\Phi$ is ${\proj_1}^*\omega$-psh, then we use approximations by smooth $\varepsilon$-geodesics to get~\eqref{eq:ddcA_TC} for a general geodesic ray. From Lemma~\ref{lem:ddc_A^Psi-smooth}, the following holds in the weak sense of currents
\[
    \frac{1}{ns}dd^c\widecheck{\actscal}^\Psi_s=\proj_{2,\star}\left( \frac{dd^c\Psi\wedge({\proj_1}^*\omega+dd^c\Phi)^{[n]}}{(1+s\dot{\Phi})^{n+1}} + (n+1)s (d^c\Psi)(\partial_\theta)\frac{({\proj_1}^*\omega+dd^c\Phi)^{[n+1]}}{(1+s\dot{\Phi})^{n+2}}\right).
\]
where ${\proj_1}:M\times\mathbb{P}^1\to M$ is the projection on the first factor and $\proj_{2,\star}$ stands for the integration along the fibres of $\proj_2: M\times\mathbb{P}^1\to \mathbb{P}^1$. By $\TCmap=\Pi\circ \proj_2$, the following equality holds on ${\tstM}_{\mathbb{D}^*}$ in the weak sense of currents
\begin{equation*}
\begin{gathered}
    \frac{1}{ns} dd^c\widecheck{\actscal}^\Psi_s =\proj_{2,\star}\left( \frac{dd^c\Psi\wedge({\proj_1}^*\omega+dd^c\Phi)^{[n]}}{(1+s\dot{\Phi})^{n+1}} + (n+1)s (d^c\Psi)(\partial_\theta)\frac{({\proj_1}^*\omega+dd^c\Phi)^{[n+1]}}{(1+s\dot{\Phi})^{n+2}}\right)\\
    =(\proj_{2}\circ\Pi)_\star\left( \frac{dd^c\Psi\wedge\Pi^*({\proj_1}^*\omega+dd^c\Phi)^{[n]}}{\left(1+s(\Pi^*\dot{\Phi})\right)^{n+1}} + (n+1)s(d^c\Psi)(\uzeta) \frac{\Pi^*({\proj_1}^*\omega+dd^c\Phi)^{[n+1]}}{\left(1+s(\Pi^*\dot{\Phi})\right)^{n+2}}\right)\\
    =\TCmap_\star\left( \frac{dd^c\Psi\wedge(\Omega+dd^c\Gamma)^{[n]}}{\left(1-s\mu_\Gamma\right)^{n+1}}+ (n+1)s(d^c\Psi)(\uzeta) \frac{(\Omega+dd^c\Gamma)^{[n+1]}}{\left(1-s\mu_\Gamma)\right)^{n+2}}\right)
\end{gathered}
\end{equation*}
where we use that $\mu_\Gamma=-\Pi^*\dot{\Phi}$ is the hamiltonian function $\mathbb{S}^1$-action $\uzeta$ on ${\tstM}$ with respect to $(\Omega+dd^c\Gamma)$, satisfying $\mu_\Gamma=\mu+(d^c\Gamma)(\uzeta)$ (see the proof of Lemma~\ref{lem:ddcV_TC}). 

When $(\varphi_t)_{t\geq 0}$ is the $C^{1,1}$-compatible geodesic ray, we can apply the above to the $\varepsilon$-geodesic ray $\Phi^\varepsilon$ which is $C^{\infty}$-compatible with the test configuration to get
\[
\frac{1}{ns} dd^c\widecheck{\actscal}_{s,\varepsilon}^\Psi=\TCmap_\star\left( \frac{dd^c\Psi\wedge(\Omega+dd^c\Gamma_\varepsilon)^{[n]}}{\left(1-s\mu_{\Gamma_\varepsilon}\right)^{n+1}}+ (n+1)s(d^c\Psi)(\uzeta) \frac{(\Omega+dd^c\Gamma_\varepsilon)^{[n+1]}}{\left(1-s\mu_{\Gamma_\varepsilon}\right)^{n+2}}\right)
\]
Since $\Gamma_\varepsilon$ converges in $C^{1,1}$ topology to $\Gamma$ on ${\tstM}\setminus M_0$, we get
\[
\frac{1}{ns} dd^c\widecheck{\actscal}_{s}^\Psi=\TCmap_\star\left( \frac{dd^c\Psi\wedge(\Omega+dd^c\Gamma)^{[n]}}{\left(1-s\mu_{\Gamma}\right)^{n+1}}+ (n+1)s(d^c\Psi)(\uzeta) \frac{(\Omega+dd^c\Gamma)^{[n+1]}}{\left(1-s\mu_{\Gamma}\right)^{n+2}}\right)
\]
where we define $\mu_\Gamma:= \mu+(d^c\Gamma)(\uzeta)$, which we justify by passing to the pointwise limit as $\varepsilon\to 0$ in the equality $\mu_{\Gamma_\varepsilon}=\mu+(d^c\Gamma_\varepsilon)(\uzeta)$.
\end{proof}
At this point, we can mimic the proof of Lemma~\ref{lem:AsymSlopeVol} to compute the asymptotic slope of $\widecheck{\actscal}^\Psi(s,\cdot)$.
\begin{lemma}\label{lem:AsymCurvPSI}
The asymptotic slope of $\widecheck{\actscal}^\Psi(s,\cdot)$ near the central fibre of a test configuration satisfies
\begin{equation}
\begin{split}
    \frac{2\pi}{ns}\lim_{t\to +\infty}&\frac{d}{dt}\widecheck{\actscal}^\Psi_s=\frac{4\pi}{ns}\int_{M_\infty} \Ric(\Omega_{|M_\infty})\wedge \frac{(\Omega_{|M_\infty})^{[n-1]}}{(1-s\mu_{\max})^{n}}\\
-2&\int_{{\tstM}}(\Ric(\Omega)-\pi^*\omega_{\rm FS})\wedge\frac{\Omega^{[n]}}{\left(1-s\mu\right)^{n+1}} +(n+1)s(\Delta_{\Omega}(\mu)-\Delta_{\omega_{\rm FS}}(\mu_{\rm FS})) \frac{\Omega^{[n+1]}}{\left(1-s\mu\right)^{n+2}}.
\end{split}
\end{equation}
\end{lemma}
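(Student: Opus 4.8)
The plan is to follow the same strategy used in the proof of Lemma~\ref{lem:AsymSlopeVol}, now applied to the current computed in Lemma~\ref{lem:ddcA_TC}. First I would integrate the identity~\eqref{eq:ddcA_TC} over $\mathbb{C}\setminus\mathbb{D}_\varepsilon$, using that $\widecheck{\actscal}^\Psi_s(\tau)$ is $\mathbb{S}^1$-invariant so that $dd^c$ in the $\tau$-variable reduces to $-\frac{d^2}{dt^2}\,dt\wedge d\theta$ with $t=-\log|\tau|$; the $\theta$-integration produces a factor $2\pi$ and the $t$-integration telescopes to the difference of the asymptotic slopes $\lim_{t\to+\infty}\frac{d}{dt}\widecheck{\actscal}^\Psi_s$ and $\lim_{t\to-\infty}\frac{d}{dt}\widecheck{\actscal}^\Psi_s$. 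This gives
\begin{equation*}
\frac{2\pi}{ns}\left(\lim_{t\to+\infty}\frac{d}{dt}\widecheck{\actscal}^\Psi_s-\lim_{t\to-\infty}\frac{d}{dt}\widecheck{\actscal}^\Psi_s\right)=\int_{{\tstM}}\left(\frac{dd^c\Psi\wedge(\Omega+dd^c\Gamma)^{[n]}}{(1-s\mu_\Gamma)^{n+1}}+(n+1)s(d^c\Psi)(\uzeta)\frac{(\Omega+dd^c\Gamma)^{[n+1]}}{(1-s\mu_\Gamma)^{n+2}}\right).
\end{equation*}

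Next I would evaluate the two pieces on the right. For the total integral over ${\tstM}$ I would invoke the cohomological invariance (as in the reference to \cite[Lemma 2]{Lahdili} used in the proof of Lemma~\ref{lem:AsymSlopeVol}) to replace $\Omega+dd^c\Gamma$ by $\Omega$ and $\mu_\Gamma$ by $\mu$; crucially, one must choose $\Psi$ so that $dd^c\Psi=2(\Ric(\Omega)-\pi^*\omega_{\rm FS})$ — this is exactly the statement that $e^\Psi$ is a Hermitian metric on $K_{{\tstM}/\pr^1}=K_{{\tstM}}-\TCmap^*K_{\pr^1}$ with curvature $2\pi^*\omega_{\rm FS}$, and then $(d^c\Psi)(\uzeta)=\Delta_\Omega(\mu)-\Delta_{\omega_{\rm FS}}(\mu_{\rm FS})$ follows from $dd^c\Psi(\cdot,\uzeta\intprod\cdot)$ and the Hamiltonian identity. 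For the term at $t\to-\infty$, i.e. on the fixed fibre $M_\infty$ where $\mu_{|M_\infty}=\mu_{\max}$, I would argue as in Lemma~\ref{lem:AsymSlopeVol} that $\frac{d}{dt}\widecheck{\actscal}^\Psi_s$ converges to the value of the integrand restricted to $M_\infty$; since $\Psi_{|M_\infty}$ restricts to (a multiple of) the Ricci potential of $\Omega_{|M_\infty}$, the boundary term becomes $\frac{4\pi}{ns}\int_{M_\infty}\Ric(\Omega_{|M_\infty})\wedge(\Omega_{|M_\infty})^{[n-1]}/(1-s\mu_{\max})^n$. Rearranging the telescoped identity to isolate $\lim_{t\to+\infty}\frac{d}{dt}\widecheck{\actscal}^\Psi_s$ then yields the claimed formula.

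The main technical point — and the part needing care rather than difficulty — is the passage from smooth subgeodesics to the $C^{1,1}$-compatible geodesic ray: as in Lemmas~\ref{lem:ddc_A^Psi-smooth},~\ref{cor:ddc_A^Psi} and~\ref{lem:AsymSlopeVol}, one works with the $\varepsilon$-geodesics $\Phi^\varepsilon$, whose associated $\Gamma_\varepsilon$ converge in $C^{1,1}$ on ${\tstM}\setminus M_0$ to $\Gamma$, so that both sides of~\eqref{eq:ddcA_TC} and of the integrated identity pass to the limit; one also needs that $1+s\dot\Phi^\varepsilon>0$ uniformly for $s$ small (Lemma~\ref{lem:RibboncontactCR}) and the uniform control of the Ricci integral near $M_0$. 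The genuinely delicate step is justifying that the limit $\lim_{t\to+\infty}\frac{d}{dt}\widecheck{\actscal}^\Psi_s$ exists and equals the slope extracted from the current equation — this requires knowing that the blow-up of $\log\omega_{\varphi_t}^n$ near $M_0$ is absorbed by the choice of the relative metric $e^\Psi$, which is precisely why the modified functional $\widecheck{\actscal}^\Psi_s$ (rather than $\widecheck{\actscal}_s$) is introduced. I expect the rest to be a matter of bookkeeping: expanding $dd^c\Psi=2(\Ric(\Omega)-\pi^*\omega_{\rm FS})$, identifying $(d^c\Psi)(\uzeta)$ with the difference of Laplacians of the Hamiltonians, and matching signs and factors of $2\pi$.
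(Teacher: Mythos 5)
Your outline follows the paper's proof almost step for step: integrate the current identity of Lemma~\ref{lem:ddcA_TC} over $\mathbb{C}\setminus\mathbb{D}_\varepsilon$ and telescope to the difference of slopes, invoke \cite[Lemma 2]{Lahdili} to replace $\Omega+dd^c\Gamma,\mu_\Gamma$ by $\Omega,\mu$, specialize $\Psi$ to the relative canonical metric induced by $\Omega$, compute the $t\to-\infty$ slope on $M_\infty$, and pass from $\varepsilon$-geodesics to the $C^{1,1}$ ray. That is exactly the structure of the paper's argument.

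Two concrete points need fixing, though. First, your characterization of $\Psi$ is off: the relevant choice is the metric $e^\Psi$ on $K_{\tstM/\pr^1}$ determined by $\Omega^{[n+1]}=e^{\Psi+\TCmap^*\log\omega_{\rm FS}}$, whose curvature is $-\tfrac12 dd^c\Psi=\Ric(\Omega)-\TCmap^*\omega_{\rm FS}$; it is not a metric ``with curvature $2\pi^*\omega_{\rm FS}$'', and your relation $dd^c\Psi=2(\Ric(\Omega)-\pi^*\omega_{\rm FS})$ has the opposite sign, which would flip the sign of the bulk term $\int_{\tstM}(\Ric(\Omega)-\pi^*\omega_{\rm FS})\wedge\Omega^{[n]}/(1-s\mu)^{n+1}$ in the final formula. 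Second, contracting $dd^c\Psi$ with $\uzeta$ only gives $(d^c\Psi)(\uzeta)$ equal to $-(\Delta_{\Omega}(\mu)-\Delta_{\omega_{\rm FS}}(\mu_{\rm FS}))$ \emph{up to an additive constant}; the paper pins this constant down to zero by comparing the weights of the two $\mathbb{S}^1$-actions on $M_\infty$ and at $\infty\in\pr^1$ (via \cite[Lemma 4.5]{BHL}), and your ``Hamiltonian identity'' remark skips this step. Finally, the $t\to-\infty$ boundary term does not come from ``$\Psi_{|M_\infty}$ restricting to the Ricci potential of $\Omega_{|M_\infty}$'': it comes from differentiating $\widecheck{\actscal}^\Psi_s$ in $t$ and observing that the entropy-type term's derivative vanishes in the limit (the $\C^*$-action is trivial near $M_\infty$, so $e^{\psi_t}$ stabilizes), leaving only the Ricci part of the action, which converges to $2\int_{M_\infty}\Ric(\Omega_{|M_\infty})\wedge(\Omega_{|M_\infty})^{[n-1]}/(1-s\mu_{\max})^n$. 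With these corrections your argument coincides with the one in the paper.
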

\begin{proof}
We follow the proof of Lemma~\ref{lem:AsymSlopeVol}. Lemma~\ref{lem:ddcA_TC} gives
\begin{equation}
\begin{split}
&\int_{{\tstM}}\frac{dd^c\Psi\wedge(\Omega+dd^c\Gamma)^{[n]}}{\left(1-s\mu_\Gamma\right)^{n+1}} + (n+1)s(d^c\Psi)(\uzeta) \frac{(\Omega+dd^c\Gamma)^{[n+1]}}{\left(1-s\mu_\Gamma\right)^{n+2}}=\\
=&\lim_{\varepsilon\to 0}\int_{{\tstM}\setminus \TCmap^{-1}(\mathbb{D}_\epsilon)}\frac{dd^c\Psi\wedge(\Omega+dd^c\Gamma)^{[n]}}{\left(1-s\mu_\Gamma\right)^{n+1}}+ (n+1)s(d^c\Psi)(\uzeta) \frac{(\Omega+dd^c\Gamma)^{[n+1]}}{\left(1-s\mu_\Gamma\right)^{n+2}}\\
=&\lim_{\varepsilon\to 0}\int_{\mathbb{C}\setminus \mathbb{D}_\epsilon} \TCmap_\star\left( \frac{dd^c\Psi\wedge(\Omega+dd^c\Gamma)^{[n]}}{\left(1-s\mu_\Gamma\right)^{n+1}} + (n+1)s(d^c\Psi)(\uzeta) \frac{(\Omega+dd^c\Gamma)^{[n+1]}}{\left(1-s\mu_\Gamma\right)^{n+2}}\right)\\
=&\frac{1}{ns}\lim_{\varepsilon\to 0}\int_{\mathbb{C}\setminus \mathbb{D}_\epsilon} dd^c\widecheck{\actscal}^\Psi_s=\frac{1}{ns}\lim_{\varepsilon\to 0}\int_{0}^{2\pi}\int_{-\log\varepsilon}^{-\infty} -\frac{d^2 \widecheck{\actscal}^\Psi_s}{dt^2} dt\,d\theta 
% =&\frac{1}{ns}\lim_{\varepsilon\to 0}\int_{0}^{2\pi}\int_{\{\varepsilon < |\tau| < +\infty\}} -\frac{d^2 \widecheck{\actscal}^\Psi_s}{dt^2} dt\wedge d\theta,\quad\text{ where }\tau=e^{-t+i\theta} \\
=\frac{2\pi}{ns}\lim_{\varepsilon\to 0}\int^{-\log\varepsilon}_{-\infty} \frac{d^2 \widecheck{\actscal}^\Psi_s }{dt^2} dt\\=&\frac{2\pi}{ns}\left(\lim_{\varepsilon\to 0}\frac{d}{dt}\Bigr|_{t=-\log\varepsilon}\widecheck{\actscal}^\Psi_s-\lim_{t\to -\infty}\frac{d}{dt}\widecheck{\actscal}^\Psi_s\right)=\frac{2\pi}{ns}\left(\lim_{t\to +\infty}\frac{d}{dt}\widecheck{\actscal}^\Psi_s - \lim_{t\to -\infty}\frac{d}{dt}\widecheck{\actscal}^\Psi_s\right).
\end{split}
\end{equation}
Thus, we obtain
\begin{equation}
    \begin{split}\label{eq:intM=slop}
    &\frac{2\pi}{ns}\left(\underset{t\to +\infty}{\lim}\frac{d}{dt}\widecheck{\actscal}^\Psi_s - \underset{t\to -\infty}{\lim}\frac{d}{dt}\widecheck{\actscal}^\Psi_s\right)=\\
    =&\int_{{\tstM}}\frac{dd^c\Psi\wedge(\Omega+dd^c\Gamma)^{[n]}}{\left(1-s\mu_\Gamma\right)^{n+1}} + (n+1)s(d^c\Psi)(\uzeta) \frac{(\Omega+dd^c\Gamma)^{[n+1]}}{\left(1-s\mu_\Gamma\right)^{n+2}}.
    \end{split}
\end{equation}
For a K\"ahler metric $\Omega'\in[\Omega]$, let $\mu'$ be a Hamiltonian function for $\uzeta$ with respect to $\Omega'$. Using \cite[Lemma 2]{Lahdili}\footnote{We take $\theta=dd^c\psi$ and ${\rm v}(\mu')=\left(1-s\mu'\right)^{-(n+1)}$ in the second integral $B_{\rm v}^{\theta}$ of \cite[Lemma 2]{Lahdili}}, the following integral 
\begin{equation}\label{intM-const}
\int_{{\tstM}}(dd^c\Psi)\wedge\frac{(\Omega')^{[n]}}{\left(1-s\mu'\right)^{n+1}}+(n+1)s(d^c\Psi)(\uzeta) \frac{(\Omega')^{[n+1]}}{\left(1-s\mu'\right)^{n+2}}.
\end{equation}
is independent of the choice of the K\"ahler metric $\Omega'\in[\Omega]$.  For $\Omega'=\Omega+dd^c\Gamma$ where $\Gamma$ is the extended solution of~\eqref{BVP:G}, using the approximation of $\Gamma$ by the smooth $\Omega$-psh functions $\Gamma_\varepsilon$ and using~\eqref{intM-const} we get 
\begin{equation}\label{intM-tot}
    \begin{split}
&\int_{{\tstM}}dd^c\Psi\wedge\frac{(\Omega+dd^c\Gamma)^{[n]}}{\left(1-s\mu_\Gamma\right)^{n+1}} + (n+1)s(d^c\Psi)(\uzeta) \frac{(\Omega+dd^c\Gamma)^{[n+1]}}{\left(1-s\mu_\Gamma)\right)^{n+2}}\\
=&\int_{{\tstM}}(dd^c\Psi)\wedge\frac{\Omega^{[n]}}{\left(1-s\mu\right)^{n+1}} + (n+1)s (d^c\Psi)(\uzeta) \frac{\Omega^{[n+1]}}{\left(1-s\mu\right)^{n+2}}.
\end{split}
\end{equation}

Let $e^{\Psi}$ be the Hermitian metric on $K_{{\tstM}/\mathbb{P}^1}$ induced from $\Omega$ by $\Omega^{[n+1]}=e^{\Psi+\TCmap^*\log\omega_{\rm FS}}$ whose curvature 2-form is 
\[
% \textcolor{red}{-}
-\frac{1}{2}dd^c\Psi=\Ric(\Omega)-\TCmap^*\omega_{\rm FS}.
\]
Contracting both sides of the above equality by $\uzeta$ and using the equivariance of~\eqref{dominate}, we get 
$$-(d^c\Psi)(\uzeta)=\Delta_{\Omega}(\mu)-\Delta_{\omega_{\rm FS}}(\mu_{\rm FS})+C$$ where $C$ is a constant. Since both functions $\mu$ and $\mu_{\rm FS}$ are momenta for a $\mathbb{S}^1$-action, on ${\tstM}$ and $\pr^1$ respectively, and that both action have the same weight on $M_\infty$ and $+\infty \in \pr^1$ respectively we get from \cite[Lemma 4.5]{BHL} that $C=0$. Therefore, we have
\begin{equation}\label{eq:dPsi-Ric}
   - (d^c\Psi)(\uzeta)=\Delta_{\Omega}(\mu)-\Delta_{\omega_{\rm FS}}(\mu_{\rm FS}).
\end{equation}
Substituting~\eqref{intM-tot} and~\eqref{eq:dPsi-Ric} back into~\eqref{eq:intM=slop} we get
\begin{equation}\label{AsymSlope:A2-A1}
\begin{split}
\frac{2\pi}{ns}&\left(\underset{t\to +\infty}{\lim}\frac{d}{dt}\widecheck{\actscal}^\Psi_s - \underset{t\to -\infty}{\lim}\frac{d}{dt}\widecheck{\actscal}^\Psi_s\right)=\\
=&\int_{{\tstM}}(dd^c\Psi)\wedge\frac{\Omega^{[n]}}{\left(1-s\mu\right)^{n+1}} + (n+1)s (d^c\Psi)(\uzeta) \frac{\Omega^{[n+1]}}{\left(1-s\mu\right)^{n+2}} \\
=& -2\int_{{\tstM}}(\Ric(\Omega)-\pi^*\omega_{\rm FS})\wedge\frac{\Omega^{[n]}}{\left(1-s\mu\right)^{n+1}} +(n+1)s(\Delta_{\Omega}(\mu)-\Delta_{\omega_{\rm FS}}(\mu_{\rm FS})) \frac{\Omega^{[n+1]}}{\left(1-s\mu\right)^{n+2}}.
\end{split}
\end{equation}
Let's compute the asymptotic slope of $\widecheck{\actscal}^{\Psi}_s$ as $t\to-\infty$. We start by computing the derivative:
\[
\begin{split}
\frac{d}{dt}\widecheck{\actscal}^\Psi_s=&ns\frac{d}{dt}\int_M\log\left(\frac{e^{\psi_t}}{\omega^{[n]}} \right)\frac{\omega_{\varphi_t}^{[n]}}{(1+s\dot{\varphi}_t)^{n+1}} + 2\int_M \Ric(\omega)\wedge \frac{\omega_{\varphi_t}^{[n-1]}}{(1+s\dot{\varphi}_t)^{n}}\\
&-n(n+1)s^2\int_{M} \log\left(\frac{e^{\Psi}}{\omega^{[n]}} \right)\frac{\partial_\theta \intprod  (\omega+dd^c\Phi)^{[n+1]}}{(1+s\dot{\Phi})^{n+2}}.
 \end{split}
\]
We compute the first term
\[\begin{split}
\frac{d}{dt}\int_M\log\left(\frac{e^{\psi_\tau}}{\omega^{[n]}} \right)\frac{\omega_{\varphi_t}^{[n]}}{(1+s\dot{\varphi}_t)^{n+1}}=&\int_M\frac{1}{(1+s\dot{\varphi}_t)^{n+1}}\left(\dot{\psi}_t\omega_{\varphi_t}^{[n]}-d\log\left(\frac{e^{\psi_t}}{\omega^{[n]}} \right)\wedge d^c\dot{\varphi}_t\wedge\omega_{\varphi_t}^{[n-1]}\right)\\
&+(n+1)s\int_{M} \log\left(\frac{e^{\Psi}}{\omega^{[n]}} \right)\frac{\partial_\theta \intprod  (\omega+dd^c\Phi)^{[n+1]}}{(1+s\dot{\Phi})^{n+2}}.
\end{split}
\]
Substituting back we get
\[
\begin{split}
\frac{d}{dt}\widecheck{\actscal}^\Psi_s=&ns\int_M\dot{\psi}_t\frac{\omega_{\varphi_t}^{[n]}}{(1+s\dot{\varphi}_t)^{n+1}}\\
&+ 2\int_M \Ric(\omega)\wedge \frac{\omega_{\varphi_t}^{[n-1]}}{(1+s\dot{\varphi}_t)^{n}}-ns\,d\log\left(\frac{e^{\psi_t}}{\omega^{[n]}} \right)\wedge d^c\dot{\varphi}_t\wedge\frac{\omega_{\varphi_t}^{[n-1]}}{(1+s\dot{\varphi}_t)^{n+1}}\\
=&ns\int_M\dot{\psi}_t\frac{\omega_{\varphi_t}^{[n]}}{(1+s\dot{\varphi}_t)^{n+1}} +2\int_{M_\tau}\left(\Pi^*\Ric(\omega)\wedge \frac{(\Omega+dd^c\Gamma)^{[n-1]}}{(1-s\mu_\Gamma)^{n}}\right)_{|M_\tau}\\
&+2ns\int_{M_\tau}\left(d\log\left(\frac{e^{\Psi+\TCmap^*\omega_{\rm FS}}}{\omega^{[n]}\wedge\omega_{\rm FS}} \right)\wedge d^c\mu_{\Gamma}\wedge \frac{(\Omega+dd^c\Gamma)^{[n-1]}}{(1-s\mu_\Gamma)^{n+1}}\right)_{|M_\tau}.
 \end{split}
\]
As $t\to-\infty$, the $\mathbb{C}^*$-action on ${\tstM}$ is trivial near $M_{\tau=\infty}$, hence $e^{\psi_t}=(\rho(\tau)^*e^\Psi)_{|M_1}$ tends to a constant with respect to $t$ and $\mu_{\Gamma|M_{\infty}}=\mu_{\max}$ since the the fiber $M_{\infty}$ is fixed by the $\mathbb{C}^*$-action on ${\tstM}$. Thus, we obtain
\begin{equation}\label{AsymSlope_A1_infinity}
\underset{t\to -\infty}{\lim} \frac{d}{dt}\widecheck{\actscal}^\Psi_s=2 \int_{M_\infty} \Ric(\Omega_{|M_\infty})\wedge \frac{(\Omega_{|M_\infty})^{[n-1]}}{(1-s\mu_{\max})^{n}}
\end{equation}
and the thesis follows from combining~\eqref{AsymSlope:A2-A1} and~\eqref{AsymSlope_A1_infinity}.
\end{proof}

Now, we compare the asymptotic slopes as $t\to+\infty$ of the functions $\widecheck{\actscal}^{\Psi}_s$ and $\widecheck{\actscal}_s$.
\begin{lemma}\label{lem:equalslopes}
    Let $(\varphi_t)_{t\geq 0}$ be a smooth subgeodesic ray or the geodesic ray compatible with $(\mathcal{M},\mathcal{L})$.% (which always exists by \cite[Lemma 4.4]{zakarias})
    Then, the asymptotic $t$-slopes of the action functional $\widecheck{\actscal}$ and its modified version $\widecheck{\actscal}^{\Psi}$ at the central fibre of $(\tstM,\tstL)$ along the ribbon $f^{-1}_{s,t}\eta_t:= (1+s\dot{\phi}_t)^{-1}(\eta +d^c_\xi\phi_t)$ coincide.
    %\lim_{t\to+\infty}\frac{d}{dt}\widecheck{\actscal}_s(t)=\lim_{t\to +\infty}\frac{d}{dt}\widecheck{\actscal}^{\Psi}_s(t)
\end{lemma}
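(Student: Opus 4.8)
The strategy is to prove that the difference
\[
D(t):=\widecheck{\actscal}_s(t)-\widecheck{\actscal}^\Psi_s(t)
\]
stays bounded on $[0,\infty)$; the statement then follows at once. Consider first the $C^{1,1}$-compatible geodesic ray, so that $({\proj_1}^*\omega+dd^c\Phi)^{[n+1]}=0$. Then the last integral in the defining formula~\eqref{ch-A} of $\widecheck{\actscal}_s$ and in the corresponding formula for $\widecheck{\actscal}^\Psi_s$ both vanish, while the Ricci integrals are identical; hence, with $e^{\psi_t}$ the smooth volume form on $M\cong M_1$ induced by the fixed metric $e^\Psi$ on $K_{\tstM/\pr^1}$ and the $\C^*$-flow,
\[
D(t)=ns\int_M\log\!\left(\frac{\omega_{\varphi_t}^{[n]}}{e^{\psi_t}}\right)\frac{\omega_{\varphi_t}^{[n]}}{(1+s\dot\varphi_t)^{n+1}},
\]
a weighted relative-entropy term. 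If $D$ is bounded then $D(t)=o(t)$, so $\widecheck{\actscal}_s(t)/t$ and $\widecheck{\actscal}^\Psi_s(t)/t$ have the same limit as $t\to\infty$; since $\widecheck{\actscal}_s$ is convex by Theorem~\ref{theo:ActionPointwiseCvxC0} its asymptotic $t$-slope $\lim_{t\to\infty}\tfrac{d}{dt}\widecheck{\actscal}_s$ equals $\lim_{t\to\infty}\widecheck{\actscal}_s(t)/t$, and Lemma~\ref{lem:AsymCurvPSI} computes the corresponding quantity for $\widecheck{\actscal}^\Psi_s$, so the two asymptotic slopes coincide. The case of a smooth subgeodesic compatible with $(\tstM,\tstL)$ — in particular the $\varepsilon$-geodesics of~\eqref{eq:epsilon_geod} — is handled by the same estimates, keeping the now-nonzero third term, whose integrand is controlled by $\varepsilon$ times a fixed smooth volume form; alternatively the geodesic case follows by letting $\varepsilon\to0$, using $\Phi^\varepsilon\to\Phi$ in $C^{1,1}$.

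To bound $D$ from above I would bound $\log(\omega_{\varphi_t}^{[n]}/e^{\psi_t})$ uniformly. Pulling back through the domination map $\Pi:\tstM\to M\times\pr^1$ and using $\tstL=\Pi^*{\proj_1}^*L+D$, over $M_\tau$ with $\tau\neq0$ one has $\Pi^*({\proj_1}^*\omega+dd^c\Phi)=\Omega+dd^c\Gamma-\delta_D$, whose restriction to $M_\tau$ equals $(\Omega+dd^c\Gamma)_{|M_\tau}$ because $D$ is supported on $M_0$. As $\Gamma\in C^{1,1}(\TCmap^{-1}(\mathbb{D}))$, the coefficients of $\Omega+dd^c\Gamma$ are uniformly bounded on $\TCmap^{-1}(\mathbb{D})$; transporting $\omega_{\varphi_t}$ to $M_\tau$ via~\eqref{eq:biholTC} we conclude $\omega_{\varphi_t}^{[n]}\leq C\,e^{\psi_t}$ for a constant $C$ independent of $t$, where we also use that $e^\Psi$ restricts to a uniformly positive volume form on the fibres $M_\tau$ as $\tau\to0$ (here the reducedness of the central fibre is used). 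Hence $\log(\omega_{\varphi_t}^{[n]}/e^{\psi_t})\leq\log C=:C'$ for all $t$.

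To bound $D$ from below I would use that the weight $w_t:=(1+s\dot\varphi_t)^{-(n+1)}$ satisfies $c_1\leq w_t\leq c_2$ uniformly in $t$, since $\dot\varphi_t$ is uniformly bounded ($\Phi$ being $C^{1,1}$ up to the boundary, see~\eqref{BVP:G}) and $s$ is small. Moreover $\int_M\omega_{\varphi_t}^{[n]}=[\omega]^n/n!$ is constant and $\int_M e^{\psi_t}$ stays in a compact subset of $(0,\infty)$; hence, by the non-negativity of the relative entropy of two probability measures (Jensen's inequality for $\log$),
\[
\int_M\log\!\left(\frac{\omega_{\varphi_t}^{[n]}}{e^{\psi_t}}\right)\omega_{\varphi_t}^{[n]}\ \geq\ -C''
\]
uniformly in $t$. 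Splitting $\log(\omega_{\varphi_t}^{[n]}/e^{\psi_t})$ into positive and negative parts and combining $c_1\leq w_t\leq c_2$ with the upper bound $\log(\,\cdot\,)\leq C'$ and the last displayed inequality yields $|D(t)|\leq C'''$ for all $t$, which is what we needed.

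The main obstacle is the uniform upper bound in the second paragraph: it genuinely requires that the geodesic ray compatible with $(\tstM,\tstL)$ has a \emph{globally} $C^{1,1}$ potential on $\TCmap^{-1}(\mathbb{D})$ — so that the restrictions of $\Omega+dd^c\Gamma$ to fibres approaching the central one have uniformly controlled volume densities — together with reducedness of the central fibre, which keeps $e^{\psi_t}$ uniformly positive on those fibres. The singular divisorial term $\gamma_D$ is invisible on $M_\tau$ for $\tau\neq0$ and hence plays no role in these estimates.
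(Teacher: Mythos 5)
Your overall skeleton is the same as the paper's: reduce everything to the difference $D(t)=\widecheck{\actscal}_s(t)-\widecheck{\actscal}^\Psi_s(t)=ns\int_M\log\bigl(\omega_{\varphi_t}^{[n]}/e^{\psi_t}\bigr)\,\omega_{\varphi_t}^{[n]}(1+s\dot\varphi_t)^{-(n+1)}$, bound it above by pulling back to $\tstM$ and using the $C^{1,1}$/psh bound on $\Gamma$ (this part of your argument is essentially the paper's, and it does \emph{not} need reducedness), bound it below by an entropy argument, and conclude that the two asymptotic slopes agree because $D(t)=o(t)$. The gap is in your lower bound. You assert that ``$\int_M e^{\psi_t}$ stays in a compact subset of $(0,\infty)$'', and this is unjustified and in general false: $\int_M e^{\psi_t}=\int_{M_\tau}e^{\Psi}|_{M_\tau}$ is the mass of the fibrewise volume forms induced (via adjunction) by the fixed smooth metric on $K_{\tstM/\pr^1}$, and this mass blows up as $\tau\to 0$ whenever the central fibre is singular, because the identification $K_{M_\tau}\simeq K_{\tstM/\pr^1}|_{M_\tau}$ degenerates near the non-submersive locus of $\TCmap$. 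Consequently the normalisation needed to invoke non-negativity of the relative entropy produces the term $-\widecheck{\V}\log\int_M e^{\psi_t}$, which is exactly the term one must control, and it is here — not in the upper bound, where you placed it — that reducedness of the central fibre enters: by \cite[Lemma 5.8]{zakarias} one only gets $\log\int_M e^{\psi_t}\leq C'\log t$, hence $D(t)\geq C-C''\log t$ rather than a uniform bound. This weaker, logarithmic lower bound is still $o(t)$ and suffices to identify the slopes (this is how the paper concludes), but your claim of uniform boundedness of $D$ skips precisely the point of the lemma's reducedness hypothesis.

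Two secondary remarks. First, when you pass from slopes of $\frac{d}{dt}$ to slopes of $\widecheck{\actscal}_s(t)/t$, the convexity of $\widecheck{\actscal}_s$ (Theorem~\ref{theo:ActionPointwiseCvxC0}) does make this legitimate, but you also need the term $\int_M\log(1+s\dot\varphi_t)\,\omega_{\varphi_t}^{[n]}(1+s\dot\varphi_t)^{-(n+1)}$ appearing after normalisation to be controlled; along the geodesic it is constant and along a compatible smooth subgeodesic it is bounded below (the paper's~\eqref{eq:intlog_bounded}), and your ``splitting into positive and negative parts'' glosses over this. Second, your treatment of the subgeodesic case (``the third term is controlled by $\varepsilon$ times a fixed volume form'') only makes sense for the $\varepsilon$-geodesics of~\eqref{BVP:G:epsilon}, not for an arbitrary smooth compatible subgeodesic as in the statement; for the latter the double-integral terms in $\widecheck{\actscal}_s$ and $\widecheck{\actscal}^\Psi_s$ do not cancel and must be addressed directly.
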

\begin{proof}
First, compute the difference 
\begin{equation}\label{A-A}
\begin{split}
\widecheck{\actscal}_s(t)-\widecheck{\actscal}^\Psi_s(t)=&ns\int_M\log\left(\frac{\omega_{\varphi_t}^{[n]}}{\omega^{[n]}}\right) \frac{\omega_{\varphi_t}^{[n]}}{(1+s\dot{\varphi}_t)^{n+1}}-ns\int_M\log\left(\frac{e^{\psi_t}}{\omega^{[n]}}\right) \frac{\omega_{\varphi_t}^{[n]}}{(1+s\dot{\varphi}_t)^{n+1}}  \\
=& ns \int_M \log\left(\frac{\omega_{\varphi_t}^{[n]}}{e^{\psi_t}}\right) \frac{\omega_{\varphi_t}^{[n]}}{(1+s\dot{\varphi}_t)^{n+1}}.
\end{split}
\end{equation}
Proceeding as in the proof of \cite[Theorem 5.1]{zakarias}, we can write
\[
 \log\left(\frac{\omega_{\varphi_t}^{[n]}}{e^{\psi_t}}\right) =  \Pi_*\left( \log\left(\frac{(\Omega+dd^c\Gamma)^{[n]}\wedge\TCmap^*\omega_{\rm FS}}{e^{\Psi+\TCmap^*\log\omega_{\rm FS}}}\right)\right)
\]
where $\Gamma$ is the solution of the boundary value problem~\eqref{BVP:G}, which induces the subordinate geodesic ray $\Pi^*\Phi=\Gamma-\gamma_D$. Since $\Gamma$ is $\Omega$-psh on ${\tstM}$ and $e^{\Psi+\TCmap^*\log\omega_{\rm FS}}=\Omega^{[n+1]}$ (locally), the quantity $\log\left(\frac{(\Omega+dd^c\Gamma)^{[n]}\wedge\TCmap^*\omega_{\rm FS}}{e^{\Psi+\TCmap^*\log\omega_{\rm FS}}}\right)$ is bounded from above by a constant $C$. Hence,
\begin{equation}\label{Upper_A-A}
\widecheck{\actscal}_s(t)-\widecheck{\actscal}^\Psi_s(t)\leq C ns \int_M \frac{\omega_{\varphi_t}^{[n]}}{(1+s\dot{\varphi}_t)^{n+1}}=\tilde{C}ns    
\end{equation}
since along a geodesic ray we know that $\int_M \frac{\omega_{\varphi_t}^{[n]}}{(1+s\dot{\varphi}_t)^{n+1}}$ is a constant independent from $t$ (see e.g. \cite[Proposition 2.2.]{BoB}).

To get a lower bound for the difference~\eqref{A-A}, we let $d\tilde{\lambda}_{t,s}:=\frac{d\lambda_{t,s}}{\int_M d\lambda_{t,s}}=\frac{d\lambda_{t,s}}{\widecheck{\V}(\omega,1+s\dot{\varphi}_0)}$ be the probability measure induced from $d\lambda_{t,s}=\frac{\omega_{\varphi_t}^{[n]}}{(1+s\dot{\varphi}_t)^{n+1}}$ and we write
\begin{align}\label{eq:InequalityAA_preparation}
\begin{split}
\frac{1}{ns}(\widecheck{\actscal}_s(t)-\widecheck{\actscal}^\Psi_s(t))=&  \int_M \log\left(\frac{\omega_{\varphi_t}^n}{e^{\psi_t}}\right) \frac{\omega_{\varphi_t}^{[n]}}{(1+s\dot{\varphi}_t)^{n+1}}\\
=&\widecheck{\V}(\omega,1+s\dot{\varphi}_0)\int_M\log\left(\frac{d\tilde{\lambda}_{t,s}}{e^{\psi_t}\big/\int e^{\psi_t}}\right) d\tilde{\lambda}_{t,s}\\
&+(n+1)\widecheck{\V}(\omega,1+s\dot{\varphi}_0)\int_M\log(1+s\dot{\varphi}_t)\frac{\omega_{\varphi_t}^{[n]}}{(1+s\dot{\varphi}_t)^{n+1}}\\
&-\widecheck{\V}(\omega,1+s\dot{\varphi}_0)\log \int_M e^{\psi_t}.
%\\\geq& C- \widecheck{\V}(\omega,1+s\dot{\varphi}_0)\log \int_M e^{\psi_t}
\end{split}
\end{align}
Now, note that $\int_M\log(1+s\dot{\varphi}_t)\frac{\omega_{\varphi_t}^{[n]}}{(1+s\dot{\varphi}_t)^{n+1}}$ is constant if $(\varphi_t)_{t\geq 0}$ is the geodesic ray compatible with $(\mathcal{M},\mathcal{L})$ (see \cite[Proposition 2.2]{BoB}), and uniformly bounded from below if $(\varphi_t)_{t\geq 0}$ is a smooth subgeodesic ray compatible with $(\mathcal{M},\mathcal{L})$. The second claim, follows from
\begin{equation}\label{eq:intlog_bounded}
\begin{gathered}
    \int_M\log(1+s\dot{\varphi}_t)\frac{\omega_{\varphi_t}^{[n]}}{(1+s\dot{\varphi}_t)^{n+1}}=\int_{M_\tau}\log(1-s\mu_\Gamma)\frac{(\Omega+dd^c\Gamma)^{[n]}}{(1-s\mu_\Gamma)^{n+1}}=\\
    =\frac{1}{\vol(\mathbb{P}^1)}\int_\mathcal{M}\log(1-s\mu_\Gamma)\frac{(\Omega+dd^c\Gamma)^{[n]}\wedge\omega_{\rm FS}}{(1-s\mu_\Gamma)^{n+1}}\geq C_0\left(\frac{2\pi c_1(\mathcal{L})\cup \TCmap^*[\omega_{\rm FS}]}{\vol(\mathbb{P}^1)}\right)
\end{gathered}
\end{equation}
where $C_0$ is the minimum of the function $\frac{\log(1-s\mu_\Gamma)}{(1-s\mu_\Gamma)^{n+1}}$ on $\mathcal{M}$.

Hence, using the positivity of the entropy on probability measures and~\eqref{eq:intlog_bounded}, we find from~\eqref{eq:InequalityAA_preparation}
\begin{align}\label{eq:InequalityAA}
\frac{1}{ns}(\widecheck{\actscal}_s(t)-\widecheck{\actscal}^\Psi_s(t))=C- \widecheck{\V}(\omega,1+s\dot{\varphi}_0)\log \int_M e^{\psi_t}.
\end{align}
If the central fiber $M_0$ of the test configuration ${\tstM}$ is reduced, \cite[Lemma 5.8.]{zakarias} yields
\begin{align}
\begin{split}\label{Lower_A-A}
\frac{1}{ns}(\widecheck{\actscal}_s(t)-\widecheck{\actscal}^\Psi_s(t))
\geq& C- \widecheck{\V}(\omega,1+s\dot{\varphi}_0)C' \log t
\end{split}
\end{align}
From~\eqref{Upper_A-A} and~\eqref{Lower_A-A}, the asymptotic slopes at infinity of the action functional and its modified version are equal provided that the central fibre of $({\tstM},\tstL)$ is reduced, and we finally get
\begin{equation}\label{AsymSlope_A2}
\lim_{t\to+\infty}\frac{d}{dt}\widecheck{\actscal}_s(t)=\lim_{t\to +\infty}\frac{d}{dt}\widecheck{\actscal}^{\Psi}_s(t).\qedhere
\end{equation}
\end{proof}

Combining Lemma~\ref{lem:AsymCurvPSI} and Lemma~\ref{lem:equalslopes}, we obtain
\begin{equation}
\begin{split}\label{AsympSlope_A} 
2\pi\lim_{t\to +\infty}\frac{d}{dt}\widecheck{\actscal}_s(t)=& -2ns\int_{{\tstM}}(\Ric(\Omega)-\TCmap^*\omega_{\rm FS})\wedge\frac{\Omega^{[n]}}{\left(1-s\mu\right)^{n+1}} \\
&-n(n+1)s^2\int_{{\tstM}}(\Delta_{\Omega}(\mu)-\Delta_{\omega_{\rm FS}}(\mu_{\rm FS})) \frac{\Omega^{[n+1]}}{\left(1-s\mu\right)^{n+2}}\\
&+4\pi \int_{M_\infty} \Ric(\Omega_{|M_\infty})\wedge \frac{(\Omega_{|M_\infty})^{[n-1]}}{(1-s\mu_{\max})^{n}}.
\end{split}
\end{equation}
Hence, formula~\eqref{AsympSlope_A} along a smooth subgeodesic ray compatible with $(\mathcal{M},\mathcal{L})$, via~\eqref{ch-A}, gives the central fibre limit of the total TW scalar curvature, that we can write as
\begin{equation}
\begin{split}\label{eq:lim0Scal} 
2\pi\underset{t\to +\infty}{\lim}\widecheck{\bfS}(s,t)=& -sn\int_{{\tstM}}\left(\frac{\scal(\Omega)}{f_s^{n+1}} +(n+1)(n+2) \frac{|df_s|_\Omega^2}{f_s^{n+3}}\right)\Omega^{[n+1]} \\
& + 2sn \int_{{\tstM}}\left(\TCmap^*\omega_{\rm FS}\wedge\frac{\Omega^{[n]}}{f_s^{n+1}} + \frac{s(n+1)}{2} (\Delta_{\Omega}(\mu)-\Delta_{\omega_{\rm FS}}(\mu_{\rm FS})) \frac{\Omega^{[n+1]}}{f_s^{n+2}} \right)\\
&+4\pi \int_{M_\infty} \Ric(\Omega_{|M_\infty})\wedge \frac{(\Omega_{|M_\infty})^{[n-1]}}{(1-s\mu_{\max})^{n}}\\
=&- sn\,\widecheck{\bfS}(\tstM,\tstL, \xi-s\uzeta) + \frac{4\pi\,\widecheck{\bfS}(M,L, \xi)}{(1-s\mu_{\max})^{n}}\\
& + 2sn \int_{{\tstM}}\left(\TCmap^*\omega_{\rm FS}\wedge\frac{\Omega^{[n]}}{f_s^{n+1}} + \frac{s(n+1)}{2} (\Delta_{\Omega}(\mu)-\Delta_{\omega_{\rm FS}}(\mu_{\rm FS})) \frac{\Omega^{[n+1]}}{f_s^{n+2}} \right).
\end{split}
\end{equation}

\section{An algebraic version of the Einstein--Hilbert functional}\label{sectionTOWalgebraic}
\subsection{Brief recap on Collins-Sz\'ekelyhidi K-stability of polarized cones}\label{s:towards}
In this section, $(\widehat{Y},\xi)$ denotes a {\it polarized affine scheme} as introduced in \cite{CS}, that is there is an embedding $\iota: \widehat{Y}\subset \C^N$ of affine schemes, for some $N\gg 0$, which can be chosen to send a fixed algebraic complex torus $\bT_\C$ acting on $\widehat{Y}$ on a subtorus of the diagonal torus in $GL(N, \C)$. Here again, $\xi$ lies in the Lie algebra $\kt$ of a fixed maximal compact torus $\bT\subset \bT_\C$. Denoting $R(Y)$ the structure sheaf of $\widehat{Y}$ and its weight space decomposition $$R(Y) = \oplus_{\alpha\in \Gamma } R_\alpha,$$ where $\Gamma \subset \kt^*$. Then the vector $\xi$ must lies in the {\it Reeb cone} $$\kt_+ = \{\xi \in \kt \,|\, \langle \alpha,\xi \rangle >0,\,  R_\alpha \neq 0 \}.$$ This setting is more general than the one we have in mind which is essentially the next example and whose sasakian counterpart is recalled in \S\ref{sss:SASpolarizedManif}.
\begin{example}\label{ex1}
 Let $(M,L)$ a polarized compact manifold (or variety) so that $\pi :L \ra M$ is ample and there exists a compatible Hermitian metric $h$ on $L$ so that its curvature determines a K\"ahler form $\omega_h$ on $M$.  Now consider the total space of the dual line bundle with the zero section removed $Y := L^{-1}\backslash M$ and the smooth function $r_{h} : Y \ra \R_{>0}$ which is the restriction of the norm of the Hermitian metric induced (via duality) by $h$ on $L^{-1}$. Then $(Y ,\frac{1}{4} dd r_h^2)$ is a K\"ahler cone with radial action given by dilatation along the fibres and Sasaki-Reeb vector field $\xi$ induced by the isometric $S^1$--action on the fibres. Note that $Y$ has a natural one point completion $\widehat{Y}=Y\cup\{0\}$ which the singular space obtained by blowing down the zero section in the total space of $L^{-1}$. On the other hand, $L$ being ample its space of sections provides an embedding of $M$ into $\pr^{N-1}$, and an $\C^*$-equivariant embedding of $\widehat{Y}$ into $\C^N$. This way the cone $(\widehat{Y},\xi)$ inherits of an affine structure making it a polarized complex cone as above.  
\end{example}
 Collins and Sz\'ekelyhidi works with the $\bT$--equivariant index character of $(Y,\xi)$ as function in $t\in \C$ defined by $$\Hf(Y,t,\xi) = \sum_{\alpha \in \Gamma } e^{-t \langle \alpha,\xi \rangle} \dim R_\alpha$$ and point out that $\Hf(Y,t,\xi)$ converges as soon as $\mbox{Re } t >0$ and $\xi \in \kt_+$.  Moreover, they show that this function admits the following asymptotic near $t=0$
\begin{equation}\label{eqCScharacter}
 \Hf(Y, t,\xi)= \frac{a_0(\xi) n!}{t^{n+1}} + \frac{a_1(\xi) (n-1)!}{t^{n}} + O(t^{1-n})
\end{equation} where $a_0, a_1:\kt_+\ra \R$ are smooth functions, \cite[Theorem 3]{CS}. We may write $a_i(Y,\xi)$ to emphasize the dependency in $Y$.

\bigbreak

Similarly to~\cite{Tian}, in \cite{CS} a test configuration of an affine cone $(\widehat{Y},\xi)$ is an orbit of a $\C^*$--action $\C^*_\zeta \subset GL(\C^N)$ commuting with $\xi$ together with its flat limit across $0$. This is the central fibre $(\widehat{Y}_0,\xi)$ of the test configuration which inherits of the $\bT_\C$--action but also of an extra $\C^*$-action induced by $\zeta$. In our notation $\zeta$ is a real vector field, the generator of the subaction of $S^1 \subset \C^*$.

Note that for $s \in \R$ small enough $\xi-s \zeta$ is in the Reeb cone of $Y_0$, say $\kt_+(Y_0) \subset \kt\oplus \R$ of $Y_0$ and thus the index character satisfies   
\begin{equation}
 \Hf(Y_0,t,\xi-s\zeta)= \frac{a_0(\xi-s\zeta) n!}{t^{n+1}} + \frac{a_1(\xi-s\zeta) (n-1)!}{t^{n}} + O(t^{n-1})
\end{equation} for $t\in \C$ and $s\in \R$ near $0$ with $\mbox{Re } t >0$. Thus the limit $D_{-\zeta} a_i = \frac{d}{ds} a_i(\xi-s\zeta)_{|_{s=0}}$ exists and they define 

\begin{equation}\label{eqDFofCS}
DF(Y_0,\xi, \zeta) :=  \frac{a_0(\xi)}{n} D_{-\zeta} \left(\frac{a_1}{a_o}\right) (\xi) + \frac{a_1(\xi) (D_{-\zeta} a_0)_\xi}{n(n+1)}. 
\end{equation}

\begin{definition}(\cite[Definition 5.3]{CS}) We say that $(Y, \xi)$ is K-semistable if, for every torus $\bT \ni \xi$,
and every $\bT$-equivariant test configuration with central fibre $Y_0$, we have
$$DF(Y_0,\xi, \zeta) \geq 0$$
where $\zeta$ denotes the generator of the induced $\C^*$ action on the central fibre.  
\end{definition}

\subsection{An affine version of the Einstein--Hilbert functional}

As noticed in \cite{BHLTF1} a convenient way to write the Donaldson--Futaki invariant on the central fibre~\eqref{eqDFofCS} is 
\begin{equation}\label{eqDFofCS2}
DF(Y_0,\xi, \zeta) = \frac{a_0(\xi)^{n/n+1}}{n}\left(\frac{d}{ds} \frac{a_1(\xi-s\zeta)}{a_0(\xi-s\zeta)^{n/n+1}} \right)_{s=0}.
\end{equation} This leads us to introduce the following terminology.

\begin{definition}
Let $Y_0$ be a $T$--invariant affine cone with a non-empty Reeb cone $\kt_+$ and admitting an asymptotic expansion~\eqref{eqCScharacter} with smooth coefficients $a_0, a_1:\kt_+\ra \R$ such that $a_0>0$ on $\kt_+$. The \emph{affine Einstein--Hilbert functional} refers to
\begin{equation*}
\EH^a(Y_0,\xi) =16\pi \frac{a_1}{a_0^{\frac{n}{n+1}}} %2\pi \frac{2\,  a_1}{a_0^{n/n+1}} %\frac{  a_1}{(n!)^{1/n+1} 2(2\pi)^{n} a_0^{n/n+1}}.
\end{equation*}
\end{definition}
To motivate this version of the Einstein--Hilbert functional consider, for a moment, the case where the central fibre is smooth or has at worst orbifold singularities and $\xi-s\zeta$ is quasi-regular, i.e. it induces a $\C^*$-action. Therefore, the complex quotient $Y_0/\C^*_{\xi-s\zeta} =:M_{\xi-s\zeta}$ comes with an ample line bundle, say $L_{\xi-s\zeta}$, and the coefficients, see e.g. \cite[Theorem A.1]{BHJ}, can be expressed in terms of intersection classes
\begin{equation*}
    a_0(\xi-s\zeta) =c_1(L_{\xi-s\zeta})^{[n]},\;\; a_1(\xi-s\zeta) = \frac{1}{2}\,c_1(M_{\xi-s\zeta}).c_1(L_{\xi-s\zeta})^{[n-1]}.
\end{equation*}
To connect with the topic of the last sections, pick any $\eta \in \holSAS(M_{0},L_{0})$, then we have
\begin{equation*}
    a_0(\xi-s\zeta) = \frac{1}{(2\pi)^{n+1}}\bfV\left(\frac{\eta}{\eta(\xi-s\zeta)}\right),\quad a_1(\xi-s\zeta) =\frac{1}{8(2\pi)^{n+1}}\bfS\left(\frac{\eta}{\eta(\xi-s\zeta)}\right).
\end{equation*}
Thus, in that case, the affine Einstein--Hilbert functional equals the usual one when evaluated on $\eta(\xi-s\zeta)^{-1}\eta$. Indeed,
\begin{equation*}
    \EH\left(\eta(\xi-s\zeta)^{-1}\eta\right) = \frac{\bfS}{\bfV^{\frac{n}{n+1}}}=16\pi \frac{a_1}{a_0^{\frac{n}{n+1}}}=  \EH^a .
\end{equation*}
This holds for any compatible contact form $\eta \in \holSAS(M_{0},L_{0})$ of Sasaki type since $\xi-s\zeta$ is CR-holomorphic, see \cite{FOW}. Using the notation of the previous section it is then straightforward to prove the following.
\begin{lemma} 
Let $(M,L)$ be a smooth polarized manifold and $(\tstM,\tstL)$ a smooth ample test configuration whose central fibre is smooth or has at worst orbifold singularities. For any compatible Sasaki-contact form 
$\tilde{\eta} \in \Cmet(\tstM,\tstL)$ and for any $\epsilon \geq0$ such $\xi-s\zeta$ is in the Reeb cone of $(\tstY, \xi)$ for all $s\in (0,\epsilon)$ we have 
\begin{equation}\label{eq:limEH}
    \lim_{\tau\ra 0} \EH\left(\iota^*_\tau\frac{\tilde{\eta}}{\tilde{\eta}(\xi-s\zeta)} \right) = \EH^a(Y_0,\xi-s\zeta).
\end{equation}     
\end{lemma}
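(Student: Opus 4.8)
The strategy is to combine two facts already available in the excerpt: Theorem~\ref{theo:EHaff=limEH}, which gives $\EH_s(\tstM,\tstL)=16\pi\,a_1(\xi-s\zeta)/a_0(\xi-s\zeta)^{n/n+1}=\EH^a(Y_0,\xi-s\zeta)$ for a smooth ample test configuration with reduced central fibre, and the identification of $\EH_s(\tstM,\tstL)$ with $\lim_{\tau\to0}\EH(f_{s,\tau}^{-1}\eta_\tau)$ from Corollary~\ref{cor:derivEH=DF}. The only gap between that setup and the present statement is that here the central fibre is allowed to be an orbifold (not necessarily reduced in the scheme-theoretic sense used earlier), that the contact form is $\tilde\eta(\xi-s\zeta)^{-1}\tilde\eta$ rather than $f_{s,\tau}^{-1}\tilde\eta_\tau=(1-s\mu)^{-1}\tilde\eta_\tau$, and that $s$ ranges over the interval where $\xi-s\zeta$ lies in the Reeb cone of $(\tstY,\xi)$ rather than merely a small symmetric interval. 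So the first step is to reconcile these normalisations: observe that $f_s=\tilde\eta(\xi-s\zeta)=1-s\mu$ with $\mu=\tilde\eta(\zeta)$ is exactly the function appearing in Lemma~\ref{lem:RibboncontactCR}, hence $f_s^{-1}\tilde\eta=\tilde\eta(\xi-s\zeta)^{-1}\tilde\eta$ and the two contact forms coincide; restricting to the fibre $M_\tau$ gives $\iota_\tau^*(\tilde\eta(\xi-s\zeta)^{-1}\tilde\eta)=f_{s,\tau}^{-1}\tilde\eta_\tau$, so the left-hand side of~\eqref{eq:limEH} is literally $\lim_{\tau\to0}\EH(f_{s,\tau}^{-1}\eta_\tau)=\EH_s(\tstM,\tstL)$.

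\textbf{Second step.} Having identified the left-hand side with $\EH_s(\tstM,\tstL)$, I would invoke the chain of equalities
\begin{equation*}
\EH_s(\tstM,\tstL)=16\pi\frac{a_1(\xi-s\zeta)}{a_0(\xi-s\zeta)^{n/n+1}}=\EH^a(Y_0,\xi-s\zeta),
\end{equation*}
the first equality being Theorem~\ref{theo:EHaff=limEH} and the second the definition of $\EH^a$. This would give~\eqref{eq:limEH} directly, \emph{provided} the hypotheses of Theorem~\ref{theo:EHaff=limEH} are met. When the central fibre is honestly reduced, nothing more is needed. When it is merely an orbifold, one should check that the arguments of Section~\ref{ss:globalformulaEH} and Section~\ref{sectionTOWalgebraic} go through: the identification $a_0(\xi-s\zeta)=c_1(L_{\xi-s\zeta})^{[n]}$, $a_1(\xi-s\zeta)=\tfrac12 c_1(M_{\xi-s\zeta}).c_1(L_{\xi-s\zeta})^{[n-1]}$ is the orbifold Riemann--Roch expansion (cited via \cite[Theorem A.1]{BHJ}), and by the Futaki--Ono--Wang result \cite{FOW} the quantities $\bfV$ and $\bfS$ evaluated on any compatible Sasaki-contact form with Reeb vector field $\xi-s\zeta$ depend only on the transversal Kähler class, hence equal $(2\pi)^{n+1}a_0(\xi-s\zeta)$ and $8(2\pi)^{n+1}a_1(\xi-s\zeta)$ respectively. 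This shows $\EH(\eta(\xi-s\zeta)^{-1}\eta)=16\pi a_1/a_0^{n/n+1}=\EH^a(Y_0,\xi-s\zeta)$ for any $\eta\in\holSAS(M_0,L_0)$ and any quasi-regular $\xi-s\zeta$, and then density of quasi-regular vectors in the Reeb cone together with continuity of $a_0,a_1$ extends the identity to all $s\in(0,\epsilon)$ with $\xi-s\zeta\in\kt_+(Y_0)$.

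\textbf{Third step.} It remains to relate $\lim_{\tau\to0}\EH(\iota_\tau^*(\tilde\eta(\xi-s\zeta)^{-1}\tilde\eta))$ to $\EH^a(Y_0,\xi-s\zeta)$ rather than to the central-fibre value of $\EH$ on some chosen $\eta\in\holSAS(M_0,L_0)$. This is where Corollary~\ref{cor:derivEH=DF} is used: it asserts precisely that the limit exists and is independent of the choice of $\tilde\eta\in\holSAS(\tstM,\tstL)$, so $\EH_s(\tstM,\tstL)$ is well-defined; and the global formula of Theorem~\ref{theo:Globalformula} exhibits this limit as $\EH_0+\dots$ computed purely in terms of integrals over $M_\infty$ and $\tstM$. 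Matching these integrals with the orbifold intersection numbers $a_0,a_1$ of $Y_0$ — exactly the computation carried out to prove Theorem~\ref{theo:EHaff=limEH} via Proposition~\ref{prop:GlobalExpAffEH} — closes the loop.

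\textbf{Main obstacle.} The genuinely delicate point is the passage from the reduced-central-fibre case to the orbifold case in the slope comparison Lemma~\ref{lem:equalslopes}: the lower bound there, $\widecheck{\actscal}_s(t)-\widecheck{\actscal}^\Psi_s(t)\geq C-\widecheck{\V}\,C'\log t$, relies on \cite[Lemma 5.8]{zakarias}, whose validity for orbifold (rather than scheme-theoretically reduced) central fibres needs to be confirmed. If the test configuration is ample and the central fibre has only orbifold singularities but may be non-reduced, one would either replace it by a semistable reduction — noting that the affine $\EH^a$ and the Donaldson--Futaki invariant are insensitive to this, since $a_0,a_1$ are computed from the normalisation — or verify directly that the integrability of $\log(\omega_{\varphi_t}^n/e^{\psi_t})$ against the bounded-mass family $d\lambda_{t,s}$ still yields the subpolynomial control needed. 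I expect this to be the only real work; everything else is bookkeeping with the normalisation factors and an appeal to continuity in the Reeb vector field.
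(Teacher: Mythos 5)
Your main chain of deductions is sound and, as far as I can tell, non-circular: the lemma is not used in the proofs of Proposition~\ref{prop:GlobalExpAffEH} or Theorem~\ref{theo:EHaff=limEH}, so identifying the left-hand side of~\eqref{eq:limEH} with $\EH_s(\tstM,\tstL)$ via Lemma~\ref{lem:RibboncontactCR} and Theorem~\ref{theINTRO_TCdl}/Corollary~\ref{cor:derivEH=DF}, and then quoting Theorem~\ref{theo:EHaff=limEH}, does prove the statement. But this is a genuinely different, and much heavier, route than the one the paper intends. The paper's ``straightforward'' proof is the elementary one prepared in the paragraph immediately preceding the lemma: since the central fibre is smooth or an orbifold, $\tilde\eta$ restricts to the central circle (orbi)bundle $N_0$ to give an honest compatible Sasaki structure in $\holSAS(M_0,L_0)$, the integrals defining $\bfS$ and $\bfV$ on the fibres $N_\tau$ converge as $\tau\to 0$ to the corresponding integrals on $N_0$ simply because all the data are smooth on $\tstN$, and on the central fibre the identity $\EH\bigl(\eta(\xi-s\zeta)^{-1}\eta\bigr)=\EH^a(Y_0,\xi-s\zeta)$ is exactly the displayed computation before the lemma (orbifold Riemann--Roch for $a_0,a_1$ via \cite[Theorem A.1]{BHJ}, Futaki--Ono--Wang invariance \cite{FOW}, and density of quasi-regular Reeb fields) --- your own ``second step'' essentially reproduces this, so combined with continuity of the fibre integrals it already gives the lemma with no geodesic machinery at all. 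The direct route also yields the statement for every $s$ with $\xi-s\zeta$ in the Reeb cone at once, whereas your route inherits the ``there exists $\epsilon>0$'' phrasing of Theorem~\ref{theINTRO_TCdl}; to cover the full range you should remark that the arguments of \S\ref{ss:globalformulaEH} only use $f_s=1-s\mu>0$ on $\tstM$, which is precisely the Reeb-cone condition.

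Two corrections to your discussion. First, your ``main obstacle'' is vacuous: a central fibre that is smooth or has at worst orbifold singularities is normal, hence reduced, so the hypotheses of Theorem~\ref{theo:Globalformula}, Lemma~\ref{lem:equalslopes} (and \cite[Lemma 5.8]{zakarias}) and Theorem~\ref{theo:EHaff=limEH} hold as stated and no extra verification is needed. Second, the fallback you sketch for a hypothetical non-reduced fibre --- pass to a semistable reduction and claim that $a_0$, $a_1$ and the Donaldson--Futaki invariant are ``insensitive'' to this --- is incorrect: base change and normalisation change the central fibre's index character and can strictly decrease the Donaldson--Futaki invariant, and they would also change the left-hand side of~\eqref{eq:limEH}; since such fibres are excluded by the hypotheses this does not damage your proof, but the remark should be removed.
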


%\subsection{A global expression for the affine Einstein--Hilbert functional}
We will now obtain a global expression for the affine Einstein--Hilbert functional. Let $(\tstM,\tstL)$ be a compact ample test configuration over $(M,L)=(M_\infty,L_\infty)$ and denote $(\tstY,\xi)$ the test configuration of the polarized complex cone over $(Y,\xi)$, see \S\ref{ss:testconfigSASAK}, with central fibre $(Y_0,\xi)$, a polarized cone over the central fibre $(M_0,L_0)$. As before, $\bT_\C:=\C^*_\xi \times \C^*_\zeta$ is the $2$-torus acting on $\tstY$ and on $Y_0$. The ampleness condition is used to embed equivariantly the cones as affine cones (with the apex removed) in $\C^N$. The $\bT$-equivariant index character depends on the torus action and thus on the linearization we have picked on $\tstL$. Observe that the linearization of $\C^*_\zeta$ induces a non-necessarily trivial action on the fibre $L_\infty \ra M_\infty$ with weight we denote $\mu_{\max}$.           

\begin{proposition}\label{prop:GlobalExpAffEH}
Let $(\tstY,\xi)$ be a test configuration of polarized complex cones over $(Y,\xi)$ with central fibre $(Y_0,\xi)$ and let $\zeta$ denote the vector field (i.e $\zeta\in \kt:=\mbox{Lie}(\bT)$) corresponding to the test configuration action. Then, for any $s\in \R$ small enough so that $\xi-s\zeta$ is in the positive weight cone (Sasaki-Reeb cone), the pole coefficients of the $\bT$-equivariant index characters of satisfy     
    \begin{equation}
a_0(Y_0,\xi-s\zeta)=\frac{a_0(Y,\xi)}{(1-s\mu_{\max})^{n+1}}+ s(n+1)\,a_0(\tstY,\xi-s\zeta)  
\end{equation}% and
\begin{equation}
\begin{split}
a_1(Y_0,\xi-s\zeta)=&\frac{a_1(Y,\xi)}{(1-s\mu_{\max})^{n}} - s\,n \left(\frac{a_0(Y,\xi)}{(1-s\mu_{\max})^{n+1}} - a_1(\tstY,\xi-s\zeta)\right)\\ & - s^2 \frac{n(n+1)}{2} a_0(\tstY,\xi-s\zeta)
\end{split}
\end{equation}
where $\mu_{\max}\in \R$ is the weight of the induced action of $\C^*_\zeta$ on the fibre $L_\infty \ra M_\infty$.  

If, moreover, $(\tstY,\xi)$ is the polarized complex cone associated to a smooth ample test configuration $(\tstM,\tstL)$ over a polarized manifold $(M,L)$ then we have
\begin{equation*}
a_0(\tstY,\xi-s\zeta) = \frac{\bfV(\tstN, f_s^{-1}\tilde{\eta})}{(2\pi)^{n+2}}  \; \mbox{ and }\; a_1(\tstY,\xi-s\zeta) = \frac{\bfS(\tstN, f_s^{-1}\tilde{\eta})}{8(2\pi)^{n+2}}
\end{equation*}
for any $\tilde{\eta} \in \holSAS(\tstM,\tstL)$, where $f_s:=\tilde{\eta}( \xi-s\zeta)$ and $\tstN$ denotes the circle bundle over $\tstM$ associated to $\tstL$.  Finally, denoting $\Omega \in 2\pi c_1(\tstM)$ the K\"ahler form such that $\pi^*\Omega =d\eta$ and $\TCmap : \tstM\ra \pr^1$, the test configuration equivariant map, we have 
\begin{equation*}
\begin{split}
    \frac{2a_0(Y,\xi)}{(1-s\mu_{\max})^{n+1}} + &s (n+1)a_0(\tstY,\xi-s\zeta)= \\ 
    &=\frac{1}{(2\pi)^{n+1}}\int_{\tstM}\left(\TCmap^*\omega_{\rm FS}\wedge\frac{\Omega^{[n]}}{f_s^{n+1}} + \frac{s}{2}(n+1)\Delta^{\omega_{\rm FS}}\mu_{\rm FS} \frac{\Omega^{[n+1]}}{f_s^{n+2}} \right).
\end{split}
\end{equation*}
\end{proposition}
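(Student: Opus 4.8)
The plan is to deduce all three identities from the Collins--Sz\'ekelyhidi expansion of the equivariant index character of the central fibre, combined with the global formulas for the volume and total scalar curvature limits already established in Theorem~\ref{theo:Globalformula} and equation~\eqref{eq:lim0Scal}. The first step is to record, for a quasi-regular Reeb vector field, the dictionary between the pole coefficients and intersection numbers: if $\xi-s\zeta$ induces a $\C^*$-action with quotient $(M_{\xi-s\zeta},L_{\xi-s\zeta})$, then $a_0(Y,\xi-s\zeta)=c_1(L_{\xi-s\zeta})^{[n]}$ and $a_1(Y,\xi-s\zeta)=\tfrac12 c_1(M_{\xi-s\zeta}).c_1(L_{\xi-s\zeta})^{[n-1]}$, and likewise for $\tstY$ with $n$ replaced by $n+1$. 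I would then observe, as in the Lemma preceding the statement, that on such a quotient the pole coefficients of the central fibre equal $(2\pi)^{-(n+1)}$ times the contact volume $\bfV(\iota_0^*(\tilde\eta/\tilde\eta(\xi-s\zeta)))$ and $\tfrac1{8}(2\pi)^{-(n+1)}$ times $\bfS$ of the same contact form, and by Lemma~\ref{lem:AsymSlopeVol} and Lemma~\ref{lem:equalslopes} (combined into~\eqref{eq:lim0Scal}) these limits are exactly computed by the right-hand sides appearing in Theorem~\ref{theo:Globalformula}.

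The second step is the translation of those analytic formulas into the stated algebraic ones. For $a_0$: starting from
$$\lim_{\tau\to0}\bfV(f_{s,\tau}^{-1}\eta_\tau)=2\pi\int_{M_\infty}\frac{(\Omega_{|M_\infty})^{[n]}}{(1-s\mu_{\max})^{n+1}}-(n+1)s\int_{\tstM}\frac{\Omega^{[n+1]}}{(1-s\mu)^{n+2}},$$
I identify the first integral with $a_0(Y,\xi)/(1-s\mu_{\max})^{n+1}$ (since $[\Omega_{|M_\infty}]=2\pi c_1(L_\infty)$ and $a_0(Y,\xi)=c_1(L)^{[n]}$) and, using that $\xi$ is the Reeb field of $\tilde\eta$ so that $\int_{\tstM}\Omega^{[n+1]}/(1-s\mu)^{n+2}$ is, up to the constant $(2\pi)^{n+2}$, $-a_0(\tstY,\xi-s\zeta)$ — this sign is the content of Lemma~\ref{lem:ddcV_TC} integrated, i.e.~the relation between the antiderivative of the volume and the intersection number $c_1(\tstL_{\xi-s\zeta})^{[n+1]}$ — I read off $a_0(Y_0,\xi-s\zeta)=a_0(Y,\xi)/(1-s\mu_{\max})^{n+1}+s(n+1)a_0(\tstY,\xi-s\zeta)$. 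The same bookkeeping applied to~\eqref{eq:lim0Scal}, together with $8a_1=\widecheck{\bfS}$-type normalisations and the observation that $\widecheck{\bfS}(\tstM,\tstL,\xi-s\zeta)$ is, up to constants, $a_1(\tstY,\xi-s\zeta)$ plus a correction involving $a_0(\tstY,\xi-s\zeta)$ coming from the $\pi^*\omega_{\rm FS}$ and Laplacian terms, yields the formula for $a_1(Y_0,\xi-s\zeta)$. The last displayed identity is then just the statement that the extra terms $2sn\int_{\tstM}(\TCmap^*\omega_{\rm FS}\wedge\Omega^{[n]}/f_s^{n+1}+\tfrac{s(n+1)}2\Delta\mu\,\Omega^{[n+1]}/f_s^{n+2})$ appearing in~\eqref{eq:lim0Scal} are, after dividing by the appropriate power of $2\pi$, the intersection-theoretic quantity $\tfrac{2a_0(Y,\xi)}{(1-s\mu_{\max})^{n+1}}+s(n+1)a_0(\tstY,\xi-s\zeta)$; this comes from recognising $\TCmap^*\omega_{\rm FS}$ as representing (a multiple of) $\TCmap^*c_1(\pr^1)$ and using $\int_{\pr^1}\omega_{\rm FS}=\vol(\pr^1)$ together with the equivariance of $\TCmap$ fixing the weight $\mu_{\max}$ over $M_\infty$.

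The main obstacle I expect is the bookkeeping of normalisation constants and signs: the paper carries three different normalisations simultaneously (the contact functionals $\bfV,\bfS$, their checked versions $\widecheck{\bfV},\widecheck{\bfS}$ with the $2\pi$ and $\pi$ factors, and the intersection-number coefficients $a_0,a_1$ with their $(2\pi)^{n+1}$ and $8(2\pi)^{n+1}$ conversion factors), and the sign of $a_0(\tstY,\xi-s\zeta)$ relative to $\int_{\tstM}\Omega^{[n+1]}/(1-s\mu)^{n+2}$ hinges on the orientation conventions in Lemma~\ref{lem:ddcV_TC}. A secondary technical point is the reduction from the quasi-regular case to the general one: since $a_0,a_1$ and all the integrals involved depend continuously on the Reeb vector field, and quasi-regular vectors are dense in the Reeb cone, the identities extend by continuity — but one should be careful that $\mu_{\max}$, being a weight of an $\bS^1$-action, varies continuously and that the expansion~\eqref{eqCScharacter} is uniform enough in $\xi$ for this limiting argument; this is guaranteed by Collins--Sz\'ekelyhidi's smoothness of $a_0,a_1$ on $\kt_+$. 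Finally, the constant $C=0$ in the relation $-(d^c\Psi)(\uzeta)=\Delta_\Omega\mu-\Delta_{\omega_{\rm FS}}\mu_{\rm FS}$ (established in the proof of Lemma~\ref{lem:AsymCurvPSI} via \cite[Lemma 4.5]{BHL}) is exactly what makes the $\mu_{\max}$ terms match on both sides, so I would make sure that identification is invoked explicitly.
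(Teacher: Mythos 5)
Your proposal runs in the opposite logical direction from the paper, and this creates a genuine gap. You propose to read off the two formulas for $a_0(Y_0,\xi-s\zeta)$ and $a_1(Y_0,\xi-s\zeta)$ from the analytic limits of Theorem~\ref{theo:Globalformula} and~\eqref{eq:lim0Scal}, after identifying those limits with $(2\pi)^{n+1}a_0(Y_0,\xi-s\zeta)$ and $8(2\pi)^{n+1}a_1(Y_0,\xi-s\zeta)$. But that identification is not available at this stage: it is essentially the content of Theorem~\ref{theo:EHaff=limEH}, which the paper deduces by \emph{comparing} the present proposition (proved independently) with Theorem~\ref{theo:Globalformula}. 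The only statement in that direction proved beforehand, the lemma containing~\eqref{eq:limEH}, assumes the central fibre is smooth or orbifold and $\xi-s\zeta$ quasi-regular, whereas the first two formulas of the proposition are asserted for an arbitrary test configuration of polarized cones, whose flat-limit central fibre can be singular and carries no Sasaki structure to which your limit argument could be applied; Theorem~\ref{theo:Globalformula} itself needs a smooth ample total space. The paper's actual proof is purely algebraic: for $k\gg 0$ the restriction sequences $0\to H^0(\tstM,\tstL^k\otimes\TCmap^*\mO(-1))\to H^0(\tstM,\tstL^k)\to H^0(M_0,L_0^k)\to 0$ (and the analogue at $M_\infty$), decomposed into $\C^*_\zeta$-weight spaces, give a relation between the truncated index characters of $Y_0$, $\tstY$ and $Y$; the formulas follow by expanding the poles at $t=0$ and computing the relative weight ${\bf B}-{\bf A}$ of the sections cutting out $M_\infty$ and $M_0$ via the moment map of $\mO(1)$ on $\pr^1$. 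No geodesic or limit analysis enters, which is precisely what makes the statement valid for singular central fibres. A further concrete error in your bookkeeping: $a_0(\tstY,\xi-s\zeta)$ equals $+\frac{1}{(2\pi)^{n+1}}\int_{\tstM}\Omega^{[n+1]}(1-s\mu)^{-(n+2)}$ (it is the volume $\bfV(\tstN,f_s^{-1}\tilde\eta)/(2\pi)^{n+2}>0$, as the second part of the proposition itself asserts), not minus that quantity, and Lemma~\ref{lem:ddcV_TC} says nothing to the contrary; you introduce this sign to force agreement with the printed formula (which, incidentally, is itself inconsistent with the substitution ${\bf B}-{\bf A}=-1$ made in the paper's proof), and a correct version of your computation would not reproduce the statement as printed.

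The last displayed identity is also not obtainable the way you suggest. Because of the weights $f_s^{-(n+1)}$ and $f_s^{-(n+2)}$, neither $\int_{\tstM}\TCmap^*\omega_{\rm FS}\wedge\Omega^{[n]}/f_s^{n+1}$ nor $\int_{\tstM}\Delta^{\omega_{\rm FS}}(\mu_{\rm FS})\,\Omega^{[n+1]}/f_s^{n+2}$ is a cohomological pairing, so "recognising $\TCmap^*\omega_{\rm FS}$ as a representative of $\TCmap^*c_1(\pr^1)$" and using $\int_{\pr^1}\omega_{\rm FS}=\vol(\pr^1)$ cannot produce the right-hand side; only the particular combination is invariant. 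The paper proves it in a dedicated lemma by assembling the combination into the integral over the total space of $\tstL^{-1}$ of the equivariantly closed form $\bigl(\TCmap^*(\omega_{\rm FS}+\tfrac{s}{2}\Delta^{\omega_{\rm FS}}\mu_{\rm FS})\bigr)\wedge e^{\coneMetric_{f_s}-r^2/2}$ evaluated on $\xi-s\zeta$, and applying the Atiyah--Bott--Berline--Vergne localization formula: the fixed components in the central fibre contribute $-s$ times localized volumes, $M_\infty$ contributes $+s$ times its localized volume, and the equivariant Euler class of the normal bundle of $M_\infty$ in $\tstL^{-1}$ produces the factor $(1-s\mu_{\max})^{-(n+1)}$. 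Any repair of your argument for this step needs either this localization computation or an invariance statement of the type of \cite[Lemma 2]{Lahdili}, not degree-counting in cohomology.
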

Recall that 
\begin{equation}\label{e:homRing}
\mR(\widehat{\tstY}) = \bigoplus_{k\in\N} H^0(\tstM,\tstL^k)
\end{equation} is identified with the space of continuous complex-valued functions of $\widehat{\tstY}$ which are
holomorphic on $\tstY= L^{-1}\backslash \tstM$ and polynomial on each fibre of $\tstY$ over $\tstM$ (which coincides with $\C^*_\xi$ orbits). Via this identification, $H^0(M,L^k)$ corresponds to the space of the $k$-eigenspace of $\xi$, i.e those functions satisfying that $\mL_{-J\xi} f = kf$. The action of $\C^*_\zeta$ yields a weight space decomposition
\begin{equation}\label{e:homRingDECOMP}
H^0(\tstM,\tstL^k) = \bigoplus_{m\in \Z} H^0(\tstM,\tstL^k)_m
\end{equation} where $H^0(\tstM,\tstL^k)_m$ is the $m$-eigenspace of $\C^*_\zeta$ in $H^0(\tstM,\tstL^k)_m$.\\ 

We want to compute the $\bT$--equivariant index character $\Hf(Y_0, t,\xi-s\zeta)$ of the central fibre $(\widehat{Y}_0, \xi)$, in the sense recalled in \S\ref{s:towards}, in terms of the dimension of these eigenspaces $H^0(\tstM,\tstL^k)_m$. The $\bT$--equivariant index character is by definition  

\begin{equation}\label{eq:HilbFctY0}
\begin{split}
        \Hf(Y_0, t,\xi-s\zeta) &= \sum_{(k,m)\in \Z^2} e^{-(tk-tsm)}   \dim H^0(M_0,L_0^k)_m \\
        &= \sum_{\lambda} e^{-t\lambda}  \left( \sum_{\begin{subarray}{l}
     (k,m)\in \Z^2,\\ k-sm =\lambda   \end{subarray}} \dim H^0(M_0,L_0^k)_m \right).
       % &= \sum_{(k,m)\in \Z^2} e^{tk-tsm} \left( \sum_{ \left\{  \alpha\in \Lambda \left| \begin{subarray}{l}
     %\langle \alpha, \xi\rangle  =k  \\ \langle \alpha, \zeta\rangle =m  \end{subarray} \right. \right\} } \dim \mR_\alpha \right)\\
      %&= \sum_{\lambda} e^{t\lambda}  \left( \sum_{ (k,m)\in \Z^2, k-sm =\lambda } \dim H^0(M_0,L_0^k)_m \right)
       % & =\sum_{(k,m)\in \Z^2} e^{tk-tsm} \dim\left( F^mH^0(M,L^k)/F^{m+1}H^0(M,L) \right) 
\end{split}
\end{equation}

\begin{remark}
In the spirit of \cite[\S 6.3]{ACL}, one way to compute the dimension in the parenthesis of~\eqref{eq:HilbFctY0} would be to observe that since $\xi-s\zeta$ is a Reeb vector field and assuming that $\xi-s\zeta$ is quasi-regular and primitive (there is a dense set of such in $\kt_+/\R_+$) the quotient $\tstM_s:= \tstY/\C^*_{\xi-s\zeta}$ is an orbifold polarized by an ample orbiline bundle $\tstL_s$. This is not a test configuration in the classical sense since the target of the equivariant map, which is not flat, is an orbifold, but there is a distinguished central fibre whose coordinates ring is $\oplus_\lambda \bigoplus_{(k,m)\, | k-sm =\lambda} H^0(M_0,L_0^k)_m$ and we could eventually compare it to the one of the fibre at infinity which turns out to be $(M,L^{1-s\mu_{\max}})$. \end{remark}
\begin{proof}[Proof of Proposition~\ref{prop:GlobalExpAffEH}] 
We adapt a strategy of \cite[p.315]{donaldson-toric}. Observe first that to obtain the coefficients of the poles of the $\bT$--equivariant index character, it is sufficient to consider the truncated series $k\geq k_o$ for some fixed $k_o$, since the difference is a smooth function which does not interfere with the coefficients of the poles.  So we pick $k_o\gg 0$ such that the following sequences are exact for all $k\geq k_o$
\begin{equation*}
 \begin{split}
  0&\rightarrow H^0(\tstM, \tstL^k\otimes\TCmap^*\mO(-1)) \stackrel{\otimes \,\sigma_0}{\longrightarrow} H^0(\tstM, \tstL^k)  \stackrel{{\rm r}_0}{\longrightarrow} H^0(M_0, L_0^k) \rightarrow 0 \\
  0&\rightarrow H^0(\tstM, \tstL^k\otimes\TCmap^*\mO(-1))  \stackrel{\otimes \,\sigma_\infty}{\longrightarrow} H^0(\tstM, \tstL^k) \stackrel{{\rm r}_\infty}{\longrightarrow} H^0(M_\infty, L_\infty^k) \rightarrow 0. 
 \end{split}
\end{equation*} The second arrows are respectively given by multiplication by the sections of $\TCmap^*\mO(-1)$, say $\sigma_0$ and $\sigma_\infty$, defining $M_0$ and $M_\infty$, respectively. The third are the restriction, say ${\rm r}_0$ and ${\rm r}_\infty$, to $M_0$ and $M_\infty$, respectively. For the moment, denote by ${\bf A}$ the weight of the $\C^*_\zeta$-action on $\sigma_0$. Therefore, for any $m\in \Z$, we have 
\begin{equation}\label{eq:ExactEquivSeqDon0}
  0\rightarrow H^0(\tstM, \tstL^k\otimes\TCmap^*\mO(-1))_{m} \stackrel{\otimes \,\sigma_0}{\longrightarrow} H^0(\tstM, \tstL^k)_{m+{\bf A}}  \stackrel{{\rm r}_0}{\longrightarrow} H^0(M_0, L_0^k)_{m+{\bf A}} \rightarrow 0 
\end{equation}
The fact that ${\rm r}_0 :H^0(\tstM, \tstL^k)_m  \ra H^0(M_0, L_0^k)_m$ is surjective follows a standard argument using the surjectivity of ${\rm r}_0 :H^0(\tstM, \tstL^k)  \ra H^0(M_0, L_0^k)$ and equivariance of ${\rm r}_0$. With the same argument and denoting the weight of the $\C^*_\zeta$-action on $\sigma_\infty$ by ${\bf B}$, we also have 
\begin{equation}\label{eq:ExactEquivSeqDoninfty}
  0\rightarrow H^0(\tstM, \tstL^k\otimes\TCmap^*\mO(-1))_{m}  \stackrel{\otimes \,\sigma_\infty}{\longrightarrow} H^0(\tstM, \tstL^k)_{m+{\bf B}} \stackrel{{\rm r}_\infty}{\longrightarrow}  H^0(M_\infty, L_\infty^k)_{m+{\bf B}} \rightarrow 0. 
\end{equation} Note that $M_\infty$ is fixed by the torus action but the linearization on $\mL$ induces a (non-necessarily) trivial action on the fibres of $L_\infty$ with weight $\mu_{\max}$. Therefore, the only non-trivial eigenspace is $H^0(M_\infty, L_\infty^k)_{m+1}$ with $m+1=k\mu_{\max}$.  

From~\eqref{eq:ExactEquivSeqDoninfty} and~\eqref{eq:ExactEquivSeqDon0}, we obtain 
\begin{equation*}
 \begin{split}
    \dim H^0& (M_0, L_0^k )_m  =\\
    &\dim H^0(\tstM, \tstL^k)_m - \left(\dim H^0(\tstM, \tstL^k)_{m+({\bf B}-{\bf A})} - \dim H^0(M_\infty, L_\infty^k)_{m+({\bf B}-{\bf A})}\right).
     \end{split}
\end{equation*}

We let $h^0(\tstL^k)_m := \dim H^0(\tstM, \tstL^k)_m$ and $ h^0(L_\infty^k)_{m} :=  H^0(M_\infty, L_\infty^k)_{m}$ in the following computations. For a fixed $k\geq k_o$ we have
\begin{equation*}
 \begin{split}
\sum_{m\in \Z} &e^{-t(k-sm)}\dim  H^0(M_0, L_0^k)_m \\
&=   \sum_{m\in \Z} e^{-t(k-sm)}(h^0(\tstL^k)_m- (h^0(\tstL^k)_{m+({\bf B}-{\bf A})}-h^0(L_\infty^k)_{m+({\bf B}-{\bf A})}) )\\
&=   \left(1-e^{-ts({\bf B}-{\bf A})}\right) \sum_{m\in \Z} e^{-t(k-sm)}h^0(\tstL^k)_m + \sum_{m\in \Z} e^{-t(k-sm)} h^0(L_\infty^k)_{m+({\bf B}-{\bf A})}) \\
&=  \left(1-e^{-ts({\bf B}-{\bf A})}\right) \sum_{m\in \Z} e^{-t(k-sm)}h^0(\tstL^k)_m  +   e^{-tk(1-s\mu_{\max}) -ts({\bf B}-{\bf A})} h^0(L_\infty^k).
 \end{split}
\end{equation*}
We deduce that the (truncated) $\bT$--equivariant index characters satisfy  
\begin{equation}
 \Hf_{k_o}(Y_0,t,\xi-s\zeta)= (1-e^{-ts({\bf B}-{\bf A})}) \Hf_{k_o}(\tstY,t,\xi-s\zeta) + e^{-ts({\bf B}-{\bf A})}\Hf_{k_o}(Y,t,(1-s\mu_{\max})\xi).
\end{equation} 
Now,
\begin{equation*}
 \Hf(\tstY, t,\xi-s\zeta)= \frac{a_0(\tstY,\xi-s\zeta) (n+1)!}{t^{n+2}} + \frac{a_1(\tstY,\xi-s\zeta) n!}{t^{n+1}} + O(t^{-n})
\end{equation*} 
and \begin{equation*}
 \Hf(Y, t,(1-s\mu_{\max})\xi)= \frac{a_0(Y,\xi) n!}{ (1-s\mu_{\max})^{n+1} t^{n+1}} + \frac{a_1(Y,\xi) (n-1)!}{(1-s\mu_{\max})^nt^{n}} + O(t^{1-n})
\end{equation*} 
The coefficients of the poles of $\Hf(Y_0,t\xi-s\zeta)$ can be computed as 
\begin{equation}
a_0(Y_0,\xi-s\zeta)=\frac{a_0(Y,\xi)}{(1-s\mu_{\max})^{n+1}}+ s(n+1)({\bf B}-{\bf A})a_0(\tstY,\xi-s\zeta)
\end{equation}
\begin{equation}\label{eq:a1_conecentralfibre}
\begin{split}
a_1(Y_0,\xi-s\zeta)=&\frac{a_1(Y,\xi)}{(1-s\mu_{\max})^{n}} - s\,n ({\bf B}-{\bf A})\frac{a_0(Y,\xi)}{(1-s\mu_{\max})^{n+1}} \\ &+ s\,n({\bf B}-{\bf A})\, a_1(\tstY,\xi-s\zeta) - s^2 ({\bf B}-{\bf A})^2\frac{n(n+1)}{2} a_0(\tstY,\xi-s\zeta).
\end{split}
\end{equation}

Now, we argue that ${\bf B}-{\bf A} =-1$. Consider any linearized $\C^*$ action on $\mO(+1)$ lifting the standard action on $\pr^1$ and denote $\mu^{\rm FS}$ the associated moment map of the Fubini-Study metric normalized to represent $\mO(+1)$ (not $\mO(+2)$). It implies that $\mu_{\max}^{\rm FS}-\mu_{\min}^{\rm FS}=1$. As is well known, see eg \cite[Chapter 8]{PGbook}, the weight of the $\C^*$-action on the fibre of $\mO(+1)$ over a fixed point is the value of the momentum map at that point. For the fixed point $0\in \pr^1$ the weight is thus the minimum value of $\mu^{\rm FS}$ and since $\sigma_{\infty}(0) \neq 0$ (and that $0\in \pr^1$ is fixed by the action), the weight of the action on $\sigma_\infty$, as a global section, must coincides with $\mu^{\rm FS}(0)$, that is ${\bf B} = \mu^{\rm FS}(0)$. Similarly for $\sigma_0$ we have ${\bf A} = \mu^{\rm FS}(\infty)$. Thus, ${\bf B}-{\bf A} =-1$. 

Since $(M,L)$ is smooth, the first term in~\eqref{eq:a1_conecentralfibre} is
\begin{equation*}
    \frac{a_1(Y,\xi)}{(1-s\mu_{\max})^{n}}  = \frac{1}{2(2\pi)^{n}(1-s\mu_{\max})^{n}} \int_{M} \Ric(\Omega_\infty)\wedge \Omega_\infty^{[n-1]}
\end{equation*}
while the third is
\begin{equation*}
    a_1(\tstY,\xi-s\zeta) = \frac{1}{8(2\pi)^{n+2}} \bfS(\tstN, f_s^{-1}\tilde{\eta})
\end{equation*}
for any $\tilde{\eta} \in \holSAS(\tstM,\tstL)$. This proves the first part of the claim and the remaining part follows the next lemma.
\end{proof}

\begin{lemma}
If $(\tstM,\tstL)$ is a smooth ample test configuration and $\Omega \in 2\pi c_1(\tstL)$ is a smooth K\"ahler metric, we have the relation
\begin{equation*}
    \begin{split}
    \frac{2\,a_0(Y,\xi)}{(1-s\mu_{\max})^{n+1}} -  &s (n+1)\,a_0(\tstY,\xi-s\zeta) \\ 
    &=\frac{1}{(2\pi)^{n+1}}\int_{\tstM}\left(\TCmap^*\omega_{\rm FS}\wedge\frac{\Omega^{[n]}}{f_s^{n+1}} + \frac{s}{2}(n+1)\Delta^{\omega_{\rm FS}}\mu_{\rm FS} \frac{\Omega^{[n+1]}}{f_s^{n+2}} \right)
        \end{split}
\end{equation*} where $\omega_{\rm FS}$ is the Fubini-Study metric of Einstein constant $1$ and $\mu_{\rm FS}$ is its momentum map for the standard action on $\pr^1$. 
\end{lemma}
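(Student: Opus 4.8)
The plan is to express everything in terms of the push-forward of $\Omega^{[n+1]}/f_s^{n+1}$ along $\TCmap$ and identify the resulting current-theoretic slope with the pole coefficients $a_0(Y,\xi)$ and $a_0(\tstY,\xi-s\zeta)$, just as was done for the volume functional in Lemma~\ref{lem:AsymSlopeVol}. More precisely, first I would recall that, since $\tstL$ is ample and $(\tstM,\tstL)$ is smooth, the equivariant index characters of $\tstY$ and of $Y$ at the fibre at infinity are computed by intersection numbers: $a_0(\tstY,\xi-s\zeta)=(2\pi)^{-n-2}\bfV(\tstN,f_s^{-1}\tilde\eta)=(2\pi)^{-n-2}\int_{\tstM}\Omega^{[n+1]}/f_s^{n+1}$ (using $\pi^*\Omega=d\tilde\eta$ and $f_s=\tilde\eta(\xi-s\zeta)=1-s\mu$, with $\mu=\tilde\eta(\zeta)$), and $a_0(Y,\xi)=(2\pi)^{-n-1}c_1(L_\infty)^{[n]}=(2\pi)^{-n-1}\int_{M_\infty}(\Omega_{|M_\infty})^{[n]}$. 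Thus the claimed identity is equivalent, after multiplying by $(2\pi)^{n+1}$, to
\begin{equation*}
\frac{2\int_{M_\infty}(\Omega_{|M_\infty})^{[n]}}{(1-s\mu_{\max})^{n+1}}-\frac{s(n+1)}{2\pi}\int_{\tstM}\frac{\Omega^{[n+1]}}{(1-s\mu)^{n+1}}=\int_{\tstM}\TCmap^*\omega_{\rm FS}\wedge\frac{\Omega^{[n]}}{(1-s\mu)^{n+1}}+\frac{s(n+1)}{2}\int_{\tstM}\Delta^{\omega_{\rm FS}}\mu_{\rm FS}\frac{\Omega^{[n+1]}}{(1-s\mu)^{n+2}}.
\end{equation*}

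The key step is then to run the Stokes/fibre-integration argument of Lemma~\ref{lem:AsymSlopeVol} with the weight $e^\Psi$ adjusted to incorporate $\omega_{\rm FS}$ on the base $\pr^1$. Concretely, I would introduce the $\mathbb{S}^1$-invariant function $\widecheck{\actvol}^{\rm FS}_s(\tau)$ on $\pr^1$ whose $dd^c$ computes $\TCmap_\star\!\big(\TCmap^*\omega_{\rm FS}\wedge\Omega^{[n]}/(1-s\mu)^{n+1}+\tfrac{s(n+1)}{2}\Delta^{\omega_{\rm FS}}\mu_{\rm FS}\,\Omega^{[n+1]}/(1-s\mu)^{n+2}\big)$; this is precisely the push-forward appearing on the right of Lemma~\ref{lem:ddcA_TC} specialised to the Hermitian metric $e^\Psi$ on $K_{\tstM/\pr^1}$ normalised by $\Omega^{[n+1]}=e^{\Psi+\TCmap^*\log\omega_{\rm FS}}$, but keeping only the ``base'' part $\TCmap^*\omega_{\rm FS}$ of the curvature $-\tfrac12 dd^c\Psi=\Ric(\Omega)-\TCmap^*\omega_{\rm FS}$, i.e. using the flat-along-fibres weight. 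Then, integrating $dd^c\widecheck{\actvol}^{\rm FS}_s$ over $\mathbb{C}\setminus\mathbb{D}_\varepsilon$ as in the proof of Lemma~\ref{lem:AsymSlopeVol}, one obtains that the right-hand side of the display above equals $\tfrac{2\pi}{(n+1)s}$ times the difference of the asymptotic slopes of $\widecheck{\actvol}^{\rm FS}_s$ at $t\to+\infty$ and $t\to-\infty$. The slope at $t\to-\infty$ is read off at the fibre $M_\infty$ (where the $\C^*_\zeta$-action is trivial and $\mu_{|M_\infty}=\mu_{\max}$), giving the term $\tfrac{2}{(1-s\mu_{\max})^{n+1}}\int_{M_\infty}(\Omega_{|M_\infty})^{[n]}$ up to normalisation; the slope at $t\to+\infty$ is identified with $a_0$-coefficients of the central fibre via the Collins--Sz\'ekelyhidi expansion, matching the left-hand side through the already-established relation $a_0(Y_0,\xi-s\zeta)=\tfrac{a_0(Y,\xi)}{(1-s\mu_{\max})^{n+1}}+s(n+1)a_0(\tstY,\xi-s\zeta)$ proved in Proposition~\ref{prop:GlobalExpAffEH}. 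Invariance of $\int_{\tstM}(\Omega')^{[n+1]}/(1-s\mu')^{n+2}$ under change of K\"ahler representative $\Omega'\in[\Omega]$ (via \cite[Lemma 2]{Lahdili}) and the $C^{1,1}$-approximation by $\varepsilon$-geodesics let us pass between the smooth $\Omega$ and the geodesic metric $\Omega+dd^c\Gamma$ exactly as in Lemmas~\ref{lem:ddcV_TC} and~\ref{lem:AsymSlopeVol}.

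The main obstacle I anticipate is bookkeeping the constant $C$ in the relation $-(d^c\Psi)(\uzeta)=\Delta_\Omega(\mu)-\Delta_{\omega_{\rm FS}}(\mu_{\rm FS})+C$ and, dually, the normalisation of $\mu_{\rm FS}$: one must be careful that $\omega_{\rm FS}$ is taken with Einstein constant $1$ (representing $c_1(\mathcal{O}(1))$, not $\mathcal{O}(2)$), so that $\mu_{\rm FS,\max}-\mu_{\rm FS,\min}=1$, and that the weights $\mathbf{A}=\mu_{\rm FS}(\infty)$, $\mathbf{B}=\mu_{\rm FS}(0)$ of $\C^*_\zeta$ on the sections $\sigma_0,\sigma_\infty$ satisfy $\mathbf{B}-\mathbf{A}=-1$ --- these were handled in the proof of Proposition~\ref{prop:GlobalExpAffEH} and must be reused consistently here. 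Once the boundary/fibre contributions at $0$ and $\infty$ are correctly evaluated and the constant $C$ is shown to vanish (again by \cite[Lemma 4.5]{BHL}, since $\mu$ and $\mu_{\rm FS}$ are momenta with matching weight at $M_\infty$ and $\infty\in\pr^1$), the identity follows by comparing the two sides term by term; the remaining steps are the same routine integration-by-parts and current-convergence manipulations already carried out in \S\ref{ss:globalformulaEH}, which I would invoke rather than repeat.
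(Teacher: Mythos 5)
Your strategy is genuinely different from the paper's, and it has a gap at its crucial step. The paper proves this identity purely on the total space, with no geodesic rays: it rewrites the right-hand side as $\frac{1}{2\pi}\int_{\tstL^{-1}}\bigl(\TCmap^*(\omega_{\rm FS}+\tfrac{s}{2}\Delta^{\omega_{\rm FS}}\mu_{\rm FS})\bigr)\wedge e^{\coneMetric_{f_s}-r^2/2}$, observes that this is a $\bT$-equivariantly closed form evaluated at $\xi-s\zeta$ (here the normalisation $\omega_{\rm FS}=\Ric(\omega_{\rm FS})$ is what makes it closed), and applies Atiyah--Bott--Berline--Vergne localisation; the unknown fixed-point contributions lying in the central fibre cancel against the corresponding localisation of $\int_{\tstM}\Omega^{[n+1]}/f_s^{n+2}$, and only the $M_\infty$ contribution survives, which is computed by an explicit equivariant Euler class calculation of $E_{M_\infty}^{\tstL^{-1}}\simeq E_{M_\infty}^{L_\infty}\oplus E_{M_\infty}^{\tstM}$. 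Your plan instead hinges on identifying the asymptotic slope at $t\to+\infty$ of your auxiliary functional $\widecheck{\actvol}^{\rm FS}_s$ with the $a_0$-coefficient of the \emph{central fibre} ``via the Collins--Sz\'ekelyhidi expansion''. No such identification is available at this stage: the equality between analytic central-fibre limits along the geodesic and the algebraic pole coefficients of $Y_0$ is exactly what Theorem~\ref{theo:EHaff=limEH} establishes, and this lemma is an ingredient of that proof, so invoking it (or any unproven analogue for the $\omega_{\rm FS}$-weighted volume) is circular; nor can you read the slope off as a smooth fibre integral over $M_0$, since the central fibre is in general singular. What Lemma~\ref{lem:AsymSlopeVol} actually gives at $t\to+\infty$ is a formula in terms of integrals over $\tstM$ and $M_\infty$, not in terms of $a_0(Y_0,\xi-s\zeta)$, so your slope argument either begs the question or collapses to a tautology. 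The whole point of the paper's localisation proof is to avoid any analysis on $M_0$.

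Two further problems in the write-up. First, your normalisations are off: $a_0(\tstY,\xi-s\zeta)=\frac{1}{(2\pi)^{n+2}}\bfV(\tstN,f_s^{-1}\tilde\eta)=\frac{1}{(2\pi)^{n+1}}\int_{\tstM}\Omega^{[n+1]}/f_s^{\,n+2}$, not $(2\pi)^{-n-2}\int_{\tstM}\Omega^{[n+1]}/f_s^{\,n+1}$; consequently the reformulated identity you propose to prove (with exponent $n+1$ and a spurious $\tfrac{1}{2\pi}$ in the volume term) is not the lemma and is in fact false as written. Second, you cannot obtain the needed $dd^c$-formula by ``specialising Lemma~\ref{lem:ddcA_TC} and keeping only the base part of the curvature'': that lemma computes the complex Hessian of a specific functional attached to a Hermitian metric on $K_{\tstM/\pr^1}$, and discarding the $\Ric(\Omega)$ part of $dd^c\Psi$ changes the functional; you would have to introduce a weight on $\TCmap^*K_{\pr^1}^{-1}$ and redo the computation of Lemma~\ref{lem:ddc_A^Psi-smooth} for it (together with the analogue of~\eqref{eq:dPsi-Ric} fixing the constant, where $\uzeta\intprod\TCmap^*\omega_{\rm FS}=\tfrac{1}{2}d\,\TCmap^*\Delta^{\omega_{\rm FS}}\mu_{\rm FS}$). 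Even after these repairs, the decisive identification of the $t\to+\infty$ slope remains unjustified, so the proposal does not yield a proof; if you want to avoid localisation you would instead have to argue by an equivariant Stokes/cohomology-of-representatives argument degenerating $(\omega_{\rm FS},\mu_{\rm FS})$ to a current supported on $M_\infty$ only, which is a different argument from the one you describe.
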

\begin{proof}   
We first exhibit an interpretation of the expression
\begin{equation*}
    \int_{\tstM}\left(\TCmap^*\omega_{\rm FS}\wedge\frac{\Omega^{[n]}}{f_s^{n+1}} + \frac{s}{2}(n+1)\Delta^{\omega_{\rm FS}}\mu_{\rm FS} \frac{\Omega^{[n+1]}}{f_s^{n+2}} \right)
\end{equation*}
in terms of intersection of equivariant closed form on the total space of $\tstL^{-1}$. Let $r:\tstL^{-1}\ra \R_{\geq 0}$ be the distance to the zero section induced by a dual Hermitian metric on $\tstL$ whose curvature is $\Omega$. Let $\tstN \subset \tstL^{-1}$ the unit circle bundle over $\tstM$ with the contact form $\tilde{\eta} = d^c\log r$, so that $d\tilde{\eta} =\pi^*\Omega$ where $\pi: \tstL^{-1} \ra \tstM$ is the bundle map. Denoting $\tilde{\eta}_{f_s} =f_s^{-1}\tilde{\eta}$, we have
\begin{equation}\label{eq:equivlocCONES}
    \begin{split}
        &\int_{\tstM}\left(\TCmap^*\omega_{\rm FS}\wedge\frac{\Omega^{n}}{f_s^{n+1}}  + \frac{s}{2}\TCmap^*\Delta^{\omega_{\rm FS}}\mu_{\rm FS}\, \frac{\Omega^{n+1}}{f_s^{n+2}} \right) \\
       & =\frac{1}{2\pi}\int_{\tstN}\left((\TCmap\circ \pi)^*\omega_{\rm FS}\wedge\tilde{\eta}_{f_s}\wedge (d\tilde{\eta}_{f_s} )^{n} +\frac{s}{2}(\TCmap\circ \pi)^*\Delta^{\omega_{\rm FS}}\mu_{\rm FS} \tilde{\eta}_{f_s}\wedge (d\tilde{\eta}_{f_s})^{n+1} \right).
    \end{split}
\end{equation}
We consider the $2$-form
\begin{equation*}
\coneMetric_{f_s} := \frac{1}{2} d(r^2\tilde{\eta}_{f_s}) = rdr\wedge \tilde{\eta}_{f_s} + \frac{r^2}{2}d\tilde{\eta}_{f_s}= f_s^{-1}dr\wedge d^cr + f_s^{-1}r^2\pi^*\Omega + df_s^{-1}\wedge d^cr^2,
\end{equation*}
which is smooth on the total space of $\tstL^{-1}$ and vanishes when pulled back to the $0$-section $\tstM$ since $dr\wedge d^cr =(dd^cr^2 -2r^2\pi^*\Omega)/4$. We use that $\int_{\R_{\geq 0}} e^{-r^2/2}r^{2m+1}dr = 2^m m!$, for any $m\in \N$, to write~\eqref{eq:equivlocCONES} as   
\begin{equation*}
    \begin{split}
        &\frac{1}{2\pi }\int_{\tstL^{-1}}(\TCmap\circ \pi)^*\omega_{\rm FS}\wedge \frac{\coneMetric_{f_s}^{n+1}}{(n+1)!} +s  \frac{1}{2\pi}\int_{\tstL^{-1}}(\TCmap\circ \pi)^*\Delta^{\omega_{\rm FS}}\mu_{\rm FS}\frac{\coneMetric_{f_s}^{n+2}}{(n+2)!}\\
        &= \frac{1}{2\pi }\int_{\tstL^{-1}}\left(\TCmap^*\left(\omega_{\rm FS} +\frac{s}{2} \Delta^{\omega_{\rm FS}}\mu_{\rm FS}\right)\right)\wedge e^{\coneMetric_{f_s}-r^2/2}
    \end{split}
\end{equation*} We have $(\xi-s\zeta)\intprod\coneMetric_{f_s} = -rdr$ and $(\xi-s\zeta)\intprod(\TCmap\circ \pi)^*\omega_{\rm FS} =\frac{s}{2} d\Delta^{\omega_{\rm FS}}\mu_{\rm FS}$, whenever $\omega_{\rm FS} = \Ric(\omega_{\rm FS})$. In that case, the form $\left(\TCmap^*\left(\omega_{\rm FS} + \frac{s}{2} \Delta^{\omega_{\rm FS}}\mu_{\rm FS}\right)\right)\wedge e^{\coneMetric_{f_s}-r^2/2}$ is a $\bT$-equivariantly closed form evaluated on $\xi-s\zeta$. 

Note that the fixed points set $\mbox{Fix}_{\bT}(\tstL^{-1})$ of the action of $\bT$ lie in the zero section $\tstM$ and coincides with the fixed point set of $\C^*_{\zeta}$ on $\tstM$. Therefore, because $\tstM$ is smooth and compact, $\mbox{Fix}_{\bT}(\tstL^{-1})$ is a disjoint union of compact connected smooth submanifolds. Thanks to the test configuration equivariant map we know that one of these subsets is $M_\infty$ and the others, that we generically denote $Z$, lie in the central fibre. Then, the pullbacks through the inclusions give 
$$\iota_{M_\infty}^*\left(\TCmap^*\left(\omega_{\rm FS} +\frac{s}{2} \Delta^{\omega_{\rm FS}}\mu_{\rm FS}\right)\wedge e^{\coneMetric_{f_s}-r^2/2}\right) = s\mu^{\rm FS}_{\max}=s $$
$$\iota_{Z}^*\left(\TCmap^*\left(\omega_{\rm FS} +\frac{s}{2} \Delta^{\omega_{\rm FS}}\mu_{\rm FS}\right)\wedge e^{\coneMetric_{f_s}-r^2/2} \right)= s\mu^{\rm FS}_{\min}=-s $$
since the pullback of $\coneMetric_{f_s}-r^2/2$ by the inclusion $\tstM\subset \tstL^{-1}$ vanishes and $2\mu_{\rm FS} =\Delta^{\omega_{\rm FS}}\mu_{\rm FS}$. 

For $s\neq 0$, $\xi-s\zeta$ is generic (i.e its zero locus coincides with $\mbox{Fix}_{\bT}(\tstL^{-1})$) and the Atiyah--Bott--Berline--Vergnes equivariant localisation formula applies to give  
\begin{equation*}
    \begin{split}
      \int_{\tstL^{-1}} &\left((\TCmap\circ\pi)^*\left(\omega_{\rm FS} +\frac{s}{2} \Delta^{\omega_{\rm FS}}\mu_{\rm FS}\right)\right)\wedge e^{\coneMetric_{f_s}-r^2/2}\\
      &= \sum_{Z} \int_Z\frac{ -s }{\euler_{\xi-s\zeta}\left(E_{Z}^{\tstL^{-1}}\right)} + \int_{M_\infty}\frac{ s }{\euler_{\xi-s\zeta}\left(E_{M_\infty}^{\tstL^{-1}}\right)}
    \end{split}
\end{equation*} where $E_{A}^{B}$ is the equivariant normal bundle of an equivariant inclusion $A\subset B$ and whose equivariant Euler class evaluated on $\xi-s\zeta$ is denoted $\euler_{\xi-s\zeta}\left(E_{A}^{B}\right)$.

Similar basic computations and argument give that 
$$2\pi \int_{\tstM} \frac{\Omega^{n+1}}{f_s^{n+2}} = \int_{\tstL^{-1}} e^{\coneMetric_{f_s}-r^2/2} = \sum_{Z} \int_Z\frac{ 1 }{\euler_{\xi-s\zeta}\left(E_{Z}^{\tstL^{-1}}\right)} +  \int_{M_\infty}\frac{ 1 }{\euler_{\xi-s\zeta}\left(E_{M_\infty}^{\tstL^{-1}}\right)}.$$

In sum, we get that 
$$2\pi \int_{\tstM}\left(\TCmap^*\omega_{\rm FS}\wedge\frac{\Omega^{n}}{f_s^{n+1}} + \frac{s}{2} \Delta^{\omega_{\rm FS}}\mu_{\rm FS} \frac{\Omega^{n+1}}{f_s^{n+2}} \right) = -2\pi s\int_{\tstM} \frac{\Omega^{n+1}}{f_s^{n+2}} +2s  \int_{M_\infty}\frac{ 1 }{\euler_{\xi-s\zeta}\left(E_{M_\infty}^{\tstL^{-1}}\right)}.$$

Now we need to compute the equivariant Euler class of $E_{M_\infty}^{\tstL^{-1}} \simeq E_{M_\infty}^{L_\infty} \oplus E_{M_\infty}^{\tstM}$. We follow the precise explanation of \cite[\S 5.2]{MSY2} and apply it for a $\bT$--equivariant line bundle $E$ on which the weight of the fiberwise action is ${\bf w} \in Lie(\bT)^*$. For $a\in Lie(\bT)$, $$\euler_{a}\left(E\right) = \frac{1}{2\pi} \left( \langle {\bf w},a  \rangle  + 2\pi c_1(E)\right).$$ In our situation, this gives $\euler_{\xi-s\zeta}\left( E_{M_\infty}^{\tstM}\right) = \frac{s}{2\pi}$ because $M_\infty$ is a maximum. For the other weight, following the line of ideas in the proof of \cite[Lemma 2.13]{ACL}, we get the expression
\begin{equation*}
    \euler_{\xi-s\zeta}\left(E_{M_\infty}^{L_\infty}\right) = \frac{1}{2\pi} \left( (1- s\mu_{\max})  + 2\pi c_1(L_\infty^{-1})\right).
\end{equation*}
With this in place, we obtain
$$\int_{M_\infty}\frac{ 1 }{\euler_{\xi-s\zeta}\left(E_{M_\infty}^{\tstL^{-1}}\right)} = \frac{(2\pi)^2}{s(1- s\mu_{\max})} \int_{M_\infty} \frac{1}{1- \frac{\Omega_\infty}{1- s\mu_{\max}}} =\frac{(2\pi)^2}{s(1- s\mu_{\max})^{n+1}} \int_{M_\infty}\Omega_\infty^n$$ from which we infer the result as a consequence of
\begin{equation*}
    \int_{\tstM}\left(\TCmap^*\omega_{\rm FS}\wedge\frac{\Omega^{n}}{f_s^{n+1}} +\frac{s}{2}\Delta^{\omega_{\rm FS}}\mu_{\rm FS} \frac{\Omega^{n+1}}{f_s^{n+2}} \right) = -s\int_{\tstM} \frac{\Omega^{n+1}}{f_s^{n+2}} + 2  \frac{2\pi}{(1- s\mu_{\max})^{n+1}} \int_{M_\infty}\Omega_\infty^n.\qedhere
\end{equation*}
\end{proof}

\bibliographystyle{abbrv}

\end{document}